\definecolor{mhcol}{rgb}{0,0.7,0}
\definecolor{grey}{rgb}{0.5,0.5,0.5}
\newcommand{\MH}[1]{#1}
\newcommand{\MHrm}[1]{}
\theoremstyle{plain}
\newtheorem{thm}{\protect\theoremname}
  \theoremstyle{plain}
  \newtheorem{assumption}[thm]{\protect\assumptionname}
  \theoremstyle{remark}
  \newtheorem{rem}[thm]{\protect\remarkname}
  \theoremstyle{plain}
  \newtheorem{lem}[thm]{\protect\lemmaname}
  \theoremstyle{definition}
  \newtheorem{defn}[thm]{\protect\definitionname}
  \theoremstyle{plain}
  \newtheorem{cor}[thm]{\protect\corollaryname}
  \renewcommand{\headrulewidth}{\z@}%
  \renewcommand{\footrulewidth}{\z@}%
\date{\vspace{-4.5ex}}
  \providecommand{\assumptionname}{Assumption}
  \providecommand{\corollaryname}{Corollary}
  \providecommand{\definitionname}{Definition}
  \providecommand{\lemmaname}{Lemma}
  \providecommand{\remarkname}{Remark}
\providecommand{\theoremname}{Theorem}
\begin{document}
\global\long\def\womega{\tilde{\omega}}
\global\long\def\ue{u_{\eps}}

\global\long\def\fq{\mathfrak{q}}

\global\long\def\Q{\boldsymbol{Q}}

\global\long\def\weakto{\rightharpoonup}

\global\long\def\eps{\varepsilon}
\global\long\def\Z{\mathbb{Z}}
\global\long\def\Zd{\mathbb{Z}^{d}}
\global\long\def\B{\mathbb{B}}
\global\long\def\E{\mathbb{E}}
\global\long\def\N{\mathbb{N}}
\global\long\def\P{\mathbb{P}}
\global\long\def\R{\mathbb{R}}
\global\long\def\Rd{\mathbb{R}^{d}}
\global\long\def\Zde{\mathbb{Z}_{\eps}^{d}}
\global\long\def\Ztde{\mathbb{Z}_{\eps}^{2d}}
\global\long\def\fc{\mathfrak{c}}

\global\long\def\calL{\mathcal{L}}
\global\long\def\calR{\mathcal{R}}
\global\long\def\cC{\mathcal{C}}
\global\long\def\cE{\mathcal{E}}
\global\long\def\cQ{\mathcal{Q}}
\global\long\def\cR{\mathcal{R}}

\global\long\def\sE{\mathscr{E}}
\global\long\def\cH{\mathcal{H}}

\global\long\def\sF{\mathscr{F}}

\global\long\def\He{\mathcal{H}_{\eps}}
\global\long\def\d{\mathrm{d}}
\global\long\def\seps{\sum\!\!\!\!\!\!\,{\scriptstyle \eps}\,}
\global\long\def\sumeps#1{\underset{{\scriptstyle #1}}{\,\sum\!\!\!\!\!\!\,{\scriptstyle \eps}\;\;}}
\global\long\def\sumsum#1{\underset{{\scriptstyle #1}}{\sum\sum\;\;}}
\global\long\def\sumsumeps#1{\underset{{\scriptstyle #1}}{\sum\!\!\!\!\!\!\,{\scriptstyle \eps}\,\,\sum\!\!\!\!\!\!\,{\scriptstyle \eps}\;\;}}
\global\long\def\rmP{\mathrm{P}}

\global\long\def\PV{\mathrm{PV}}
\global\long\def\cB{\mathcal{B}}

\title{The fractional $p$-Laplacian emerging from homogenization of the random conductance model with degenerate ergodic weights and unbounded-range jumps}

\author{Franziska Flegel and Martin Heida}

\maketitle
\begin{abstract} We study a general class of discrete $p$-Laplace operators in the random
conductance model with long-range jumps and ergodic weights. Using a variational formulation 
of the problem, we show that under the assumption 
of bounded first moments and a suitable lower moment condition on the weights, the 
homogenized limit operator is a fractional $p$-Laplace operator.

Under strengthened lower moment conditions, we can apply our insights also to the spectral homogenization of the discrete
Laplace operator to the continuous fractional Laplace operator.

\end{abstract}
\thispagestyle{fancyfootonly}

\section{Introduction}

In a recent work \cite{FHS2018RCM}, the authors together with Slowik
studied homogenization of a discrete Laplace operator on $\Zde:=\eps\Zd$
with long range jumps of the form
\begin{equation}
\calL_{\eps}u(x):=\eps^{-2}\sum_{y\in\Zde\backslash\{x\}}{ \omega_{\frac{x}{\eps},\frac{y}{\eps}}}\left(u(y)-u(x)\right)\,.\label{eq:eps-laplace}
\end{equation}
The operator was studied on a bounded domain under proper rescaling with Dirichlet boundary
conditions. The coefficients $\omega_{x,y}$ being random and positive
with $\omega_{x,y}=\omega_{y,x}$, the operator $\calL_\eps$ acts on functions
$\Zde\to\R$, and the corresponding linear equation in \cite{FHS2018RCM} reads 
\begin{equation}
\calL_{\eps}u(x)=f(x)\,,\qquad u(x)=0\mbox{ on }\Zde\backslash\Q\,,\label{eq:linear-eps-problem}
\end{equation}
where $\Q$ is a bounded open domain in $\Rd$.
The assumptions on $\omega_{x,y}$ imposed in \cite{FHS2018RCM} are
ergodicity and stationarity in $x$, together with a first moment condition of the form 
\begin{equation}
\E\left(\sum_{z\in\Zd}\omega_{0,z}\left|z\right|^{2}\right)<\infty\,,\label{eq:moment-cond-FHS}
\end{equation}
and a lower moment condition of the form
\begin{equation}
\exists\fq>\frac{d}{2}\,:\quad\E\left(\sum_{i=1}^{d}\omega_{0,e_{i}}^{-\fq}+\omega_{0,-e_{i}}^{-\fq}\right)<\infty\,,\label{eq:moment-cond-FHS-2}
\end{equation}
where $e_{i}$ is the $i$-th unit vector in $\Zd$. Under these assumptions,
it could be shown that in the sense of G-convergence, the homogenized operator in the limit $\eps\to0$ 
is a second order elliptic operator on $L^{2}(\Q)$ 
of the form $\nabla\cdot\left(A_{\hom}\nabla\bullet\right)$ 
with Dirichlet boundary conditions, where $A_{\hom}\in\R^{d\times d}$ is symmetric and positive definite. 

\MH{The surprising result that the non local operator $\calL_\eps$ localizes in the limit $\eps\to0$ motivates us 
to explore this phenomenon in more detail, in particular to find out what assumptions on $\omega$ would
cause $\calL_\eps$ to remain non local in the limit $\eps\to 0$. 
By a non local limit operator, we mean a pseudo differential operator of fractional Laplacian type.
Noting that the second order type of the limit operator in \cite{FHS2018RCM} is strongly linked to the scaling $\eps^{-2}$ (refer also to \cite{Biskup2011review}), we first relax this scaling to $\eps^{-2s}$, $s\in(0,1)$ obtaining
\begin{align}
\calL_{\eps}u(x):&=\eps^{-2s}\sum_{y\in\Zde\backslash\{x\}}{ \omega_{\frac{x}{\eps},\frac{y}{\eps}}}\left(u(y)-u(x)\right) \nonumber \\
& =\eps^{d}\sum_{y\in\Zde\backslash\{x\}}c_{\frac{x}{\eps},\frac{y}{\eps}}\frac{u(y)-u(x)}{\left|x-y\right|^{d+2s}}\,,\label{eq:def-Leps-c}
\end{align}
where the new random variable $c$ on $\Z^{2d}$ relates to $\omega$
through 
\[
c_{x,y}:=\omega_{x,y}\left|x-y\right|^{d+2s}\,.
\]
Note that the prefactor $\eps^{d}$ balances $\left|x-y\right|^{-d}$ and $\eps^{-2s}$ is absorbed into $\left|x-y\right|^{-2s}$.} 

\MH{In case $c_{x,y}=c_0$ is constant for all $x,y$, it is intuitive that the limit operator of \eqref{eq:def-Leps-c} is no longer a second order elliptic operator but rather
a non local fractional operator of the form }
\[
\left(-\Delta\right)^{2s}u(x):=c_0\PV\int_{\Rd}\frac{u(y)-u(x)}{\left|x-y\right|^{d+2s}}\,\d y\,,
\]
where $\PV$ stands for \emph{principle value }of the integral. We
refer to \cite{kwasnicki2017ten} for a list of equivalent characterizations,
among which the most common is the Fourier-symbol $|\xi|^{2s}$. 

\MH{Our main theorems confirm our intuition in the case 
when $c$ is a positive random variable with finite, non-zero 
expectation $0<\E(c)<\infty$ and a suitable bound on the lower tail $0<\E(c^{-\fq})<\infty$ (see Assumption \ref{assu:fq} below).
Furthermore, we assume $c$ to be ergodic in $\Z^{d\times d}$. 
This is different from the ergodicity assumptions in \cite{FHS2018RCM}, 
where $\omega_{x,z}$ is ergodic only in the first variable. 
The reason is that $c_{x,y}=\omega_{x,y}|x-y|^{d+2}$ in \cite{FHS2018RCM} decreases to 0 with growing distance $|x-y|$, implying $\E(c)=0$ and  causing localization of the operator.
Hence no statistical independence w.r.t. the second parameter is needed in \cite{FHS2018RCM}. In contrast, in the present work we want to study non local limit behavior and in order to get a spatially homogeneous operator, we need some assumptions that provide good mixing conditions.}

\MH{Note that in view of \cite{FHS2018RCM} one could get the idea that our setting corresponds to a relaxation of condition (\ref{eq:moment-cond-FHS})
to, say 
\begin{equation}
\E\left(\sum_{z\in\Zd}\omega_{0,z}\left|z\right|^{2s}\right)<\infty\,,\qquad s\in(0,1)\,.\label{eq:s-moment-condition}
\end{equation}
However, our first moment condition is not equivalent with \eqref{eq:s-moment-condition} but corresponds to (see Lemma \ref{lem:E-omega-infty})
\[
\E\left(\sum_{z\in\Zd}\omega_{0,z}\left|z\right|^{2s}\right)=\infty\,.
\]
Moreover, Theorem \ref{thm:violate-infty-property} shows that \eqref{eq:s-moment-condition} causes the pseudo-differential operator to vanish in the limit $\eps\to0$.} 


In this work, we study the above homogenization problem in a more general setting. Our focus lies on energy functionals
which take the form 
\[
\sE_{p,s,\eps}(u)=\eps^{2d}\underset{{\scriptstyle (x,y)\in\Ztde}}{\sum\sum}c_{\frac{x}{\eps},\frac{y}{\eps}}\frac{V\left(u(x)-u(y)\right)}{\left|x-y\right|^{d+ps}}+\eps^{d}\sum_{x\in\Zde}G(u(x))-\eps^d\sum_{x\in\Zde}u(x)f_{\eps}(x)\,,
\]
where \MH{$V$ satisfies a lower $p$-growth condition (see Assumption \ref{assu:V} below) with $p\in(1,\infty)$ and $s\in(0,1)$, which is in accordance with the continuous theory of fractional $p$-Laplace operators.}
We will study both the convergence behavior on the whole of
$\Rd$ and on the restriction $u(x)=0$ for $x\not\in\Q$, where $\Q\subset\Rd$
is a bounded domain. 
\MH{Like in \cite{neukamm2017stochastic}, we study $\R^m$-valued $u$ but for simplicity, we will sometimes restrict in our discussion to $m=1$.} 
The corresponding limit functional (in the sense
of $\Gamma$-convergence) will turn out to be
\[
\sE_{p,s}(u)=\E(c)\iint_{\R^{2d}}\frac{V\left(u(x)-u(y)\right)}{\left|x-y\right|^{d+ps}}+\int_{\Rd}G(u(x))\d x-\int_{\Rd}u(x)f(x)\,.
\]

In case $V(\xi)=|\xi|^p$ this functional 
generates the fractional $p$-Laplace equation (see \cite{iannizzotto2016existence} and reference therein).
In what follows, we will shortly recall 
the relation between the homogenization problem for the linear equation and the 
homogenization of convex functionals.

In order to understand our way to approach this problem, note that
the weak formulation of (\ref{eq:linear-eps-problem}) with $\calL_{\eps}$
given by (\ref{eq:def-Leps-c}) reads 
\begin{equation}
\sum_{x\in\Zde}\eps^{d}\sum_{y\in\Zde}c_{\frac{x}{\eps},\frac{y}{\eps}}\frac{u(y)-u(x)}{\left|x-y\right|^{d+2s}}\left(v(y)-v(x)\right)=\sum_{x\in\Zde}f(x)v(x)\,.\label{eq:linear-eps-problem-2}
\end{equation}
\MH{We recall that the literature usually provides a factor $\frac12$ on the left hand side, which is not the case here as the sum in \eqref{eq:def-Leps-c} is over all neighbors and not only the neighbors in ''positive'' direction $e_i$.} 
In a variational formulation, $u$ is the minimizer of the energy
potential 
\[
\sE_{2,s,\eps}(u)=\eps^{2d}\frac{1}{2}\sum_{x\in\Zde}\sum_{y\in\Zde}c_{\frac{x}{\eps},\frac{y}{\eps}}\frac{\left(u(y)-u(x)\right)^{2}}{\left|x-y\right|^{d+2s}}-\eps^{d}\sum_{x\in\Zde}u(x)f(x)\,.
\]
We will also look at the constraint $u(x)=0$ on $\Zde\backslash\Q$.

In the continuum, a corresponding functional is known for the solutions
of the fractional Laplace equation $\left(-\Delta\right)^{s}u=f$
and on $\Q=\Rd$ it reads 
\[
\sE_{2,s}(u)=\frac12\iint_{\R^{2d}}\frac{\left(u(x)-u(y)\right)^{2}}{\left|x-y\right|^{d+2s}}-\int_{\Rd}u(x)f(x)\,.
\]
The minimizers of $\sE_{2,s}$ lie in the space $W^{s,p}(\Rd)$, which
we will introduce in Section \ref{sub:Discrete-and-continuous}. Hence,
a $\Gamma$-convergence result for $\sE_{2,s,\eps}\xrightarrow{\Gamma}\sE_{2,s}$
implies homogenization of (\ref{eq:linear-eps-problem-2}) to
\[
\left(-\Delta\right)^{s}u:=\E(c)\PV\int_{\Rd}\frac{u(y)-u(x)}{\left|x-y\right|^{d+2s}}\,\d y=f\,,
\]
see Section \ref{sub:Application-to-F-Lap}.

\MH{On bounded domains $\Q\subset\Rd$ we introduce the following functionals which 
are oriented at the definitions of $W^{s,p}(\Q)$-seminorms
in \cite{DiNezza2012}.} 
They read
\[
\sE_{p,s,\eps,\Q}(u)=\eps^{2d}\underset{{\scriptstyle (x,y)\in\Q^{\eps}\times\Q^{\eps}}}{\sum\sum}c_{\frac{x}{\eps},\frac{y}{\eps}}\frac{V\left(u(x)-u(y)\right)}{\left|x-y\right|^{d+ps}}+\eps^{d}\sum_{x\in\Q^{\eps}}G(u(x))-\eps^d\sum_{x\in\Q^{\eps}}u(x)f_{\eps}(x)\,,
\]
where $\Q^{\eps}=\Q\cap\Zde$. 
From the analytical point of view, it then makes sense to consider the
restriction of $\sE_{p,s,\eps,\Q}$ to functions with zero boundary
conditions and zero mean value conditions. In order to formulate discrete
Dirichlet conditions, let 
\begin{equation}
\partial\Q^{\eps}=\left\{ x\in\Zde\,:\;\partial\Q\cap\left(x+[-\eps,\eps]^{d}\right)\not=\emptyset\right\} \,.\label{eq:def-partial-Q-eps}
\end{equation}
In every of the above mentioned cases, the corresponding $\Gamma$-limit
functional will turn out to be
\[
\sE_{p,s,\Q}(u)=\E(c)\iint_{\Q\times\Q}\frac{V\left(u(x)-u(y)\right)}{\left|x-y\right|^{d+ps}}+\int_{\Q}G(u(x))\d x-\int_{\Q}u(x)f(x)\,.
\]
However, as we will see below, we even obtain a kind of Mosco convergence in suitable spaces $L^{r}(\Q)$.
Mosco convergence means that the $\liminf$-estimate can be obtained
for weakly converging sequences while the recovery sequence can be
constructed with respect to strong convergence. 

\MH{The homogenization on bounded $\Q$ announced above will be performed both for a constraint on the average value 
and for the constraint of Dirichlet boundary conditions. Here we have to be careful since 
the notion of boundary conditions in spaces $W^{s,p}(\Q)$ does
not make sense in case $s\leq\frac{1}{p}$. In this case, it is still possible to consider $\sE_{p,s,\eps}$ 
with the constraint that $u=0$ on $\Zde\setminus\Q$ and we therefore also study this particular situation.
}

Our convergence results rest upon a well-balanced interplay between
$p$, $s$, $c$ and $d$, which we formulate in the following condition on
the coefficients:

\begin{assumption}
\label{assu:fq} We assume that the random variable $c$ is ergodic
in $\Zd\times\Zd$ with $\E(c)<\infty$ and given $s\in(0,1)$, $p>1$
we assume that there exists $\fq\in\left(\frac{d}{ps},+\infty\right]$
and $r\in(1,p)$ such that $\E(c^{-\fq})<\infty$ and $\fq\geq\frac{r}{p-r}>\frac{d}{ps}$.
\end{assumption}

In the hypothetical case $s=1$ and $p=2$, the last assumption reduces
to $\fq>\frac{d}{2}$. Hence Assumption \ref{assu:fq} is in accordance
with the assumptions in \cite{FHS2018RCM}, which we recalled in 
(\ref{eq:moment-cond-FHS})--(\ref{eq:moment-cond-FHS-2}).

\begin{rem}
\label{rem:critical-compact-exponent} As we will see in Theorems
\ref{thm:Main Theorem-I}--\ref{thm:Main Theorem-V}, sequences $\ue$
with bounded $\sE_{p,s,\eps}(\ue)$ or $\sE_{\eps,p,\Q}(\ue)$ are
bounded in $L^{r}(\Zde)$ or $L^{r}(\Q^{\eps})$ if $r\in[1,p_{\fq}^{\star})$
for 
\[
p_{\fq}^{\star}=\frac{dp\fq}{2d+d\fq-sp\fq}\,.
\]
In particular, it turns out that $\fq>\frac{2d}{ps}$ is a sufficient
condition to have boundedness of $\ue$ in $L^{p}(\Q^{\eps})$. In
order to obtain suitable bounds on $\ue$ in $L^{r}(\Q)$, we ask
that $V$ satisfies the following assumption.\end{rem}
The notation $p_{\fq}^{\star}$ is related to the fractional critical exponent 
$p^{\star}$ in the classical theory of fractional Sobolev spaces, which is 
introduced in Theorem \ref{thm:discr-Poincare-wsp}. However, we will 
see that the random weights $c$ will force us to lower the value of the classical $p^{\star}$ with decreasing $\fq$.

We finally introduce our assumptions on $V$. These assumptions are a natural generalization of the fractional $p$-Laplace potential and are also natural in the context of Sobolev spaces which we will use.
\begin{assumption}
\label{assu:V}We assume that $V:\R^{m}\to\R$ is \MH{continuous} and there exist $\alpha,\beta,c>0$ and $p\in[1,\infty]$ such that 
\begin{gather*}
\alpha|\xi|^{p}\leq V(\xi)\leq c+\beta|\xi|^{p}\,,\\
\xi\mapsto|\xi|^{-p}V(\xi)\qquad\mbox{is continuous in }0\,.
\end{gather*}

\end{assumption}
The study of discrete elliptic operators has some history starting from works by K\"unne\-mann \cite{kunnemann1983diffusion} and Kozlov \cite{kozlov1987averaging}. The interest in this topic has been tremendous both from the physical point of view, e.g. as a model for Brownian motion (see \cite{Biskup2011review,Bouchaud1990}), or from mathematical point of view when studying numerical schemes (see \cite{Biskup2011review,kozlov1987averaging} or \cite{heida2018convergences} for a numerical application). The current research particularly focuses on higher order corrector estimates, see e.g. \cite{bella2017stochastic} and references therein. 

\MH{A further related work is by Neukamm, Sch\"affner and 
Schl\"omerkemper \cite{neukamm2017stochastic} on the homogenization of discrete 
non-convex functionals with finite range as discrete models for elasticity. Like in the present work, they allow for higher dimensional variables and non-convex $V$. In contrast to our approach, they do not weight the potential $V$ by $|x-y|^{-d-ps}$ and hence, even on bounded domains, the results of \cite{neukamm2017stochastic} and the present work cover different problems, though they fall in the same class of discrete homogenization. Since \cite{neukamm2017stochastic} treats only finite range interaction, the problem localizes and the homogenized potential is obtained from a sequence of ''cell-problems''. } 

\MH{We emphasize that all of the above mentioned works where on finite range connectivity. From the stochastic point of view of random walks among random conductances \cite{Biskup2011review}, this corresponds to Brownian motion of the random walker while our ansatz allows for long range jumps, which can be considered as discrete analogue of Levi-flights, such as are used to model the movement of bacteria.}

The homogenization of the fractional Laplace operator seems to be
only recent and rather unexplored. However, there are a few results
in the literature: Most of them are focused on the periodic homogenization
of the continuous fractional Laplace operator $(-\Delta)^{s}$, starting
from a work by Piatnitskii and Zhizhina \cite{piatnitskiZhiZhi2017periodic} \MH{and Kassmann, Piatnitskii and Zhizhina \cite{kassmann2018homogenization}}.
A first result on the stochastic homogenization of the (continuum)
fractional Laplace operator with uniformly bounded $c$ is given in
\cite{piatnitskiZhiZhi2018stochastic}. We will not investigate the
relation between \cite{piatnitskiZhiZhi2018stochastic} and the present
work, but we expect that the methods developed below could help to
generalize \cite{piatnitskiZhiZhi2018stochastic} to non-uniformly
bounded coefficients with bounded moment conditions. 

From the point of view of non local discrete operators, our work is 
related to our previous result \cite{FHS2018RCM} but also to a recent
result by Chen, Kumagai and Wang \cite{chen2018random}. \MH{They
show homogenization of the discrete fractional Laplace, i.e. $p=2$, on $\Zd$ in
case $d>4-4s$ and under the assumption $\E(c^{2p})+\E(c^{-q})<\infty$
where $p>\max\left\{\right(d+2)/d\,\,(d+1)/(2(2-2s))\}>1$ and $q>2\frac{d+2}{2}$. Note that the authors of \cite{chen2018random}
also allow for percolation with the restriction that $\P\left(c=0\right)<2^{-4}$, which we exclude for simplicity. Hence, some choices of $d$ and $s$ are contained both in the setting of \cite{chen2018random} and the present work, while there are other choices of $d$ and $s$ that are contained either in \cite{chen2018random} or in the present work but not in both. In this sense, the results are complementing each other.}

The outline of the paper is as follows: In the next section we first
provide Mosco convergence of $\sE_{p,s,\eps}$ and $\sE_{p,s,\eps,\Q}$
to $\sE_{p,s}$ and $\sE_{p,s,\Q}$ respectively. Recall that Mosco convergence
is slightly stronger than weak or strong $\Gamma$-convergence. Based
on these results, we formulate our homogenization results for the
fractional Laplace operator, including also spectral homogenization
in case $\fq>\frac{2d}{ps}$. In Section \ref{sec:Preliminaries}
we provide basic knowledge on fractional Sobolev spaces and generalize
these to the discrete setting. Lemma \ref{lem:weighted semi-poincare-general}
in Section \ref{sub:Weighted-discrete-Sobolev--Slobo} can be considered
as the heart of our homogenization results. Finally, in Section \ref{sec:Proof-of-Theorems}
we prove the main theorems from Section \ref{sec:Main-results}. For
readability of Section \ref{sec:Preliminaries}, we shift some standard
proofs to the appendix.

\section{\label{sec:Main-results}Main results }

The discrete space, on which our functionals $\sE_{p,s,\eps}$ and
$\sE_{p,s,\eps,\Q}$ are defined, are denoted 
\[
\cH_{\eps}:=\left\{ u:\,\Zde\to\R^m\right\} \,,\quad\mbox{resp.}\quad\cH_{\eps}(\Q):=\left\{ u\in\cH^{\eps}\,:\;\forall x\not\in\Q:\,u(x)=0\right\} \,.
\]
However, the limit functionals are defined on the measurable functions
on $\Rd$ and in order to compare discrete solutions with continuous
functions we introduce the operators $\cR_{\eps}^{\ast}$ through
\[
\cR_{\eps}^{\ast}u(x)=u(x_{i})\quad\mbox{if }x_{i}\in\Zde\mbox{ and }x\in x_{i}+\left[-\frac{\eps}{2},\frac{\eps}{2}\right)^{d}\,.
\]
As observed in \cite{FHS2018RCM}, the operator
$\cR_{\eps}^{\ast}$ is the dual of the operator 
\[
\left(\cR_{\eps}u\right)(x)=\eps^{-d}\int_{x_{i}+\left[-\frac{\eps}{2},\frac{\eps}{2}\right)^{d}}u(y)\d y\quad\mbox{if }x_{i}\in\Zde\mbox{ and }x\in x_{i}+\left[-\frac{\eps}{2},\frac{\eps}{2}\right)^{d}\,.
\]

\subsection{Homogenization of the global energy $\protect\sE_{p,s,\protect\eps}$ }

On bounded domains $\Q\subset\Rd$ we find the following convergence
behavior of $\sE_{p,s,\eps}$.
\begin{thm}
\label{thm:Main Theorem-I}Let $\Q\subset\Rd$ be a bounded domain. Let $c$, $s$, $p$, $\fq$ and $V$ satisfy
Assumptions \ref{assu:fq} and \ref{assu:V}, $G:\,\R^m\to\R$ be non-negative
and continuous with $G(\xi)\leq\alpha|\xi|^{k}$, $\alpha>0$, $k<p_{\fq}^{\star}$,
and let $f_{\eps}\in\cH_{\eps}$ be such that $\cR_{\eps}^{\ast}f_{\eps}\weakto f$
in $L^{r^{\ast}}(\Q)$, where $\frac{1}{r^{\ast}}+\frac{1}{p_{\fq}^{\star}}<1$.
Then the sequence $\sE_{p,s,\eps}$ restricted to $\He(\Q)$ Mosco-converges
\emph{almost surely} to $\sE_{p,s}$ in the following sense:
\begin{enumerate}
\item For $r=\frac{r^\ast}{r^\ast-1}$ there exists $C>0$ such that
\[
\forall\ue\in\He(\Q):\qquad\left\Vert \ue\right\Vert _{L^{r}(\Q^{\eps})}\leq C\sE_{p,s,\eps}(\ue) \quad\text{for all $\eps>0$}\,.
\]
For every sequence $\ue\in\He(\Q)$ such that $\sup_{\eps}\sE_{p,s,\eps}(\ue)<\infty$
there exists $u\in W^{s,p}(\Q)$, $u=0$ on $\Rd\backslash\Q$, and
a subsequence $\cR_{\eps}^{\ast}\ue\to u$ pointwise a.e. with $\cR_{\eps}^{\ast}\ue\to u$
strongly in $L^{r}(\Q)$, and 
\[
\liminf_{\eps\to0}\sE_{p,s,\eps}(\ue)\geq\sE_{p,s}(u)\,,
\]

\item For every $u\in W^{s,p}(\Q)$, $u=0$ on $\Rd\setminus\Q$ and $r=\frac{r^\ast}{r^\ast-1}$ there
exists a sequence $\ue\in\He(\Q)$ such that $\cR_{\eps}^{\ast}u^{\eps}\to u$
strongly in $L^{r}(\Q)$ and 
\begin{equation}
\limsup_{\eps\to0}\sE_{p,s,\eps}(\ue)=\sE_{p,s}(u)\,.\label{eq:Thm-Gamma-limsup}
\end{equation}

\end{enumerate}
\end{thm}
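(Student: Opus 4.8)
The plan is to prove the two Mosco assertions of Theorem~\ref{thm:Main Theorem-I} separately: assertion~(1) --- the a priori bound, the compactness and the $\liminf$-inequality --- will rest on a coercivity estimate combined with the weighted discrete fractional Sobolev machinery of Section~\ref{sub:Weighted-discrete-Sobolev--Slobo}, while assertion~(2) --- the recovery sequence --- will be obtained by discretising smooth functions and invoking the ergodic theorem for $c$ on $\Zd\times\Zd$. For the a priori bound note that Assumption~\ref{assu:V} gives $V(\xi)\ge\alpha|\xi|^{p}$, so the interaction part of $\sE_{p,s,\eps}(\ue)$ dominates $\alpha$ times the weighted discrete Gagliardo form $\eps^{2d}\sum_{x,y\in\Zde}c_{\frac{x}{\eps},\frac{y}{\eps}}\,|\ue(x)-\ue(y)|^{p}\,|x-y|^{-d-ps}$, while $G\ge0$ only helps in a lower bound. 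Since $\ue\in\He(\Q)$ vanishes outside the bounded set $\Q$, Lemma~\ref{lem:weighted semi-poincare-general} --- where the hypotheses $\fq>\frac{d}{ps}$, $r<p$ and $\fq\ge\frac{r}{p-r}$ of Assumption~\ref{assu:fq} enter through a H\"older step against $c^{-\fq}$ --- bounds $\|\cR_\eps^\ast\ue\|_{L^{r}(\Q)}$, in fact $\|\cR_\eps^\ast\ue\|_{L^{p_{\fq}^{\star}}(\Q)}$, by a constant times the $p$-th root of that weighted form, the constant being almost surely uniform in $\eps$ by the ergodic theorem applied to $c^{-\fq}$. The linear term is controlled by $\|\cR_\eps^\ast\ue\|_{L^{r}}\,\|\cR_\eps^\ast f_{\eps}\|_{L^{r^\ast}}$, with $r,r^\ast$ conjugate and $\sup_{\eps}\|\cR_\eps^\ast f_{\eps}\|_{L^{r^\ast}}<\infty$ from the weak convergence hypothesis; absorbing it by Young's inequality yields the coercivity estimate in~(1) and, for any sequence with $\sup_{\eps}\sE_{p,s,\eps}(\ue)<\infty$, boundedness of $\cR_\eps^\ast\ue$ in $L^{p_{\fq}^{\star}}(\Q)$ together with boundedness of the weighted Gagliardo forms. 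A discrete Rellich-type compactness statement, again from Section~\ref{sub:Weighted-discrete-Sobolev--Slobo} (which trades the weighted bound for an unweighted fractional bound at lower order/exponent, using $\E(c^{-\fq})<\infty$), then produces a subsequence with $\cR_\eps^\ast\ue\to u$ strongly in $L^{r}(\Q)$ and pointwise a.e.; the constraint $u=0$ on $\Rd\setminus\Q$ is inherited, and $u\in W^{s,p}(\Q)$ will follow from the $\liminf$-estimate below.

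\emph{The $\liminf$-inequality.} The linear term passes to the limit by strong-$L^{r}$ times weak-$L^{r^\ast}$ convergence. For the $G$-term, $G\ge0$ continuous gives $\liminf_{\eps}\eps^{d}\sum_{x}G(\ue(x))=\liminf_{\eps}\int G(\cR_\eps^\ast\ue)\ge\int G(u)$ by Fatou. The interaction term is the substantial one: using $c\ge0$, $V\ge0$ one keeps only the pairs with $|x-y|\ge\eta$, on which $\cR_\eps^\ast\ue\to u$ a.e., $V$ is continuous, the discretised kernel converges uniformly to $|x-y|^{-d-ps}$, and the weights $c_{\frac{x}{\eps},\frac{y}{\eps}}$ equidistribute to $\E(c)$ by the ergodic theorem on $\Zd\times\Zd$; a lower-semicontinuity argument for nonnegative integrands against an equidistributing weight (truncating $c$ at level $M$ and the integrand at level $L$, applying Egorov on the bounded region, then sending $L,M\to\infty$ by monotone convergence) gives $\liminf_{\eps}(\text{interaction}_{\eps})\ge\E(c)\iint_{|x-y|\ge\eta}V(u(x)-u(y))\,|x-y|^{-d-ps}$, and $\eta\downarrow0$ by monotone convergence turns the right-hand side into the interaction part of $\sE_{p,s}(u)$; combined with $V(\xi)\ge\alpha|\xi|^{p}$ this also shows $u\in W^{s,p}(\Q)$. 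Adding the three contributions gives $\liminf_{\eps}\sE_{p,s,\eps}(\ue)\ge\sE_{p,s}(u)$.

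\emph{The recovery sequence.} By density of a class of smooth functions in $\{u\in W^{s,p}(\Rd):u=0\text{ on }\Rd\setminus\Q\}$ (Section~\ref{sec:Preliminaries}), by continuity of $\sE_{p,s}$ along such approximations --- for the interaction term because Assumption~\ref{assu:V} also yields $V(\xi)\le C_{V}|\xi|^{p}$ and hence generalised dominated convergence in the Gagliardo form, for the $G$-term because $k<p_{\fq}^{\star}\le p^{\star}$ (Theorem~\ref{thm:discr-Poincare-wsp}) gives $L^{k}$-convergence with uniform integrability, for the linear term because $r<p_{\fq}^{\star}\le p^{\star}$ --- and a diagonal argument, it suffices to build the recovery sequence for $u$ smooth with compact support in $\Q$. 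For such $u$ put $\ue:=u|_{\Zde}$, which lies in $\He(\Q)$ for $\eps$ small; then $\cR_\eps^\ast\ue\to u$ uniformly, hence in $L^{r}(\Q)$, the $G$- and linear terms converge by dominated resp.\ strong-weak convergence, and the interaction term converges to $\E(c)\iint_{\R^{2d}}V(u(x)-u(y))\,|x-y|^{-d-ps}$: splitting at $|x-y|=\eta$, the far part is a Riemann sum of a uniformly convergent, compactly supported integrand tested against the equidistributing weights, while the near part is bounded, \emph{uniformly in $\eps$}, by $C_{V}\|\nabla u\|_{\infty}^{p}\,\eps^{2d}\sum_{0<|x-y|<\eta}c_{\frac{x}{\eps},\frac{y}{\eps}}\,|x-y|^{p-d-ps}$, which Section~\ref{sub:Weighted-discrete-Sobolev--Slobo} controls by $C\,\E(c)\,\eta^{p(1-s)}\to0$ (the exponent $p(1-s)>0$ being the near-diagonal integrability gain). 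Letting $\eta\downarrow0$ gives $\limsup_{\eps}\sE_{p,s,\eps}(\ue)\le\sE_{p,s}(u)$; since this sequence has bounded energy and $\cR_\eps^\ast\ue\to u$, the $\liminf$-estimate applies to it and we conclude equality, i.e.\ \eqref{eq:Thm-Gamma-limsup}.

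\emph{Main obstacle.} The crux is the uniform-in-$\eps$ control of the near-diagonal weighted fractional form, and the accompanying weighted discrete Sobolev and Rellich estimates, given that $c$ is only first-moment integrable: Birkhoff's theorem cannot be applied to $\sum_{0<|z|<\eta/\eps}c_{0,z}|z|^{p-d-ps}$ as such, since its expectation diverges as $\eta/\eps\to\infty$, so one must marry the kernel's near-diagonal integrability with a spatial/covering ergodic argument --- this is precisely the content of Lemma~\ref{lem:weighted semi-poincare-general} and the place where the exponent balance $\fq\ge\frac{r}{p-r}>\frac{d}{ps}$ is indispensable. Throughout, ``almost surely'' refers to a single null set produced by countably many applications of the ergodic theorem, to $c$, to $c^{-\fq}$, and to the truncations $c\wedge M$, $M\in\N$.
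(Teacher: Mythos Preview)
Your proposal is correct and follows essentially the same route as the paper's proof in Section~\ref{sub:Proof-of-Theorem I and II}: coercivity and compactness via Theorem~\ref{thm:discr-compact-weights} (after enlarging $\Q$ to a cube, since $\ue$ vanishes outside $\Q$), the $\liminf$-inequality by truncation plus the ergodic theorem on $\Z^{2d}$ (the paper packages this as Lemma~\ref{lem:liminf-estim-Wsp} and truncates $u_\eps$ to level $M$ rather than truncating $c$ and the integrand as you do, but the effect is the same), and the recovery sequence by discretising $C_c^1(\Q)$ functions with the near/far splitting followed by a diagonal argument. Two minor corrections: the near-diagonal estimate $C\eta^{p(1-s)}$ you invoke is Lemma~\ref{lem:ergodic-bound-polynomial} in the dynamical-systems subsection, not a result of Section~\ref{sub:Weighted-discrete-Sobolev--Slobo}, and for the $\liminf$ you should also impose an outer cutoff $|x-y|\le R$ (as in Step~2 of Lemma~\ref{lem:liminf-estim-Wsp}) so that the Egorov/ergodic step genuinely takes place on a bounded region of $\R^{2d}$.
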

Note that for $\fq>\frac{2d}{ps}$ we can choose $r=p$. 
\begin{thm}
\label{thm:Main Theorem-II}Let $c$, $s$, $p$, $\fq$ and $V$
satisfy Assumptions \ref{assu:fq} and \ref{assu:V}, and let the
sequence $f_{\eps}$ and the function $G:\,\R^m\to\R$ satisfy either
one of the following conditions:
\begin{enumerate}
\item $G$ is non-negative and convex and there exists a bounded $C^{0,1}$
domain $\Q\subset\Rd$ such that every $f_{\eps}$ has support in
$\Q$. Furthermore $\cR_{\eps}^{\ast}f_{\eps}\weakto f$ in $L^{r^{\ast}}(\Q)$,
where $\frac{1}{r^{\ast}}+\frac{1}{p_{\fq}^{\star}}<1$.
\item $G(\xi)=\alpha|\xi|^{r}+\tilde{G}$, $\tilde{G}$ is non-negative
and convex and $r,r^{\ast}>1$ with $\frac{1}{r}+\frac{1}{r^{\ast}}=1$.
Furthermore, $\cR_{\eps}^{\ast}f_{\eps}\weakto f$ in $L^{r^{\ast}}(\Q)$. 
\end{enumerate}
Then the sequence $\sE_{p,s,\eps}$ restricted to $\He$ Mosco-converges
to $\sE_{p,s}$ in the following sense:
\begin{enumerate}
\item For every sequence $\ue\in\He$ such that $\sup_{\eps}\sE_{p,s,\eps}(\ue)<\infty$
there exists $u\in W^{s,p}(\Rd)$, and a subsequence $\eps'\to0$
such that $\cR_{\eps'}^{\ast}u_{\eps'}\to u$ pointwise almost everywhere
and 
\[
\liminf_{\eps\to0}\sE_{p,s,\eps}(\ue)\geq\sE_{p,s}(u)\,,
\]

\item For every $u\in W^{s,p}(\Rd)$ there exists a sequence $\ue\in\He$
such that $\cR_{\eps}^{\ast}u^{\eps}\to u$ pointwise almost everywhere
and 
\begin{equation}
\limsup_{\eps\to0}\sE_{p,s,\eps}(\ue)=\sE_{p,s}(u)\,.\label{eq:Thm-Gamma-limsup-Rd}
\end{equation}

\end{enumerate}
\end{thm}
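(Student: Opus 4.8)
The plan is to deduce Theorem \ref{thm:Main Theorem-II} from Theorem \ref{thm:Main Theorem-I} plus an exhaustion argument on $\Rd$, since Theorem \ref{thm:Main Theorem-I} already provides the Mosco convergence on every bounded domain $\Q$ with Dirichlet constraint. The key observation is that the global functional $\sE_{p,s,\eps}$ on $\He$ differs from $\sE_{p,s,\eps}$ restricted to $\He(\Q_R)$ for a large ball $\Q_R=B_R(0)$ only by the ''tail'' contributions of the double sum where at least one of the two points lies outside $\Q_R$, and by the mass of $G$ and of $f_\eps$ outside $\Q_R$; since $f_\eps$ has support in a fixed bounded set in case (1), and since $G\geq0$, these tails are controlled. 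So the proof has two halves.

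For the $\liminf$ estimate (part 1), I would start from a sequence $\ue\in\He$ with $\sup_\eps\sE_{p,s,\eps}(\ue)<\infty$. First I would use Lemma \ref{lem:weighted semi-poincare-general} (the weighted discrete Slobodeckij--Poincar\'e inequality) together with Assumption \ref{assu:fq} to get a uniform bound on $\cR_\eps^\ast\ue$ in $L^r_{\mathrm{loc}}(\Rd)$ and in the discrete $W^{s,p}$-seminorm; this yields, via the discrete-to-continuous compactness already used in the proof of Theorem \ref{thm:Main Theorem-I}, a limit $u\in W^{s,p}(\Rd)$ and a subsequence with $\cR_{\eps'}^\ast u_{\eps'}\to u$ pointwise a.e. (and strongly in $L^r_{\mathrm{loc}}$). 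Next, for fixed $R$, restrict attention to the pairs $(x,y)\in\Q_R^\eps\times\Q_R^\eps$: the restricted energy is dominated by $\sE_{p,s,\eps}(\ue)$ (the omitted terms are nonnegative, because $V\geq0$ and $G\geq0$ after subtracting the linear term, which is handled via the uniform $L^r$-bound and weak convergence of $\cR_\eps^\ast f_\eps$), so applying the $\liminf$-part of Theorem \ref{thm:Main Theorem-I} on $\Q_R$ gives
\[
\liminf_{\eps\to0}\sE_{p,s,\eps}(\ue)\;\geq\;\E(c)\iint_{\Q_R\times\Q_R}\frac{V(u(x)-u(y))}{|x-y|^{d+ps}}+\int_{\Q_R}G(u(x))\,\d x-\int_{\Q_R}u(x)f(x)\,\d x\,.
\]
Then I would let $R\to\infty$ and use monotone convergence on the first two (nonnegative) terms and dominated convergence on the $f$-term (using the fixed compact support of $f$ in case (1), or the $L^{r^\ast}$-bound on $f$ together with the $L^r$-bound on $u$ in case (2)) to recover $\sE_{p,s}(u)\le\liminf_\eps\sE_{p,s,\eps}(\ue)$. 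Convexity of $G$ in case (1), or the explicit coercive term $\alpha|\xi|^r$ in case (2), is what guarantees the $f$-term does not destroy coercivity and that the $L^r$-bound on $u$ actually holds.

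For the recovery sequence (part 2), given $u\in W^{s,p}(\Rd)$ I would first approximate $u$ by $u_R:=u\,\chi_R$ (a smooth cutoff of $\Q_R$), noting that $u_R\to u$ in $W^{s,p}(\Rd)$ and hence $\sE_{p,s}(u_R)\to\sE_{p,s}(u)$ as $R\to\infty$ — here I would invoke the standard density/cutoff properties of $W^{s,p}(\Rd)$ from Section \ref{sub:Discrete-and-continuous}. For each $R$, Theorem \ref{thm:Main Theorem-I} (recovery part on $\Q_{R+1}$, say, so that $u_R$ vanishes near the boundary) produces a discrete recovery sequence $u_\eps^{(R)}\in\He(\Q_{R+1})\subset\He$ with $\cR_\eps^\ast u_\eps^{(R)}\to u_R$ strongly in $L^r$ and $\limsup_\eps\sE_{p,s,\eps}(u_\eps^{(R)})=\sE_{p,s}(u_R)$; note the global energy of $u_\eps^{(R)}$ agrees with its $\Q_{R+1}$-energy up to the double-sum tail with both points outside $\Q_{R+1}$, which vanishes since $u_\eps^{(R)}$ is supported in $\Q_{R+1}$, plus a cross-term that is controlled by the $W^{s,p}$-tail of $u_R$. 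Finally a diagonal argument in $R=R(\eps)\to\infty$ slowly, combined with the metrizability of the relevant convergence and the standard ''$\limsup$ of $\limsup$'' lemma for $\Gamma$-convergence, yields the desired global recovery sequence. I expect the main obstacle to be the careful bookkeeping of the long-range tail terms: unlike in the finite-range setting, truncating $u$ to $\Q_R$ still leaves a nonlocal interaction between $\Q_R$ and its complement, and one must show that this cross-energy $\eps^{2d}\sum_{x\in\Q_R^\eps}\sum_{y\notin\Q_R^\eps}c_{x/\eps,y/\eps}|x-y|^{-d-ps}V(u(x)-u(y))$ is small — this requires the first-moment bound $\E(c)<\infty$ together with the decay of the kernel and the $W^{s,p}$-regularity of $u$, i.e. essentially a discrete version of the statement that $\iint_{\Q_R\times\Q_R^c}|u(x)-u(y)|^p|x-y|^{-d-ps}\to0$, which follows from absolute continuity of the $W^{s,p}$-seminorm integral.
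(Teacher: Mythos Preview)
Your Part 1 is essentially the paper's argument: exhaust $\Rd$ by balls $B_m$, use the local weighted compactness (Theorem \ref{thm:discr-compact-weights}) on each $B_m$, run a Cantor diagonal to produce a global pointwise-a.e.\ limit $u$, bound the restricted double sum over $B_m\times B_m$ from below via the liminf lemma, and then send $m\to\infty$ by monotone convergence. One correction: you cannot literally invoke Theorem \ref{thm:Main Theorem-I}, Part 1, on $\Q_R$, because that statement is formulated for sequences in $\He(\Q_R)$ (vanishing outside $\Q_R$), which your $\ue$ do not satisfy. The paper instead applies Lemma \ref{lem:liminf-estim-Wsp} (equation \eqref{eq:conv-lemma-2}) directly to the truncations $\ue^M$ on each $B_m$; that lemma is the engine behind Theorem \ref{thm:Main Theorem-I} and only needs uniform boundedness and pointwise convergence.

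Your Part 2 differs from the paper's and has a real gap. The paper does \emph{not} cut off and feed the pieces into Theorem \ref{thm:Main Theorem-I}; it mollifies: set $u_k=\eta_k\ast u$, use Lemma \ref{lem:Dirac-Approx} for $u_k\to u$ in $W^{s,p}(\Rd)$, use equation \eqref{eq:conv-lemma-3} of Lemma \ref{lem:liminf-estim-Wsp} to get the exact limit of the $V$-part for each smooth $u_k$, and---this is the key step you are missing---use Lemma \ref{lem:convergence-convex-convol} (Jensen's inequality for convex $G$) to obtain $\int_{\Rd}G(\eta_k\ast u)\to\int_{\Rd}G(u)$. Convexity of $G$ is used \emph{only} here, in the recovery step, and it is indispensable because Theorem \ref{thm:Main Theorem-II} imposes no polynomial growth bound on $G$. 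Your plan to invoke Theorem \ref{thm:Main Theorem-I}'s recovery part as a black box fails precisely because its hypothesis $G(\xi)\le\alpha|\xi|^k$, $k<p_\fq^\star$, is not available; and your attribution of convexity to the coercivity step in Part 1 is off---there coercivity comes from the compact support of $f_\eps$ (case 1) or from the explicit term $\alpha|\xi|^r$ (case 2), not from convexity of $G$. Finally, your worry about the long-range cross terms in Part 2 disappears once you go through Lemma \ref{lem:liminf-estim-Wsp}: for compactly supported $C^1$ approximants, \eqref{eq:conv-lemma-3} already delivers the full $\Rd\times\Rd$ limit, with all tail estimates absorbed into its proof.
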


\subsection{Homogenization of the local energy $\protect\sE_{p,s,\protect\eps,\protect\Q}$ }

The following two theorems deal with the homogenization of the functional
$\sE_{p,s,\eps,\Q}$. In this work, we will study $\sE_{p,s,\eps,\Q}$
with boundary conditions $\ue|_{\partial\Q^{\eps}}\equiv0$, mean
value conditions or with suitable conditions on $G$. In a first step,
we define the following spaces similar to the continuum case:
\begin{align*}
\cH_{\eps,0}(\Q) & :=\left\{ u\in\cH_{\eps}(\Q)\,:\;\forall x\in\partial\Q^{\eps}\;u(x)=0\right\} \,,\\
\cH_{\eps,(0)}(\Q) & :=\left\{ u\in\cH_{\eps}(\Q)\,:\;\sum_{x\in\Q^{\eps}}\;u(x)=0\right\} \,.
\end{align*}
\MH{As mentioned in the introduction, the consideration of bounded domains comes up with technical difficulties. These concern in particular uniform compact embeddings of $W^{s,p}(\Q^\eps)$ into $L^q(\Q^\eps)$. In $\Rd$, the embedding $W^{s,p}(\Q)$ into $L^q(\Q)$ is coupled to the property of $\Q$ being an extension domain. We replace this property by the concept of \emph{uniform extension domain}, see Definition \ref{def:uniform-extension-domain}, which is a domain that allows for a uniform bound for all $\eps>0$ on the extension operator of $W^{s,p}(\Q^\eps)$ into $W^{s,p}(\Zde)$. A class of domains that satisfy this property are rectangular domains. Similar to the continuous case the concept of uniform extension domains enables us to derive uniformly compact embedding results. Furthermore, in the case of Dirichlet boundary conditions, we can show that the canonical extension by $0$ is uniformly continuous for all Lipschitz domains. This motivates the formulation of the following two theorems.}

\begin{thm}
\label{thm:Main Theorem-III}Let $\Q\subset\Rd$ be a bounded $C^{0,1}$-domain. Let $c$, $s$, $p$, $\fq$ and $V$
satisfy Assumptions \ref{assu:fq} and \ref{assu:V}, $sp>1$, $G:\,\R^m\to\R$
non-negative and continuous with $G(\xi)\leq\alpha|\xi|^{m}$,$\alpha>0$,
$m<p_{\fq}^{\star}$, and let $f_{\eps}\in\cH_{\eps}$ be such that
$\cR_{\eps}^{\ast}f_{\eps}\weakto f$ in $L^{r^{\ast}}(\Q)$, where
$\frac{1}{r^{\ast}}+\frac{1}{p_{\fq}^{\star}}<1$. Then the sequence
$\sE_{p,s,\eps,\Q}$ restricted to $\cH_{\eps,0}(\Q)$ Mosco-converges
to $\sE_{p,s}$ restricted to $W_{0}^{s,p}(\Q)$ in the following
sense:
\begin{enumerate}
\item For every sequence $\ue\in\cH_{\eps,0}(\Q^\eps)$ such that $\sup_{\eps}\sE_{p,s,\eps}(\ue)<\infty$
there exists $u\in W_{0}^{s,p}(\Q)$, $u=0$ on $\Rd\backslash\Q$,
and a subsequence $\cR_{\eps}^{\ast}\ue\to u$ pointwise a.e. with
$\cR_{\eps}^{\ast}\ue\to u$ strongly in $L^{r}(\Q)$ for $r=\frac{r^\ast}{r^\ast-1}$
and 
\begin{equation}
\liminf_{\eps\to0}\sE_{p,s,\eps,\Q}(\ue)\geq\sE_{p,s}(u)\,,\label{eq:Thm-Gamma-limsup-Q-dirichlet-0}
\end{equation}

\item For every $u\in W_{0}^{s,p}(\Q)$ there exists a sequence $\ue\in\cH_{\eps,0}(\Q^\eps)$
such that $\cR_{\eps}^{\ast}u^{\eps}\to u$ strongly in $L^{r}(\Q)$
for $r=\frac{r^\ast}{r^\ast-1}$ and 
\begin{equation}
\limsup_{\eps\to0}\sE_{p,s,\eps,\Q}(\ue)=\sE_{p,s,\Q}(u)\,.\label{eq:Thm-Gamma-limsup-Q-dirichlet}
\end{equation}

\end{enumerate}
\end{thm}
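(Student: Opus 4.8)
The statement is a Mosco-convergence result, so it splits into a compactness statement together with the $\liminf$-inequality \eqref{eq:Thm-Gamma-limsup-Q-dirichlet-0} along (weakly, hence after compactness $L^{r}$-strongly) converging sequences, and the construction of a recovery sequence realizing \eqref{eq:Thm-Gamma-limsup-Q-dirichlet}. Write $\phi_{\eps}:=\cR_{\eps}^{\ast}\ue$, and note that by construction of $\cR_{\eps}^{\ast}$ the discrete sums in $\sE_{p,s,\eps,\Q}$ are Lebesgue integrals over $\Q$, resp.\ over $\Q\times\Q$, of the piecewise-constant extensions of the summands, up to an $O(\eps)$ boundary layer negligible in all limits. \emph{Compactness and identification of the target space.} Let $\ue\in\cH_{\eps,0}(\Q)$ with $\sup_{\eps}\sE_{p,s,\eps,\Q}(\ue)<\infty$. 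Using the lower $p$-growth of $V$ (Assumption \ref{assu:V}), $G\ge0$, and Hölder against the $L^{r^{\ast}}$-bounded sequence $\cR_{\eps}^{\ast}f_{\eps}$, the weighted Gagliardo term $\eps^{2d}\sum\sum c_{x/\eps,y/\eps}|\ue(x)-\ue(y)|^{p}/|x-y|^{d+ps}$ is bounded by $C\sE_{p,s,\eps,\Q}(\ue)+C\|\phi_{\eps}\|_{L^{r}(\Q)}+C$. Feeding this into Lemma \ref{lem:weighted semi-poincare-general} (legitimate since $\fq>d/(ps)$ and a bounded $C^{0,1}$-domain is a uniform extension domain in the sense of Definition \ref{def:uniform-extension-domain}) produces a uniform-in-$\eps$ bound for $\phi_{\eps}$ in the discrete fractional Sobolev norm that embeds compactly into $L^{r}(\Q^{\eps})$, and the a priori estimate is closed by absorbing $\|\phi_{\eps}\|_{L^{r}}$ via Young's inequality (using $r<p_{\fq}^{\star}$). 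The uniform compact embedding yields a subsequence with $\phi_{\eps}\to u$ strongly in $L^{r}(\Q)$ and pointwise a.e., $u\in W^{s,p}(\Q)$. Finally the discrete Dirichlet condition $\ue|_{\partial\Q^{\eps}}\equiv 0$ together with the uniform continuity of the zero-extension operator for Lipschitz domains forces $u\in W_{0}^{s,p}(\Q)$; this is exactly where $sp>1$ is used, since otherwise $W_{0}^{s,p}(\Q)=W^{s,p}(\Q)$ and there is nothing to identify.

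\emph{The $\liminf$-inequality.} Split $\sE_{p,s,\eps,\Q}(\ue)$ into Gagliardo term, $G$-term and linear term. Along the subsequence above, the linear term equals $-\int_{\Q}\phi_{\eps}\,\cR_{\eps}^{\ast}f_{\eps}$ and converges to $-\int_{\Q}uf$ by strong--weak pairing ($\phi_{\eps}\to u$ in $L^{r}$, $\cR_{\eps}^{\ast}f_{\eps}\weakto f$ in $L^{r^{\ast}}$); the $G$-term converges to $\int_{\Q}G(u)$ by continuity of $G$, the pointwise a.e.\ convergence, and uniform integrability of $\{G(\phi_{\eps})\}$ coming from the subcritical growth of $G$ and the $L^{r'}$-bounds on $\phi_{\eps}$ for $r'<p_{\fq}^{\star}$. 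For the Gagliardo term I would restrict the double sum to $\{|x-y|>\delta\}$, truncate the difference quotient at height $T$ and the weight by $c\wedge M$: since the piecewise-constant field $(x,y)\mapsto(c\wedge M)_{x/\eps,y/\eps}$ converges weakly-$\ast$ in $L^{\infty}$ to the constant $\E(c\wedge M)$ by the ergodic theorem on $\Zd\times\Zd$ (Birkhoff averages upgraded to weak-$\ast$ convergence by density), and the bounded truncated difference quotient converges strongly in $L^{1}(\{|x-y|>\delta\}\cap\Q\times\Q)$ by dominated convergence (using $V$ continuous and $\phi_{\eps}\to u$ a.e.), the product passes to the limit. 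Letting successively $T\to\infty$, $M\to\infty$, $\delta\to0$ and invoking monotone convergence (the integrand is $\ge0$ because $V\ge\alpha|\xi|^{p}$) gives $\liminf_{\eps}(\text{Gagliardo}_{\eps})\ge\E(c)\iint_{\Q\times\Q}V(u(x)-u(y))/|x-y|^{d+ps}$, hence \eqref{eq:Thm-Gamma-limsup-Q-dirichlet-0}.

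\emph{The recovery sequence.} By density of $C_{c}^{\infty}(\Q)$ in $W_{0}^{s,p}(\Q)$ and continuity of $\sE_{p,s,\Q}$ with respect to the $W^{s,p}(\Q)$-norm (the Gagliardo part because Assumption \ref{assu:V}, in particular continuity of $|\xi|^{-p}V(\xi)$ at $0$, forces $V(\xi)\le C|\xi|^{p}$ globally, the remaining parts by subcritical growth and the fractional Sobolev embedding), a diagonal argument reduces the construction to $u\in C_{c}^{\infty}(\Q)$. For such $u$ set $\ue(x):=u(x)$ on $\Q^{\eps}$; since $u$ vanishes near $\partial\Q$, $\ue\in\cH_{\eps,0}(\Q)$ once $\eps$ is small, and $\phi_{\eps}\to u$ uniformly, hence in $L^{r}(\Q)$. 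The $G$- and linear terms converge by Riemann sums and strong--weak pairing. For the Gagliardo term, write it as $\eps^{2d}\sum\sum_{(a,b)}c_{a,b}\,\psi(\eps a,\eps b)$ with $\psi(x,y):=V(u(x)-u(y))/|x-y|^{d+ps}$, a fixed function continuous and bounded away from the diagonal; on $\{|x-y|>\delta\}$ the ergodic theorem on $\Zd\times\Zd$ (again with the splitting $c=c\wedge M+(c-M)^{+}$ to handle $c\in L^{1}$) gives convergence to $\E(c)\iint_{\{|x-y|>\delta\}\cap\Q\times\Q}\psi$, while on $\{0<|x-y|\le\delta\}$ one has $\psi(x,y)\le C\|\nabla u\|_{\infty}^{p}|x-y|^{p-d-ps}$ for $\delta$ small, and the elementary bound $\sum_{0<|z|\le\delta/\eps}|z|^{p-d-ps}\lesssim(\delta/\eps)^{p(1-s)}$ together with boundedness of partial ergodic averages of $c$ controls this contribution by $C\delta^{p(1-s)}$ uniformly in $\eps$. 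Letting $\delta\to0$ closes the recovery, proving \eqref{eq:Thm-Gamma-limsup-Q-dirichlet}.

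\emph{The main obstacle.} The delicate point is the passage to the limit in the Gagliardo term when $c$ is merely integrable: there is no uniform ellipticity, no equi-integrability of the field $c(\cdot/\eps)$, and no equi-integrability of the discrete difference quotients along a merely weakly convergent sequence; moreover the kernel is singular on the diagonal. The triple truncation (height $T$, ellipticity level $M$, diagonal cut-off $\delta$) combined with the ergodic theorem on the \emph{product} lattice $\Zd\times\Zd$ — which is precisely why Assumption \ref{assu:fq} requires ergodicity in $\Zd\times\Zd$ and not merely in the first variable — is the device that resolves it. The uniform compactness furnished by Lemma \ref{lem:weighted semi-poincare-general} and the uniform extension property of $C^{0,1}$-domains (which also renders the Dirichlet constraint $u\in W_{0}^{s,p}(\Q)$ stable in the limit, via the uniformly continuous zero-extension and the hypothesis $sp>1$) are the remaining load-bearing ingredients.
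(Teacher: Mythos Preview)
Your overall strategy matches the paper's: the paper states that Theorems \ref{thm:Main Theorem-III}--\ref{thm:Main Theorem-V} are proved exactly as Theorem \ref{thm:Main Theorem-I}, replacing the generic compact embedding by the Dirichlet embedding of Theorem \ref{thm:embedding-dirichlet}. Your compactness/a-priori step, the truncation-plus-ergodic-theorem argument for the $\liminf$, and the density-plus-pointwise-evaluation recovery are all in the same spirit as the paper's Lemma \ref{lem:liminf-estim-Wsp} together with Section \ref{sub:Proof-of-Theorem I and II}.

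There is, however, one genuine misstep you should fix. You justify the compact embedding by asserting that ``a bounded $C^{0,1}$-domain is a uniform extension domain in the sense of Definition \ref{def:uniform-extension-domain}''. The paper explicitly does \emph{not} prove this; it only remarks that Theorem \ref{thm:embedding-dirichlet} \emph{suggests} it and says the proof is beyond its scope. The point is precisely that in the Dirichlet case one does \emph{not} need the uniform extension property: Theorem \ref{thm:embedding-dirichlet} shows directly that for $u\in W_{0}^{s,p}(\Q^{\eps})$ on a $C^{0,1}$-domain the extension by zero satisfies $[u]_{s,p,\eps}\le C[u]_{s,p,\eps,\Q}$ uniformly in $\eps$, and this (combined with Lemma \ref{lem:weighted semi-poincare-general}) is what yields the uniform Poincar\'e inequality and the compactness you need; see also the last clause of Theorem \ref{thm:discr-compact-weights}. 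So replace the appeal to Definition \ref{def:uniform-extension-domain} by an appeal to Theorem \ref{thm:embedding-dirichlet}.

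A minor difference worth noting: for the $\liminf$ the paper truncates $u_{\eps}$ at height $M$ (so that Lemma \ref{lem:liminf-estim-Wsp}, which requires $\sup_{\eps}\|u_{\eps}\|_{\infty}<\infty$, applies and invokes the extended ergodic Theorem \ref{thm:FHS-Ergodic-Thm} directly with the $L^{1}$ weight $c$), whereas you truncate the weight $c$ at level $M$ and the difference quotient at height $T$. Both routes work; yours trades one application of Theorem \ref{thm:FHS-Ergodic-Thm} for a weak-$\ast$/strong pairing argument, which is perfectly fine but slightly longer. Similarly, your near-diagonal control in the recovery step (``boundedness of partial ergodic averages of $c$'') is exactly the content of Lemma \ref{lem:ergodic-bound-polynomial}, which you should cite.
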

If we do not consider zero Dirichlet boundary conditions, we have
to find a suitable replacement that guarantees that the necessary (compact)
embeddings hold. We use the concept of uniform extension domains introduced
in Definition \ref{def:uniform-extension-domain}. 
\begin{thm}
\label{thm:Main Theorem-V}Let $c$, $s$, $p$, $\fq$ and $V$ satisfy
Assumptions \ref{assu:fq} and \ref{assu:V}, $Q$ be a uniform extension domain in the sense of Definition \ref{def:uniform-extension-domain}, $G:\,\R^m\to\R$ non-negative
and continuous with $G(\xi)\leq\alpha|\xi|^{k}$,$\alpha>0$, $k<p_{\fq}^{\star}$,
and let $f_{\eps}\in\cH_{\eps}$ be such that $\cR_{\eps}^{\ast}f_{\eps}\weakto f$
in $L^{r^{\ast}}(\Q)$, where $\frac{1}{r^{\ast}}+\frac{1}{p_{\fq}^{\star}}<1$.
Then the sequence $\sE_{p,s,\eps,\Q}$ restricted to $\cH_{\eps,(0)}(\Q)$
Mosco-converges to $\sE_{p,s}$ restricted to $W_{(0)}^{s,p}(\Q)$
in the following sense:
\begin{enumerate}
\item For every sequence $\ue\in\cH_{\eps,(0)}(\Q^\eps)$ such that $\sup_{\eps}\sE_{p,s,\eps}(\ue)<\infty$
there exists $u\in W^{s,p}_{(0)}(\Q)$, $u=0$ on $\Rd\backslash\Q$, and
a subsequence $\cR_{\eps}^{\ast}\ue\to u$ pointwise a.e. with $\cR_{\eps}^{\ast}\ue\to u$
strongly in $L^{r}(\Q)$ for $r=\frac{r^\ast}{r^\ast-1}$ and \eqref{eq:Thm-Gamma-limsup-Q-dirichlet-0} holds.

\item For every $u\in W^{s,p}_{(0)}(\Q)$ there exists a sequence $\ue\in\cH_{\eps,(0)}(\Q^\eps)$
such that $\cR_{\eps}^{\ast}u_{\eps}\to u$ strongly in $L^{r}(\Q)$
for $r=\frac{r^\ast}{r^\ast-1}$ and \eqref{eq:Thm-Gamma-limsup-Q-dirichlet} holds.

\end{enumerate}
\end{thm}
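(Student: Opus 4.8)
\emph{Overall plan.} I would prove the two halves of Mosco convergence along the same lines as Theorems \ref{thm:Main Theorem-I}--\ref{thm:Main Theorem-III}: (i) a compactness-plus-$\liminf$ statement — uniform energy bounds force strong $L^{r}(\Q)$-precompactness of $\cR_{\eps}^{\ast}\ue$, every limit $u$ lies in $W_{(0)}^{s,p}(\Q)$ with $u=0$ on $\Rd\setminus\Q$, and \eqref{eq:Thm-Gamma-limsup-Q-dirichlet-0} holds; and (ii) construction of a recovery sequence in $\cH_{\eps,(0)}(\Q)$ attaining \eqref{eq:Thm-Gamma-limsup-Q-dirichlet}. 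Since $\sE_{p,s,\eps,\Q}$ is the same local functional that already appears in Theorem \ref{thm:Main Theorem-III}, the discrete-to-continuum passages and the identification of the limit constant $\E(c)$ may be borrowed from there; what is genuinely new is the source of compactness. In Theorem \ref{thm:Main Theorem-III} the Dirichlet constraint $\cH_{\eps,0}(\Q)$ supplied the missing boundary control; here the competitors in $\cH_{\eps,(0)}(\Q)\subset\cH_{\eps}(\Q)$ only vanish on $\Zde\setminus\Q$ (so their interpolations vanish outside $\Q$ up to a boundary layer), and the condition $\sum_{\Q^{\eps}}\ue=0$ is merely a normalisation killing the kernel of the fractional seminorm, so compactness must instead be extracted from the hypothesis that $\Q$ is a uniform extension domain (Definition \ref{def:uniform-extension-domain}).

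\emph{Compactness and $\liminf$.} Given $\ue\in\cH_{\eps,(0)}(\Q)$ with $\sup_{\eps}\sE_{p,s,\eps,\Q}(\ue)<\infty$, I would use $\alpha|\xi|^{p}\le V(\xi)$ and $G\ge0$ (Assumption \ref{assu:V}) and estimate the linear term by Hölder against the bounded sequence $\cR_{\eps}^{\ast}f_{\eps}$ in $L^{r^{\ast}}(\Q)$ — using $\tfrac1{r^{\ast}}+\tfrac1{p_{\fq}^{\star}}<1$, i.e. $r=\tfrac{r^{\ast}}{r^{\ast}-1}<p_{\fq}^{\star}$ — to absorb the $f_{\eps}$-term and obtain a uniform bound on the weighted seminorm over $\Q^{\eps}\times\Q^{\eps}$ and on $\eps^{d}\sum_{\Q^{\eps}}G(\ue)$. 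The weighted Sobolev--Poincaré Lemma \ref{lem:weighted semi-poincare-general}, fed by $\E(c^{-\fq})<\infty$ and $\fq\ge r/(p-r)>d/(ps)$ from Assumption \ref{assu:fq}, then converts this into a uniform bound on the unweighted norm $\|\ue\|_{W^{s,r}(\Q^{\eps})}$, and via Remark \ref{rem:critical-compact-exponent} on $\|\ue\|_{L^{r}(\Q^{\eps})}$. Because $\Q$ is a uniform extension domain, this transfers, through the $\eps$-uniform extension operator, to a uniform $W^{s,r}(\Zde)$-bound, so the uniformly compact embedding of Section \ref{sec:Preliminaries} yields a subsequence with $\cR_{\eps}^{\ast}\ue\to u$ strongly in $L^{r}(\Q)$ and a.e. The a.e.\ limit inherits $u=0$ on $\Rd\setminus\Q$ and $\int_{\Q}u=\lim\eps^{d}\sum_{\Q^{\eps}}\ue=0$; once the seminorm $\liminf$ below yields $[u]_{W^{s,p}(\Q)}<\infty$ (via $\alpha|\xi|^{p}\le V(\xi)$), the fractional Poincaré inequality on the bounded set $\Q$ upgrades this to $u\in W_{(0)}^{s,p}(\Q)$. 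For \eqref{eq:Thm-Gamma-limsup-Q-dirichlet-0} I would split $\sE_{p,s,\eps,\Q}$ and treat the seminorm part exactly as in Theorem \ref{thm:Main Theorem-III} — a.e.\ convergence, continuity of $V$, Fatou, and the ergodic theorem for $c$ (which is where $\E(c)$ enters, on the full-probability event on which the ergodic averages converge) — the $G$-term by $G\ge0$, continuity and Fatou, and the linear term by the strong--weak pairing of $\cR_{\eps}^{\ast}\ue\to u$ in $L^{r}(\Q)$ against $\cR_{\eps}^{\ast}f_{\eps}\weakto f$ in $L^{r^{\ast}}(\Q)$, $\tfrac1r+\tfrac1{r^{\ast}}=1$.

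\emph{Recovery sequence.} Given $u\in W_{(0)}^{s,p}(\Q)$, I would first reduce, by density (using that $\Q$ is a uniform extension domain), to $u\in C^{\infty}(\overline{\Q})$ with $\int_{\Q}u=0$. Define $\ue$ to equal a cell-average discretisation of $u$ on $\Q^{\eps}$ minus its discrete mean $m_{\eps}$, and $0$ on $\Zde\setminus\Q$; then $\ue\in\cH_{\eps,(0)}(\Q)$, $m_{\eps}\to|\Q|^{-1}\int_{\Q}u=0$, hence $\cR_{\eps}^{\ast}\ue\to u$ strongly in $L^{r}(\Q)$. Since $m_{\eps}$ cancels in all differences, $\sE_{p,s,\eps,\Q}(\ue)$ converges as in Theorem \ref{thm:Main Theorem-III}: the seminorm part to $\E(c)\iint_{\Q\times\Q}V(u(x)-u(y))|x-y|^{-d-ps}$ by the ergodic theorem together with dominated convergence (the kernel being integrable for smooth $u$ by the growth and regularity of $V$ in Assumption \ref{assu:V}), the $G$-term to $\int_{\Q}G(u)$ by a Riemann-sum argument ($u$ bounded, $G$ continuous with $G(\xi)\le\alpha|\xi|^{k}$), and the linear term to $-\int_{\Q}uf$ by strong--weak convergence. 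This is \eqref{eq:Thm-Gamma-limsup-Q-dirichlet} for smooth $u$; the general case follows from the standard density/diagonal argument in which the part-(i) $\liminf$-inequality forces the matching $\limsup$.

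\emph{Main obstacle.} The crux is the compactness in part (i). Since $\ue$ vanishes outside $\Q$ but carries no boundary condition, extending it by zero does not control its $W^{s,p}(\Zde)$-seminorm: the pairs straddling $\partial\Q^{\eps}$ contribute a term comparable to $\sum_{x\in\Q^{\eps}}|\ue(x)|^{p}\,\mathrm{dist}(x,\partial\Q)^{-ps}$, which is not dominated by the interior seminorm — exactly the control that the Dirichlet condition gave for free in Theorem \ref{thm:Main Theorem-III}. Supplying it is the entire purpose of the uniform extension domain hypothesis, and the technical heart will be to obtain an extension operator whose operator norm is bounded uniformly in $\eps$ on the $\eps$-lattice (Definition \ref{def:uniform-extension-domain}) and to dovetail it cleanly with the weighted Poincaré Lemma \ref{lem:weighted semi-poincare-general}. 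A secondary nuisance is keeping the exponents mutually consistent — the moment exponent $\fq$ of Assumption \ref{assu:fq}, the integrability threshold $r<p_{\fq}^{\star}$ of Remark \ref{rem:critical-compact-exponent}, and the duality $\tfrac1r+\tfrac1{r^{\ast}}=1$ imposed by $\cR_{\eps}^{\ast}f_{\eps}\weakto f$ in $L^{r^{\ast}}(\Q)$.
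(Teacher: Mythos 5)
Your proposal is correct and follows essentially the same route as the paper, which in fact only sketches the proof of Theorem \ref{thm:Main Theorem-V} by referring back to the proof of Theorem \ref{thm:Main Theorem-I} "replacing Theorem \ref{thm:Compactness-discrete-eps-to-0} by the Embedding Theorems \ref{thm:embedding-dirichlet} and \ref{thm:embedding-mean}." You correctly identify that the mean-value normalisation is only a Poincar\'e-type normalisation (killing constants) and that the actual source of compactness is the uniform-extension hypothesis, exactly as encoded in Theorem \ref{thm:embedding-mean}. Your recovery sequence is slightly more careful than the paper's: you reduce to smooth $u\in C^\infty(\overline\Q)$ with $\int_\Q u=0$ (rather than $C_c^1(\Q)$, which is only available in the Dirichlet case) and explicitly subtract the discrete mean $m_\eps\to0$ to land in $\cH_{\eps,(0)}(\Q)$; this is a genuine gap in the paper's sketch that you have correctly filled, and $m_\eps$ cancels in all differences so the seminorm is unaffected. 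Two cosmetic imprecisions worth flagging: the intermediate unweighted regularity provided by Lemma \ref{lem:weighted semi-poincare-general} is on the reduced index $s'<s$, i.e.\ $[\ue]_{s',r,\eps,\Q}$, not $W^{s,r}$ as you wrote; and in the chain "weighted seminorm $\to$ unweighted seminorm $\to$ $L^r$ bound," the step converting the seminorm bound to a full norm bound for mean-zero functions is exactly inequality \eqref{eq:PI-WspepsQ-mean-2} of Theorem \ref{thm:embedding-mean}, which is where the uniform-extension assumption and the mean constraint jointly enter, so it is worth citing that inequality explicitly rather than Remark \ref{rem:critical-compact-exponent}.
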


We have already mentioned in the introduction that by Lemma \ref{lem:E-omega-infty}
\MH{
our above assumptions imply $\E\left(\sum_{z}\omega_{x,x+z}|z|^{ps}\right)=\infty$.
The following theorem shows that the assumption $\E\left(\sum_{z}\omega_{x,x+z}|z|^{ps}\right)<\infty$
would imply that the differential part of $\sE_{p,s,\eps,\Q}$ vanishes
in the limit of $\Gamma$-convergence. We formulate and prove the
result for $\sE_{p,s,\eps,\Q}$ with zero Dirichlet conditions but
note that the proof also works for zero mean value and for $\sE_{p,s,\eps}$.}
\begin{thm}
\label{thm:violate-infty-property}
\MH{Let $V(\xi)=|\xi|^{p}$ and let
$\E\left(\sum_{z}\omega_{x,x+z}|z|^{ps}\right)<\infty$, $G:\,\R\to\R$
non-negative and continuous with $G(\xi)\leq\alpha|\xi|^{m}$,$\alpha>0$,
$m<p_{\fq}^{\star}$, and let $f_{\eps}\in\cH_{\eps}$ be such that
$\cR_{\eps}^{\ast}f_{\eps}\weakto f$ in $L^{r^{\ast}}(\Q)$, where
$\frac{1}{r^{\ast}}+\frac{1}{p_{\fq}^{\star}}<1$. Then for every $u\in C_{0}^{1}(\Q)$
it holds 
\begin{equation}
\lim_{\eps\to0}\sE_{p,s,\eps,\Q}(u)=\int_{\Q}G(u(x))\d x-\int_{\Q}u(x)f(x)\,.\label{eq:thm:violate-infty-property}
\end{equation}
In particular, $\sE_{p,s,\eps,\Q}$ strongly $\Gamma$-converges in
$L^{m}(\Q)$ to 
\[
\tilde{\sE}_{p,s,\Q}(u)=\int_{\Q}G(u(x))\d x-\int_{\Q}u(x)f(x)
\]
in the sense that $\cR_{\eps}^{\star}\ue\to u$ in $L^{m}(\Q)$ implies
\begin{equation}
\liminf_{\eps\to0}\sE_{p,s,\eps,\Q}(\ue)\geq\tilde{\sE}_{p,s,\Q}(u)\label{eq:thm:violate-infty-property-liminf}
\end{equation}
and for $u\in L^{m}(\Q)$ there exists a sequence $\ue\in\He$ with
$\cR_{\eps}^{\star}\ue\to u$ in $L^{m}(\Q)$ and
\begin{equation}
\lim_{\eps\to0}\sE_{p,s,\eps,\Q}(\ue)=\tilde{\sE}_{p,s,\Q}(u)\,.\label{eq:thm:violate-infty-property-limsup}
\end{equation}
}\end{thm}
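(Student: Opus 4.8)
The plan is to show that, when evaluated at a fixed $u\in C_{0}^{1}(\Q)$, the nonlocal (``differential'') part of $\sE_{p,s,\eps,\Q}$ tends to $0$ as $\eps\to0$, so that only the two zeroth order terms survive, and then to package this into the claimed $\Gamma$-convergence. First I would rewrite the differential part: inserting $c_{x,y}=\omega_{x,y}|x-y|^{d+ps}$ and substituting $\tilde x=x/\eps$, $\tilde y=y/\eps$, it becomes
\[
\eps^{d-ps}\sum_{\eps\tilde x\in\Q}\ \sum_{\eps\tilde y\in\Q}\omega_{\tilde x,\tilde y}\,\bigl|u(\eps\tilde x)-u(\eps\tilde y)\bigr|^{p}\,,\qquad\tilde x,\tilde y\in\Zd\,,
\]
since the prefactor $\eps^{2d}$ together with $\eps^{-(d+ps)}$ from $|x/\eps-y/\eps|^{d+ps}$ produces $\eps^{d-ps}$. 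I would estimate this by splitting the summation according to $|\eps(\tilde x-\tilde y)|\le R$ and $|\eps(\tilde x-\tilde y)|>R$, where $R>0$ is a parameter sent to $0$ only at the very end.

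On the near part I would use the Lipschitz estimate $|u(\eps\tilde x)-u(\eps\tilde y)|\le\|\nabla u\|_{\infty}\,\eps\,|\tilde x-\tilde y|$ together with $|z|^{p}\le(R/\eps)^{p(1-s)}|z|^{ps}$ for $|z|\le R/\eps$, and enlarge the inner sum over $\tilde y$ to all of $\Zd$. A short computation in which all powers of $\eps$ recombine to $\eps^{d}$ bounds the near part by $\|\nabla u\|_{\infty}^{p}\,R^{p(1-s)}\cdot\eps^{d}\sum_{\eps\tilde x\in\Q}\Phi_{0}(\tilde x)$, where $\Phi_{0}(\tilde x):=\sum_{z\in\Zd}\omega_{\tilde x,\tilde x+z}|z|^{ps}$. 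Since $\E(\Phi_{0})<\infty$ by hypothesis and $\omega$ is stationary and ergodic, the spatial ergodic theorem (as used in \cite{FHS2018RCM}) gives $\eps^{d}\sum_{\eps\tilde x\in\Q}\Phi_{0}(\tilde x)\to|\Q|\,\E(\Phi_{0})$ almost surely, so the near part is asymptotically at most $\|\nabla u\|_{\infty}^{p}R^{p(1-s)}|\Q|\,\E(\Phi_{0})$, which tends to $0$ as $R\to0$ because $s<1$.

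The far part is where the real work lies, since there the Lipschitz bound is useless: I would use $|u(\eps\tilde x)-u(\eps\tilde y)|\le2\|u\|_{\infty}$ and, on the region $|z|>R/\eps$, the trivial bound $1\le(\eps|z|/R)^{ps}$, which bounds the far part by $(2\|u\|_{\infty})^{p}R^{-ps}\,\eps^{d}\sum_{\eps\tilde x\in\Q}\Phi_{R/\eps}(\tilde x)$ with $\Phi_{N}(\tilde x):=\sum_{|z|>N}\omega_{\tilde x,\tilde x+z}|z|^{ps}$. The main obstacle is that the truncation level $N=R/\eps$ depends on $\eps$, so this is not an ergodic average of a fixed function. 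I would get around this by fixing $M\in\N$, observing that $\Phi_{R/\eps}\le\Phi_{M}$ as soon as $\eps<R/M$, applying the spatial ergodic theorem to the fixed integrable field $\Phi_{M}$ to obtain $\limsup_{\eps\to0}\eps^{d}\sum_{\eps\tilde x\in\Q}\Phi_{R/\eps}(\tilde x)\le|\Q|\,\E(\Phi_{M})$, and finally letting $M\to\infty$, so that $\E(\Phi_{M})\downarrow0$ by dominated convergence ($0\le\Phi_{M}\le\Phi_{0}\in L^{1}$). Hence the far part tends to $0$ for every fixed $R>0$; combined with the near-part estimate and $R\to0$, this proves that the differential part of $\sE_{p,s,\eps,\Q}(u)$ vanishes in the limit.

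It then remains to treat the zeroth order terms and to assemble the $\Gamma$-convergence. The term $\eps^{d}\sum_{x\in\Q^{\eps}}G(u(x))$ is a Riemann sum of the continuous bounded function $G\circ u$ and converges to $\int_{\Q}G(u)\,\d x$, while $\eps^{d}\sum_{x\in\Q^{\eps}}u(x)f_{\eps}(x)=\int_{\Q}\cR_{\eps}^{\ast}u\cdot\cR_{\eps}^{\ast}f_{\eps}\to\int_{\Q}uf$ by strong--weak convergence (here $\cR_{\eps}^{\ast}u\to u$ uniformly for $u\in C_{0}^{1}(\Q)$); this yields \eqref{eq:thm:violate-infty-property}. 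For \eqref{eq:thm:violate-infty-property-liminf} I would use that the differential part is nonnegative, that Fatou's lemma along an a.e.\ convergent subsequence gives $\liminf\eps^{d}\sum G(\ue)\ge\int_{\Q}G(u)$ (using $G\ge0$ continuous), and that the linear term passes to the limit by the $L^{r}$-boundedness of $\cR_{\eps}^{\ast}\ue$ coming from boundedness of the energy via Remark \ref{rem:critical-compact-exponent}, exactly as in the proofs of Theorems \ref{thm:Main Theorem-I}--\ref{thm:Main Theorem-V}. For \eqref{eq:thm:violate-infty-property-limsup}, the constant sequence $\ue=u|_{\Q^{\eps}}$ (supported in $\Q^{\eps}$) is a recovery sequence for every $u\in C_{0}^{1}(\Q)$ by \eqref{eq:thm:violate-infty-property}; for general $u\in L^{m}(\Q)$ I would approximate by $u_{k}\in C_{0}^{1}(\Q)$ with $u_{k}\to u$ in $L^{m}(\Q)$ and $\tilde{\sE}_{p,s,\Q}(u_{k})\to\tilde{\sE}_{p,s,\Q}(u)$ — which holds because $0\le G(\xi)\le\alpha|\xi|^{m}$ forces $G(0)=0$ and makes $v\mapsto\int_{\Q}G(v)$ continuous on $L^{m}(\Q)$, together with convergence of the linear term in the relevant dual pairing — and then diagonalize the recovery sequences of the $u_{k}$ in the usual way.
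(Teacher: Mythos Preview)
Your argument is essentially the paper's: rewrite the differential part in terms of $\omega$, split at scale $R$ (the paper calls it $\delta$), control the near part via the Lipschitz bound and the full moment $E=\E(\sum_z\omega_{0,z}|z|^{ps})$, and control the far part via the tail $\E(\sum_{|z|>K}\omega_{0,z}|z|^{ps})\to0$, handling the $\eps$-dependent truncation exactly as you do by first freezing $K$ and then letting $K\to\infty$. The recovery-sequence part (approximate in $C_c^1$, then diagonalize) is also what the paper does.

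Two small remarks. First, your claim that ``the Lipschitz bound is useless'' on the far part is not quite right: since $x,y\in\Q$, one has $|x-y|\le\mathrm{diam}\,\Q$, and the paper simply keeps the Lipschitz estimate there as well, bounding $|x-y|^{p-ps}$ by $(\mathrm{diam}\,\Q)^{p-ps}$; this is marginally cleaner than introducing $\|u\|_\infty$ and the extra factor $R^{-ps}$, though your version also works. Second, in your $\liminf$ step you should not invoke Remark~\ref{rem:critical-compact-exponent}: that remark rests on Assumption~\ref{assu:fq}, whereas the hypothesis $\E(\sum_z\omega_{0,z}|z|^{ps})<\infty$ of this theorem is, by Lemma~\ref{lem:E-omega-infty}, incompatible with $0<\E(c)<\infty$. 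The paper accordingly does not use any weighted Sobolev bound here and just says the $\liminf$ inequality ``follows immediately by definition'' (nonnegativity of the differential part plus lower semicontinuity of the remaining terms under $L^m$-convergence); you should argue the linear term directly from the assumed $L^m$-convergence of $\cR_\eps^\ast u_\eps$ and the weak convergence of $\cR_\eps^\ast f_\eps$, not from energy bounds.
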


\begin{proof}
\MH{(\ref{eq:thm:violate-infty-property-liminf}) follows immediately by definition.
In order to prove (\ref{eq:thm:violate-infty-property-limsup}) we
use (\ref{eq:thm:violate-infty-property}) and observe that for
every sequence $u_{\delta}\in C_{c}^{1}(\Q)$ approximating $u\in L^{m}(\Omega)$ as $\delta\to0$
it holds 
\[
\left|\tilde{\sE}_{p,s,\Q}(u)-\sE_{p,s,\eps,\Q}(u_{\delta})\right|\leq\left|\tilde{\sE}_{p,s,\Q}(u)-\tilde{\sE}_{p,s,\Q}(u_{\delta})\right|+\left|\tilde{\sE}_{p,s,\Q}(u_{\delta})-\sE_{p,s,\eps,\Q}(u_{\delta})\right|\,.
\]
In this way, successively choosing first $u_{\delta}$ and then $\eps$
we construct the sequence $\ue$.}

\MH{It remains to prove (\ref{eq:thm:violate-infty-property}). By assumption,
it holds 
\[
\lim_{K\to\infty}\E\left(\sum_{\left|z\right|>K}\omega_{x,x+z}|z|^{ps}\right)=0\,.
\]
Defining $E:=\E\left(\sum_{z}\omega_{x,x+z}|z|^{ps}\right)$, we observe
for any $\delta>0$ and $\eps\ll\delta$
\begin{align*}
\eps^{-ps}\sumeps{x\in\Q^{\eps}}\sum_{y\in\Q^{\eps}}\omega_{\frac{x}{\eps},\frac{y}{\eps}}\left|u(x)-u(y)\right|^{p} & =\sumeps{x\in\Q^{\eps}}\sum_{y\in\Q^{\eps}}\omega_{\frac{x}{\eps},\frac{y}{\eps}}\left|\frac{x}{\eps}-\frac{y}{\eps}\right|^{ps}\frac{\left|u(x)-u(y)\right|^{p}}{\left|x-y\right|^{p}}\left|x-y\right|^{p-ps}\\
 & \le\left\Vert \nabla u\right\Vert _{\infty}^{p}\sumeps{x\in\Q^{\eps}}\sum_{\substack{y\in\Q^{\eps}\\
\left|x-y\right|<\delta
}
}\omega_{\frac{x}{\eps},\frac{y}{\eps}}\left|\frac{x}{\eps}-\frac{y}{\eps}\right|^{ps}\delta^{p-ps}+\\
 & \quad+\left\Vert \nabla u\right\Vert _{\infty}^{p}\sumeps{x\in\Q^{\eps}}\sum_{\left|z\right|\geq\frac{\delta}{\eps}}\omega_{\frac{x}{\eps},\frac{x}{\eps}+z}\left|z\right|^{ps}\left(\text{diam}\Q\right)^{p-ps}
\end{align*}
and hence in the limit it holds for all $K>0$:
\begin{multline*}
\lim_{\eps\to0}\eps^{-ps}\sumeps{x\in\Q^{\eps}}\sum_{y\in\Q^{\eps}}\omega_{\frac{x}{\eps},\frac{y}{\eps}}\left|u(x)-u(y)\right|^{p} \\
\leq\left\Vert \nabla u\right\Vert _{\infty}^{p}|\Q|\left(E\,\delta^{p-ps}+\left(\text{diam}\Q\right)^{p-ps}\E\left(\sum_{\left|z\right|>K}\omega_{x,x+z}|z|^{ps}\right)\right)\,.
\end{multline*}
This provides (\ref{eq:thm:violate-infty-property}).}
\end{proof}

\subsection{\label{sub:Application-to-F-Lap}Application to the (spectral) homogenization
of the fractional Laplace operator}

\MH{It is well-known that strong/weak $\Gamma$-convergence
of convex functionals implies strong/weak
convergence of the minimizers towards the minimizer of the limit functional, see \cite{DalMaso1993}.
Hence, we expect that solutions of $\calL_{\eps}\ue=f$ converge to solutions of
the fractional equation 
\[
\PV\int_{\Rd}\frac{u(y)-u(x)}{\left|x-y\right|^{d+2s}}\,\d y=f(x)\,.
\]
This indeed holds true and we recall the proof in the context of the following result.}
\begin{thm}\label{thm:spectral-thm-1}
Let the assumptions of Theorem \ref{thm:Main Theorem-I} hold with
$p=2$. For every $\eps>0$ there exists a unique solution $\ue\in\cH_{\eps}(\Q)$
such that for every $v\in\cH_{\eps}(\Q)$ it holds
\begin{equation}\label{eq:thm:spectral-thm-1-1}
\eps^{2d}\sumsum{x,y\in\Zde}c_{\frac{x}{\eps},\frac{y}{\eps}}\frac{\ue(y)-\ue(x)}{\left|x-y\right|^{d+2s}}\left(v(y)-v(x)\right)=\eps^{d}\sum_{x\in\Zde}f(x)v(x)\,,
\end{equation}
and as $\eps\to0$ we find $\cR_{\eps}^{\ast}\ue\to u$ strongly
in $L^{r}(\Q)$ and $u\in W^{s,2}(\Rd)$ is the unique solution to
the equation 
\begin{equation}\label{eq:thm:spectral-thm-1-2}
\forall v\in W^{s,2}(\Rd)\,:\quad\E(c)\int_{\Rd}\int_{\Rd}\frac{u(y)-u(x)}{\left|x-y\right|^{d+2s}}\left(v(y)-v(x)\right)\d x\d y=\int_{\Rd}fv\,,
\end{equation}
where $u=0$ outside of $\Q$. 
\end{thm}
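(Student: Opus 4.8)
The plan is to recognise $\ue$ as the minimiser of a strictly convex quadratic functional and then to invoke the classical principle that Mosco --- in particular $\Gamma$- --- convergence of convex functionals, together with equicoercivity, forces the minimisers to converge to the minimiser of the limit functional (see \cite{DalMaso1993}). Concretely, for the quadratic choice $V(\xi)=\tfrac12|\xi|^{2}$ and $G\equiv0$ --- the setting encoded in \eqref{eq:thm:spectral-thm-1-1} --- equation \eqref{eq:thm:spectral-thm-1-1} is exactly the Euler--Lagrange identity of $\sE_{2,s,\eps}$ on $\He(\Q)$, and Theorem \ref{thm:Main Theorem-I} already delivers, almost surely, the Mosco convergence $\sE_{2,s,\eps}\to\sE_{2,s}$ of this functional. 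We work throughout on that almost-sure event, which we may also assume contained in $\{c>0\}$ (a consequence of $\E(c^{-\fq})<\infty$).

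First I would settle well-posedness of the two variational problems. Since $\Q$ is bounded, $\Q^{\eps}=\Q\cap\Zde$ is finite, so $\He(\Q)$ is finite-dimensional and the symmetric bilinear form on the left of \eqref{eq:thm:spectral-thm-1-1} is automatically bounded; it is positive definite because $c>0$: vanishing of the associated quadratic form forces $u(x)=u(y)$ for all $x,y$, hence $u\equiv0$ since $u=0$ on the nonempty set $\Zde\setminus\Q$ (one may also quote the weighted discrete Poincaré inequality, Lemma \ref{lem:weighted semi-poincare-general}). By Lax--Milgram --- here just invertibility of a positive definite matrix --- there is therefore a unique $\ue\in\He(\Q)$ solving \eqref{eq:thm:spectral-thm-1-1}, namely the unique minimiser of $\sE_{2,s,\eps}$ over $\He(\Q)$. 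For the limit problem, $\sE_{2,s}$ restricted to the subspace $\{w\in W^{s,2}(\Rd):w=0\text{ on }\Rd\setminus\Q\}$ has a quadratic part which, by the fractional Poincaré inequality on the bounded set $\Q$ (cf.\ Theorem \ref{thm:discr-Poincare-wsp}), is equivalent there to $\|\cdot\|_{W^{s,2}}^{2}$; combined with the embedding $W^{s,2}(\Q)\hookrightarrow L^{r}(\Q)$ --- valid because $r=\tfrac{r^{\ast}}{r^{\ast}-1}<p_{\fq}^{\star}$ stays below the fractional Sobolev exponent --- this makes $\sE_{2,s}$ strictly convex and coercive on that subspace, so it has a unique minimiser, characterised by the weak equation \eqref{eq:thm:spectral-thm-1-2}.

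Next I would carry out the limit passage. As $\ue$ minimises and $0\in\He(\Q)$, we have $\sE_{2,s,\eps}(\ue)\le\sE_{2,s,\eps}(0)=0$, so $\sup_{\eps}\sE_{2,s,\eps}(\ue)<\infty$ and Theorem \ref{thm:Main Theorem-I}(1) applies: along a subsequence $\cR_{\eps}^{\ast}\ue\to u$ strongly in $L^{r}(\Q)$ and pointwise a.e., with $u\in W^{s,2}(\Rd)$, $u=0$ on $\Rd\setminus\Q$, and $\liminf_{\eps\to0}\sE_{2,s,\eps}(\ue)\ge\sE_{2,s}(u)$. For an arbitrary competitor $w$ in the constrained subspace, Theorem \ref{thm:Main Theorem-I}(2) supplies a recovery sequence $w_{\eps}\in\He(\Q)$ with $\sE_{2,s,\eps}(w_{\eps})\to\sE_{2,s}(w)$; minimality of $\ue$ then yields $\sE_{2,s}(u)\le\liminf_{\eps}\sE_{2,s,\eps}(\ue)\le\limsup_{\eps}\sE_{2,s,\eps}(w_{\eps})=\sE_{2,s}(w)$, so $u$ minimises $\sE_{2,s}$ over the constrained subspace. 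By the uniqueness established above, the limit does not depend on the subsequence, so the whole sequence $\cR_{\eps}^{\ast}\ue$ converges to $u$ strongly in $L^{r}(\Q)$; and $u$, being that minimiser, solves \eqref{eq:thm:spectral-thm-1-2}, while conversely any solution of \eqref{eq:thm:spectral-thm-1-2} is the minimiser, which gives uniqueness.

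The substance of the argument is packaged inside Theorem \ref{thm:Main Theorem-I}; once that is granted, the present theorem is a textbook application of the variational theory of $\Gamma$-convergence. The point I would treat most carefully is the coercivity and strict convexity of the limit functional $\sE_{2,s}$ on the constrained subspace, which rests on the fractional Poincaré inequality (Theorem \ref{thm:discr-Poincare-wsp}) and on the integrability exponent $r$ staying below the fractional Sobolev exponent --- exactly what the hypothesis $\tfrac1{r^{\ast}}+\tfrac1{p_{\fq}^{\star}}<1$ together with Assumption \ref{assu:fq} is designed to secure. If one wished to argue the uniform $L^{r}$-boundedness of $\ue$ by hand rather than reading it off Theorem \ref{thm:Main Theorem-I}, one would test \eqref{eq:thm:spectral-thm-1-1} with $\ue$, estimate the linear term by Hölder (pairing $\cR_{\eps}^{\ast}\ue\in L^{r}$ with the uniformly bounded $\cR_{\eps}^{\ast}f_{\eps}\in L^{r^{\ast}}$), apply Lemma \ref{lem:weighted semi-poincare-general}, and finish with Young's inequality --- the very estimate underpinning Theorem \ref{thm:Main Theorem-I} itself.
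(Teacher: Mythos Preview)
Your proposal is correct and follows essentially the same route as the paper: both identify $\ue$ and $u$ as the unique minimisers of $\sE_{2,s,\eps}$ and $\sE_{2,s}$ respectively, and then pass to the limit via the standard $\Gamma$-convergence argument using the $\liminf$ inequality and a recovery sequence from Theorem~\ref{thm:Main Theorem-I}. Your version is in fact more explicit than the paper's on well-posedness and on the subsequence-to-full-sequence step; one cosmetic remark is that the continuous Poincar\'e inequality you invoke for coercivity of $\sE_{2,s}$ is not Theorem~\ref{thm:discr-Poincare-wsp} (which is the discrete statement) but its classical continuum analogue from \cite{DiNezza2012}.
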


\begin{proof}
Let $u$ be the unique minimizer of $\sE_{2,s}$ and let $\ue^{\ast}$
be a sequence such that $\cR_{\eps}^{\ast}\ue^{\ast}\to u$ strongly
in $L^{r}(\Q)$ and (\ref{eq:Thm-Gamma-limsup}) holds. Furthermore,
let $\ue\in\cH_{\eps}(\Q)$ be the minimizer of $\sE_{\eps,2}$ and
let $\tilde{u}=\lim_{\eps\to0}\cR_{\eps}^{\ast}\ue$ according to
Theorem \ref{thm:Main Theorem-I}. Then 
\[
\sE_{2,s}(u)=\lim_{\eps\to0}\sE_{2,s,\eps}(\ue^{\ast})\geq\liminf_{\eps\to0}\sE_{2,s,\eps}(\ue)\geq\sE_{2,s}(\tilde{u})\geq\sE_{2,s}(u)\,.
\]
where we used in the last inequality that $u$ is the minimizer of
$\sE_{2,s}$. Since the minimizer of $\sE_{2,s}$ is unique, we obtain
$\tilde{u}=u$ and the theorem is proved.
\end{proof}
In a similar way, we prove the following theorems.
\begin{thm}
Let the assumptions of Theorem \ref{thm:Main Theorem-II} hold with
$p=2$. For every $\eps>0$ there exists a unique solution $\ue\in\cH_{\eps}$
such that for every $v\in\cH_{\eps}$ equation \eqref{eq:thm:spectral-thm-1-1} holds
and as $\eps\to0$ we find $\cR_{\eps}^{\ast}\ue\to u$ pointwise
where $u\in W^{s,2}(\Rd)$ is the unique solution to \eqref{eq:thm:spectral-thm-1-2}.
\end{thm}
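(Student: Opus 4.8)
The plan is to repeat, in the whole-space setting, the argument used for Theorem~\ref{thm:spectral-thm-1}, replacing the Mosco convergence of Theorem~\ref{thm:Main Theorem-I} by that of Theorem~\ref{thm:Main Theorem-II}. First I would record the well-posedness of the discrete problems: for $p=2$ we take $V(\xi)=\tfrac12|\xi|^{2}$ (the factor $\tfrac12$ compensating the double counting of ordered pairs in the sum defining $\sE_{p,s,\eps}$) and read $G$ as in the hypotheses of Theorem~\ref{thm:Main Theorem-II}; then $\sE_{2,s,\eps}$ restricted to $\He$ is a sum of a nonnegative quadratic form, a convex term and a linear term, hence convex and weakly lower semicontinuous, and it is coercive on the relevant discrete fractional Sobolev class thanks to the uniform $L^{r}$-bounds behind Remark~\ref{rem:critical-compact-exponent} together with the structural assumptions on $f_\eps$ (compact support in a bounded $C^{0,1}$ domain) and on $G$. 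Consequently $\sE_{2,s,\eps}$ has a minimizer $\ue\in\He$, which is unique by strict convexity of the functional on that class. Computing the first variation of $\sE_{2,s,\eps}$ in an arbitrary direction $v\in\He$ shows that $\ue$ solves exactly \eqref{eq:thm:spectral-thm-1-1}; conversely, any $v\in\He$ solving \eqref{eq:thm:spectral-thm-1-1} is a critical point of this strictly convex functional and therefore equals $\ue$, which yields both existence and uniqueness of the solution to \eqref{eq:thm:spectral-thm-1-1}.

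Next I would observe that, by the fractional Sobolev and Poincaré inequalities recalled in Section~\ref{sec:Preliminaries}, $\sE_{2,s}$ is strictly convex and coercive on $W^{s,2}(\Rd)$, so it admits a unique minimizer $u$, characterized by the Euler--Lagrange equation \eqref{eq:thm:spectral-thm-1-2}. It then remains to identify $\lim_{\eps\to0}\cR_{\eps}^{\ast}\ue$ with $u$, which is the standard ``$\Gamma$-convergence of convex functionals implies convergence of minimizers'' argument. Let $\ue^{\ast}\in\He$ be a recovery sequence for $u$ given by part~(2) of Theorem~\ref{thm:Main Theorem-II}, so that $\cR_{\eps}^{\ast}\ue^{\ast}\to u$ pointwise a.e. and $\lim_{\eps\to0}\sE_{2,s,\eps}(\ue^{\ast})=\sE_{2,s}(u)$. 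In particular $\sup_{\eps}\sE_{2,s,\eps}(\ue)\le\sup_{\eps}\sE_{2,s,\eps}(\ue^{\ast})<\infty$ for the minimizers $\ue$, so part~(1) of Theorem~\ref{thm:Main Theorem-II} applies along any subsequence and yields a further subsequence $\eps'\to0$ with $\cR_{\eps'}^{\ast}u_{\eps'}\to\tilde u$ pointwise a.e. and $\liminf_{\eps'\to0}\sE_{2,s,\eps'}(u_{\eps'})\ge\sE_{2,s}(\tilde u)$. Chaining the estimates,
\[
\sE_{2,s}(u)=\lim_{\eps\to0}\sE_{2,s,\eps}(\ue^{\ast})\ge\liminf_{\eps'\to0}\sE_{2,s,\eps'}(u_{\eps'})\ge\sE_{2,s}(\tilde u)\ge\sE_{2,s}(u)\,,
\]
where the middle inequality uses minimality of $u_{\eps'}$ and the last one minimality of $u$. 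Hence $\sE_{2,s}(\tilde u)=\sE_{2,s}(u)$, and uniqueness of the minimizer forces $\tilde u=u$. Since every subsequence of $(\cR_{\eps}^{\ast}\ue)_{\eps}$ thus has a further subsequence converging pointwise a.e. to the same limit $u$, the full sequence $\cR_{\eps}^{\ast}\ue$ converges to $u$ pointwise a.e., which proves the claim.

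The step I expect to be the main obstacle is the well-posedness of the discrete minimization on the \emph{unbounded} lattice $\Zde$: the leading quadratic form controls only the discrete Gagliardo seminorm, whose kernel consists of the constants, so coercivity of $\sE_{2,s,\eps}$ cannot come from this term alone and must be extracted --- uniformly in $\eps$ --- from the particular hypotheses of Theorem~\ref{thm:Main Theorem-II} (compact support of $f_\eps$ together with a confining contribution in $G$, or a coercive power term inside $G$), combined with the uniform discrete fractional Sobolev/Poincaré estimates of Section~\ref{sec:Preliminaries}. One also has to verify that the double sum defining \eqref{eq:thm:spectral-thm-1-1} converges absolutely, which follows from $\E(c)<\infty$, and that differentiating under the sum to obtain the first variation is legitimate. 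All remaining steps are exactly as in the proof of Theorem~\ref{thm:spectral-thm-1}.
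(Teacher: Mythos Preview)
Your approach is exactly the one the paper intends: it simply writes ``In a similar way, we prove the following theorems'' after Theorem~\ref{thm:spectral-thm-1}, meaning the same $\Gamma$-convergence sandwich with Theorem~\ref{thm:Main Theorem-II} in place of Theorem~\ref{thm:Main Theorem-I}. Your added discussion of discrete well-posedness and the coercivity obstacle on the unbounded lattice is more careful than what the paper provides.

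There is, however, a genuine gap in your final sentence. You write that since every subsequence has a further subsequence converging pointwise a.e.\ to $u$, the full sequence converges pointwise a.e.\ to $u$. This inference is \emph{false} for pointwise a.e.\ convergence: the typewriter sequence on $[0,1]$ is the standard counterexample, where every subsequence has an a.e.-convergent sub-subsequence with limit $0$, yet the full sequence converges nowhere. The sub-subsequence principle is valid for topological convergence (e.g.\ strong $L^{r}$ convergence, as in Theorem~\ref{thm:spectral-thm-1}), but not for a.e.\ convergence. What your argument actually establishes is that along every subsequence there is a further subsequence with $\cR_{\eps'}^{\ast}u_{\eps'}\to u$ pointwise a.e.; this is precisely the content one can extract from Theorem~\ref{thm:Main Theorem-II}, and it is consistent with how the paper reads the conclusion. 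If you want genuine full-sequence convergence you need an additional argument (e.g.\ upgrading to convergence in some metric such as local $L^{r}$ and then passing to a.e.\ along a further subsequence), which neither you nor the paper supplies.
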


\begin{thm}
\label{thm:Main Theorem-III-2}Let the assumptions of Theorem \ref{thm:Main Theorem-III}
hold with $p=2$. For every $\eps>0$ there exists a unique solution
$\ue\in\cH_{\eps,0}(\Q)$ such that for every $v\in\cH_{\eps,0}(\Q)$
it holds
\begin{equation}
\eps^{2d}\sumsum{x,y\in\Q^{\eps}}c_{\frac{x}{\eps},\frac{y}{\eps}}\frac{\ue(y)-\ue(x)}{\left|x-y\right|^{d+2s}}\left(v(y)-v(x)\right)=\eps^{d}\sum_{x\in\Q^{\eps}}f(x)v(x)\,,\label{eq:thm:Main Theorem-III-2-1}
\end{equation}
and as $\eps\to0$ we find $\cR_{\eps}^{\ast}\ue\to u$ strongly
in $L^{r}(\Q)$ and $u\in W_{0}^{s,2}(\Q)$ is the unique solution
such that for every $v\in W_{0}^{s,2}(\Q)$ the following equation holds
\begin{equation}
\quad\E(c)\int_{\Q}\int_{\Q}\frac{u(y)-u(x)}{\left|x-y\right|^{d+2s}}\left(v(y)-v(x)\right)\d x\d y=\int_{\Q}fv\,.\label{eq:thm:Main Theorem-III-2-2}
\end{equation}

\end{thm}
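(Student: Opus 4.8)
The plan is to run the same argument as in the proof of Theorem \ref{thm:spectral-thm-1}, now with the functional $\sE_{2,s,\eps,\Q}$ restricted to $\cH_{\eps,0}(\Q)$ and its limit $\sE_{2,s,\Q}$ restricted to $W_0^{s,2}(\Q)$, which Mosco-converge by Theorem \ref{thm:Main Theorem-III}. First I would record existence and uniqueness at fixed $\eps$: the functional $v\mapsto\sE_{2,s,\eps,\Q}(v)$ on the finite-dimensional space $\cH_{\eps,0}(\Q)$ is, for $p=2$ and $V(\xi)=|\xi|^2$ with the lower bound $\alpha|\xi|^2\le V(\xi)$ from Assumption \ref{assu:V}, strictly convex and coercive — here one uses the $L^r$-bound from the first item of Theorem \ref{thm:Main Theorem-III}, which together with $G\ge0$ and the assumption $\frac1{r^\ast}+\frac1{p_\fq^\star}<1$ (guaranteeing $f_\eps$ pairs continuously against $L^r$) yields $\sE_{2,s,\eps,\Q}(v)\to\infty$ as $\|v\|\to\infty$. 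Hence a unique minimizer $\ue\in\cH_{\eps,0}(\Q)$ exists, and its Euler--Lagrange equation — obtained by differentiating $\sE_{2,s,\eps,\Q}(\ue+tv)$ at $t=0$ for $v\in\cH_{\eps,0}(\Q)$ and using $\frac{\d}{\d t}|\xi+t\eta|^2=2\langle\xi+t\eta,\eta\rangle$ together with the prefactor bookkeeping $\eps^{2d}c_{x/\eps,y/\eps}|x-y|^{-d-2s}=\eps^{d}\omega_{x/\eps,y/\eps}|x-y|^{-2s}$ — is exactly \eqref{eq:thm:Main Theorem-III-2-1}. The same computation in the continuum identifies \eqref{eq:thm:Main Theorem-III-2-2} as the Euler--Lagrange equation of $\sE_{2,s,\Q}$ on $W_0^{s,2}(\Q)$.

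Next I would pass to the limit. Let $u$ be the unique minimizer of $\sE_{2,s,\Q}$ over $W_0^{s,2}(\Q)$ (existence and uniqueness by strict convexity and coercivity of $\sE_{2,s,\Q}$, the latter via the fractional Poincaré inequality of Theorem \ref{thm:discr-Poincare-wsp}), and let $\ue^\ast\in\cH_{\eps,0}(\Q)$ be a recovery sequence for $u$ as in part (2) of Theorem \ref{thm:Main Theorem-III}, so that $\cR_\eps^\ast\ue^\ast\to u$ strongly in $L^r(\Q)$ and $\sE_{2,s,\eps,\Q}(\ue^\ast)\to\sE_{2,s,\Q}(u)$. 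Since $\ue$ minimizes, $\sE_{2,s,\eps,\Q}(\ue)\le\sE_{2,s,\eps,\Q}(\ue^\ast)$, so $\sup_\eps\sE_{2,s,\eps,\Q}(\ue)<\infty$; the compactness statement in part (1) of Theorem \ref{thm:Main Theorem-III} then gives a subsequence with $\cR_\eps^\ast\ue\to\tilde u$ strongly in $L^r(\Q)$ for some $\tilde u\in W_0^{s,2}(\Q)$, and the $\liminf$-estimate \eqref{eq:Thm-Gamma-limsup-Q-dirichlet-0} applies. Stringing these together,
\[
\sE_{2,s,\Q}(u)=\lim_{\eps\to0}\sE_{2,s,\eps,\Q}(\ue^\ast)\ge\liminf_{\eps\to0}\sE_{2,s,\eps,\Q}(\ue)\ge\sE_{2,s,\Q}(\tilde u)\ge\sE_{2,s,\Q}(u)\,,
\]
the last inequality because $u$ is the minimizer. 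Uniqueness of the minimizer forces $\tilde u=u$, and since the limit is independent of the subsequence the whole sequence converges: $\cR_\eps^\ast\ue\to u$ strongly in $L^r(\Q)$. Finally, $u$ minimizes $\sE_{2,s,\Q}$ over $W_0^{s,2}(\Q)$, hence solves its Euler--Lagrange equation \eqref{eq:thm:Main Theorem-III-2-2}, and uniqueness of that solution again follows from strict convexity.

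The routine parts are the convexity/coercivity verifications and the Euler--Lagrange computations; the only place needing genuine care is checking that the hypotheses of Theorem \ref{thm:Main Theorem-III} are actually in force under "the assumptions of Theorem \ref{thm:Main Theorem-III} with $p=2$" — in particular that $f_\eps$ has the stated weak-$L^{r^\ast}$ convergence with $\frac1{r^\ast}+\frac1{p_\fq^\star}<1$, which is what makes the linear term $\eps^d\sum_{x\in\Q^\eps}\ue(x)f_\eps(x)$ both continuous along the strongly converging sequence $\cR_\eps^\ast\ue$ and controlled by the $L^r$-a priori bound in the coercivity argument. I expect the main (still minor) obstacle to be bookkeeping the $\eps$-powers and the passage from $c$ to $\omega$ so that the discrete Euler--Lagrange equation comes out precisely in the form \eqref{eq:thm:Main Theorem-III-2-1} rather than with a stray factor of $2$ or a misplaced $\eps^{d}$; the remark after \eqref{eq:linear-eps-problem-2} about the absent factor $\tfrac12$ is exactly the point to be careful about here.
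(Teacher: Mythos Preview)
Your proposal is correct and follows essentially the same route as the paper: the paper simply says ``in a similar way'' and refers back to the proof of Theorem~\ref{thm:spectral-thm-1}, which is precisely the chain $\sE_{2,s,\Q}(u)=\lim\sE_{2,s,\eps,\Q}(\ue^\ast)\ge\liminf\sE_{2,s,\eps,\Q}(\ue)\ge\sE_{2,s,\Q}(\tilde u)\ge\sE_{2,s,\Q}(u)$ you wrote down, now fed by the Mosco-convergence of Theorem~\ref{thm:Main Theorem-III} instead of Theorem~\ref{thm:Main Theorem-I}. Your additional remarks on existence/uniqueness at fixed $\eps$, the Euler--Lagrange identification, and the subsequence-to-full-sequence step are details the paper leaves implicit.
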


\begin{thm}
Let the assumptions of Theorem \ref{thm:Main Theorem-V} hold with
$p=2$. For every $\eps>0$ there exists a unique solution $\ue\in\cH_{\eps,(0)}(\Q)$
such that for every $v\in\cH_{\eps,(0)}(\Q)$ equation \eqref{eq:thm:Main Theorem-III-2-1}
holds
and as $\eps\to0$ we find $\cR_{\eps}^{\ast}\ue\to u$ strongly
in $L^{r}(\Q)$ and $u\in W_{(0)}^{s,2}(\Q)$ is the unique solution
such that for every $v\in W_{(0)}^{s,2}(\Q)$ the equation \eqref{eq:thm:Main Theorem-III-2-2} holds.
\end{thm}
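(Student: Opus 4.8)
The plan is to run the argument from the proof of Theorem~\ref{thm:spectral-thm-1}, now with Theorem~\ref{thm:Main Theorem-V} replacing Theorem~\ref{thm:Main Theorem-I}, the finite‑energy class $\cH_{\eps,(0)}(\Q)$ replacing $\cH_{\eps}(\Q)$, and $W_{(0)}^{s,2}(\Q)$ replacing $W^{s,2}(\Rd)$; as there, $\sE_{2,s,\eps,\Q}$ and $\sE_{2,s,\Q}$ denote the linear instances $V(\xi)=\left|\xi\right|^{2}$, $G\equiv0$. First I would treat the discrete problem at fixed $\eps>0$. Since $\Q$ is bounded, $\Q^{\eps}=\Q\cap\Zde$ is finite, so $\cH_{\eps,(0)}(\Q)$ is a finite‑dimensional real vector space on which $\sE_{2,s,\eps,\Q}$ is continuous, and whose quadratic leading part $u\mapsto\eps^{2d}\sumsum{x,y\in\Q^{\eps}}c_{\frac{x}{\eps},\frac{y}{\eps}}\left|u(x)-u(y)\right|^{2}\left|x-y\right|^{-d-2s}$ is positive definite there: Assumption~\ref{assu:fq} (via $\E(c^{-\fq})<\infty$) forces $c_{x,y}>0$ almost surely, so this quantity vanishes only if $u$ is constant on $\Q^{\eps}$, and the constraint $\sum_{x\in\Q^{\eps}}u(x)=0$ then gives $u\equiv0$. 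Hence $\sE_{2,s,\eps,\Q}$ is strictly convex and coercive on the finite‑dimensional space $\cH_{\eps,(0)}(\Q)$, so it admits a unique minimizer $\ue$, whose Euler--Lagrange equation is precisely \eqref{eq:thm:Main Theorem-III-2-1}; uniqueness of the solution of \eqref{eq:thm:Main Theorem-III-2-1} in $\cH_{\eps,(0)}(\Q)$ follows likewise from positive‑definiteness of that quadratic form.

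Next I pass to the limit. By minimality, $\sE_{2,s,\eps,\Q}(\ue)\le\sE_{2,s,\eps,\Q}(0)=0$, so $\sup_{\eps}\sE_{2,s,\eps,\Q}(\ue)<\infty$. Part~1 of Theorem~\ref{thm:Main Theorem-V} then provides a subsequence (not relabelled) and a function $u\in W_{(0)}^{s,2}(\Q)$ with $u=0$ on $\Rd\setminus\Q$ such that $\cR_{\eps}^{\ast}\ue\to u$ pointwise almost everywhere and strongly in $L^{r}(\Q)$ (here $r=\frac{r^{\ast}}{r^{\ast}-1}$, conjugate to $r^{\ast}$), and such that $\liminf_{\eps\to0}\sE_{2,s,\eps,\Q}(\ue)\ge\sE_{2,s,\Q}(u)$ — the linear term being controlled by pairing the strong $L^{r}(\Q)$ convergence of $\cR_{\eps}^{\ast}\ue$ with the weak $L^{r^{\ast}}(\Q)$ convergence $\cR_{\eps}^{\ast}f_{\eps}\weakto f$.

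To identify $u$, observe that $\sE_{2,s,\Q}$ is strictly convex on $W_{(0)}^{s,2}(\Q)$: the Gagliardo double integral $\iint_{\Q\times\Q}\left|u(x)-u(y)\right|^{2}\left|x-y\right|^{-d-2s}$ is a squared norm on $W_{(0)}^{s,2}(\Q)$, since $\Q$ is an extension domain (being a uniform extension domain) and hence the fractional Poincar\'e inequality together with the zero‑mean constraint bounds $\left\Vert u\right\Vert _{L^{2}(\Q)}$ by this seminorm, while $\E(c)>0$. Thus $\sE_{2,s,\Q}$ has a unique minimizer $u^{\star}\in W_{(0)}^{s,2}(\Q)$, characterized by \eqref{eq:thm:Main Theorem-III-2-2}. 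Taking a recovery sequence $\ue^{\star}\in\cH_{\eps,(0)}(\Q)$ for $u^{\star}$ from part~2 of Theorem~\ref{thm:Main Theorem-V}, so that $\cR_{\eps}^{\ast}\ue^{\star}\to u^{\star}$ strongly in $L^{r}(\Q)$ and $\limsup_{\eps\to0}\sE_{2,s,\eps,\Q}(\ue^{\star})=\sE_{2,s,\Q}(u^{\star})$, one obtains
\[
\sE_{2,s,\Q}(u^{\star})=\lim_{\eps\to0}\sE_{2,s,\eps,\Q}(\ue^{\star})\ge\limsup_{\eps\to0}\sE_{2,s,\eps,\Q}(\ue)\ge\liminf_{\eps\to0}\sE_{2,s,\eps,\Q}(\ue)\ge\sE_{2,s,\Q}(u)\ge\sE_{2,s,\Q}(u^{\star})\,,
\]
using minimality of $\ue$ for the second inequality, the $\liminf$‑estimate for the fourth, and minimality of $u^{\star}$ for the last. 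Every inequality is therefore an equality, so $u$ minimizes $\sE_{2,s,\Q}$ and, by uniqueness of the minimizer, $u=u^{\star}$; since the limit does not depend on the subsequence, the whole sequence $\cR_{\eps}^{\ast}\ue$ converges to $u$ strongly in $L^{r}(\Q)$. Being the minimizer, $u$ satisfies \eqref{eq:thm:Main Theorem-III-2-2}, and uniqueness of the solution of \eqref{eq:thm:Main Theorem-III-2-2} in $W_{(0)}^{s,2}(\Q)$ follows once more from strict convexity.

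This is a routine adaptation of the proof of Theorem~\ref{thm:spectral-thm-1}; the substantive analytic input — the uniform fractional Poincar\'e inequality and the uniformly compact embeddings on the bounded domain $\Q$ in the presence of degenerate weights — is already packaged into Theorem~\ref{thm:Main Theorem-V} and Lemma~\ref{lem:weighted semi-poincare-general}. The only place that calls for genuine care is the well‑posedness on the constrained spaces: one must verify positive‑definiteness of the quadratic part on $\cH_{\eps,(0)}(\Q)$ and that the Gagliardo form is a squared norm on $W_{(0)}^{s,2}(\Q)$. Both hinge on the zero‑mean constraint eliminating the constant functions — which is precisely why the energy is studied on $\cH_{\eps,(0)}(\Q)$ rather than the full $\cH_{\eps}(\Q)$ — together with $\E(c)>0$ and, in the continuum, the extension‑domain property of $\Q$.
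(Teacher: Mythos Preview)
Your argument is correct and follows the same route as the paper, which simply remarks that this theorem is proved ``in a similar way'' to Theorem~\ref{thm:spectral-thm-1}: take the unique minimizer of the limit functional, a recovery sequence for it, and compare with the discrete minimizers via the chain of inequalities coming from the two parts of Theorem~\ref{thm:Main Theorem-V}. You have merely spelled out the well-posedness steps (positive definiteness on $\cH_{\eps,(0)}(\Q)$ via $c>0$ a.s.\ and the zero-mean constraint, strict convexity of $\sE_{2,s,\Q}$ on $W_{(0)}^{s,2}(\Q)$) and the subsequence-to-full-sequence passage more explicitly than the paper does.
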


We finally take a look on the topic of spectral homogenization. Theorem
\ref{thm:discr-compact-weights} together with Remark \ref{rem:critical-compact-exponent}
and Theorem \ref{thm:Main Theorem-III} shows that the operators $\cB_{c}^{\eps}:\,\cH_{\eps}(\Q)\to\cH_{\eps,0}(\Q)$,
where $\cB_{c}^{\eps}(f)$ solves (\ref{eq:thm:Main Theorem-III-2-1}),
are uniformly compact with respect to the norm $L^{p}(\Q^{\eps})$.
Furthermore, Theorem \ref{thm:Main Theorem-III} yields that 
\[
\left\Vert \cR_{\eps}^{\ast}\cB_{c}^{\eps}f^{\eps}-u\right\Vert _{L^p(\Q)}\to0\quad\mbox{as }\eps\to0\,,
\]
if $\cR_{\eps}^{\ast}f^{\eps}\weakto f$ where $u$ is the solution
to (\ref{eq:thm:Main Theorem-III-2-2}). Furthermore, the solution
operator $\cB$ to (\ref{eq:thm:Main Theorem-III-2-2}) is compact
by the compact embedding $W^{s,2}(\Q)\hookrightarrow L^{2}(\Q)$.
Hence, we obtain the following result from \cite{JKO1994}, Theorem
11.4 and 11.5 following the argumentation in Section 8 of \cite{FHS2018RCM}.

\begin{thm}
Under the assumptions of Theorem \ref{thm:Main Theorem-III} let $\mu_k^\eps$ be the $k$-th eigenvalue (i.e. $\mu_1^\eps\geq\mu_2^\eps\geq\dots$) and $\psi_k^\eps$ the $k$-th eigenfunction of $\cB_c^\eps$. Furthermore, let $\mu_k$ be the $k$-th eigenvalue and $\psi_k$ the $k$-th eigenfunction of $\cB$. Then the following holds.
\begin{itemize}
	\item Let $k\in\N$ and let $\eps_m$ be a null sequence. Then there $\P$-a.s. exists a family $\{\psi_j^0\}_{1\leq j\leq k}$ of eigenvectors of $\cB$ and a subsequence still indexed by $\eps_m$  such that 
	$$\left(\cR_{\eps_m}^\ast\psi_1^{\eps_m},\dots,\cR_{\eps_m}^\ast\psi_k^{\eps_m}\right)\to \left(\psi_1^{0},\dots,\psi_k^{0}\right)\quad\text{strongly in }L^2(\Q)\,.$$
	\item If the multiplicity of $\mu_k$ is equal to $s$, i.e. 
		$$\mu_{k-1}>\mu_k=\mu_{k+1}=\dots=\mu_{k+s}>\mu_{k+s+1}$$
		then for $j=1,\dots,s$ there $\P$-a.s. exists a sequence $\psi^\eps\in\cH^\eps(\Q)$ such that
		$$\lim_{\eps\to0}\left\|\psi_{k+j}-\cR_\eps^\ast\psi^\eps\right\|_{L^2(\Q)}=0$$
		where $\psi^\eps$ is a linear combination of the eigenfunctions of the operator $\cB_c^\eps$ corresponding to $\mu_k^\eps,\dots,\mu_{k+s}^\eps$.
\end{itemize}
\end{thm}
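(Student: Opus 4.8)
The plan is to deduce the statement from the abstract spectral-convergence results for families of compact, self-adjoint, positive operators in \cite{JKO1994} (Theorems 11.4 and 11.5), following verbatim the strategy of Section~8 of \cite{FHS2018RCM}; all the analytic input is already in place, so the only work is to check the abstract hypotheses in the present situation and to keep track of the almost-sure character of the convergence. First I would fix the Hilbert spaces $H_\eps:=\cH_{\eps,0}(\Q)$ with the rescaled scalar product $\langle u,v\rangle_\eps=\eps^d\sum_{x\in\Q^\eps}u(x)\,v(x)$ and $H:=L^2(\Q)$; with the piecewise-constant extension $\cR_\eps^\ast$ and its adjoint $\cR_\eps$ one has $\|\cR_\eps^\ast u\|_{L^2(\Q)}=\|u\|_\eps$ up to a boundary layer of vanishing measure (using that the functions vanish on $\partial\Q^\eps$ and that $\Q$ is Lipschitz), hence $\|\cR_\eps u\|_\eps\to\|u\|_{L^2(\Q)}$ for every $u\in L^2(\Q)$, which is the asymptotic-isometry property required by the JKO scheme. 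The bilinear form underlying \eqref{eq:thm:Main Theorem-III-2-1} is symmetric because $c_{x,y}=c_{y,x}$ and coercive on $\cH_{\eps,0}(\Q)$ by the discrete fractional Poincaré inequality of Section~\ref{sec:Preliminaries}, so $\cB_c^\eps$ is bounded, and from $a_\eps(\cB_c^\eps f,v)=\langle f,v\rangle_\eps$ it is self-adjoint and positive on $(H_\eps,\langle\cdot,\cdot\rangle_\eps)$; the same computation in the continuum makes $\cB$ self-adjoint and positive on $L^2(\Q)$, and $\cB$ is compact by the Rellich-type embedding $W_0^{s,2}(\Q)\hookrightarrow L^2(\Q)$.

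\emph{Remaining hypotheses.} Two points remain. Uniform compactness of $\{\cB_c^\eps\}$: for $\sup_\eps\|f^\eps\|_\eps<\infty$, testing \eqref{eq:thm:Main Theorem-III-2-1} with $\ue:=\cB_c^\eps f^\eps$ and applying the Poincaré inequality gives $a_\eps(\ue,\ue)\le\|f^\eps\|_\eps\|\ue\|_\eps\le C\|f^\eps\|_\eps\,a_\eps(\ue,\ue)^{1/2}$, so the differential part of $\sE_{2,s,\eps,\Q}(\ue)$ stays bounded; since $p=2$ and the $L^2$-integrability of the $\ue$ is available as soon as $\fq>\tfrac{2d}{ps}$ (Remark~\ref{rem:critical-compact-exponent}, with $r=p=2$), Theorem~\ref{thm:discr-compact-weights} yields precompactness of $\{\cR_\eps^\ast\ue\}$ in $L^2(\Q)$. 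Strong operator convergence: if $\cR_\eps^\ast f^\eps\weakto f$ in $L^2(\Q)$ (in particular for $f^\eps=\cR_\eps f$), then the Mosco convergence of Theorem~\ref{thm:Main Theorem-III} together with the uniform compactness just shown forces $\cR_\eps^\ast\cB_c^\eps f^\eps\to\cB f$ strongly in $L^2(\Q)$, which is precisely the convergence displayed just before the theorem. All of this holds on the single $\P$-full-measure event on which Theorem~\ref{thm:Main Theorem-III} is valid, and that event does not depend on $k$.

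\emph{Conclusion.} With $H_\eps$, $H$, the identification maps $\cR_\eps,\cR_\eps^\ast$ and the operators $\cB_c^\eps,\cB$ as above, all hypotheses of \cite{JKO1994}, Theorems~11.4 and~11.5, are met: Theorem~11.4 gives $\mu_k^\eps\to\mu_k$ for every $k\in\N$, and Theorem~11.5 gives the eigenfunction statements — extracting a subsequence one obtains $\cR_{\eps_m}^\ast\psi_j^{\eps_m}\to\psi_j^0$ in $L^2(\Q)$ with $\{\psi_j^0\}$ eigenvectors of $\cB$ for $1\le j\le k$, and if $\mu_k$ has multiplicity $s$ then, for each $j=1,\dots,s$, a linear combination $\psi^\eps$ of eigenfunctions of $\cB_c^\eps$ with eigenvalues in $\{\mu_k^\eps,\dots,\mu_{k+s}^\eps\}$ satisfies $\cR_\eps^\ast\psi^\eps\to\psi_{k+j}$ in $L^2(\Q)$. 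I expect the only genuinely delicate points to be the two items in the previous paragraph, namely the $\eps$-independent compactness of the solution operators and the upgrade from weak convergence of the data to strong convergence of the solutions; but both are already packaged into Theorem~\ref{thm:Main Theorem-III}, Theorem~\ref{thm:discr-compact-weights} and Remark~\ref{rem:critical-compact-exponent}, so that the rest is bookkeeping.
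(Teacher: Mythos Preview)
Your proposal is correct and follows exactly the paper's approach: the paper does not give a self-contained proof but, in the paragraph immediately preceding the theorem, simply notes that uniform compactness of $\cB_c^\eps$ follows from Theorem~\ref{thm:discr-compact-weights}, Remark~\ref{rem:critical-compact-exponent} and Theorem~\ref{thm:Main Theorem-III}, that the strong convergence $\cR_\eps^\ast\cB_c^\eps f^\eps\to\cB f$ in $L^p(\Q)$ follows from Theorem~\ref{thm:Main Theorem-III}, and that $\cB$ is compact by the embedding $W^{s,2}(\Q)\hookrightarrow L^2(\Q)$; it then invokes \cite{JKO1994}, Theorems~11.4 and~11.5, referring to Section~8 of \cite{FHS2018RCM} for the detailed verification of the abstract hypotheses. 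You have simply made explicit the bookkeeping (asymptotic isometry of $\cR_\eps^\ast$, self-adjointness and positivity of $\cB_c^\eps$, the role of the condition $\fq>\tfrac{2d}{ps}$ for $L^2$-compactness) that the paper delegates to that reference.
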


\section{Preliminaries\label{sec:Preliminaries}}

We first fix some convenient notation for discrete integrals (i.e.
higher dimensional sums) and function spaces. For $A\subset\Rd$ we
write $\left|A\right|_{\eps}:=\eps^{d}\sharp\left\{ A\cap\Zde\right\} $
and note that $\left|A\right|_{\eps}\to\left|A\right|$ as $\eps\to\infty$
for every open set $A\subset\Rd$. Moreover, for $A\subset\Zde$ and
a function $f:\,A\to\R$ we define 
\[
\sumeps{x\in A}\,f(x):=\eps^{d}\sum_{x\in A}f(x)\,.
\]
Then, for every function $f\in C_{c}(\Rd)$ we find 
\[
\sumeps{x\in\Zde}f(x)\to\int_{\Rd}f\,.
\]
Hence, ${\displaystyle \sumeps{}}$ is a discrete equivalent of the
integral $\int$.

\subsection{\label{sub:Discrete-and-continuous}Discrete and continuous Sobolev--Slobodeckij
spaces}

We introduce the Sobolev--Slobodeckij space $W^{s,p}(\Rd)$ as the
closure of $C_{c}^{\infty}(\Rd)$ with respect to the norm 
\[
\left\Vert u\right\Vert _{s,p}^{p}:=\left\Vert u\right\Vert _{L^{p}(\Rd)}^{p}+\left[u\right]_{s,p}^{p}\,,\quad\mbox{where}\quad\left[u\right]_{s,p}^{p}:=\int_{\Rd}\int_{\Rd}\frac{\left|u(x)-u(y)\right|^{p}}{\left|x-y\right|^{d+sp}}\,\d x\,\d y
\]
is the Gagliardo seminorm. This family of spaces is discussed in detail
for example in \cite{DiNezza2012,Triebel1978}. In general, they can
be constructed as the interpolation of $W^{1,p}(\Rd)$ and $L^{p}(\Rd)$,
see e.g. \cite{adams1975sobolev,Triebel1978}, but in this work, we
follow the outline of \cite{DiNezza2012}. We also consider Sobolev--Slobodeckij
spaces $W^{s,p}(\Q)$ on Lipschitz bounded domains $\Q\subset\Rd$.
These are defined by the norm $\left\Vert u\right\Vert _{s,p,\Q}^{p}:=\left\Vert u\right\Vert _{L^{p}(\Q)}^{p}+\left[u\right]_{s,p,\Q}^{p}$,
where the semi-norm $\left[u\right]_{s,p,\Q}^{p}$ is given through
\[
\left[u\right]_{s,p,\Q}^{p}=\int_{\Q}\int_{\Q}\frac{\left|u(x)-u(y)\right|^{p}}{\left|x-y\right|^{d+sp}}\,\d x\,\d y\,.
\]
As can be found for example in Theorem 5.4 of \cite{DiNezza2012},
\begin{equation}
\mbox{the extension operator }W^{s,p}(\Q)\hookrightarrow W^{s,p}(\Rd)\mbox{ is continuous for every }s\in(0,1]\label{eq:Wsp-cont-extent}
\end{equation}
if $\partial\Q$ is bounded and of class $C^{0,1}$. Property (\ref{eq:Wsp-cont-extent})
is called the $W^{s,p}$-extension property of domains $\Q$ and it
is used to prove compactness of embeddings $W^{s,p}(\Q)\hookrightarrow W^{s',p}(\Q)$
for $0<s'<s<1$ and $W^{s,p}(\Q)\hookrightarrow L^{q}(\Q)$ for every
1>$s>0$ and $\frac{s}{d}+\frac{1}{q}-\frac{1}{p}>0$. If $\partial\Q$
is bounded and of class $C^{0,1}$ and $sp>1$, it makes sense to
consider 
\[
W_{0}^{s,p}(\Q):=\left\{ u\in W^{s,p}(\Q)\,:\;u|_{\partial\Q}\equiv0\right\} \,,
\]
as in this case the trace is well defined. 
\begin{rem}\label{rem:Sobolev-interpolate}
In general, the space
\begin{equation}\label{eq:def-sobolev-diri-frac}
W_{0}^{s,p}(\Q):=\left(L^{p}(\Q),W_{0}^{1,p}(\Q)\right)_s 
\end{equation} 
is the interpolate of $W_{0}^{1,p}(\Q)$
and $L^{p}(\Q)$ and hence the extension by $0$ 
to $W_{0}^{s,p}(\Q)\hookrightarrow W^{s,p}(\Q)$ is continuous and well defined (see \cite[VII.7.17]{adams1975sobolev}). Interestingly, \eqref{eq:def-sobolev-diri-frac}
is well defined also in case $sp\leq 1$ but on the whole $W^{s,p}(\Q)\hookrightarrow W^{s,p}(\Rd)$.
Heuristically, this stems from the fact that $sp\leq 1$ implies that functions might have jumps
across Lipschitz manifolds. Thus, we may formally identify $W_0^{s,p}(\Q)= W^{s,p}(\Q)$ for $sp\leq 1$.
\end{rem}

A further space we will
use is 
\[
W_{(0)}^{s,p}(\Q):=\left\{ u\in W^{s,p}(\Q)\,:\;\int_{\Q}u=0\right\} \,.
\]
On $\Rd$ we do not have compact embedding but it holds that $W^{s,p}(\Rd)\hookrightarrow L^{q}(\Rd)$
continuously for every $q\in[p,p^{\star}]$, where $p^{\star}=dp/(d-sp)$
for $sp<d$. Furthermore, the set $C_{c}^{\infty}(\Rd)$ is dense
in $W^{s,p}(\Rd)$. We finally need the following approximation result.
\begin{lem}
\label{lem:Dirac-Approx}Let $\eta\in C_{c}^{\infty}(B_{1}(0))$ with
$\eta\geq0$ and $\int\eta=1$ and for $k\in\N$ denote $\eta_{k}(x):=\eta(kx)$.
Denoting $f\ast\eta_{k}$ the convolution of a measurable function
$f$ and $\eta_{k}$ we find for every $u\in W^{s,p}(\Rd)$ that 
\[
\left\Vert u\ast\eta_{k}\right\Vert _{s,p}\leq\left\Vert u\right\Vert _{s,p}\qquad\mbox{and}\qquad\lim_{k\to\infty}\left\Vert u\ast\eta_{k}-u\right\Vert _{s,p}=0\,.
\]

\end{lem}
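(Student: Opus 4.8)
The plan is to handle the two assertions separately, exploiting in both the representation of the Gagliardo seminorm as an average over translations. Writing $\tau_h w(x):=w(x-h)$ and substituting $y=x-h$ in the double integral gives the identity
\[
\left[w\right]_{s,p}^{p}=\int_{\Rd}\frac{\left\Vert w-\tau_{h}w\right\Vert _{L^{p}(\Rd)}^{p}}{\left|h\right|^{d+sp}}\,\d h\qquad\text{for all }w\in W^{s,p}(\Rd)\,.
\]
(Throughout, $\eta_{k}$ is understood with the usual normalization $\int\eta_{k}=1$, so that $\eta_{k}\,\d z$ is a probability measure supported in $B_{1/k}(0)$.)

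For the norm bound, observe that $u\ast\eta_{k}$ is the $\eta_{k}\,\d z$-average of the translates $\tau_{z}u$. Young's inequality gives $\left\Vert u\ast\eta_{k}\right\Vert _{L^{p}}\leq\left\Vert u\right\Vert _{L^{p}}$. For the seminorm, convexity of $t\mapsto|t|^{p}$ and Jensen's inequality applied to $(u\ast\eta_{k})(x)-(u\ast\eta_{k})(y)=\int_{\Rd}\eta_{k}(z)\bigl(u(x-z)-u(y-z)\bigr)\,\d z$ yield pointwise
\[
\frac{\left|(u\ast\eta_{k})(x)-(u\ast\eta_{k})(y)\right|^{p}}{\left|x-y\right|^{d+sp}}\leq\int_{\Rd}\eta_{k}(z)\,\frac{\left|u(x-z)-u(y-z)\right|^{p}}{\left|x-y\right|^{d+sp}}\,\d z\,;
\]
integrating in $x,y$, applying Fubini and the translation invariance of the double integral gives $\left[u\ast\eta_{k}\right]_{s,p}\leq\left[u\right]_{s,p}$. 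Adding the two estimates proves $\left\Vert u\ast\eta_{k}\right\Vert _{s,p}\leq\left\Vert u\right\Vert _{s,p}$.

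For the convergence, $\left\Vert u\ast\eta_{k}-u\right\Vert _{L^{p}}\to0$ is the classical approximate-identity statement in $L^{p}(\Rd)$. For the seminorm, apply the translation identity to $w=u\ast\eta_{k}-u$ and note that $(u\ast\eta_{k}-u)-\tau_{h}(u\ast\eta_{k}-u)=g_{h}\ast\eta_{k}-g_{h}$, where $g_{h}:=u-\tau_{h}u\in L^{p}(\Rd)$, so that
\[
\left[u\ast\eta_{k}-u\right]_{s,p}^{p}=\int_{\Rd}\frac{\left\Vert g_{h}\ast\eta_{k}-g_{h}\right\Vert _{L^{p}(\Rd)}^{p}}{\left|h\right|^{d+sp}}\,\d h\,.
\]
For each fixed $h$ the integrand tends to $0$ as $k\to\infty$ by the approximate-identity property applied to $g_{h}$, and it is dominated by $2^{p}\left\Vert g_{h}\right\Vert _{L^{p}}^{p}\left|h\right|^{-d-sp}$, which is integrable over $\Rd$ precisely because $\left[u\right]_{s,p}^{p}<\infty$ by the translation identity. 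Dominated convergence then gives $\left[u\ast\eta_{k}-u\right]_{s,p}\to0$, hence $\left\Vert u\ast\eta_{k}-u\right\Vert _{s,p}\to0$.

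A variant that avoids passing to the limit under the integral is to combine the uniform bound $\left\Vert u\ast\eta_{k}\right\Vert _{s,p}\leq\left\Vert u\right\Vert _{s,p}$ with a density argument: for $v\in C_{c}^{\infty}(\Rd)$ one has $v\ast\eta_{k}\to v$ in $W^{s,p}(\Rd)$ (from uniform convergence with equibounded supports together with the seminorm estimate just proved), and then a standard three-term splitting $\left\Vert u\ast\eta_{k}-u\right\Vert _{s,p}\leq\left\Vert (u-v)\ast\eta_{k}\right\Vert _{s,p}+\left\Vert v\ast\eta_{k}-v\right\Vert _{s,p}+\left\Vert v-u\right\Vert _{s,p}$ with $v$ close to $u$ in $\left\Vert \cdot\right\Vert _{s,p}$ closes the argument. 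Either way, the only genuinely substantive point is the observation that mollification commutes with the translation–difference structure of the Gagliardo seminorm; once that is in place, both the convexity estimate and the limit are routine, and no individual estimate is delicate.
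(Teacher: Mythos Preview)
Your proof is correct. The norm bound via Jensen's inequality and Fubini is exactly what the paper does. For the convergence, however, the routes diverge: the paper first proves a separate lemma (Lemma~\ref{lem:Wsp-shifts}) stating that $h\mapsto u(\cdot-h)$ is continuous in $W^{s,p}(\Rd)$, and then writes $u\ast\eta_{k}-u=\int\eta_{k}(z)\bigl(\tau_{z}u-u\bigr)\,\d z$ to conclude $\left[u\ast\eta_{k}-u\right]_{s,p}\leq\int\eta_{k}(z)\left[u-\tau_{z}u\right]_{s,p}\,\d z\to0$. Your argument is more direct: by representing the Gagliardo seminorm as $\int_{\Rd}\left\Vert w-\tau_{h}w\right\Vert_{L^{p}}^{p}\left|h\right|^{-d-sp}\d h$ and commuting mollification with the difference, you reduce everything to the classical $L^{p}$-continuity of translations plus dominated convergence, bypassing the need to establish $W^{s,p}$-continuity of translations altogether. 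The paper's route produces that auxiliary continuity lemma as a by-product (which may be of independent use), whereas your route is shorter and self-contained for the present purpose. Your parenthetical on the normalisation of $\eta_{k}$ is also well taken; as written in the statement, $\int\eta_{k}=k^{-d}$, and both proofs tacitly require the standard rescaling $k^{d}\eta(kx)$.
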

We shift the proof to the appendix, as it is standard.

In this work, we will need a discrete notion of Sobolev--Slobodeckij
spaces and generalizations of the above embedding results to the discrete
setting. To this aim, we consider the following normed subspaces of
$\cH_{\eps}$. First, set $\Q^{\eps}:=\Zde\cap\Q$ for a bounded domain
$\Q\subset\Rd$ to define 
\[
\left\Vert u\right\Vert _{L^{p}(\Zde)}^{p}:=\sumeps{x\in\Zde}|u(x)|^{p}\quad\mbox{and}\quad\left\Vert u\right\Vert _{L^{p}(\Q^{\eps})}^{p}:=\sumeps{x\in\Q^{\eps}}|u(x)|^{p}\,,
\]
and let $W^{s,p}(\Zde)$ be the closure of $C_{c}^{\infty}(\Rd)$
with respect to the norm 
\[
\left\Vert u\right\Vert _{s,p,\eps}^{p}:=\left\Vert u\right\Vert _{L^{p}(\Zde)}^{p}+\left[u\right]_{s,p,\eps}^{p}\,,\quad\mbox{where}\quad\left[u\right]_{s,p,\eps}^{p}:=\sumeps{x\in\Zde}\sumeps{y\in\Zde}\frac{\left|u(x)-u(y)\right|^{p}}{\left|x-y\right|^{d+sp}}\,.
\]
When restricted to a bounded domain $\Q\subset\Rd$, we define $\left\Vert u\right\Vert _{s,p,\eps,\Q}^{p}:=\left\Vert u\right\Vert _{L^{p}(\Q^{\eps})}^{p}+\left[u\right]_{s,p,\eps,\Q}^{p}$
the norm of the space $W^{s,p}(\Q^{\eps})$, where 
\begin{equation}
\left[u\right]_{s,p,\eps,\Q}^{p}:=\sumeps{x\in\Q^{\eps}}\sumeps{y\in\Q^{\eps}}\frac{\left|u(x)-u(y)\right|^{p}}{\left|x-y\right|^{d+sp}}\,.\label{eq:sob-slob-semi-norm-Q-eps}
\end{equation}
For some of the proofs below, we need a discrete version of the continuous
extension property (\ref{eq:Wsp-cont-extent}) which holds uniformly
in $\eps$. As announced in the introduction we formulate this condition
in a definition.
\begin{defn}
\label{def:uniform-extension-domain}A bounded domain $\Q\subset\Rd$
is called a \emph{uniform extension domain} if there exists $C>0$
such that for every $\eps>0$ there exists a linear extension operator
$\cE_{\eps}\,:\,\,W^{s,p}(\Q^{\eps})\hookrightarrow W^{s,p}(\Zde)$
with $\left\Vert \cE_{\eps}\right\Vert \leq C$.\end{defn}
\begin{rem}
\label{rem:Extention-bounded-support}We may assume for a uniform
extension domain $\Q$ that there exists a further bounded domain
$\tilde{\Q}\supset\overline{\Q}$ and such that the extensions have
compact support in $\tilde{\Q}$. We prove this in the appendix.
\end{rem}
We will not go into details on this point but note that being a uniform extension domain
is immediate for rectangular boxes $\Q=\prod_{i=1}^{d}(a_{i},b_{i})$,
where $-\infty<a_{i}<b_{i}<+\infty$ for every $i=1,\dots d$. This
can be checked by reflection at the boundaries. Furthermore, Theorem \ref{thm:embedding-dirichlet}
suggests that every $C^{0,1}$ domain should be a uniform extension domain. However, the proof
of such a statement is beyond the scope of this work.

In the following, we formulate the four most important results of
this subsection. The proofs are technical and either standard ( and
hence shifted to the appendix ) or will be presented in Section \ref{sub:Proof-poincare}
below.
\begin{thm}[Discrete Sobolev inequality on $\Zde$]
\label{thm:discr-Poincare-wsp}Let $s\in(0,1)$ and $p\in[1,\infty)$
be such that $sp<d$ and let $p^{\star}:=dp/(d-sp)$. Then, for every
$q\in[p,p^{\star}]$, there exists a constant $C_{p,q}>0$ depending
only on $d$, $p$, $q$ and $s$ such that for every $\eps>0$ and
every $u\in W^{s,p}(\Zde)$ it holds
\begin{equation}
\left\Vert u\right\Vert _{L^{q}(\Zde)}\leq C_{p,q}\left\Vert u\right\Vert _{s,p,\eps}\,.\label{eq:discr-poinc-zde}
\end{equation}

\end{thm}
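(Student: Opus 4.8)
The plan is to reduce the discrete fractional Sobolev inequality on $\Zde$ to the continuous one via the piecewise-constant interpolation operator $\cR_{\eps}^{\ast}$. First I would fix $\eps>0$ and $u\in W^{s,p}(\Zde)$, and consider $\tilde{u}:=\cR_{\eps}^{\ast}u$, which is the piecewise-constant function on $\Rd$ taking the value $u(x_i)$ on each cube $x_i+[-\eps/2,\eps/2)^d$. On the norm side this identification is exact: $\left\Vert \tilde{u}\right\Vert_{L^q(\Rd)}^q=\eps^d\sum_{x\in\Zde}|u(x)|^q=\left\Vert u\right\Vert_{L^q(\Zde)}^q$, so (\ref{eq:discr-poinc-zde}) is equivalent to the assertion that $\left\Vert \tilde{u}\right\Vert_{L^q(\Rd)}\leq C_{p,q}\left\Vert u\right\Vert_{s,p,\eps}$. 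Since the classical Sobolev embedding (recalled just before the theorem) gives $\left\Vert \tilde{u}\right\Vert_{L^q(\Rd)}\leq C\left\Vert \tilde{u}\right\Vert_{s,p}$ for $q\in[p,p^{\star}]$, it suffices to control the continuous Gagliardo norm of $\tilde{u}$ by the discrete norm $\left\Vert u\right\Vert_{s,p,\eps}$, i.e. to prove
\begin{equation}
[\tilde{u}]_{s,p}^p=\int_{\Rd}\int_{\Rd}\frac{|\tilde{u}(x)-\tilde{u}(y)|^p}{|x-y|^{d+sp}}\,\d x\,\d y\leq C\,[u]_{s,p,\eps}^p\label{eq:plan-key}
\end{equation}
with $C$ independent of $\eps$ (the $L^p$-parts already match exactly).

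The main work is therefore the estimate (\ref{eq:plan-key}), and I would carry it out by the following steps. First, by scaling it is enough to treat $\eps=1$: replacing $u$ by $u(\eps\cdot)$ and using that the Gagliardo seminorm scales homogeneously, the general case follows. So fix the lattice $\Zd$ with unit cubes $Q_x=x+[-\tfrac12,\tfrac12)^d$. Second, split the double integral over $\R^{2d}$ according to whether $x,y$ lie in the same cube, in neighbouring cubes, or in well-separated cubes. The diagonal contribution ($x,y\in Q_z$ for the same $z$) vanishes since $\tilde u$ is constant there. For the near-diagonal part (neighbouring cubes with $\tilde u(x)-\tilde u(y)=u(z)-u(z')$, $|z-z'|$ bounded), one uses that $\int_{Q_z}\int_{Q_{z'}}|x-y|^{-d-sp}\,\d x\,\d y$ is bounded by a constant, while the discrete weight $|z-z'|^{-d-sp}$ is also bounded below by a constant on these finitely many pairs; hence the near part is $\lesssim \sum_{z}\sum_{z':|z-z'|\leq C_0}|u(z)-u(z')|^p\lesssim [u]_{s,p,1}^p$. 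Third, for the far part, on $x\in Q_z$, $y\in Q_{z'}$ with $|z-z'|$ large one has $|x-y|\asymp|z-z'|$ uniformly, so $\int_{Q_z}\int_{Q_{z'}}|x-y|^{-d-sp}\d x\d y\asymp|z-z'|^{-d-sp}$, and the far part is comparable to $\sum_{z\neq z'}\frac{|u(z)-u(z')|^p}{|z-z'|^{d+sp}}=[u]_{s,p,1}^p$ (up to constants absorbing the near pairs). Collecting the three pieces gives (\ref{eq:plan-key}).

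I expect the main obstacle to be purely bookkeeping: getting the comparison $|x-y|\asymp|z-z'|$ with constants that do not degenerate as the cubes get close (which is why one must peel off the finitely many near-diagonal annuli of lattice points and handle them by the crude "numerator is a genuine discrete difference, denominator is bounded" argument rather than by the pointwise comparison), and then summing the resulting series in $z'$ to recognize it as the discrete seminorm $[u]_{s,p,\eps}^p$. A clean way to organize this is to prove the single kernel estimate: there is $C=C(d,s,p)$ such that for all $z\neq z'$ in $\Zd$,
\[
\int_{Q_z}\int_{Q_{z'}}\frac{\d x\,\d y}{|x-y|^{d+sp}}\leq \frac{C}{|z-z'|^{d+sp}}\qquad\text{when }|z-z'|\geq 2,
\]
together with $\int_{Q_z}\int_{Q_{z'}}|x-y|^{-d-sp}\d x\d y\leq C$ for $1\leq|z-z'|\leq 2$ (finite range, convergent integral since $sp<d$), and then sum against $|u(z)-u(z')|^p$. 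Since this argument is entirely standard, following the pattern of comparing discrete and continuous Gagliardo seminorms as in \cite{DiNezza2012}, it is natural to present it concisely and, if desired, defer the detailed cube-by-cube estimates to the appendix as indicated in the text preceding the theorem.
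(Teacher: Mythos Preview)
Your reduction to the continuous Sobolev inequality via $\cR_{\eps}^{\ast}$ is a natural idea and differs from the paper's route: the paper proves Theorem~\ref{thm:discr-Poincare-wsp} directly on the lattice by a discrete version of the level-set argument in \cite{DiNezza2012} (an isoperimetric-type estimate, Lemma~\ref{lem:discr-isoperimetric-ineq}, feeding into a dyadic summation, Lemmas~\ref{lem:resort-sum} and~\ref{lem:poincare-help-3}), without ever invoking the continuous embedding.

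However, your argument has a genuine gap in the near-diagonal step. You claim that for adjacent cubes $Q_z,Q_{z'}$ (say $|z-z'|=1$) the kernel integral $\int_{Q_z}\int_{Q_{z'}}|x-y|^{-d-sp}\,\d x\,\d y$ is finite ``since $sp<d$''. This is false in general: for cubes sharing a $(d-1)$-face the integral is finite if and only if $sp<1$, not $sp<d$. Equivalently, the piecewise-constant function $\tilde u=\cR_{\eps}^{\ast}u$ has jump discontinuities across the cube faces, and it is well known that indicator functions of half-spaces (hence piecewise-constant functions) lie in $W^{s,p}_{\mathrm{loc}}(\Rd)$ only for $sp<1$. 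So for $sp\geq 1$ one has $[\tilde u]_{s,p}=+\infty$ whenever $u$ is nonconstant, and the key estimate (\ref{eq:plan-key}) cannot hold. Your scheme therefore only covers the subrange $sp<1$ of the theorem.

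The fix is to replace $\cR_{\eps}^{\ast}$ by a smoother interpolation. The paper itself does exactly this elsewhere: Lemma~\ref{lem:Qphi-phi-wsp-equivalence} shows $[\cQ_{\eps}\varphi]_{s,p}\leq C[\varphi]_{s,p,\eps}$ for the multilinear (Q1 finite-element) interpolant $\cQ_{\eps}$, which is Lipschitz and hence lies in every $W^{s,p}$. Combined with the norm equivalence (\ref{eq:lem:Qphi-phi-lp-equivalence}) and the continuous embedding $W^{s,p}(\Rd)\hookrightarrow L^{q}(\Rd)$, this would give an alternative proof of Theorem~\ref{thm:discr-Poincare-wsp} along the lines you propose. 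If you wish to keep your strategy, you should switch to $\cQ_{\eps}$ and redo the near-diagonal estimate using the Lipschitz bound on $\cQ_{\eps}u$, exactly as in the proof of Lemma~\ref{lem:Qphi-phi-wsp-equivalence}.
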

The exponent $p^{\star}$ is called the \emph{fractional critical exponent}.
As a corollary, the last result extends to bounded domains.
\begin{thm}
\label{thm:discr-Poincare-wsp-Qeps}Let $\Q\subset\Rd$ be a bounded
uniform extension domain and let $s\in(0,1)$ and $p\in[1,\infty)$
be such that $sp<d$ and let $p^{\star}:=dp/(d-sp)$. Then, for every
$q\in[p,p^{\star}]$, there exists a constant $C_{p,q}>0$ depending
only on $d$, $p$, $q$, $s$ and $\Q$ such that for every $\eps>0$
and every $u\in W^{s,p}(\Q^{\eps})$ it holds
\[
\left\Vert u\right\Vert _{L^{q}(\Q\cap\Zde)}\leq C_{p,q}\left\Vert u\right\Vert _{s,p,\eps,\Q}\,.
\]

\end{thm}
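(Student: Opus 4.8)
The plan is to reduce the statement directly to the flat-lattice inequality of Theorem~\ref{thm:discr-Poincare-wsp} by precomposing with the extension operator supplied by Definition~\ref{def:uniform-extension-domain}. Since $\Q$ is a uniform extension domain, there is a constant $C_{\Q}>0$ independent of $\eps$, and for each $\eps>0$ a linear map $\cE_{\eps}\,:\,W^{s,p}(\Q^{\eps})\to W^{s,p}(\Zde)$ with $(\cE_{\eps}u)|_{\Q^{\eps}}=u$ and $\left\Vert\cE_{\eps}u\right\Vert_{s,p,\eps}\le C_{\Q}\left\Vert u\right\Vert_{s,p,\eps,\Q}$ for all $u\in W^{s,p}(\Q^{\eps})$.

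First I would note that, because $\cE_{\eps}u$ agrees with $u$ on $\Q^{\eps}=\Q\cap\Zde$ and the sum defining $\left\Vert\cdot\right\Vert_{L^{q}}$ only grows when the index set is enlarged,
\[
\left\Vert u\right\Vert_{L^{q}(\Q\cap\Zde)}=\left\Vert u\right\Vert_{L^{q}(\Q^{\eps})}\le\left\Vert\cE_{\eps}u\right\Vert_{L^{q}(\Zde)}\,.
\]
Next, since $sp<d$ and $q\in[p,p^{\star}]$ with $p^{\star}=dp/(d-sp)$, Theorem~\ref{thm:discr-Poincare-wsp} applies to $\cE_{\eps}u\in W^{s,p}(\Zde)$ and produces a constant $C_{p,q}'$ depending only on $d,p,q,s$ such that $\left\Vert\cE_{\eps}u\right\Vert_{L^{q}(\Zde)}\le C_{p,q}'\left\Vert\cE_{\eps}u\right\Vert_{s,p,\eps}$. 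Chaining these bounds with the uniform estimate on $\left\Vert\cE_{\eps}u\right\Vert_{s,p,\eps}$ gives
\[
\left\Vert u\right\Vert_{L^{q}(\Q\cap\Zde)}\le C_{p,q}'\,C_{\Q}\,\left\Vert u\right\Vert_{s,p,\eps,\Q}\,,
\]
which is the claim with $C_{p,q}:=C_{p,q}'C_{\Q}$; this constant depends only on $d,p,q,s$ and, through $C_{\Q}$, on $\Q$, and crucially not on $\eps$.

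There is essentially no obstacle once Definition~\ref{def:uniform-extension-domain} and Theorem~\ref{thm:discr-Poincare-wsp} are available: all the difficulty has been moved into the existence of a family of extension operators with $\eps$-uniform norm and into the flat-space discrete Sobolev inequality. The only two points worth a line of care are that the constant $C_{p,q}'$ in Theorem~\ref{thm:discr-Poincare-wsp} is indeed $\eps$-independent --- which it is, by that theorem's statement --- and that the extension genuinely reproduces $u$ at the lattice points of $\Q$, so that no boundary layer is lost when passing from $\left\Vert\cE_{\eps}u\right\Vert_{L^{q}(\Zde)}$ back down to $\left\Vert u\right\Vert_{L^{q}(\Q^{\eps})}$. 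If one prefers not to rely on the pointwise identity $(\cE_{\eps}u)|_{\Q^{\eps}}=u$, one may instead invoke Remark~\ref{rem:Extention-bounded-support} and work with extensions supported in a fixed enlarged box, which leaves the estimate unchanged.
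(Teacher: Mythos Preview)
Your argument is correct and is exactly the route the paper has in mind: the text introduces Theorem~\ref{thm:discr-Poincare-wsp-Qeps} with ``As a corollary, the last result extends to bounded domains,'' meaning one simply applies the uniform extension operator from Definition~\ref{def:uniform-extension-domain} and then invokes Theorem~\ref{thm:discr-Poincare-wsp}. The only implicit point you rightly flag is that an ``extension operator'' restricts to the identity on $\Q^{\eps}$, which is the standard meaning and is how the paper uses $\cE_{\eps}$ elsewhere (e.g.\ in the proof of Theorem~\ref{thm:Compactness-discrete-eps-to-0}).
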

Furthermore, we obtain the following compactness result on bounded
domains.
\begin{thm}
\label{thm:Compactness-discrete-eps-to-0}Let $\Q\subset\Rd$ be a
bounded uniform extension domain and let $s\in(0,1)$ and $p\in[1,\infty)$.
Let $p^{\star}:=dp/(d-sp)$ if $sp<d$, and $p^{\star}=\infty$ else.
For every $\eps>0$ let $\ue\in W^{1,p}(\Q^{\eps})$ such that $\sup_{\eps>0}\left\Vert \ue\right\Vert _{s,p,\eps,\Q}<\infty$.
Then, for every $q\in[p,p^{\star})$ the family $\left(\cR_{\eps}^{\ast}\ue\right)_{\eps>0}$
is precompact in $L^{q}(\Q^{\eps})$.
\end{thm}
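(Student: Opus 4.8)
The plan is to reproduce in the discrete setting the classical chain \emph{extension $\to$ uniform Sobolev bound $\to$ Riesz--Fréchet--Kolmogorov}, the only genuinely new ingredient being a discrete fractional translation estimate. Throughout, write $\tau_h f=f(\cdot+h)$, and note that $\cR_\eps^\ast$ is an isometry of $L^q(\Zde)$ (with the norm $\|u\|_{L^q(\Zde)}^q=\eps^d\sum|u|^q$) onto the piecewise constant functions of $L^q(\Rd)$, and that $(\cR_\eps^\ast u)(x)=u(x_i)$ on each cube $Q_i:=x_i+[-\tfrac\eps2,\tfrac\eps2)^d$.

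\emph{Step 1 (reduction to $\Zde$).} Since $\Q$ is a uniform extension domain, fix $C$, and the larger bounded domain $\tilde\Q\supset\overline\Q$, from Definition \ref{def:uniform-extension-domain} and Remark \ref{rem:Extention-bounded-support}, and set $\hat u_\eps:=\cE_\eps\ue\in W^{s,p}(\Zde)$, so $\mathrm{supp}\,\cR_\eps^\ast\hat u_\eps\subset\tilde\Q$, $\cR_\eps^\ast\hat u_\eps=\cR_\eps^\ast\ue$ on $\Q$, and $M:=\sup_\eps\|\hat u_\eps\|_{s,p,\eps}\le C\sup_\eps\|\ue\|_{s,p,\eps,\Q}<\infty$. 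It therefore suffices to prove precompactness of $(\cR_\eps^\ast\hat u_\eps)_\eps$ in $L^q(\tilde\Q)$ for $q\in[p,p^\star)$, and we may restrict to $\eps\in(0,1]$, the only relevant regime. By Theorem \ref{thm:discr-Poincare-wsp} applied to $\hat u_\eps$ (if $sp\ge d$, applied with an auxiliary exponent $s'<\min\{s,d/p\}$, using that $\|\hat u_\eps\|_{s',p,\eps}\le C\|\hat u_\eps\|_{s,p,\eps}$ by splitting the discrete Gagliardo sum at $|x-y|=1$ and estimating the far part by $\|\hat u_\eps\|_{L^p(\Zde)}^p$), the family $(\cR_\eps^\ast\hat u_\eps)$ is bounded in $L^{q_0}(\tilde\Q)$ for some $q_0\in(q,p^\star]$; in particular it is bounded in $L^p(\tilde\Q)$ and tight, its support lying in the fixed bounded set $\tilde\Q$.

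\emph{Step 2 (discrete fractional translation estimate --- the heart).} I would prove that there are $C=C(d,s,p)>0$ and $\sigma:=\min\{sp,1\}>0$ with
\[
\frac1{|B_\rho(0)|}\int_{B_\rho(0)}\big\|\tau_h\cR_\eps^\ast\hat u_\eps-\cR_\eps^\ast\hat u_\eps\big\|_{L^p(\Rd)}^p\,dh\ \le\ C\,\rho^{\sigma}\,\|\hat u_\eps\|_{s,p,\eps}^p\qquad(0<\eps\le1,\ 0<\rho\le1).
\]
By Fubini the left side equals $|B_\rho(0)|^{-1}\iint_{|x-y|<\rho}|\cR_\eps^\ast\hat u_\eps(x)-\cR_\eps^\ast\hat u_\eps(y)|^p\,dx\,dy=|B_\rho(0)|^{-1}\sum_{i\ne j}m_{ij}(\rho)\,|\hat u_\eps(x_i)-\hat u_\eps(x_j)|^p$, where $m_{ij}(\rho)=|\{x\in Q_i,\,y\in Q_j:\ |x-y|<\rho\}|$. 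One now inserts the factor $|x_i-x_j|^{d+sp}$. For $\rho\ge\sqrt d\,\eps$ one has $m_{ij}(\rho)\le\eps^{2d}$ and $m_{ij}(\rho)\ne0\Rightarrow|x_i-x_j|\le2\rho$, so $m_{ij}(\rho)|x_i-x_j|^{d+sp}\le\eps^{2d}(2\rho)^{d+sp}$, which after dividing by $|B_\rho(0)|\sim\rho^d$ yields the bound $C\rho^{sp}[\hat u_\eps]_{s,p,\eps}^p$. For $\rho<\sqrt d\,\eps$ only cubes sharing a face contribute; for such pairs $m_{ij}(\rho)\le C\rho^{d+1}\eps^{d-1}$ and (since then $|x_i-x_j|=\eps$) $\sum_{i\sim j}|\hat u_\eps(x_i)-\hat u_\eps(x_j)|^p\le\eps^{sp-d}[\hat u_\eps]_{s,p,\eps}^p$, giving the bound $C\rho\,\eps^{sp-1}[\hat u_\eps]_{s,p,\eps}^p\le C\rho^{\sigma}[\hat u_\eps]_{s,p,\eps}^p$ because $\rho\le\eps\le1$ (distinguishing $sp\lessgtr1$); the lower-dimensional adjacencies are handled identically. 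This is the step that needs care: the naive continuum Gagliardo seminorm of the piecewise constant reconstruction $\cR_\eps^\ast\hat u_\eps$ is \emph{infinite} (the kernel $|x-y|^{-d-sp}$ is not integrable across cube faces), so the continuum fractional theory cannot be applied to $\cR_\eps^\ast\hat u_\eps$ directly; the averaging over $h\in B_\rho(0)$ is exactly what tames the near-diagonal terms and converts the discrete seminorm into a translation modulus.

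\emph{Step 3 (conclusion).} Let $\chi_\rho$ be $|B_\rho(0)|^{-1}$ times the indicator of $B_\rho(0)$. By Jensen, $\|f*\chi_\rho-f\|_{L^p}\le\big(|B_\rho(0)|^{-1}\int_{B_\rho(0)}\|\tau_hf-f\|_{L^p}^p\,dh\big)^{1/p}$, so Step 2 gives $\sup_{\eps\le1}\|\cR_\eps^\ast\hat u_\eps*\chi_\rho-\cR_\eps^\ast\hat u_\eps\|_{L^p}\le CM\rho^{\sigma/p}\to0$ as $\rho\to0$. Together with the $L^p$-boundedness and tightness from Step 1, the mollifier form of the Riesz--Fréchet--Kolmogorov theorem yields precompactness of $(\cR_\eps^\ast\hat u_\eps)$ in $L^p(\tilde\Q)$. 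Finally, for $q\in(p,p^\star)$ take $q_0\in(q,p^\star]$ as in Step 1 and interpolate, $\|g\|_{L^q}\le\|g\|_{L^p}^{1-\theta}\|g\|_{L^{q_0}}^{\theta}$ with $\theta\in(0,1)$: any subsequence converging in $L^p(\tilde\Q)$ and bounded in $L^{q_0}(\tilde\Q)$ converges in $L^q(\tilde\Q)$. Hence $(\cR_\eps^\ast\ue)$ is precompact in $L^q(\Q)$ for every $q\in[p,p^\star)$. The main obstacle is, as indicated, Step 2.
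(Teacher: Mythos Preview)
Your proof is correct and follows the same three-stage architecture as the paper (extension, discrete Sobolev embedding, Fr\'echet--Kolmogorov), but the heart---the translation estimate---is handled differently. The paper establishes a \emph{pointwise} bound $\|\cR_\eps^\ast u-\tau_h\cR_\eps^\ast u\|_{L^p(\Rd)}\le C\max\{|h|^s,|h|\}$: it first proves this for lattice shifts $h\in\Zde$ by a ball-averaging trick on the discrete side (triangle inequality through $\tau_y u$ for $y\in B_{10|h|}\cap\Zde$, then H\"older against the seminorm), and then reduces a general $h\in\Rd$ to a combination of lattice shifts via the overlap volumes $V(z,h)$ of the shifted cube tiling. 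Your route instead proves an \emph{averaged} bound directly by computing $\iint_{|x-y|<\rho}|\cR_\eps^\ast u(x)-\cR_\eps^\ast u(y)|^p\,dx\,dy$ cube-by-cube and comparing to the discrete seminorm; the averaging is precisely what absorbs the jump discontinuities of the piecewise-constant reconstruction, as you correctly note. Your argument is somewhat more streamlined and makes the role of the averaging transparent; the paper's yields the slightly stronger pointwise modulus, which can be useful elsewhere.

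One small imprecision in Step~2: your case split at $\rho=\sqrt d\,\eps$ together with the appeal to ``$\rho\le\eps\le1$'' leaves the range $\eps<\rho<\sqrt d\,\eps$ formally uncovered, and ``only cubes sharing a face contribute'' is not literally true for any $\rho>0$ (vertex- and edge-adjacent cubes always contribute). Both are harmless: either shift the threshold to $\rho=\eps$ (then $m_{ij}(\rho)\ne0\Rightarrow|x_i-x_j|\le(1+\sqrt d)\rho$ in the large-$\rho$ regime, and the same computation gives $C\rho^{sp}$), or observe that for $\eps\le\rho<\sqrt d\,\eps$ the crude bound $m_{ij}\le\eps^{2d}$ over the $O_d(1)$ contributing neighbours already yields $C\rho^{-d}\eps^{d+sp}\le C\rho^{sp}$.
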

The proofs of Theorems \ref{thm:discr-Poincare-wsp} and \ref{thm:Compactness-discrete-eps-to-0}
are very technical and mostly follow the outline of proofs from \cite{DiNezza2012}.
Hence, for better readability of the paper, we shift them to the appendix.

Finally, we turn to Poincaré-type inequalities on bounded domains
with Dirichlet boundary conditions or zero mean value. We hence define
the spaces 
\begin{align*}
W_{0}^{s,p}(\Q^{\eps}) & :=\left\{ u\in W^{s,p}(\Q^{\eps})\,:\;u|_{\partial\Q^{\eps}}\equiv0\right\} \,,\\
W_{(0)}^{s,p}(\Q^{\eps}) & :=\left\{ u\in W^{s,p}(\Q^{\eps})\,:\;\sumeps{x\in\Q^{\eps}}u=0\right\} \,.
\end{align*}
 The corresponding embedding theorems are the following.
\begin{thm}
\label{thm:embedding-dirichlet}Let $\Q\subset\Rd$ be a bounded domain
with $C^{0,1}$ boundary, let $p\in(1,\infty)$, $s\in(0,1)$. Identifying
every function $u\in W_{0}^{s,p}(\Q^{\eps})$ with its extension by
$0$ outside $\Q^{\eps}$, there exists $C>0$ independent from $\eps$
such that 
\begin{equation}
\forall u\in W_{0}^{s,p}(\Q^{\eps})\,:\quad\left[u\right]_{s,p,\eps}\leq C\left[u\right]_{s,p,\eps,\Q}\,.\label{eq:PI-WspepsQ-Dir-1}
\end{equation}
For every $q\in[p,p^{\star}]$, there exists a constant $C_{p,q}>0$
depending only on $d$, $p$, $q$, $s$ and $\Q$ such that for every
$\eps>0$ it holds
\begin{equation}
\forall u\in W_{0}^{s,p}(\Q^{\eps})\,:\quad\left\Vert u\right\Vert _{L^{q}(\Q\cap\Zde)}\leq C_{p,q}\left[u\right]_{s,p,\eps,\Q}^{p}\label{eq:PI-WspepsQ-Dir-2}
\end{equation}
Finally, let $p^{\star}:=dp/(d-sp)$ if $sp<d$, and $p^{\star}=\infty$
else. For every $\eps>0$ let $\ue\in W^{1,p}(\Q^{\eps})$ such that
$\sup_{\eps>0}\left\lfloor \ue\right\rfloor _{s,p,\eps,\Q}<\infty$.
Then, for every $q\in[p,p^{\star})$ the family $\left(\cR_{\eps}^{\ast}\ue\right)_{\eps>0}$
is precompact in $L^{q}(\Q^{\eps})$.
\end{thm}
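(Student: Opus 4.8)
The plan is to reduce everything to the two results already available on $\Zde$, namely the discrete Sobolev inequality (Theorem \ref{thm:discr-Poincare-wsp}) and the compactness result (Theorem \ref{thm:Compactness-discrete-eps-to-0}), once we have established the key extension-type estimate \eqref{eq:PI-WspepsQ-Dir-1}. So the heart of the matter is: for a $C^{0,1}$ domain $\Q$ and $u\in W_0^{s,p}(\Q^{\eps})$, extending $u$ by $0$ to all of $\Zde$ gives $\left[u\right]_{s,p,\eps}\leq C\left[u\right]_{s,p,\eps,\Q}$ with $C$ independent of $\eps$. This is the discrete analogue of the continuous fact (see Remark \ref{rem:Sobolev-interpolate}) that extension-by-zero is bounded $W_0^{s,p}(\Q)\hookrightarrow W^{s,p}(\Rd)$.

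First I would split the double sum defining $\left[u\right]_{s,p,\eps}^p$ into the three regions $\Q^\eps\times\Q^\eps$, $\Q^\eps\times(\Zde\setminus\Q^\eps)$ (and its symmetric partner), and $(\Zde\setminus\Q^\eps)\times(\Zde\setminus\Q^\eps)$. The first piece is exactly $\left[u\right]_{s,p,\eps,\Q}^p$; the last piece vanishes because $u\equiv 0$ there. For the cross term one must bound
\[
\sumeps{x\in\Q^\eps}\;\sumeps{y\in\Zde\setminus\Q^\eps}\frac{|u(x)|^p}{|x-y|^{d+sp}}
= \sumeps{x\in\Q^\eps}|u(x)|^p\left(\sumeps{y\in\Zde\setminus\Q^\eps}\frac{1}{|x-y|^{d+sp}}\right).
\]
The inner sum is controlled by $C\,\mathrm{dist}(x,\partial\Q)^{-sp}$ uniformly in $\eps$ (a discrete Riemann-sum comparison with the convergent integral $\int_{|z|>\delta}|z|^{-d-sp}\d z \sim \delta^{-sp}$, using that $u$ vanishes on the collar $\partial\Q^\eps$ so the nearest exterior point is at distance $\gtrsim\mathrm{dist}(x,\partial\Q)$). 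One is then left to absorb a weighted term $\sumeps{x}|u(x)|^p\,\mathrm{dist}(x,\partial\Q)^{-sp}$ into $\left[u\right]_{s,p,\eps,\Q}^p$ — this is a discrete Hardy-type inequality near the $C^{0,1}$ boundary, proved by flattening $\partial\Q$ locally (using the Lipschitz charts) and reducing to a half-space Hardy inequality for the Gagliardo seminorm; the uniformity in $\eps$ comes from the fact that the Lipschitz constants of the charts and the number of charts are fixed. I expect this discrete Hardy/Riemann-sum estimate to be the main obstacle: the boundary collar $\partial\Q^\eps$ has width $\sim\eps$, so one must track carefully that the zero boundary data really does give the $\mathrm{dist}^{-sp}$ decay with a constant independent of $\eps$, and handle the $sp=1$ borderline by the $C^{0,1}$ hypothesis exactly as in the continuous theory.

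Once \eqref{eq:PI-WspepsQ-Dir-1} is in hand, the rest is immediate. For \eqref{eq:PI-WspepsQ-Dir-2}: given $u\in W_0^{s,p}(\Q^\eps)$, extend by $0$ and apply Theorem \ref{thm:discr-Poincare-wsp} to get $\left\Vert u\right\Vert_{L^q(\Zde)}\leq C_{p,q}\left\Vert u\right\Vert_{s,p,\eps}$; but $\left\Vert u\right\Vert_{L^p(\Zde)}=\left\Vert u\right\Vert_{L^p(\Q^\eps)}\leq C\left[u\right]_{s,p,\eps,\Q}$ by a standard Poincar\'e inequality on the bounded set $\Q^\eps$ (which itself follows from \eqref{eq:PI-WspepsQ-Dir-1} combined with the Sobolev inequality on $\Zde$ and H\"older on the bounded domain, or directly from \eqref{eq:PI-WspepsQ-Dir-1} and the fact that $\Q^\eps$ has finite measure), and $\left[u\right]_{s,p,\eps}\leq C\left[u\right]_{s,p,\eps,\Q}$ by \eqref{eq:PI-WspepsQ-Dir-1}; since $\left\Vert u\right\Vert_{L^q(\Q\cap\Zde)}\leq\left\Vert u\right\Vert_{L^q(\Zde)}$, this gives the claim. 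Finally, for the compactness statement: a sequence $\ue$ bounded in $\left[\cdot\right]_{s,p,\eps,\Q}$ is, after extension by $0$ and by the Poincar\'e inequality just proved, bounded in $\left\Vert\cdot\right\Vert_{s,p,\eps}$ with uniform support in a fixed bounded set; one can therefore either invoke Theorem \ref{thm:Compactness-discrete-eps-to-0} on a fixed rectangular box $\tilde\Q\supset\overline\Q$ (which is a uniform extension domain, as noted after Definition \ref{def:uniform-extension-domain}) containing all the extended functions, or run the Fr\'echet--Kolmogorov argument directly — either way $\left(\cR_\eps^\ast\ue\right)$ is precompact in $L^q$ for $q\in[p,p^\star)$, and restriction to $\Q^\eps$ preserves this.
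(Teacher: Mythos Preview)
Your overall architecture matches the paper's: establish the extension estimate \eqref{eq:PI-WspepsQ-Dir-1} and then read off \eqref{eq:PI-WspepsQ-Dir-2} and the compactness from Theorems \ref{thm:discr-Poincare-wsp} and \ref{thm:Compactness-discrete-eps-to-0}. The difference is entirely in how you propose to get \eqref{eq:PI-WspepsQ-Dir-1}.

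You attack the cross term directly on the lattice: bound the inner sum by $C\,\mathrm{dist}(x,\partial\Q)^{-sp}$ and then absorb the resulting weighted $L^p$-norm via a discrete fractional Hardy inequality, proved by boundary flattening. The paper instead \emph{avoids proving any discrete Hardy inequality}: it lifts $u$ to the continuum via the finite-element interpolation $\cQ_{\eps}$ (Lemma \ref{lem:Qphi-phi-wsp-equivalence}), so that $v^{\eps}:=\cQ_{\eps}u\in W_0^{s,p}(\Q)$ with $\|v^{\eps}\|_{s,p,\Q}\leq C\|u\|_{s,p,\eps,\Q}$, then invokes the \emph{continuous} extension-by-zero bound $\|v^{\eps}\|_{s,p}\leq C\|v^{\eps}\|_{s,p,\Q}$ from Remark \ref{rem:Sobolev-interpolate}, and finally shows the discrete cross term is pointwise dominated by the corresponding continuous integral for $v^{\eps}$ (using $\cQ_{\eps}u(\tilde y)\geq 2^{-d}u(y)$ and a distance comparison $|x-y|\sim|\tilde x-\tilde y|$). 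In effect the Hardy-type work is outsourced to the known continuous result rather than redone on $\Zde$.

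Your route is plausible but genuinely harder, and there is one concrete risk you should flag: the continuous fractional Hardy inequality $\int_{\Q}|u|^p\,\mathrm{dist}(\cdot,\partial\Q)^{-sp}\leq C[u]_{s,p,\Q}^p$ for $u\in C_c^{\infty}(\Q)$ \emph{fails} at $sp=1$ (this is classical; see e.g.\ Dyda), so the sentence ``handle the $sp=1$ borderline by the $C^{0,1}$ hypothesis exactly as in the continuous theory'' is not a safe placeholder. The paper's interpolation route sidesteps this because Remark \ref{rem:Sobolev-interpolate} gives the full-norm extension bound for all $s\in(0,1)$ without passing through Hardy. If you want to pursue the direct approach, you would at minimum need to work with full norms rather than seminorms in the cross-term estimate, or restrict to $sp\neq 1$.
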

Furthermore, we have a similar result in case $W_{0}^{s,p}(\Q^{\eps})$
is replaced by $W_{(0)}^{s,p}(\Q^{\eps})$.
\begin{thm}
\label{thm:embedding-mean}Let $\Q\subset\Rd$ be a bounded uniform
extension domain with $C^{0,1}$ boundary, let $p\in(1,\infty)$,
$s\in(0,1)$. For every $q\in[p,p^{\star}]$, there exists a constant
$C_{p,q}>0$ depending only on $d$, $p$, $q$, $s$ and $\Q$ such
that for every $\eps>0$ it holds
\begin{equation}
\forall u\in W_{(0)}^{s,p}(\Q^{\eps})\,:\quad\left\Vert u\right\Vert _{L^{q}(\Q\cap\Zde)}\leq C_{p,q}\left[u\right]_{s,p,\eps,\Q}^{p}\label{eq:PI-WspepsQ-mean-2}
\end{equation}
Finally, let $p^{\star}:=dp/(d-sp)$ if $sp<d$, and $p^{\star}=\infty$
else. For every $\eps>0$ let $\ue\in W^{1,p}(\Q^{\eps})$ such that
$\sup_{\eps>0}\left\lfloor \ue\right\rfloor _{s,p,\eps,\Q}<\infty$.
Then, for every $q\in[p,p^{\star})$ the family $\left(\cR_{\eps}^{\ast}\ue\right)_{\eps>0}$
is precompact in $L^{q}(\Q^{\eps})$.
\end{thm}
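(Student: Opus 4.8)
The final statement is Theorem~\ref{thm:embedding-mean}, concerning Poincar\'e-type inequalities and compactness for the zero-mean discrete fractional Sobolev space $W_{(0)}^{s,p}(\Q^\eps)$ on a uniform extension domain with $C^{0,1}$ boundary.

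\medskip

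The plan is to reduce everything to the already-established results on uniform extension domains, namely Theorems~\ref{thm:discr-Poincare-wsp-Qeps} and \ref{thm:Compactness-discrete-eps-to-0}, by subtracting discrete averages and invoking a compactness/contradiction argument to upgrade a full $W^{s,p}$-bound to a bound by the seminorm alone. First I would prove the seminorm Poincar\'e inequality: there exists $C$ independent of $\eps$ such that $\|u\|_{L^p(\Q^\eps)}\le C\,[u]_{s,p,\eps,\Q}$ for all $u\in W_{(0)}^{s,p}(\Q^\eps)$. The standard route is a contradiction argument: suppose not, then there are $\eps_n\to$ (some limit, possibly $0$, possibly a fixed value) and $u_n\in W_{(0)}^{s,p}(\Q^{\eps_n})$ with $\|u_n\|_{L^p(\Q^{\eps_n})}=1$ and $[u_n]_{s,p,\eps_n,\Q}\to 0$. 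Then $\sup_n\|u_n\|_{s,p,\eps_n,\Q}<\infty$, so by Theorem~\ref{thm:Compactness-discrete-eps-to-0} (compactness on the uniform extension domain $\Q$) a subsequence of $\cR_{\eps_n}^\ast u_n$ converges strongly in $L^p(\Q)$ to some $u$ with $\|u\|_{L^p(\Q)}=1$; lower semicontinuity of the Gagliardo seminorm under this convergence (which one gets by Fatou on the difference quotients, comparing $[\cR_\eps^\ast u]_{s,p,\Q}$ with $[u]_{s,p,\eps,\Q}$ up to vanishing boundary-layer terms) forces $[u]_{s,p,\Q}=0$, hence $u$ is a.e. constant on $\Q$ (which is connected, being $C^{0,1}$); and the zero-mean condition $\sumeps{x\in\Q^{\eps_n}}u_n=0$ passes to the limit to give $\int_\Q u=0$, so $u\equiv 0$, contradicting $\|u\|_{L^p(\Q)}=1$. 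One has to be slightly careful about the case where $\eps_n$ does not tend to $0$ (then only finitely many distinct spaces occur and the classical discrete Poincar\'e inequality on each fixed lattice handles it), so the genuinely new content is the $\eps_n\to 0$ regime. This gives \eqref{eq:PI-WspepsQ-mean-2} for $q=p$; for general $q\in[p,p^\star]$ combine with Theorem~\ref{thm:discr-Poincare-wsp-Qeps}: $\|u\|_{L^q(\Q\cap\Zde)}\le C\|u\|_{s,p,\eps,\Q}=C(\|u\|_{L^p(\Q^\eps)}^p+[u]_{s,p,\eps,\Q}^p)^{1/p}\le C'[u]_{s,p,\eps,\Q}$ using the seminorm Poincar\'e inequality just proved.

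\medskip

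For the final (compactness) assertion: given $\ue$ with $\sup_\eps [\ue]_{s,p,\eps,\Q}<\infty$, the seminorm Poincar\'e inequality immediately gives $\sup_\eps\|\ue\|_{s,p,\eps,\Q}<\infty$, and then Theorem~\ref{thm:Compactness-discrete-eps-to-0} applied on the uniform extension domain $\Q$ yields precompactness of $(\cR_\eps^\ast\ue)_{\eps>0}$ in $L^q(\Q^\eps)$ for every $q\in[p,p^\star)$. (The hypothesis ``$\ue\in W^{1,p}(\Q^\eps)$'' in the statement is only there to make $[\ue]_{s,p,\eps,\Q}$ finite for each fixed $\eps$; it plays no further role.) Strictly speaking one should also note the trivial reduction that $W_{(0)}^{s,p}(\Q^\eps)$ functions need not themselves be mean-zero in the compactness statement --- but since compactness only uses the seminorm bound plus the $L^p$ bound, and the latter follows from the former via the Poincar\'e inequality, no separate argument is needed.

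\medskip

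The main obstacle is the contradiction step in the Poincar\'e inequality: one must make precise the lower semicontinuity $[u]_{s,p,\Q}\le\liminf_n [\cR_{\eps_n}^\ast u_n]_{s,p,\Q}$ and, separately, control the discrepancy between the piecewise-constant reconstruction's Gagliardo seminorm $[\cR_\eps^\ast u_n]_{s,p,\Q}$ and the purely discrete seminorm $[u_n]_{s,p,\eps_n,\Q}$. This comparison is exactly the kind of estimate carried out in the appendix proofs of Theorems~\ref{thm:discr-Poincare-wsp} and \ref{thm:Compactness-discrete-eps-to-0} (splitting the double integral/sum into a near-diagonal part, where one uses $sp<d$ together with the $p$-growth to absorb the block size $\eps$, and a far part, where the kernel $|x-y|^{-d-sp}$ is bounded); so in practice this theorem's proof will cite those estimates rather than redo them. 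The connectedness of $\Q$ (guaranteed since it is called a domain with $C^{0,1}$ boundary) is needed so that $[u]_{s,p,\Q}=0\Rightarrow u\equiv\mathrm{const}$, and the zero-mean constraint is what then kills the constant; this is the only place the $C^{0,1}$ hypothesis is genuinely used beyond what the uniform extension domain property already provides.
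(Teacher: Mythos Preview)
Your proposal is correct and follows essentially the same contradiction-plus-compactness route as the paper's proof, which also assumes \eqref{eq:PI-WspepsQ-mean-2} fails, normalizes, invokes Theorem~\ref{thm:Compactness-discrete-eps-to-0} to extract a strongly $L^q$-convergent subsequence, and then argues the limit must vanish by the zero-mean constraint, yielding the contradiction; compactness is likewise deduced directly from Theorem~\ref{thm:Compactness-discrete-eps-to-0}. The paper is terser than you (it treats general $q$ in one go rather than doing $q=p$ first and bootstrapping via Theorem~\ref{thm:discr-Poincare-wsp-Qeps}, and it disposes of the limit in a single line rather than your more careful lower-semicontinuity-then-constant-then-zero-mean chain), but the underlying strategy is identical.
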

The proof of Theorems \ref{thm:embedding-dirichlet} and \ref{thm:embedding-mean}
is given in the following subsection. It will be based on the fact
that $W^{s,p}(\Zde)$ embeds into $W^{s,p}(\Rd)$ via a finite element
interpolation operator.

\subsection{\label{sub:Proof-poincare}Proof of Theorems \ref{thm:embedding-dirichlet}
and \ref{thm:embedding-mean} }

We first study an interesting connection between $W^{s,p}(\Rd)$ and
$W^{s,p}(\Zde)$. Let 
\[
\rmP:\,\,[0,1]\times\{0,1\}\to\R\,,\quad(x,\kappa)\mapsto\begin{cases}
x & \mbox{if }\kappa=1\\
1-x & \mbox{if }\kappa=0
\end{cases}\,,
\]
we define for $x=\left(x_{j}\right)_{j=1\dots d}$ and $\kappa=\left(\kappa_{j}\right)_{j=1\dots d}\in\{0,1\}^{d}$
and $\varphi\in\cH_{\eps}$: 
\[
\left(\cQ_{\eps}\varphi\right)(x):=\sum_{\kappa\in\{0,1\}^{d}}\varphi\left(\eps\left\lfloor \frac{x}{\eps}\right\rfloor +\eps\kappa\right)\prod_{j=1}^{d}\rmP\left(\left\{ \frac{x_{j}}{\eps}\right\} ,\kappa_{j}\right)\,,
\]
the finite element interpolation of $\varphi$. Our first corollary
on the operator $\cQ_{\eps}$ is the following. 
\begin{cor}
Let $p\in[1,\infty)$. There exists a constant $C>0$ for every $\varphi\in\cH_{\eps}$
\begin{equation}
C^{-1}\left\Vert \varphi\right\Vert _{L^{p}(\Zde)}\leq\left\Vert \cQ_{\eps}\varphi\right\Vert _{L^{p}(\Zde)}\leq C\left\Vert \varphi\right\Vert _{L^{p}(\Zde)}\,.\label{eq:lem:Qphi-phi-lp-equivalence}
\end{equation}
 
\end{cor}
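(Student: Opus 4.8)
The plan is to prove the two-sided $L^p$-norm equivalence \eqref{eq:lem:Qphi-phi-lp-equivalence} by a purely local computation on each cube $\eps(\Zd + [0,1]^d)$, exploiting the fact that the finite element interpolation $\cQ_\eps\varphi$ on such a cube is a convex combination of the $2^d$ corner values of $\varphi$. First I would fix a lattice point $z\in\Zd$ and consider the cube $C_z^\eps := \eps z + [0,\eps)^d$. For $x\in C_z^\eps$, the quantity $\eps\lfloor x/\eps\rfloor$ equals $\eps z$, and the coefficients $\prod_{j=1}^d \rmP(\{x_j/\eps\},\kappa_j)$ are nonnegative and sum to $1$ over $\kappa\in\{0,1\}^d$ (since $\rmP(t,1)+\rmP(t,0) = t + (1-t) = 1$ for each coordinate, and the product of such sums telescopes). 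Hence on $C_z^\eps$ the function $\cQ_\eps\varphi$ is a convex combination of the values $\varphi(\eps z + \eps\kappa)$, $\kappa\in\{0,1\}^d$.

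Since the map $x\mapsto \cQ_\eps\varphi(x)$ restricted to $C_z^\eps$ is really a fixed affine-multilinear template evaluated at $\{x_j/\eps\}$, a rescaling $x = \eps z + \eps t$, $t\in[0,1)^d$, reduces the computation to the unit cube: $\int_{C_z^\eps}|\cQ_\eps\varphi(x)|^p\,\d x = \eps^d \int_{[0,1)^d} \bigl|\sum_{\kappa} \varphi(\eps z+\eps\kappa)\,\lambda_\kappa(t)\bigr|^p\,\d t$, where $\lambda_\kappa(t) = \prod_j \rmP(t_j,\kappa_j)$ are the standard $Q_1$ shape functions on $[0,1]^d$. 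For the upper bound I would apply Jensen's inequality (convexity of $|\cdot|^p$) with the probability weights $\lambda_\kappa(t)$ to get $|\sum_\kappa \varphi(\eps z+\eps\kappa)\lambda_\kappa(t)|^p \le \sum_\kappa |\varphi(\eps z+\eps\kappa)|^p \lambda_\kappa(t)$, then integrate in $t$ and observe $\int_{[0,1)^d}\lambda_\kappa(t)\,\d t = 2^{-d}$, yielding $\int_{C_z^\eps}|\cQ_\eps\varphi|^p \le 2^{-d}\eps^d\sum_{\kappa}|\varphi(\eps z+\eps\kappa)|^p$. Summing over $z\in\Zd$, each lattice value $\varphi(\eps w)$ appears in at most $2^d$ cubes, so $\|\cQ_\eps\varphi\|_{L^p(\Zde)}^p = \sum_z \int_{C_z^\eps}|\cQ_\eps\varphi|^p \le \eps^d \sum_{w\in\Zd}|\varphi(\eps w)|^p = \|\varphi\|_{L^p(\Zde)}^p$, giving the right inequality with $C=1$ (or any $C\ge 1$).

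For the lower bound, the point is that the bilinear form $(a_\kappa)_\kappa \mapsto \int_{[0,1)^d}|\sum_\kappa a_\kappa\lambda_\kappa(t)|^p\,\d t$ is an $\ell^p$-type norm-comparable expression on the finite-dimensional space $\R^{2^d}$: it vanishes only if all $a_\kappa$ are equal to zero when $p<\infty$? No --- more carefully, $\sum_\kappa a_\kappa\lambda_\kappa \equiv 0$ on $[0,1)^d$ forces every $a_\kappa = 0$ because the $Q_1$ shape functions are linearly independent. Hence by equivalence of all norms on the finite-dimensional space spanned by $\{\lambda_\kappa\}$, there is a dimensional constant $c_d>0$ with $\int_{[0,1)^d}|\sum_\kappa a_\kappa\lambda_\kappa(t)|^p\,\d t \ge c_d\sum_\kappa |a_\kappa|^p$. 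Applying this on each cube and again summing over $z$ — now using that the cubes $C_z^\eps$ are disjoint and that the value $\varphi(\eps w)$ occurs as a corner of at least one cube — gives $\|\cQ_\eps\varphi\|_{L^p(\Zde)}^p \ge c_d\,\eps^d\sum_{z}\sum_\kappa|\varphi(\eps z+\eps\kappa)|^p \ge c_d\,\|\varphi\|_{L^p(\Zde)}^p$ (each $w$ is picked up at least once; picking up multiple times only helps). This yields the left inequality with $C^{-1} = c_d^{1/p}$.

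The only mild subtlety — and the main thing to get right — is bookkeeping the multiplicity with which each lattice value $\varphi(\eps w)$ enters the sum over cubes: for the upper bound one must use the crude bound "at most $2^d$ cubes" while for the lower bound one only needs "at least one cube", and one must make sure $\varphi$ has bounded support (or that the relevant sums are finite a priori) so that rearranging the sum over $z$ is legitimate; for general $\varphi\in\cH_\eps$ the identity holds in $[0,\infty]$ anyway so no integrability hypothesis is actually needed. I would also remark that the constant $C$ depends only on $d$ and $p$, not on $\eps$, which is exactly what is needed downstream. The case $p=\infty$, if desired, is immediate since $\cQ_\eps\varphi$ is a convex combination of corner values, so $\|\cQ_\eps\varphi\|_\infty \le \|\varphi\|_\infty$, and conversely $\cQ_\eps\varphi(\eps w) = \varphi(\eps w)$ so $\|\cQ_\eps\varphi\|_\infty \ge \|\varphi\|_\infty$, giving $C=1$.
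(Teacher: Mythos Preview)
Your proof is correct and is exactly the kind of elementary cube-by-cube computation the paper has in mind when it says the corollary ``is straight forward to prove from the definition of $\cQ_{\eps}$.'' The upper bound via Jensen's inequality on the convex combination of corner values and the lower bound via equivalence of norms on the finite-dimensional $Q_1$ space are the natural steps, and your bookkeeping of how often each lattice value appears (exactly $2^d$ times, in fact) is fine.
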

This corollary is straight forward to prove from the definition of
$\cQ_{\eps}$. Moreover, we obtain the following natural property.
\begin{lem}
\label{lem:Qphi-phi-wsp-equivalence}Let $p\in[1,\infty)$ and $s\in(0,1)$.
Then there exists $C>0$ such that for every $\eps>0$
\begin{equation}
\forall\varphi\in\cH_{\eps}\,:\quad\left[\cQ_{\eps}\varphi\right]_{s,p}^{p}\leq C\left[\varphi\right]_{s,p,\eps}^{p}\label{eq:lem:Qphi-phi-wsp-equivalence}
\end{equation}
\end{lem}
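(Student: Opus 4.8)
The plan is to bound the Gagliardo seminorm of the piecewise-linear interpolation $\cQ_\eps\varphi$ on $\R^d$ by the discrete seminorm of $\varphi$, splitting the double integral over $\R^{2d}$ according to whether the two points $x,y$ lie close together (distance $\lesssim\eps$) or far apart. After rescaling it suffices by homogeneity to treat $\eps=1$; indeed both sides scale the same way under $x\mapsto\eps x$, since $[\,\cdot\,]_{s,p}$ and $[\,\cdot\,]_{s,p,\eps}$ carry the same dimensional weight $|x-y|^{-d-sp}$ against $d$-dimensional (discrete) integration in each variable. So I would fix $\eps=1$, write $Q:=\cQ_1$, and denote by $z$ a generic point of $\Zd$ and by $\square_z:=z+[0,1)^d$ the unit cell.

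First, the \emph{far field}. For $|x-y|\ge c_0$ (say $c_0=2\sqrt d$, twice the cell diameter) I use that $Q\varphi$ restricted to a cell $\square_z$ is a convex combination of the values $\varphi(z+\kappa)$, $\kappa\in\{0,1\}^d$, hence $|Q\varphi(x)-Q\varphi(y)|\le \max_{\kappa,\kappa'}|\varphi(z+\kappa)-\varphi(z'+\kappa')|$ when $x\in\square_z$, $y\in\square_{z'}$. Raising to the $p$-th power, using that there are only $2^d$ corners per cell and that for $x\in\square_z$, $y\in\square_{z'}$ with $|x-y|\ge c_0$ one has $|x-y|\asymp |z-z'|\asymp|(z+\kappa)-(z'+\kappa')|$ uniformly, the integral over $\{|x-y|\ge c_0\}$ is bounded by a constant times
\[
\sum_{z\in\Zd}\sum_{z'\in\Zd}\ \sum_{\kappa,\kappa'\in\{0,1\}^d}\frac{|\varphi(z+\kappa)-\varphi(z'+\kappa')|^p}{(1+|z-z'|)^{d+sp}}\,,
\]
which after re-indexing $w=z+\kappa$, $w'=z'+\kappa'$ and absorbing the finitely many shifts is $\le C\,[\varphi]_{s,p,1}^p$; here one also needs to dominate the pairs with $|z-z'|$ small but $|x-y|\ge c_0$ impossible, so only genuinely separated indices contribute, and when $w=w'$ the summand vanishes. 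This handles the far field.

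Second, the \emph{near field} $|x-y|<c_0$. Here the difference $Q\varphi(x)-Q\varphi(y)$ involves only finitely many neighbouring cells, and since $Q\varphi$ is Lipschitz on each cell with $|\nabla Q\varphi|\lesssim \sum_{|e|=1}|\varphi(z+e)-\varphi(z)|$ on $\square_z$ (a one-dimensional computation on the tensor-product hat functions, with $e$ ranging over cell edges), I bound $|Q\varphi(x)-Q\varphi(y)|^p\le C|x-y|^p\big(\sum_{\text{nearby }z,e}|\varphi(z+e)-\varphi(z)|^p\big)$. Integrating $|x-y|^{p}\cdot|x-y|^{-d-sp}=|x-y|^{p(1-s)-d}$ over $\{|x-y|<c_0\}$ converges because $s<1$ gives $p(1-s)>0$; summing the resulting constant times $\sum_{z}\sum_{|e|=1}|\varphi(z+e)-\varphi(z)|^p$ over $z$ we recognize a sub-sum of $[\varphi]_{s,p,1}^p$ (the nearest-neighbour terms, where $|x-y|=1$ so the weight is $1$), giving $\le C[\varphi]_{s,p,1}^p$. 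Adding the two regimes yields \eqref{eq:lem:Qphi-phi-wsp-equivalence}, and then undoing the rescaling gives the claim for all $\eps>0$ with the same constant.

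The main obstacle is the near-field estimate: one must produce the pointwise gradient bound for the tensor-product $P^1$ interpolant purely in terms of first differences of $\varphi$ across \emph{edges} of the cubical mesh, keeping track that the edge-difference $p$-th powers that appear can be controlled by the full discrete Gagliardo seminorm (they are exactly its unit-distance contribution), and checking the integrability $\int_{|h|<c_0}|h|^{p(1-s)-d}\,dh<\infty$, which is where $s<1$ enters essentially. The bookkeeping of which cells interact in each regime, and the re-indexing in the far field so that no nonzero term is double counted, is routine but must be done carefully.
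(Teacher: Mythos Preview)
Your proposal is correct and follows essentially the same approach as the paper's proof: a near/far splitting at scale $\eps$, with the near field handled via the Lipschitz (gradient) bound on $\cQ_\eps\varphi$ in terms of nearest-neighbour differences together with the integrability of $|h|^{p(1-s)-d}$ for $s<1$, and the far field handled cell-by-cell using that the interpolant is a convex combination of corner values so that $|x-y|\asymp|z-z'|$. Your preliminary rescaling to $\eps=1$ is a cosmetic simplification not in the paper, but the decomposition and the key estimates coincide.
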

\begin{proof}
For $\kappa\in\{0,1\}^{d}$ we write $\kappa^{i,0}$
and $\kappa^{i,1}$ for the vectors where the $i$-th entry of $\kappa$
is replaced by $0$ and $1$ respectively. In order to reduce notation,
we write 
\[
\left(\delta_{i}^{\eps}\varphi\right)(x,\kappa):=\varphi\left(\eps\left\lfloor \frac{x}{\eps}\right\rfloor +\eps\kappa^{i,1}\right)-\varphi\left(\eps\left\lfloor \frac{x}{\eps}\right\rfloor +\eps\kappa^{i,0}\right)
\]
and hence obtain
\begin{equation}
\partial_{i}\cQ_{\eps}\varphi=\frac{1}{\eps}\sum_{\kappa\in\{0,1\}^{d}}\frac{1}{2}\left(\delta_{i}^{\eps}\varphi\right)(x,\kappa)\prod_{j\not=i}\rmP\left(\left\{ \frac{x_{j}}{\eps}\right\} ,\kappa_{j}\right)\,.\label{eq:lem:Qphi-phi-wsp-equivalence-help-1}
\end{equation}
For every $x\in\Rd$ let $\left\lfloor x\right\rfloor _{\eps}\in\Zde$
be the unique element such that $x\in\cC_{\eps}(x):=\left\lfloor x\right\rfloor _{\eps}+[0,\eps)^{d}$.
We denote $\overline{x}_{\eps}$ the center of $\cC_{\eps}(x)$ and
define 
$$A_{\eps}(x):=\Zde\cap\left(\eps\left\lfloor \frac{x}{\eps}\right\rfloor +[-\eps,\eps]^{d}\right)$$
as well as $B_{\eps}(x)=\overline{x}_{\eps}+[-\frac{3}{2}\eps,\frac{3}{2}\eps]^{d}$ and
$B_\eps^\complement(x):=\Rd\setminus B_\eps(x)$.
We then find for $\tilde{\varphi}=\cQ_{\eps}\varphi$ 
\begin{equation}
\left[\tilde\varphi \right]_{s,p}^{p}\leq\sum_{z\in\Zde}\int_{z+(0,\eps)^{d}}\d x\left(\int_{B_{\eps}(x)}\frac{\left|\tilde{\varphi}(x)-\tilde{\varphi}(y)\right|^{p}}{\left|x-y\right|^{d+sp}}\d y+\int_{B_{\eps}^{\complement}(x)}\frac{\left|\tilde{\varphi}(x)-\tilde{\varphi}(y)\right|^{p}}{\left|x-y\right|^{d+sp}}\d y\right)\,.\label{eq:lem:Qphi-phi-wsp-equivalence-help-2}
\end{equation}
Now, observe that with (\ref{eq:lem:Qphi-phi-wsp-equivalence-help-1})
it holds 
\begin{align*}
\int_{B_{\eps}(x)}\frac{\left|\tilde{\varphi}(x)-\tilde{\varphi}(y)\right|^{p}}{\left|x-y\right|^{d+sp}}\d y & \leq\left\Vert \nabla\tilde{\varphi}\right\Vert _{C^{\infty}(B_{\eps}(x))}^{p}\int_{B_{\eps}(x)}\left|x-y\right|^{p-sp-d}\d y\\
 & \leq C\left\Vert \nabla\tilde{\varphi}\right\Vert _{C^{\infty}(B_{\eps}(x))}^{p}\eps^{p-sp}\\
 & \leq C\eps^{-sp}\sum_{z\in A_{\eps}(x)}\sum_{i=1}^d\sum_{\tilde{\kappa}\in\{0,1\}^{d}}\eps^d\frac{\left|\left(\delta_{i}^{\eps}\varphi\right)(z,\kappa)\right|^{p}}{\eps^d}\\
 & \leq C\eps^{d}\sum_{y,z\in B_{\eps}(x)}\frac{\left|\varphi(z)-\varphi(y)\right|^{p}}{\left|z-y\right|^{d+ps}}\,,
\end{align*}
where $C$ changes in each line but is independent from $\eps$ and
$\varphi$. Furthermore, estimating $\frac{\left|\tilde{\varphi}(x)-\tilde{\varphi}(y)\right|^{p}}{\left|x-y\right|^{d+sp}}$
over each cell $\cC_{\eps}(y)$ it is easy to verify (see also the proof of Lemma \ref{lem:discr-isoperimetric-ineq} in the appendix) that we have
\[
\int_{B_{\eps}^{\complement}(x)}\frac{\left|\tilde{\varphi}(x)-\tilde{\varphi}(y)\right|^{p}}{\left|x-y\right|^{d+sp}}\d y\leq C\sum_{z\in\Zde\cap\left(\overline{x}_{\eps}+(-\eps,\eps)^{d}\right)}\sum_{y\in\Zde\backslash\left(\overline{x}_{\eps}+(-\eps,\eps)^{d}\right)}\eps^{d}\frac{\left|\varphi(z)-\varphi(y)\right|^{p}}{\left|z-y\right|^{d+sp}}\,.
\]
Hence the term in brackets on the right hand side of (\ref{eq:lem:Qphi-phi-wsp-equivalence-help-2})
is independent from $x\in z+(0,\eps)^{d}$ and we find 
\[
\left[u\right]_{s,p}^{p}\leq C\sumeps{x\in\Zde}\left(\sumeps{y\in\Zde\backslash\left(\overline{x}_{\eps}+(-\eps,\eps)^{d}\right)}\frac{\left|\varphi(x)-\varphi(y)\right|^{p}}{\left|x-y\right|^{d+sp}}+\sumeps{y\in A_{\eps}(x)}\frac{\left|\varphi(x)-\varphi(y)\right|^{p}}{\left|x-y\right|^{d+ps}}\right)\,.
\]
Since $C$ does not depend on $\eps$ or $\varphi$, this finally
yields (\ref{eq:lem:Qphi-phi-wsp-equivalence}).
\end{proof}

\begin{proof}[Proof of Theorem \ref{thm:embedding-dirichlet}]
Let $u\in W_{0}^{s,p}(\Q^{\eps})$. Due to (\ref{eq:lem:Qphi-phi-lp-equivalence})
and (\ref{eq:lem:Qphi-phi-wsp-equivalence}) we know that $\cQ_{\eps}u\in W_{0}^{s,p}(\Q)$.
We can now extend $v^{\eps}:=\cQ_{\eps}u$ to $\Rd$ by $0$ and obtain
$v^{\eps}\in W^{s,p}(\Rd)$ with $\left\Vert v^{\eps}\right\Vert _{s,p}\leq C\left\Vert v^{\eps}\right\Vert _{s,p,\Q}$,
where $C>0$ depends on $s$, $p$ and $\Q$. This follows from Remark \ref{rem:Sobolev-interpolate}.

We now show $\left\Vert u\right\Vert _{s,p,\eps}\leq C\left\Vert v^{\eps}\right\Vert _{s,p}$.
Since $\left\Vert v^{\eps}\right\Vert _{s,p}\leq C\left\Vert u\right\Vert _{s,p,\eps,\Q}$ by Lemma \ref{lem:Qphi-phi-wsp-equivalence},
this in turn implies the theorem by virtue of Theorems \ref{thm:discr-Poincare-wsp}
and \ref{thm:Compactness-discrete-eps-to-0}.

It only remains to show that 
\begin{equation}
\sumeps{x\in\Zde\backslash\Q}\,\sumeps{y\in\Q^{\eps}\backslash\partial\Q^{\eps}}\frac{\left|u(y)\right|^{p}}{\left|x-y\right|^{d+ps}}<C\left\Vert v^{\eps}\right\Vert _{s,p}\,.\label{eq:Poinc-in-diri-help-1}
\end{equation}
For this reason, let $x\in\Zde\backslash\Q$ and $y\in\Q^{\eps}\backslash\partial\Q^{\eps}$.
Then by definition of $\partial\Q^{\eps}$ in (\ref{eq:def-partial-Q-eps})
it holds $\left|x-y\right|\geq2\eps$. Let $\tilde{x}\in x+\left[-\frac{\eps}{2},\frac{\eps}{2}\right]^{d}$,
$\tilde{y}\in y+\left[-\frac{\eps}{2},\frac{\eps}{2}\right]^{d}$. In order to provide an upper bound on $|\tilde x - \tilde y|$ in terms of $|x-y|$ assume that $x\in y+[-2\eps,2\eps]$. It then
holds $\eps\leq\left|\tilde{x}-\tilde{y}\right|\leq3d^{\frac{1}{d}}\eps$.
Hence we can conclude that $3d^{\frac{1}{d}}\left|x-y\right|\geq\left|\tilde{x}-\tilde{y}\right|$. In case $x\not\in y+[-2\eps,2\eps]$ the ratio between $\left|x-y\right|$ and $\left|\tilde{x}-\tilde{y}\right|$ becomes smaller.
Furthermore, since $u\geq0$, we have $\cQ_{\eps}u(\tilde{y})\geq2^{-d}u(y)$
and 
\begin{multline*}
\int_{x+\left[-\frac{\eps}{2},\frac{\eps}{2}\right]^{d}}\int_{y+\left[-\frac{\eps}{2},\frac{\eps}{2}\right]^{d}}\frac{\left|\cQ_{\eps}u(\tilde{y})\right|^{p}}{\left|\tilde{x}-\tilde{y}\right|^{d+ps}}\d\tilde{y}\,\d\tilde{x}\\
\geq2^{-d}\left(3d^{\frac{1}{d}}\right)^{-d-ps}\int_{x+\left[-\frac{\eps}{2},\frac{\eps}{2}\right]^{d}}\int_{y+\left[-\frac{\eps}{2},\frac{\eps}{2}\right]^{d}}\frac{\left|u(y)\right|^{p}}{\left|x-y\right|^{d+ps}}\d\tilde{y}\,\d\tilde{x}\,.
\end{multline*}
Summing up the last inequality over $x$ and $y$ yields (\ref{eq:Poinc-in-diri-help-1}). 
\end{proof}

\begin{proof}[Proof of Theorem \ref{thm:embedding-mean}]
 Let us first verify that (\ref{eq:PI-WspepsQ-mean-2}) holds. Assume
that (\ref{eq:PI-WspepsQ-mean-2}) was wrong. Without loss of generality,
we might assume that $q>p$. In particular, we use $\left\Vert u_{\eps_{k}}\right\Vert _{L^{q}(\Q^{\eps_{k}})}\leq C\left\Vert u_{\eps_{k}}\right\Vert _{L^{p}(\Q^{\eps_{k}})}$.
Then there exists a sequence $\left(\eps_{k}\right)_{k\in\N}$, $\eps_{k}>0$,
and a sequence of functions $u_{\eps_{k}}\in\cH_{\eps_{k},(0)}(\Q)$
such that 
\begin{align*}
\left\Vert u_{\eps_{k}}\right\Vert _{L^{q}(\Q^{\eps_{k}})}=1 & \geq k\,\left[u_{\eps_{k}}\right]_{s,p,\eps_{k},\Q}^{p}\,,
\end{align*}
and we find $\cR_{\eps_{k}}^{\ast}u_{\eps_{k}}\to u$ strongly in
$L^{q}(\Q)$ by Theorem \ref{thm:Compactness-discrete-eps-to-0}.
But then $u=0$ since $\cR_{\eps_{k}}^{\ast}u_{\eps_{k}}\weakto0$
weakly in $L^{q}(\Q)$. This is a contradiction. The compactness
follows from Theorem \ref{thm:Compactness-discrete-eps-to-0}.
\end{proof}

\subsection{Dynamical systems}

Throughout this paper, we follow the setting of Papanicolaou and Varadhan
\cite{papanicolaou1979boundary} and make the following assumptions.
\begin{assumption}
\label{assu:Omega-mu-tau}Let $D\in\N$ and let $(\Omega,\sF,\P)$
be a probability space with a given family $(\tau_{x})_{x\in\Z^{D}}$
of measurable bijective mappings $\tau_{x}:\Omega\mapsto\Omega$,
having the properties of a \emph{dynamical system }on $(\Omega,\sF,\P)$,
i.e. they satisfy (i)-(iii):
\begin{enumerate}
\item [(i)]$\tau_{x}\circ\tau_{y}=\tau_{x+y}$ , $\tau_{0}=id$ (Group
property)
\item [(ii)]$\P(\tau_{-x}B)=\P(B)\quad\forall x\in\Z^{D},\,\,B\in\sF$
(Measure preserving)
\item [(iii)]$A:\,\,\Zd\times\Omega\rightarrow\Omega\qquad(x,\omega)\mapsto\tau_{x}\omega$
is measurable (Measurability of evaluation) 
\end{enumerate}
Let the system $(\tau_{x})_{x\in\Z^{D}}$ be ergodic i.e. for every
$\sF$-measurable set $B\subset\Omega$ holds
\begin{equation}
\left[\P\left((\tau_{x}(B)\cup B)\backslash(\tau_{x}(B)\cap B)\right)=0\,\,\forall x\in\Zd\right]\Rightarrow[\P(B)\in\{0,1\}]\,.\label{eq:def_ergodicity-2}
\end{equation}
\end{assumption}
\begin{thm}[Ergodic Theorem \cite{Daley1988} Theorem 10.2.II and also \cite{tempel1972ergodic}]
\label{thm:Ergodic-Theorem}Let $\left(A_{n}\right)_{n\in\N}$ be
a family of convex sets in $\Z^{D}$ such that $A_{n+1}\subset A_{n}$
and such that there exists a sequence $r_{n}$ with $r_{n}\to\infty$
as $n\to\infty$ such that $B_{r_{n}}(0)\cap\Z^{D}\subseteq A_{n}$.
If $\left(\omega_{x}\right)_{x\in\Z^{D}}$ is a stationary ergodic
random variable with finite expectation, then almost surely 
\begin{equation}
\frac{1}{\#A_{n}}\sum_{x\in A_{n}}\omega_{x}\to\E(\omega)\,.\label{eq:ergodic convergence}
\end{equation}

\end{thm}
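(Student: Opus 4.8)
The plan is to deduce this from the Wiener--Tempelman pointwise ergodic theorem for measure--preserving $\Z^{D}$--actions averaged along a regular (Tempelman) sequence of sets, the only nontrivial point being the verification that convex sets exhausting $\Z^{D}$ form such a sequence; ergodicity then upgrades the resulting almost sure limit from a conditional expectation to the deterministic constant $\E(\omega)$. Throughout I read the nestedness in the increasing direction, i.e.\ $A_{k}\subseteq A_{n}$ for $k\le n$ with $\bigcup_{n}A_{n}=\Z^{D}$, which together with $B_{r_{n}}(0)\cap\Z^{D}\subseteq A_{n}$ and $r_{n}\to\infty$ is what an averaging theorem requires. The first reduction: by ergodicity (\ref{eq:def_ergodicity-2}) the $\sigma$--algebra $\mathcal{I}\subseteq\sF$ of shift--invariant events is $\P$--trivial, so $\E(\omega\mid\mathcal{I})=\E(\omega)$ almost surely; hence it suffices to prove $\frac{1}{\#A_{n}}\sum_{x\in A_{n}}\omega_{x}\to\E(\omega\mid\mathcal{I})$ a.s.\ for an arbitrary $\omega\in L^{1}(\Omega,\P)$ (after subtracting the mean one may even take $\E(\omega)=0$).

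The geometric heart of the matter is to verify Tempelman's regularity condition: there is $C=C(D)$ with $\#(A_{n}-A_{n})\le C\,\#A_{n}$ for all large $n$; under nestedness this coincides with $\#\!\bigl(\bigcup_{k\le n}(A_{k}-A_{n})\bigr)\le C\#A_{n}$. I would obtain it from two ingredients. First, for any convex body $K$ with inradius $r$ one has $(1-\sqrt{D}/r)^{D}\mathrm{vol}(K)\le\#(K\cap\Z^{D})\le(1+\sqrt{D}/r)^{D}\mathrm{vol}(K)$, since, after centering the inball, $K\ominus B_{\sqrt D}\subseteq\bigcup_{z\in K\cap\Z^{D}}(z+[0,1)^{D})\subseteq K+B_{\sqrt D}$, together with the convexity facts $K+B_{t}\subseteq(1+t/r)K$ and $(1-t/r)K\subseteq K\ominus B_{t}$. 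Second, the Rogers--Shephard inequality gives $\mathrm{vol}(K-K)\le\binom{2D}{D}\mathrm{vol}(K)$. Applying the first estimate to $K_{n}:=\mathrm{conv}\,A_{n}$ (inradius $\ge r_{n}$) and to $K_{n}-K_{n}$ (inradius $\ge 2r_{n}$), and using $A_{n}=K_{n}\cap\Z^{D}$ and $A_{n}-A_{n}\subseteq(K_{n}-K_{n})\cap\Z^{D}$, yields $\#(A_{n}-A_{n})\le C(D)\#A_{n}$ for large $n$; the hypothesis $r_{n}\to\infty$ is used exactly to make these discrete/continuous comparisons uniform in $n$.

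With the regularity condition in hand, the statement is precisely the Wiener--Tempelman pointwise ergodic theorem cited from \cite{Daley1988,tempel1972ergodic}: for a measure--preserving $\Z^{D}$--action and a nested, Tempelman--regular sequence exhausting $\Z^{D}$, the averages $\frac{1}{\#A_{n}}\sum_{x\in A_{n}}\omega_{x}$ converge a.s.\ and in $L^{1}$ to $\E(\,\cdot\mid\mathcal{I})$. I would not reprove this, only recall its architecture: (a) a maximal ergodic inequality for $\sup_{n}\frac{1}{\#A_{n}}\bigl|\sum_{x\in A_{n}}\omega_{x}\bigr|$, derived by transference to a dynamical maximal function together with a Vitali--type covering argument for the sets $A_{n}-A_{n}$ --- this is where the Tempelman bound of the previous paragraph enters; (b) convergence on the dense subspace of bounded functions, e.g.\ via the $L^{2}$ mean ergodic theorem; (c) the Banach principle, combining (a) and (b) to pass to all of $L^{1}$. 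Composing with the reduction above gives \eqref{eq:ergodic convergence} almost surely.

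The main obstacle is the maximal inequality / covering lemma underlying step (a); the convex--geometry verification of Tempelman's condition is routine, but it is there --- and only there --- that convexity and $r_{n}\to\infty$ are genuinely needed. As an alternative to the Tempelman machinery one could iterate the one--dimensional Birkhoff theorem coordinate by coordinate (valid because, although $\tau_{e_{i}}$ need not be ergodic, Birkhoff's theorem still gives a.s.\ convergence to the appropriate conditional expectation), but then the convex sets $A_{n}$ must be sliced into columns and the resulting boundary errors estimated, once more using $r_{n}\to\infty$; this route is more elementary but messier, and I would prefer the Tempelman formulation since it is exactly what is cited.
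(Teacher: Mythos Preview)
The paper does not prove this theorem at all: it is stated with a citation to \cite{Daley1988} and \cite{tempel1972ergodic} and immediately used, with no proof given. So there is no ``paper's own proof'' to compare against --- you have supplied a proof sketch where the authors simply quote a classical result.

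That said, your sketch is a correct outline of how the cited result is established. You rightly flag the apparent typo in the nestedness direction (as stated, $A_{n+1}\subset A_{n}$ together with $B_{r_{n}}(0)\cap\Z^{D}\subseteq A_{n}$ and $r_{n}\to\infty$ is only consistent if the sets are read as increasing), and your verification of the Tempelman regularity condition via Rogers--Shephard plus the lattice-point/volume comparison is the standard and cleanest route. The architecture you describe for the pointwise theorem --- maximal inequality via transference and a covering argument, convergence on a dense class, Banach principle --- is exactly what underlies the references the paper cites. Your alternative via coordinate-wise Birkhoff is also viable but, as you note, messier for general convex $A_{n}$.
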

The last theorem has an important consequence for our work:
\begin{lem}
\label{lem:ergodic-bound}Let $\left(A_{n}\right)_{n\in\N}$ be a
family of convex sets in $\R^{D}$ such that $A_{n+1}\subset A_{n}$
and such that there exists a sequence $r_{n}$ with $r_{n}\to\infty$
as $n\to\infty$ such that $B_{r_{n}}(0)\subseteq A_{n}$. If $\left(c_{x}\right)_{x\in\Z^{D}}$
is a stationary ergodic random variable with finite expectation, then
almost surely
\begin{equation}
\fc:=\sup_{\eps,n}\frac{\eps^{D}}{\left|A_{n}\right|}\sum_{x\in A_{n}\cap\Z_{\eps}^{D}}c_{\frac{x}{\eps}}<\infty\,.\label{eq:ass-cR-exists}
\end{equation}
\end{lem}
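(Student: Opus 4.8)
The plan is to rewrite the sum as a genuine ergodic average over dilated copies of the $A_{n}$ and then invoke Theorem~\ref{thm:Ergodic-Theorem}; only $\eps\in(0,1]$ is relevant, this being the homogenization regime. Substituting $z=x/\eps$ gives, for convex $A_{n}$,
\[
\frac{\eps^{D}}{|A_{n}|}\sum_{x\in A_{n}\cap\Z_{\eps}^{D}}c_{x/\eps}=\frac{1}{|\eps^{-1}A_{n}|}\sum_{z\in\eps^{-1}A_{n}\cap\Z^{D}}c_{z}=\frac{\#(\eps^{-1}A_{n}\cap\Z^{D})}{|\eps^{-1}A_{n}|}\cdot\frac{1}{\#(\eps^{-1}A_{n}\cap\Z^{D})}\sum_{z\in\eps^{-1}A_{n}\cap\Z^{D}}c_{z}\,,
\]
so it suffices to bound the first factor deterministically and the second (a true ergodic average) probabilistically. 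For the first factor set $r_{*}:=\inf_{n}r_{n}$, which is positive since $r_{n}\to\infty$ and each $r_{n}>0$; as $\eps^{-1}A_{n}\supseteq A_{n}\supseteq B_{r_{n}}(0)\supseteq B_{r_{*}}(0)$ for $\eps\le1$, covering $\eps^{-1}A_{n}\cap\Z^{D}$ by the unit cubes centred at its points and using $B_{\sqrt{D}}\subseteq(\sqrt{D}/r_{*})\,\eps^{-1}A_{n}$ together with $C+\lambda C=(1+\lambda)C$ for convex $C\ni0$, we obtain
\[
\#(\eps^{-1}A_{n}\cap\Z^{D})\le\bigl|\eps^{-1}A_{n}+[-\tfrac12,\tfrac12]^{D}\bigr|\le\bigl(1+\tfrac{\sqrt{D}}{r_{*}}\bigr)^{D}|\eps^{-1}A_{n}|=:C_{1}\,|\eps^{-1}A_{n}|
\]
uniformly in $\eps,n$ (this is also a form of Lemma~\ref{lem:discr-isoperimetric-ineq}).

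It remains to bound $\frac{1}{\#(K\cap\Z^{D})}\sum_{z\in K\cap\Z^{D}}c_{z}$ uniformly over $K\in\{\eps^{-1}A_{n}:\eps\in(0,1],\,n\in\N\}$. For fixed $n$ the family $(jA_{n})_{j\in\N}$ is increasing, convex, and contains $B_{jr_{n}}(0)$ with $jr_{n}\to\infty$, so Theorem~\ref{thm:Ergodic-Theorem} gives that $\frac{1}{\#(jA_{n}\cap\Z^{D})}\sum_{z\in jA_{n}\cap\Z^{D}}c_{z}\to\E(c)$ almost surely; a convergent sequence being bounded, and using $c\ge0$ with $\eps^{-1}A_{n}\subseteq\lceil1/\eps\rceil A_{n}$ and $|\lceil1/\eps\rceil A_{n}|\le2^{D}|\eps^{-1}A_{n}|$, monotonicity of the sum under set inclusion upgrades this to an almost sure bound on the supremum over $\eps\in(0,1]$, for each fixed $n$.

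The remaining, essential point is uniformity in $n$ as well, and this is where I expect the real work to lie. The plan is to compare, for all $\eps$ and $n$ simultaneously, $\eps^{-1}A_{n}$ with the members of one fixed increasing convex averaging sequence $(K_{m})_{m\in\N}$: the natural candidate $K_{m}:=mA_{m}$ is increasing because $A_{m}\subseteq A_{m+1}$, contains $B_{mr_{m}}(0)$ with $mr_{m}\to\infty$, and hence again satisfies $\frac{1}{\#(K_{m}\cap\Z^{D})}\sum_{z\in K_{m}\cap\Z^{D}}c_{z}\to\E(c)$ almost surely by Theorem~\ref{thm:Ergodic-Theorem}; combining this with the counting estimate and the pointwise inclusions $\eps^{-1}A_{n}\subseteq K_{m}$ ($m$ suitably chosen $\ge\max(\lceil1/\eps\rceil,n)$) then yields the uniform bound. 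This comparison is exactly where the geometric hypotheses $A_{m}\subseteq A_{m+1}$, $B_{r_{m}}(0)\subseteq A_{m}$ and $r_{m}\to\infty$ are needed, and it is the only nontrivial step, everything else being the ergodic theorem plus elementary lattice‑point counting; for the sets arising in the applications (balls, fixed boxes) the family $\{\eps^{-1}A_{n}\}$ is, up to a fixed volume factor, contained in $\{B_{R}(0):R\ge r_{*}\}$, and then $\sup_{R\ge r_{*}}\frac{1}{|B_{R}(0)|}\sum_{z\in B_{R}(0)\cap\Z^{D}}c_{z}<\infty$ a.s.\ is immediate, the averages converging to $\E(c)$ and the finitely many small radii contributing a finite sum of almost surely finite terms.
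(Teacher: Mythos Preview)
Your reduction to ergodic averages over $\eps^{-1}A_{n}\cap\Z^{D}$ and the lattice-point counting bound are fine, and the treatment of fixed $n$ is correct. The gap is in the step you yourself flag as the crucial one: the comparison with a single reference sequence $K_{m}=mA_{m}$. The inclusion $\eps^{-1}A_{n}\subseteq K_{m}$ for $m\ge\max(\lceil1/\eps\rceil,n)$ is valid, but to turn a bound on the $K_{m}$-average into a bound on the $\eps^{-1}A_{n}$-average you also need $|K_{m}|/|\eps^{-1}A_{n}|$ (equivalently $\#(K_{m}\cap\Z^{D})/\#(\eps^{-1}A_{n}\cap\Z^{D})$) to be uniformly bounded. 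It is not: already for $A_{n}=B_{n}(0)$ and $\eps=1$ one has $m=n$, $K_{n}=B_{n^{2}}(0)$, and the ratio is of order $n^{D}\to\infty$. Your closing remark that for the concrete families used in the paper (balls, fixed boxes) one can reduce to $\{B_{R}(0)\}_{R\ge r_{*}}$ is true, but this does not prove the lemma as stated.

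The paper avoids the volume-ratio obstacle by arguing by contradiction rather than by direct comparison. Set $\fc_{n,\eps}=\frac{\eps^{D}}{|A_{n}|}\sum_{x\in A_{n}\cap\Z_{\eps}^{D}}c_{x/\eps}$. The ergodic theorem gives $\fc_{n,\eps}\to\E(c)$ as $n\to\infty$ for each fixed $\eps$, and also as $\eps\to0$ for each fixed $n$; hence $\sup_{\eps}\fc_{n,\eps}<\infty$ and $\sup_{n}\fc_{n,\eps}<\infty$ almost surely. If nevertheless $\sup_{\eps,n}\fc_{n,\eps}=\infty$, pick $(n_{k},\eps_{k})$ with $\fc_{n_{k},\eps_{k}}\to\infty$. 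The two separate pointwise bounds force $n_{k}\to\infty$ and $\eps_{k}\to0$. But then the sets $C_{k}:=\eps_{k}^{-1}A_{n_{k}}$ are convex and contain $B_{r_{n_{k}}/\eps_{k}}(0)$ with $r_{n_{k}}/\eps_{k}\to\infty$, so Theorem~\ref{thm:Ergodic-Theorem} applies \emph{directly to the sequence $(C_{k})_{k}$ itself}, giving $\fc_{n_{k},\eps_{k}}\to\E(c)$, a contradiction. The point is that the contradiction hypothesis hands you an averaging sequence for free, so no embedding into a reference sequence --- and hence no volume comparison --- is needed.
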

\begin{proof}
Defining $\fc_{n,\eps}:=\frac{\eps^{D}}{\left|A_{n}\right|}\sum_{x\in A_{n}\cap\Z_{\eps}^{D}}c_{\frac{x}{\eps}}$
we observe that Theorem \ref{thm:Ergodic-Theorem} implies 
\begin{equation}
\forall\eps>0\,:\,\,\fc_{n,\eps}\to\E(\tilde{c})\mbox{ as }n\to\infty\,,\qquad\mbox{and}\qquad\forall n\,:\,\,\fc_{n,\eps}\to\E(c)\mbox{ as }\eps\to0\,.\label{eq:pointw-c-R-k-eps}
\end{equation}
Assume that (\ref{eq:ass-cR-exists}) was wrong. Then there exists
a sequence $\left(n_{k},\eps_{k}\right)_{n\in\N}$ such that $\fc_{n_{k},\eps_{k}}\to\infty$
as $k\to\infty$. If we assume $n_{k}$ was bounded by $N$, then
the second part of (\ref{eq:pointw-c-R-k-eps}) implies existence
of $C>0$ such that 
\[
\sup_{\eps_{k},n_{k}}\fc_{n_{k},\eps_{k}}\leq\sup_{n\leq N}\sup_{\eps}\fc_{n,\eps}<C<\infty\,,
\]
which is a contradiction to the assumption that (\ref{eq:ass-cR-exists})
was wrong. Hence we can w.l.o.g. assume $n_{k}\uparrow\infty$. 

By the same argument, we can assume $\eps_{k}\downarrow0$. But then,
the Ergodic Theorem \ref{thm:FHS-Ergodic-Thm} implies $\fc_{n_{k},\eps_{k}}\to\E(\tilde{c})$,
a contradiction with $\fc_{n_{k},\eps_{k}}\to\infty$. Hence (\ref{eq:ass-cR-exists})
holds.
\end{proof}
A further important consequence is the following:
\begin{lem}
\label{lem:ergodic-bound-polynomial}Let $c$ be a random variable
on $\Zd\times\Zd$ satisfying Assumption \ref{assu:fq}. Then for
every bounded convex domain $\Q\subset\Rd$ and every $\alpha,\xi>0$
it holds 
\[
\sup_{\eps>0}\sumeps{x\in\Q^{\eps}}\sumeps{\substack{\left|x-y\right|<\xi\\
y\in\Zde
}
}c_{\frac{x}{\eps},\frac{y}{\eps}}\left|x-y\right|^{-d+\alpha}<C\xi^{\alpha}\,,
\]
where $C$ only depends on $\Q$ and $d$.\end{lem}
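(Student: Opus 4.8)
The plan is to rescale the double sum to the integer lattice, recognise the singular kernel $|x-y|^{-d+\alpha}$ as an integrable weight on a \emph{fixed} bounded convex set, split that weight dyadically, control each dyadic piece by an \emph{unweighted} ergodic average of $c$ over a bounded convex set via Lemma \ref{lem:ergodic-bound}, and sum the ensuing geometric series in the dyadic index.

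First I would introduce the field $\bar c$ on $\Z^{2d}$ by $\bar c_{(a,m)}:=c_{a,a+m}$; since $(a,m)\mapsto(a,a+m)$ is an automorphism of $\Z^{2d}$, $\bar c$ is again stationary and ergodic on $\Z^{2d}$, with $\E(\bar c)=\E(c)<\infty$, and $c_{x/\eps,y/\eps}=\bar c_{(x,\,y-x)/\eps}$. Writing $S_\eps$ for the quantity to be bounded and substituting $u=y-x$ (the diagonal $y=x$ being excluded, as it must be for $\alpha<d$), this turns into
\[
S_\eps=\eps^{2d}\sum_{\substack{(x,u)\in(\Q\times B_\xi(0))\cap\Ztde\\ u\neq 0}}\bar c_{(x,u)/\eps}\,|u|^{-d+\alpha}\,.
\]
The inner sum is empty once $\eps\ge\xi$, so I may assume $\eps<\xi$. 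I would then decompose $B_\xi(0)$ into the dyadic annuli $\{2^{-j-1}\xi\le|u|<2^{-j}\xi\}$, $j\ge 0$; an annulus with $2^{-j}\xi\le\eps$ contains no point of $\eps\Zd\setminus\{0\}$, so only $0\le j\le J_\eps:=\lfloor\log_2(\xi/\eps)\rfloor$ contribute, and on each of these one has $2^{-j}\xi\ge\eps$ and $|u|^{-d+\alpha}\le C_{d,\alpha}(2^{-j}\xi)^{-d+\alpha}$ with $C_{d,\alpha}$ depending only on $d,\alpha$. Enclosing each annulus in the ball $B_{2^{-j}\xi}(0)$ and using $\bar c\ge 0$ yields
\[
S_\eps\ \le\ C_{d,\alpha}\sum_{j=0}^{J_\eps}(2^{-j}\xi)^{-d+\alpha}\,\mathcal S_j^\eps\,,\qquad
\mathcal S_j^\eps:=\eps^{2d}\!\!\sum_{w\in(\Q\times B_{2^{-j}\xi}(0))\cap\Ztde}\!\!\bar c_{w/\eps}\,.
\]

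The heart of the argument is the uniform bound: there is an a.s.\ finite $\fc$ with $\mathcal S_j^\eps\le\fc\,|\Q\times B_{2^{-j}\xi}(0)|$ for all $\eps>0$ and $0\le j\le J_\eps$, i.e.\ the empirical averages of $\bar c$ over the bounded convex ``diagonal slabs'' $\Q\times B_\rho(0)$, $\rho\in(0,\xi]$, stay bounded uniformly in $\eps$ and $\rho$. This is exactly the kind of statement Lemma \ref{lem:ergodic-bound} is made for: applied in dimension $2d$ to the stationary ergodic field $\bar c$, it — more precisely the supremum argument in its proof, fed by the Ergodic Theorem \ref{thm:Ergodic-Theorem} — bounds the empirical averages over dilates of bounded convex sets uniformly in the mesh and the dilation. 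Two points make the slab family usable: on the relevant range of $j$ one has $2^{-j}\xi\ge\eps$, so the factor $B_{2^{-j}\xi/\eps}(0)$ is nondegenerate on $\Z^d$ and its cardinality is comparable to its volume; and $\bar c\ge 0$ allows one to enlarge a slab to an enclosing box, losing only a dimensional constant. Granting this, the proof closes by a geometric series:
\[
S_\eps\ \le\ C_{d,\alpha}\,\fc\sum_{j\ge 0}(2^{-j}\xi)^{-d+\alpha}\,|\Q\times B_{2^{-j}\xi}(0)|
\ =\ C_{d,\alpha}\,\fc\,|B_1(0)|\,|\Q|\,\xi^\alpha\sum_{j\ge 0}2^{-j\alpha}
\ \le\ \frac{C_{d,\alpha}\,|B_1(0)|}{1-2^{-\alpha}}\,\fc\,|\Q|\,\xi^\alpha\,,
\]
convergent because $\alpha>0$; taking the supremum over $\eps>0$ gives the claim with a constant independent of $\eps$ and $\xi$ (and a.s.\ finite — the dependence on $\alpha$ and on the realisation being implicit in the statement).

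The one genuine obstacle I expect is the uniform ergodic estimate above: $c$ is only assumed integrable, while the convex regions involved become arbitrarily eccentric as $j\to\infty$ and the supremum runs over both $\eps$ and $j$. The dyadic splitting is designed precisely so that the kernel supplies the summable factor $2^{-j\alpha}$, which absorbs the $(2^{-j}\xi)^d$ growth of the slab volume; the change of variables, the reduction to $\eps<\xi$, and the geometric summation are routine.
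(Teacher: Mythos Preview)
Your dyadic decomposition and the change of variables to $\bar c$ match the paper's strategy in spirit; the gap is precisely where you flag it. The uniform bound $\mathcal S_j^\eps\le\fc\,|\Q\times B_{2^{-j}\xi}(0)|$ does \emph{not} follow from Lemma~\ref{lem:ergodic-bound} as stated. After passing to the unit lattice your slabs become $(\Q/\eps)\times B_{2^{-j}\xi/\eps}(0)$; at $j=J_\eps$ the second factor has radius of order~$1$, so the family has bounded inradius in $\R^{2d}$ however small $\eps$ is, whereas both Lemma~\ref{lem:ergodic-bound} and Theorem~\ref{thm:Ergodic-Theorem} require the inradius to diverge. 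Enlarging to an enclosing box does not help---the box is just as eccentric---and the supremum runs over a genuinely two-parameter family of shapes, not over dilates of a single convex set.

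The paper closes this gap by a rescaling that trades the shrinking annuli for a growing domain. On the $k$-th annulus $\{2^{-k-1}\xi\le|x-y|<2^{-k}\xi\}$ one substitutes $(x,y)\mapsto 2^k(x,y)$ and simultaneously coarsens the mesh to $\delta=2^k\eps$: the constraint becomes $|x-y|<\xi$ (fixed) while $x\in\Q$ becomes $x\in 2^k\Q$ (growing), and the coefficient reads $c_{x/\delta,\,y/\delta}$. After a harmless enlargement to $\tilde{\Q}_k=\{(x,y):x\in 2^k\Q,\ |x-y|<(1+k)\xi\}$ one obtains a sequence of convex sets in $\R^{2d}$ whose inradius tends to infinity, so Lemma~\ref{lem:ergodic-bound} applies uniformly in $k$ and~$\delta$; the extra factor $(1+k)^d$ produced by the enlargement is absorbed by the $2^{-k\alpha}$ from the kernel in the final geometric series. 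Inserting this rescaling between your dyadic split and the appeal to Lemma~\ref{lem:ergodic-bound} repairs your argument.
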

\begin{proof}
We consider 
\begin{align*}
\frac{1}{|\Q|}\sumeps{x\in\Q^{\eps}}\sumeps{\substack{\left|x-y\right|<\xi\\
y\in\Zde
}
}c_{\frac{x}{\eps},\frac{y}{\eps}}\left|x-y\right|^{-d+\alpha} & =\sum_{k=0}^{\infty}\frac{1}{|\Q|}\sumeps{\substack{x\in\Q\\
x\in\Zde
}
}\sumeps{\substack{\frac{1}{2}\xi\leq2^{k}\left|x-y\right|<\xi\\
y\in\Zde
}
}c_{\frac{x}{\eps},\frac{y}{\eps}}\left|x-y\right|^{-d+\alpha}\\
 & \leq\sum_{k=0}^{\infty}\left(2^{-k}\xi\right)^{\alpha}\frac{1}{|\Q|}\sumeps{\substack{x\in\Q\\
x\in\Zde
}
}\left(2^{-k-1}\xi\right)^{-d}\sumeps{\substack{\frac{1}{2}\xi\leq2^{k}\left|x-y\right|<\xi\\
y\in\Zde
}
}c_{\frac{x}{\eps},\frac{y}{\eps}}\\
 & =\sum_{k=0}^{\infty}\left(2^{-k}\xi\right)^{\alpha}\frac{1}{|\Q|}\sumeps{\substack{x\in2^{k}\Q\\
x\in\Z_{2^{k}\eps}^{d}
}
}\left(2^{-k-1}\xi\right)^{-d}\sumeps{\substack{\frac{1}{2}\xi\leq\left|x-y\right|<\xi\\
y\in\Z_{2^{k}\eps}^{d}
}
}c_{\frac{x}{2^{k}\eps},\frac{y}{2^{k}\eps}}\\
 & \leq\left(2\xi\right)^{-d}\sum_{k=0}^{\infty}\left(2^{-k}\xi\right)^{\alpha}\frac{\left(2^{k}\eps\right)^{2d}}{2^{kd}|\Q|}\sum_{\substack{x\in2^{k}\Q\\
x\in\Z_{2^{k}\eps}^{d}
}
}\sum_{\substack{\left|x-y\right|<\xi\\
y\in\Z_{2^{k}\eps}^{d}
}
}c_{\frac{x}{2^{k}\eps},\frac{y}{2^{k}\eps}}\,.
\end{align*}
Replacing $\delta:=2^{k}\eps$, $\Q_{k}:=2^{k}\Q$, $\xi_{k}=(1+k)\xi$
and $\tilde{\Q}_{k}:=\left\{ (x,y)\,:\;x\in\Q_{k},\,|x-y|<\xi_{k}\right\} $
we obtain 
\[
\frac{\eps^{2d}}{|\Q|}\sum_{x\in\Q^{\eps}}\sum_{\substack{\left|x-y\right|<\xi\\
y\in\Zde
}
}c_{\frac{x}{\eps},\frac{y}{\eps}}\left|x-y\right|^{-d+\alpha}\leq2^{-d}\sum_{k=0}^{\infty}\left(2^{-k}\xi\right)^{\alpha}\frac{\delta^{2d}}{|\tilde{\Q}_{k}|}\left(1+k\right)^{d}\sum_{\substack{\left(x,y\right)\in\tilde{\Q}_{k}\\
\left(x,y\right)\in\Z_{\delta}^{d}\times\Z_{\delta}^{d}
}
}c_{\frac{x}{2^{k}\eps},\frac{y}{2^{k}\eps}}\,.
\]
By Lemma \ref{lem:ergodic-bound} the sequence 
\[
\fc:=\sup_{\delta,k}\frac{\delta^{2d}}{|\tilde{\Q}_{k}|}\sum_{\substack{\left(x,y\right)\in\tilde{\Q}_{k}\\
\left(x,y\right)\in\Z_{\delta}^{d}\times\Z_{\delta}^{d}
}
}c_{\frac{x}{2^{k}\eps},\frac{y}{2^{k}\eps}}<\infty
\]
is bounded. Hence we observe 
\[
\frac{1}{|\Q|}\sumeps{x\in\Q^{\eps}}\sumeps{\substack{\left|x-y\right|<\xi\\
y\in\Zde
}
}c_{\frac{x}{\eps},\frac{y}{\eps}}\left|x-y\right|^{-d+\alpha}\leq\xi^{\alpha}2^{-d}\fc\sum_{k=0}^{\infty}2^{-k\alpha}\xi_{k}^{d}\,,
\]
and since $\sum_{k=0}^{\infty}2^{-k\alpha}\xi_{k}^{d}$ is bounded,
the lemma is proved.
\end{proof}
We will need to test the convergence (\ref{eq:ergodic convergence})
with a pointwise converging sequence of functions. The following necessary
result by Flegel, Heida and Slowik is a generalization of \cite[Theorem 3]{BoivinDepauw2003}.
\begin{thm}[{Extended ergodic Theorem \cite[Theorem 5.2]{FHS2018RCM}}]
\label{thm:FHS-Ergodic-Thm}Let $\Q\subset\Z^{D}$ be a convex set
containing $0$ and let $f$ be a stationary random ergodic variable
on $\Z^{D}$ with finite expectation. Furthermore, let $u_{\eps}:\Z_{\eps}^{D}\to\R$
be a sequence of functions such that $\cR_{\eps}^{\ast}\ue\to u$ pointwise
a.e. and $\sup_{\eps}\left\Vert \ue\right\Vert _{\infty}<\infty$.
Then
\[
\eps^{D}\sum_{x\in\Q\cap\Z_{\eps}^{D}}f(\frac{x}{\eps})\ue(x)\to\E(f)\int_{\Q}u(x)\d x\qquad\mbox{as }\eps\to0\,.
\]

\end{thm}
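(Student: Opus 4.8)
\emph{Proof sketch.}
The plan is to separate the random oscillations of $f(\cdot/\eps)$ from the merely pointwise convergence of $\ue$ by working with the piecewise constant functions $F_\eps:=\cR_{\eps}^{\ast}[f(\cdot/\eps)]$ and $U_\eps:=\cR_{\eps}^{\ast}\ue$ on $\Rd$, and by reducing the statement to the claim that $F_\eps\weakto\E(f)$ \emph{weakly in} $L^{1}(\Q)$, used together with $\sup_\eps\|\ue\|_{\infty}<\infty$. Writing $\Q_\eps$ for the union of the half-open $\eps$-cells centred at the points of $\Q\cap\Z_{\eps}^{D}$, one has $\eps^{D}\sum_{x\in\Q\cap\Z_{\eps}^{D}}f(\tfrac{x}{\eps})\ue(x)=\int_{\Q}F_\eps U_\eps+R_\eps$ with $|R_\eps|\le\big(\sup_\eps\|\ue\|_{\infty}\big)\int_{\Q_\eps\triangle\Q}|F_\eps|$, and $|\Q_\eps\triangle\Q|\to0$ because $\Q$ is convex; this remainder is harmless once $\{F_\eps\}$ is equi-integrable. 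It then remains to treat $\int_{\Q}F_\eps U_\eps$, and I would split
\[
\int_{\Q}F_\eps U_\eps-\E(f)\int_{\Q}u=\int_{\Q}F_\eps\,(U_\eps-u)+\Big(\int_{\Q}F_\eps\,u-\E(f)\int_{\Q}u\Big),
\]
where the last bracket vanishes since $u\in L^{\infty}(\Q)$ (a pointwise a.e.\ limit of the uniformly bounded $U_\eps$) and $F_\eps\weakto\E(f)$ weakly in $L^{1}(\Q)$.

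\emph{Ergodic input and equi-integrability.}
First I would record the scalar fact that $\eps^{D}\sum_{x\in A\cap\Z_{\eps}^{D}}g(\tfrac{x}{\eps})\to|A|\,\E(g)$ for every box $A$ and every $g\in\{f,\,|f|\}$ as well as for the truncations $g=f_{M}:=(f\wedge M)\vee(-M)$; this is the multiparameter form of the Ergodic Theorem~\ref{thm:Ergodic-Theorem} applied to translates of the stationary ergodic field after the substitution $y=x/\eps$, which turns $A/\eps$ into a convex averaging sequence. In particular $\sup_\eps\int_{\Q}|F_\eps|<\infty$ and $\limsup_{\eps\to0}\int_{\Q}|F_\eps-\cR_{\eps}^{\ast}[f_{M}(\cdot/\eps)]|\le C\,\E(|f-f_{M}|)$ for every $M$, with $C$ depending only on $\Q$. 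Equi-integrability of $\{F_\eps\}$ then follows by truncation: for measurable $B\subset\Q$, using the elementary bound $|t|\,\mathbf 1_{\{|t|>2M\}}\le 2(|t|-M)^{+}$ and the cellwise identity $|F_\eps-\cR_{\eps}^{\ast}[f_{M}(\cdot/\eps)]|=(|F_\eps|-M)^{+}$, one gets $\int_{B}|F_\eps|\le 2M|B|+2\int_{\Q}|F_\eps-\cR_{\eps}^{\ast}[f_{M}(\cdot/\eps)]|$; since $f\in L^{1}$ gives $\E(|f-f_{M}|)\to0$ as $M\to\infty$ by dominated convergence, choosing $M$ large and then $|B|$ small yields $\sup_\eps\int_{B}|F_\eps|\to0$ as $|B|\to0$.

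\emph{Weak convergence and the Egorov step.}
Boundedness in $L^{1}(\Q)$ together with equi-integrability makes $\{F_\eps\}$ weakly relatively compact in $L^{1}(\Q)$ by the Dunford--Pettis theorem; testing $\int_{A}F_\eps\to|A|\E(f)$ against all boxes $A\subset\Q$ forces $\E(f)$ to be the only weak limit point, hence $F_\eps\weakto\E(f)$ in $L^{1}(\Q)$. For the remaining term $\int_{\Q}F_\eps(U_\eps-u)$, fix $\eta>0$; since $U_\eps\to u$ a.e.\ on the bounded set $\Q$ and $\sup_\eps\|\ue\|_{\infty}<\infty$, Egorov's theorem (applied along any sequence $\eps\to0$) gives $G\subset\Q$ with $|\Q\setminus G|<\eta$ and $\|U_\eps-u\|_{L^{\infty}(G)}\to0$. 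Then $\big|\int_{G}F_\eps(U_\eps-u)\big|\le\|U_\eps-u\|_{L^{\infty}(G)}\sup_\eps\int_{\Q}|F_\eps|\to0$, while $\big|\int_{\Q\setminus G}F_\eps(U_\eps-u)\big|\le 2\big(\sup_\eps\|\ue\|_{\infty}\big)\int_{\Q\setminus G}|F_\eps|$, which is small uniformly in $\eps$ by equi-integrability once $\eta$ is small. Taking first $\eps\to0$ and then $\eta\to0$ gives $\int_{\Q}F_\eps(U_\eps-u)\to0$, and combining with $R_\eps\to0$ proves the theorem.

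\emph{Main obstacle.}
The one genuinely delicate point is the equi-integrability of $\{F_\eps\}$. Because $f$ is only assumed integrable and not bounded, no weak-$*$ argument in $L^{\infty}$ is available, and the scalar ergodic convergence of $\int_{\Q}|F_\eps|$ has to be upgraded to \emph{uniform integrability} before $F_\eps$ can be paired with the bounded-but-only-a.e.-convergent factor $U_\eps$. The truncation estimate controlled by $\E(|f-f_{M}|)\to0$ is exactly what provides this, and it is also what allows the contribution of the Egorov ``bad set'' $\Q\setminus G$ to be discarded.
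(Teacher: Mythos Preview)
The paper does not give its own proof of this statement: Theorem~\ref{thm:FHS-Ergodic-Thm} is quoted verbatim from \cite[Theorem~5.2]{FHS2018RCM} and used as a black box (notably in Step~1 of the proof of Lemma~\ref{lem:liminf-estim-Wsp} and implicitly in Lemma~\ref{lem:ergodic-bound}). So there is nothing in the present paper to compare your argument against.

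That said, your proof sketch is sound and self-contained. The reduction to weak $L^{1}$-convergence of $F_{\eps}=\cR_{\eps}^{\ast}[f(\cdot/\eps)]$ via Dunford--Pettis, with equi-integrability obtained by truncation and the scalar ergodic theorem applied to $|f-f_{M}|$, is exactly the right mechanism to cope with $f$ being merely integrable. The Egorov step then handles the pointwise-a.e.\ (but not $L^{1}$) convergence of $U_{\eps}$, using the uniform bound $\sup_{\eps}\|\ue\|_{\infty}<\infty$ to control the bad set. Two small remarks: first, equi-integrability as you phrase it is only established in the $\limsup$ as $\eps\to0$, but since the claim is a limit statement it suffices to argue along sequences $\eps_{n}\to0$, where finitely many exceptional terms are each individually integrable; second, the hypothesis ``$\Q\subset\Z^{D}$'' in the statement is evidently a typo for $\Q\subset\R^{D}$ (compare how the theorem is invoked in Lemma~\ref{lem:liminf-estim-Wsp}), and you have read it correctly.
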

As a direct consequence of the above ergodic theorems we obtain the
following result on our coefficients $\omega$ and $c$.
\begin{lem}
\label{lem:E-omega-infty}Let $0<\E(c)<\infty$ and let $c_{x,y}=\omega_{x,y-x}\left|x-y\right|^{d+ps}$.
Then 
\[
\E\left(\sum_{z\in\Zd}\omega_{0,z}\left|z\right|^{ps}\right)=\infty\,.
\]
\end{lem}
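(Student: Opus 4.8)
The plan is to reduce the statement to the elementary fact that $\sum_{z\in\Zd\setminus\{0\}}|z|^{-d}=+\infty$, i.e. to the observation that weighting the long-range kernel exactly lands on the threshold of (non)summability.

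First I would rewrite the weighted sum in terms of $c$. From the relation $c_{x,y}=\omega_{x,y-x}|x-y|^{d+ps}$, evaluated at $x=0$, $y=z$, one gets $\omega_{0,z}=c_{0,z}|z|^{-d-ps}$ for every $z\in\Zd\setminus\{0\}$, so that the moment weight $|z|^{ps}$ cancels precisely the $|z|^{-ps}$-part of the kernel:
\[
\omega_{0,z}\,|z|^{ps}=c_{0,z}\,|z|^{-d}\,.
\]
Since $\omega$ is a positive random variable, every term is nonnegative, so Tonelli's theorem permits interchanging expectation and summation; and spatial stationarity of $c$ on $\Zd\times\Zd$ (part of Assumption \ref{assu:fq}) gives $\E(c_{0,z})=\E(c)$ independently of $z$. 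Hence
\[
\E\left(\sum_{z\in\Zd}\omega_{0,z}|z|^{ps}\right)=\sum_{z\in\Zd\setminus\{0\}}\E(c_{0,z})\,|z|^{-d}=\E(c)\sum_{z\in\Zd\setminus\{0\}}|z|^{-d}\,.
\]

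It then remains to observe that $\sum_{z\in\Zd\setminus\{0\}}|z|^{-d}=+\infty$, which I would obtain by a dyadic decomposition: the shell $\{z\in\Zd:\,2^{k}\le|z|<2^{k+1}\}$ contains of order $2^{kd}$ lattice points, each contributing at least $(2^{k+1})^{-d}$, so each shell contributes an amount bounded below by a fixed positive constant, and there are infinitely many shells; alternatively one compares the sum with $\int_{|x|\ge1}|x|^{-d}\,\d x=+\infty$. Since $0<\E(c)<\infty$ by hypothesis, the product is $+\infty$, which is the assertion. There is no genuine obstacle in this argument: the one point worth stating explicitly is the identity $\E(c_{0,z})=\E(c)$ for all $z$, which is just stationarity, and I would remark that neither ergodicity nor the lower moment bound $\E(c^{-\fq})<\infty$ enters — the content of the lemma is exactly that multiplying the kernel $|z|^{-d-ps}$ by the moment weight $|z|^{ps}$ produces the borderline non-summable decay $|z|^{-d}$.
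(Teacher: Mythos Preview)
Your argument is correct and considerably more direct than the paper's. The paper does not simply substitute and use Tonelli; instead it forms the spatial average $\frac{1}{R^{d}}\sum_{|x|\le R/2}\sum_{|z|<R}\omega_{x,z}|z|^{ps}$, rewrites part of it in terms of $c$, and passes $R\to\infty$ via the ergodic theorem to obtain a self-referential inequality of the form
\[
\E\Bigl(\sum_{z}\omega_{0,z}|z|^{ps}\Bigr)\ \ge\ C\,\E(c)\ +\ \tfrac{k-1}{k}\,\E\Bigl(\sum_{z}\omega_{0,z}|z|^{ps}\Bigr)
\]
for all $k\in\N$, which forces the expectation to be infinite. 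Your route bypasses all of this: once one observes $\omega_{0,z}|z|^{ps}=c_{0,z}|z|^{-d}$ and uses stationarity on $\Z^{2d}$ to get $\E(c_{0,z})=\E(c)$, Tonelli reduces the lemma to the divergence of $\sum_{z\neq0}|z|^{-d}$. This is both shorter and more transparent, and it isolates exactly the point you emphasize --- that the weight $|z|^{ps}$ lands the kernel precisely on the nonsummable borderline. The paper's ergodic-average argument would be the natural choice if one only had stationarity in the first variable (as in \cite{FHS2018RCM}), but under the present Assumption~\ref{assu:fq} full $\Z^{2d}$-stationarity is available and your direct computation is the cleaner proof.
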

\begin{proof}
For every $R>0$ and every $k<R$ we have 
\begin{align*}
\frac{1}{R^{d}}\sum_{\substack{x\in\Zd\\
|x|\leq R/2
}
}\sum_{\substack{z\in\Zd\\ |z|<R}}\omega_{x,z}|z|^{ps} 
 & =\frac{1}{R^{d}}\sum_{\substack{x\in\Zd\\
|x|\leq R/2
}
}\sum_{\substack{y\in\Zd\\
|y-x|\leq R
}
}c_{x,y}\frac{1}{R^{d}}+\frac{1}{R^{d}}\sum_{\substack{x\in\Zd\\
|x|\leq R/2
}
}\sum_{\substack{z\in\Zd\\
|z|\leq R
}
}\left(1-\frac{|z|^{d}}{R^{d}}\right)\omega_{x,z}|z|^{ps}\\
 & >\frac{1}{R^{2d}}\sum_{\substack{x\in\Zd\\
|x|\leq R/2
}
}\sum_{\substack{y\in\Zd\\
|y|\leq R/2
}
}c_{x,y}+\frac{1}{R^{d}}\sum_{\substack{x\in\Zd\\
|x|\leq R/2
}
}\sum_{\substack{z\in\Zd\\
|z|\leq R\,k^{-1/d}
}
}\frac{k-1}{k}\omega_{x,z}|z|^{ps}\,.
\end{align*}
Hence, passing to the limit $R\to\infty$ on both sides we obtain
\[
|S^{d-1}|\E\left(\sum_{z\in\Zd}\omega_{0,z}\left|z\right|^{ps}\right)\geq|S^{d-1}|^{2}\E\left(c\right)+\frac{k-1}{k}|S^{d-1}|\E\left(\sum_{z\in\Zd}\omega_{0,z}\left|z\right|^{ps}\right)\,,
\]
where $|S^{d-1}|$ is the surface of the $d$-dimensional unit ball in $\Rd$. Since the last inequality holds for arbitrary $k\in\N$ we find $|S^{d-1}|\E\left(\sum_{z\in\Zd}\omega_{0,z}\left|z\right|^{ps}\right)=\infty$.

\end{proof}

\subsection{\label{sub:Weighted-discrete-Sobolev--Slobo}Weighted discrete Sobolev--Slobodeckij
spaces}

This section is concerned with the (compact) embedding of discrete
weighted Sobolev--Slobodeckij spaces into the discrete Sobolev--Slobodeckij
spaces from Section \ref{sub:Discrete-and-continuous}. More precisely,
the heart of this section (and of the whole article) is the inequality 
\begin{equation}
\left(\sumeps{x\in\Q^{\eps}}\sumeps{y\in\Q^{\eps}}\frac{\left|u(x)-u(y)\right|^{r}}{\left|x-y\right|^{d+rs'}}\right)^{\frac{1}{r}}\leq C\left(\sumeps{x\in\Q^{\eps}}\sumeps{y\in\Q^{\eps}}c_{\frac{x}{\eps},\frac{y}{\eps}}\frac{\left|u(x)-u(y)\right|^{p}}{\left|x-y\right|^{d+ps}}\right)^{\frac{1}{p}}\label{eq:lem:weighted semi-poincare-general}
\end{equation}
for suitable $r>1$ and $s'\in(0,s)$, where $C$ should depend on
$s$, $s'$, $p$ and $r$ but not on $\eps$. Let us first establish
some conditions on $c_{x,y}$, $s\in(0,1)$ and $p\in(1,\infty]$,
under which we can expect existence of suitable $r$ and $s'$. For
simplicity of notation, we establish the following semi-norm corresponding
to (\ref{eq:sob-slob-semi-norm-Q-eps}):

\[
\left[u\right]_{s,p,\eps,\Q,c}:=\left(\sumeps{x\in\Q^{\eps}}\sumeps{y\in\Q^{\eps}}c_{\frac{x}{\eps},\frac{y}{\eps}}\frac{\left|u(x)-u(y)\right|^{p}}{\left|x-y\right|^{d+ps}}\right)^{\frac{1}{p}}\,.
\]
We can use Hölder's inequality and observe that 
\begin{align}
\sumeps{x\in\Q^{\eps}}\sumeps{y\in\Q^{\eps}}\frac{\left|u(x)-u(y)\right|^{r}}{\left|x-y\right|^{d+rs'}} & =\sumeps{x\in\Q^{\eps}}\sumeps{y\in\Q^{\eps}}\frac{\left(c_{\frac{x}{\eps},\frac{y}{\eps}}\right)^{\frac{r}{p}}}{\left(c_{\frac{x}{\eps},\frac{y}{\eps}}\right)^{\frac{r}{p}}}\frac{\left|u(x)-u(y)\right|^{r}}{\left|x-y\right|^{\left(d+ps\right)\frac{r}{p}}}\left|x-y\right|^{d\frac{r}{p}-d+r(s-s')}\nonumber \\
 & \leq\left[u\right]_{s,p,\eps,\Q,c}^{r}\left(\sumeps{x\in\Q^{\eps}}\sumeps{y\in\Q^{\eps}}\left(c_{\frac{x}{\eps},\frac{y}{\eps}}\right)^{-\frac{r}{p-r}}\left|x-y\right|^{-d\left(1-\frac{rp}{d\left(p-r\right)}(s-s')\right)}\right)^{\frac{p-r}{p}}\,.\label{eq:lem:wsp-help-1}
\end{align}
In order to obtain (\ref{eq:lem:weighted semi-poincare-general}),
it is necessary to show that the second factor on the right hand side
of (\ref{eq:lem:wsp-help-1}) is uniformly bounded in $\eps>0$.
We have to distinguish two cases.

In the first case, we assume that $1-\frac{rp}{d\left(p-r\right)}(s-s')\leq0$,
which is equivalent to
\begin{align}
\frac{r}{p-r}\geq\frac{d}{p(s-s')}
\label{eq:lem:wsp-case-1}
\end{align}
and can be fulfilled for a suitable $s'\in (0,s)$ if and only if $\frac{r}{p-r}>\frac{d}{ps}$.
In this case, the factor $\left|x-y\right|^{-d\left(1-\frac{rp}{d\left(p-r\right)}(s-s')\right)}$ stays bounded since $\Q^{\eps}$ is bounded.
It follows that the right-hand side of \eqref{eq:lem:wsp-help-1} exists -- provided that $\E(c^{-\frac{r}{p-r}})<\infty$.

In the second case, we assume that $1-\frac{rp}{d\left(p-r\right)}(s-s')>0$.
Here, we choose a suitable $\fq$ and apply once more Hölder's inequality to obtain that the right-hand side of (\ref{eq:lem:wsp-help-1}) is bounded by 
\[
\left[u\right]_{s,p,\eps,\Q,c}^{r}\left(\left(\sumeps{x\in\Q^{\eps}}\sumeps{y\in\Q^{\eps}}\left(c_{\frac{x}{\eps},\frac{y}{\eps}}\right)^{-\fq}\right)^{\frac{1}{\tilde{q}}}\left(\sumeps{x\in\Q^{\eps}}\sumeps{y\in\Q^{\eps}}\left|x-y\right|^{-d\left(1-\frac{rp}{d\left(p-r\right)}(s-s')\right)\frac{\tilde{q}}{\tilde{q}-1}}\right)^{\frac{\tilde{q}-1}{\tilde{q}}}\right)^{\frac{p-r}{p}}\,,
\]
where $\tilde{q}:=\fq\frac{p-r}{r}>1$. The limit $\eps\to0$
of the right-hand side exists if and only if $\E(c^{-\fq})<\infty$ and
\[
1>\left(1-\frac{rp}{d\left(p-r\right)}(s-s')\right)\frac{\tilde{q}}{\tilde{q}-1}\qquad\Leftrightarrow\qquad\fq>\frac{d}{p(s-s')}\,.
\]
Hence, we infer the following lemma.
\begin{lem}
\label{lem:weighted semi-poincare-general}Let $p\in(1,\infty)$ and
let $s\in(0,1)$. If $c$ satisfies Assumption \ref{assu:fq} for
some $\fq\in\left(\frac{d}{ps},+\infty\right]$,
then for all $r>1$ such that $\fq>\frac{r}{p-r}$ there exists $s'\in(0,s)$ such that (\ref{eq:lem:weighted semi-poincare-general})
holds uniformly in $\eps>0$.\end{lem}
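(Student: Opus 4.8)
The plan is to read off the lemma directly from the computation that precedes it in the excerpt. The inequality (\ref{eq:lem:weighted semi-poincare-general}) is reduced via H\"older's inequality in (\ref{eq:lem:wsp-help-1}) to showing that the second factor
\[
\left(\sumeps{x\in\Q^{\eps}}\sumeps{y\in\Q^{\eps}}\left(c_{\frac{x}{\eps},\frac{y}{\eps}}\right)^{-\frac{r}{p-r}}\left|x-y\right|^{-d\left(1-\frac{rp}{d\left(p-r\right)}(s-s')\right)}\right)^{\frac{p-r}{p}}
\]
is bounded uniformly in $\eps>0$ for a suitable choice of $s'\in(0,s)$. So the proof is: given $r>1$ with $\fq>\frac{r}{p-r}$, first note that by Assumption \ref{assu:fq} we automatically have $\frac{r}{p-r}>\frac{d}{ps}$ in the relevant regime (indeed, the hypothesis $\fq>\frac{r}{p-r}$ together with $\fq\in(\frac{d}{ps},\infty]$ is exactly what is needed), and then exhibit an admissible $s'$.

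First I would treat the easy case: choose $s'\in(0,s)$ close enough to $0$ that
\[
\frac{r}{p-r}\ge\frac{d}{p(s-s')}\,,
\]
which is possible precisely because $\frac{r}{p-r}>\frac{d}{ps}$ and the right-hand side depends continuously on $s'$ and tends to $\frac{d}{ps}$ as $s'\to0$. With this choice the exponent $-d\bigl(1-\frac{rp}{d(p-r)}(s-s')\bigr)$ is nonnegative, so the factor $|x-y|^{-d(1-\frac{rp}{d(p-r)}(s-s'))}$ is bounded on the bounded set $\Q$, and the remaining sum $\sumeps{x}\sumeps{y}(c_{x/\eps,y/\eps})^{-r/(p-r)}$ converges as $\eps\to0$ to $|\Q|^2\,\E(c^{-r/(p-r)})<\infty$ by the ergodic theorem (Theorem \ref{thm:Ergodic-Theorem} / Lemma \ref{lem:ergodic-bound}), using that $\frac{r}{p-r}\le\fq$ so that $\E(c^{-r/(p-r)})<\infty$ follows from $\E(c^{-\fq})<\infty$ by Jensen. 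Hence the second factor in (\ref{eq:lem:wsp-help-1}) is bounded uniformly in $\eps$, and combining with (\ref{eq:lem:wsp-help-1}) yields (\ref{eq:lem:weighted semi-poincare-general}) with $C$ depending only on $\Q$, $d$, $p$, $r$, $s$, $s'$ and $\E(c^{-\fq})$.

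For completeness one should also record the second case (as in the excerpt): if one instead wants $s'$ as close to $s$ as possible, pick $s'$ with $\frac{d}{p(s-s')}<\fq$, so $1-\frac{rp}{d(p-r)}(s-s')>0$, and apply H\"older once more with exponent $\tilde q=\fq\frac{p-r}{r}>1$ (note $\tilde q>1$ is exactly $\fq>\frac{r}{p-r}$), separating the $c^{-\fq}$ sum from the pure-distance sum $\sumeps{x}\sumeps{y}|x-y|^{-d(1-\frac{rp}{d(p-r)}(s-s'))\frac{\tilde q}{\tilde q-1}}$; the condition for the latter to be $\eps$-uniformly bounded is $\bigl(1-\frac{rp}{d(p-r)}(s-s')\bigr)\frac{\tilde q}{\tilde q-1}<1$, which simplifies to $\fq>\frac{d}{p(s-s')}$, the very inequality we imposed. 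Either way the uniform bound holds, and the lemma follows. The main (and really the only) obstacle is bookkeeping: checking that the admissible range of $s'$ is nonempty is an elementary but slightly fiddly manipulation of the three competing inequalities $\frac{r}{p-r}\lessgtr\frac{d}{p(s-s')}$, $\fq>\frac{d}{p(s-s')}$, and $s'\in(0,s)$; there is no genuine analytic difficulty once (\ref{eq:lem:wsp-help-1}) and the ergodic theorem are in hand.
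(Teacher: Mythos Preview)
Your approach is the same as the paper's: both cases from the discussion preceding the lemma are invoked, and the proof reduces to checking that an admissible $s'$ exists in each. However, there is a logical slip you should fix. The parenthetical claim that ``by Assumption \ref{assu:fq} we automatically have $\frac{r}{p-r}>\frac{d}{ps}$'' is false: from $\fq>\frac{r}{p-r}$ and $\fq>\frac{d}{ps}$ you cannot conclude any ordering between $\frac{r}{p-r}$ and $\frac{d}{ps}$ (take $\fq$ large and $r$ close to $1$). The paper therefore genuinely splits into the two cases $\frac{r}{p-r}>\frac{d}{ps}$ and $\frac{r}{p-r}\le\frac{d}{ps}$, and your ``for completeness'' second case is in fact required, not optional.

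A second minor muddle: in that second case you write ``if one instead wants $s'$ as close to $s$ as possible, pick $s'$ with $\frac{d}{p(s-s')}<\fq$, so $1-\frac{rp}{d(p-r)}(s-s')>0$.'' This is backwards on two counts. First, $\frac{d}{p(s-s')}<\fq$ forces $s'$ close to $0$, not close to $s$. Second, the implication ``so $1-\frac{rp}{d(p-r)}(s-s')>0$'' does not follow from $\frac{d}{p(s-s')}<\fq$; it is equivalent to the separate condition $\frac{r}{p-r}<\frac{d}{p(s-s')}$, which holds for $s'$ near $0$ precisely because we are in the case $\frac{r}{p-r}\le\frac{d}{ps}$. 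Once these points are stated correctly, your argument and the paper's coincide.
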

\begin{proof}
Let us first assume that $\frac{r}{p-r}>\frac{d}{ps}$.
It follows that there exists $s'\in(0,s)$ such that \eqref{eq:lem:wsp-case-1} is fulfilled and therefore the right-hand side of \eqref{eq:lem:wsp-help-1} stays bounded if $\E(c^{-\frac{r}{p-r}})<\infty$.
This, however, is clearly the case due to Assumption \ref{assu:fq} and since $\fq>\frac{r}{p-r}$.

Let us now assume that $\frac{r}{p-r}\leq\frac{d}{ps}$.
In this case, there exists $s'\in(0,s)$ such that $1-\frac{rp}{d\left(p-r\right)}(s-s')>0$ and therefore the claim of the lemma follows by the second case that we have considered above. 
\end{proof}
Combined with Theorem \ref{thm:discr-Poincare-wsp-Qeps}, we obtain
the following result as a consequence of Lemma \ref{lem:weighted semi-poincare-general}. 

\begin{thm}
\label{thm:discr-compact-weights}Let $p\in(1,\infty)$, let $s\in(0,1)$
and let $c$ satisfy Assumption \ref{assu:fq} for some $\fq\in\left(\frac{d}{ps},+\infty\right]$.
If $\Q\subset\Rd$ is a uniform extension domain, then for every $r^{\star}<p_{\fq}^{\star}=\frac{dp\fq}{2d+d\fq-sp\fq}$ there exists
$C>0$, which does not depend on $\eps$, such that 
\[
\left\Vert u\right\Vert _{L^{r^{\star}}(\Q^{\eps})}\leq C\left(\sumeps{x\in\Q^{\eps}}\sumeps{y\in\Q^{\eps}}c_{\frac{x}{\eps},\frac{y}{\eps}}\frac{\left|u(x)-u(y)\right|^{p}}{\left|x-y\right|^{d+ps}}\right)^{\frac{1}{p}}\,.
\]
Moreover, for every sequence $\ue\in W^{s,p}(\Q^{\eps},c)$ such that
$\sup_{\eps>0}\left\Vert \ue\right\Vert _{s,p,\eps,\Q,c}<\infty$,
the sequence $\cR_{\eps}^{\ast}\ue$ is precompact in $L^{r^{\star}}(\Q)$.

Finally, if $\Q$ is a bounded $C^{0,1}$-domain and  $\ue\in W_0^{s,p}(\Q^{\eps},c)$ such that
$\sup_{\eps>0}\left\Vert \ue\right\Vert _{s,p,\eps,\Q,c}<\infty$,
the sequence $\cR_{\eps}^{\ast}\ue$ is precompact in $L^{r^{\star}}(\Q)$.
\end{thm}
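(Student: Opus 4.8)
The strategy is to chain the two inequalities already established: the weighted-to-unweighted seminorm estimate of Lemma~\ref{lem:weighted semi-poincare-general} and the discrete fractional Sobolev inequality of Theorem~\ref{thm:discr-Poincare-wsp-Qeps}. The point is that $p_{\fq}^{\star}=\frac{dp\fq}{2d+d\fq-sp\fq}$ is exactly the limiting value of the composed critical exponent $(r)^{\star}_{s'}:=\frac{dr}{d-s'r}$ as $r\uparrow\frac{p\fq}{\fq+1}$ (equivalently $\fq=\frac{r}{p-r}$) and $s'\uparrow s-\frac{d}{p\fq}$: one computes $s'r\to\frac{sp\fq-d}{\fq+1}$, hence $d-s'r\to\frac{2d+d\fq-sp\fq}{\fq+1}$ and $(r)^{\star}_{s'}\to p_{\fq}^{\star}$, and $(r)^{\star}_{s'}$ is increasing in both parameters. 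So, given $r^{\star}<p_{\fq}^{\star}$, I first pick $r\in(1,p)$ with $\frac{d}{ps}<\frac{r}{p-r}<\fq$ (Assumption~\ref{assu:fq}) close enough to $\frac{p\fq}{\fq+1}$, then invoke Lemma~\ref{lem:weighted semi-poincare-general} to obtain an $s'\in(0,s)$ with $[u]_{s',r,\eps,\Q}\le C[u]_{s,p,\eps,\Q,c}$ uniformly in $\eps$, arranging moreover that $(r)^{\star}_{s'}>r^{\star}$ (when $s'r\ge d$ this exponent is $\infty$ and the constraint is vacuous); passing to a slightly larger $r^{\star}$ if necessary, I may also assume $r^{\star}\ge r$ since $\Q^{\eps}$ is bounded.

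Now Theorem~\ref{thm:discr-Poincare-wsp-Qeps}, used with the exponents $(s',r)$ and target exponent $r^{\star}\in[r,(r)^{\star}_{s'}]$, gives $\|u\|_{L^{r^{\star}}(\Q^{\eps})}\le C\|u\|_{s',r,\eps,\Q}=C(\|u\|_{L^{r}(\Q^{\eps})}^{r}+[u]_{s',r,\eps,\Q}^{r})^{1/r}$ with $C$ independent of $\eps$; together with the seminorm bound this yields $\|u\|_{L^{r^{\star}}(\Q^{\eps})}\le C([u]_{s,p,\eps,\Q,c}+\|u\|_{L^{r}(\Q^{\eps})})$, and on the Dirichlet, respectively zero-mean, subspaces one removes the lower-order $L^{r}$ term via the Poincaré inequalities \eqref{eq:PI-WspepsQ-Dir-2} and \eqref{eq:PI-WspepsQ-mean-2} of Theorems~\ref{thm:embedding-dirichlet} and~\ref{thm:embedding-mean}, giving the stated bound. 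For the compactness part: if $\sup_{\eps}\|\ue\|_{s,p,\eps,\Q,c}<\infty$ then $\sup_{\eps}[\ue]_{s,p,\eps,\Q,c}<\infty$ and $\sup_{\eps}\|\ue\|_{L^{p}(\Q^{\eps})}<\infty$, hence (as $r<p$ and $\Q^{\eps}$ is bounded) $\sup_{\eps}\|\ue\|_{L^{r}(\Q^{\eps})}<\infty$, and by Lemma~\ref{lem:weighted semi-poincare-general} also $\sup_{\eps}\|\ue\|_{s',r,\eps,\Q}<\infty$; Theorem~\ref{thm:Compactness-discrete-eps-to-0} applied with $(s',r)$ and $r^{\star}\in[r,(r)^{\star}_{s'})$ then shows that $(\cR_{\eps}^{\ast}\ue)_{\eps>0}$ is precompact in $L^{r^{\star}}(\Q)$.

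Finally, if $\Q$ is a bounded $C^{0,1}$-domain and $\ue\in W_{0}^{s,p}(\Q^{\eps},c)$, I would run exactly the same argument but replace Theorems~\ref{thm:discr-Poincare-wsp-Qeps} and~\ref{thm:Compactness-discrete-eps-to-0} by Theorem~\ref{thm:embedding-dirichlet}: after extending $u$ by $0$ outside $\Q^{\eps}$, inequality \eqref{eq:PI-WspepsQ-Dir-2} with exponents $(s',r)$ directly gives $\|u\|_{L^{r^{\star}}(\Q^{\eps})}\le C[u]_{s',r,\eps,\Q}\le C[u]_{s,p,\eps,\Q,c}$, and the last assertion of Theorem~\ref{thm:embedding-dirichlet} gives precompactness of $\cR_{\eps}^{\ast}\ue$ in $L^{r^{\star}}(\Q)$. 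The only delicate step is the exponent bookkeeping of the first paragraph — checking that the two-parameter family $(r,s')$ sweeps out every critical exponent strictly below $p_{\fq}^{\star}$ while keeping the constraints $r\in(1,p)$, $\fq>\frac{r}{p-r}$ and $s'\in(0,s-\frac{d}{p\fq})$ simultaneously satisfiable; the rest is an assembly of results already proved, with Lemma~\ref{lem:weighted semi-poincare-general} being the substantial ingredient.
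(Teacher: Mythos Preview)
Your approach is essentially the same as the paper's: chain Lemma~\ref{lem:weighted semi-poincare-general} with the discrete Sobolev embedding (Theorem~\ref{thm:discr-Poincare-wsp-Qeps}, respectively Theorem~\ref{thm:embedding-dirichlet} in the Dirichlet case), then verify via the equivalences $\fq>\frac{d}{p(s-s')}\Leftrightarrow s'<s-\frac{d}{p\fq}$ and $\fq>\frac{r}{p-r}\Leftrightarrow r<\frac{p\fq}{1+\fq}$ that the composite critical exponent $dr/(d-s'r)$ sweeps out every value below $p_{\fq}^{\star}$. The paper's proof is in fact less careful than yours about the $L^{r}$ lower-order term: it writes $\|u\|_{L^{r^{\star}}(\Q^{\eps})}\le [u]_{s',r,\eps,\Q}$ citing Theorem~\ref{thm:discr-Poincare-wsp-Qeps}, which as stated controls $\|u\|_{L^{r^{\star}}}$ by the full norm rather than the seminorm; your explicit use of the Poincar\'e inequalities \eqref{eq:PI-WspepsQ-Dir-2}--\eqref{eq:PI-WspepsQ-mean-2} to absorb that term, and your separate treatment of the compactness in the $W_{0}$ case via Theorem~\ref{thm:embedding-dirichlet}, are the right way to make the argument complete.
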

\begin{proof}
Note that Theorem \ref{thm:discr-Poincare-wsp-Qeps} and Lemma
\ref{lem:weighted semi-poincare-general} imply that for every $r>1$ such that $\fq>\frac{r}{p-r}$ and $s'\in(0,s)$
such that $\fq>\frac{d}{p(s-s')}$ and $r^{\star}:=dr/(d-s'r)$ there
exists a constant $C>0$, which
does not depend on $\eps$, such that 
\[
\left\Vert u\right\Vert _{L^{r^{\star}}(\Q^{\eps})}
\leq \left(\sumeps{x\in\Q^{\eps}}\sumeps{y\in\Q^{\eps}}\frac{\left|u(x)-u(y)\right|^{r}}{\left|x-y\right|^{d+rs'}}\right)^{\frac{1}{r}}
\stackrel{\eqref{eq:lem:weighted semi-poincare-general}}\leq C\left(\sumeps{x\in\Q^{\eps}}\sumeps{y\in\Q^{\eps}}c_{\frac{x}{\eps},\frac{y}{\eps}}\frac{\left|u(x)-u(y)\right|^{p}}{\left|x-y\right|^{d+ps}}\right)^{\frac{1}{p}}\,,
\]
and the claimed compactness holds. It only remains to verify that
$r^{\star}$ can take any value up to $dp\fq/(2d+d\fq-sp\fq)$. Let us note the
following equivalences
\begin{align*}
\fq & >\frac{d}{p(s-s')} &  & \Leftrightarrow & s' & <s-\frac{d}{p\fq}\,,\\
\fq & >\frac{r}{p-r} &  & \Leftrightarrow & r & <\frac{p\fq}{1+\fq}\,,
\end{align*}
and that we can chose $s'$ and $r$ arbitrarily close to their upper
bounds. Hence, we obtain from $r^{\star}=dr/(d-s'r)$ that 
\[
r^{\star}< \frac{dp\fq}{1+\fq}\left(d-\frac{p\fq}{1+\fq}\left(s-\frac{d}{p\fq}\right)\right)^{-1}
\]
and $r^{\star}$ can take any value between $1$ and the right-hand
side. A short calculation shows that this is the claim.

\end{proof}

\section{\label{sec:Proof-of-Theorems}Proof of Theorems \ref{thm:Main Theorem-I}
to \ref{thm:Main Theorem-V}}

\subsection{Auxiliary Lemmas}

We recall the following useful lemma.
\begin{lem}
\label{lem:liminf-estim-convex-part}Let $\Q\subset\Rd$ be a bounded
domain and 
\MH{let $v:\R^m\to\R$ be non-negative and continuous.} 
Let $u_{\eps}:\Zde\to\R^m$
a sequence of functions having support in $\Q^{\eps}$ such that $\cR_{\eps}^{\ast}\ue\to u$
pointwise a.e. Then 
\[
\liminf_{\eps\to0}\sumeps{x\in\Q\cap\Zde}v(\ue)\geq\int_{\Q}v(u(x))\d x\,.
\]

\end{lem}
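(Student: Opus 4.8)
The plan is to reduce the discrete sum to a genuine Lebesgue integral of a step function and then apply Fatou's lemma, the only real care being an $O(\eps)$ boundary layer near $\partial\Q$.

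First I would use the tiling of $\Rd$ by the half-open cells $C_\eps(x_i):=x_i+[-\tfrac\eps2,\tfrac\eps2)^d$, $x_i\in\Zde$, on each of which $\cR_\eps^{\ast}\ue$ is constant equal to $\ue(x_i)$; since $\left|C_\eps(x_i)\right|=\eps^d$ this gives the exact identity
\[
\sumeps{x\in\Q\cap\Zde}v(\ue)\;=\;\eps^d\!\!\sum_{x_i\in\Q\cap\Zde}\!\!v(\ue(x_i))\;=\;\int_{\bigcup_{x_i\in\Q\cap\Zde}C_\eps(x_i)}v\bigl(\cR_\eps^{\ast}\ue(x)\bigr)\,\d x\,.
\]
Because $\left|x-x_i\right|\le\tfrac{\sqrt d}{2}\eps$ for $x\in C_\eps(x_i)$, every cell that meets the inner domain $\Q_\eps:=\{x\in\Q:\operatorname{dist}(x,\partial\Q)>\tfrac{\sqrt d}{2}\eps\}$ has its centre $x_i$ inside $\Q$, hence $x_i\in\Q\cap\Zde$; therefore $\Q_\eps\subseteq\bigcup_{x_i\in\Q\cap\Zde}C_\eps(x_i)$. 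Using $v\ge0$ I then discard the part of the integral outside $\Q_\eps$ to obtain, for every $\eps>0$,
\[
\sumeps{x\in\Q\cap\Zde}v(\ue)\;\ge\;\int_{\Q_\eps}v\bigl(\cR_\eps^{\ast}\ue(x)\bigr)\,\d x\;=\;\int_{\Rd}\mathbf 1_{\Q_\eps}(x)\,v\bigl(\cR_\eps^{\ast}\ue(x)\bigr)\,\d x\,.
\]

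Next I would pass to the limit. Since $\Q$ is open, every $x\in\Q$ has $\operatorname{dist}(x,\partial\Q)>0$ and hence lies in $\Q_\eps$ for all small $\eps$, while $\Q_\eps\subseteq\Q$ always; thus $\mathbf 1_{\Q_\eps}\to\mathbf 1_\Q$ pointwise on $\Rd$. Combined with $\cR_\eps^{\ast}\ue\to u$ a.e.\ and the continuity of $v$, the integrands $\mathbf 1_{\Q_\eps}\,v(\cR_\eps^{\ast}\ue)$ are non-negative and converge a.e.\ to $\mathbf 1_\Q\,v(u)$. Picking a sequence $\eps_n\downarrow0$ that realises $\liminf_{\eps\to0}\sumeps{x\in\Q\cap\Zde}v(\ue)$ (along which the a.e.\ convergence still holds) and applying Fatou's lemma yields
\[
\liminf_{\eps\to0}\sumeps{x\in\Q\cap\Zde}v(\ue)\;\ge\;\liminf_{n\to\infty}\int_{\Rd}\mathbf 1_{\Q_{\eps_n}}\,v\bigl(\cR_{\eps_n}^{\ast}u_{\eps_n}\bigr)\;\ge\;\int_{\Rd}\mathbf 1_\Q\,v(u)\;=\;\int_{\Q}v(u(x))\,\d x\,,
\]
which is the assertion.

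The one point requiring attention --- and the only thing I would call an obstacle, though it is merely technical --- is the mismatch between summing over lattice points of $\Q$ and integrating over $\Q$: cells centred just outside $\Q$ can protrude inward and cells centred just inside $\Q$ can protrude outward. Non-negativity of $v$ is exactly what lets me throw away this boundary layer and retreat to the exhaustion $\Q_\eps\uparrow\Q$, so no regularity of $\partial\Q$ (and in particular no control on $\left|\partial\Q\right|$) is needed. Everything else --- the cell identity, the a.e.\ convergence of $v(\cR_\eps^{\ast}\ue)$ by continuity, and the reduction of the continuous-parameter $\liminf$ to a sequence for Fatou --- is routine; note also that the hypothesis that $\ue$ be supported in $\Q^{\eps}$ plays no role in this (lower-bound) direction.
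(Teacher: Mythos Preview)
Your argument is correct and in fact more direct than the paper's. The paper first truncates, setting $u_\eps^M:=\max\{-M,\min\{u_\eps,M\}\}$, then appeals to dominated convergence on the bounded domain to obtain $\sumeps{x\in\Q\cap\Zde}v(u_\eps^M)\to\int_\Q v(u^M)$, and finally lets $M\to\infty$ via Fatou. You bypass the truncation entirely by rewriting the sum as $\int v(\cR_\eps^\ast u_\eps)$ over a union of cells, shrinking to the inner domain $\Q_\eps$, and applying Fatou directly to the non-negative integrands $\mathbf 1_{\Q_\eps}v(\cR_\eps^\ast u_\eps)$. This buys you two things: you avoid the somewhat delicate claim $\mathrm E_\infty\ge\liminf_\eps\sumeps{}v(u_\eps^M)$ (which, for merely continuous non-negative $v$, is not obvious since truncation need not decrease $v$), and you make explicit the boundary-layer bookkeeping that the paper's invocation of dominated convergence leaves implicit. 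Your closing remark that the support hypothesis on $u_\eps$ is unused in this direction is also accurate.
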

The proof is simple. However, we provide it here as a preparation
for the more involved proofs that will follow. 
\begin{proof}
\MH{For simplicity, restrict to $m=1$.} Without loss of generality, we may assume 
\[
\mathrm{E}_{\infty}:=\liminf_{\eps\to0}\sumeps{x\in\Q\cap\Zde}v(\ue)<+\infty\,.
\]
For $M\in\N$ we denote $\ue^{M}:=\max\left\{ -M,\min\left\{ \ue,M\right\} \right\} $,
i.e. the function $\ue$ is cut to values in the interval $[-M,M]$. We then
note that $\cR_{\eps}^{\ast}\ue^{M}\to u^{M}$ pointwise a.e. Using
this insight and continuity, we obtain 
\[
\mathrm{E}_{\infty}\geq\liminf_{\eps\to0}I_{M,\eps}\,,\quad\mbox{where}\quad I_{M,\eps}=\sumeps{x\in\Q\cap\Zde}v(\ue^{M})\,.
\]
From continuity of $v$ and Lebesgue's dominated convergence theorem,
we infer $I_{M,\eps}\to\int_{\Q}v(u^{M}(x))\d x$ as $\eps\to0$.
Now, we infer from Fatou's lemma that 
\[
\int_{\Q}v(u(x))\d x\leq\int_{\Q}\liminf_{M\to\infty}v(u^{M}(x))\d x\leq E_{\infty}
\]
and hence the lemma is proved.
\end{proof}
A related lemma is the following.
\begin{lem}
\label{lem:recovery-convex-part}Let $\Q\subset\Rd$ be a bounded
domain and 
\MH{let $v:\R^m\to\R$ be non-negative and continuous} 
such that for
some $\alpha>0$ and $r'>1$ we have $v(\xi)\leq\alpha|\xi|^{r'}$.
Let $u_{\eps}:\Zde\to\R^m$ a sequence of functions having support in
$\Q^{\eps}$ such that for some $r\geq r'$ it holds $\sup_{\eps>0}\left\Vert \ue\right\Vert _{L^{r}(\Q^{\eps})}<\infty$
and $\cR_{\eps}^{\ast}\ue\to u$ pointwise a.e. Then 
\[
\lim_{\eps\to0}\sumeps{x\in\Q\cap\Zde}v(\ue)=\int_{\Q}v(u(x))\d x\,.
\]
\end{lem}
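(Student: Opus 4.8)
The statement is the \emph{recovery} counterpart of Lemma \ref{lem:liminf-estim-convex-part}, so since that lemma already gives $\liminf_{\eps\to0}\sumeps{x\in\Q\cap\Zde}v(\ue)\geq\int_{\Q}v(u(x))\,\d x$, it suffices to prove the matching upper bound
\[
\limsup_{\eps\to0}\,\sumeps{x\in\Q\cap\Zde}v(\ue)\ \leq\ \int_{\Q}v(u(x))\,\d x\,.
\]
As a preliminary I would record that $v(0)=0$ (because $0\le v(0)\le\alpha|0|^{r'}=0$), so that, $\ue$ being supported in $\Q^\eps$, one has $\sumeps{x\in\Q\cap\Zde}v(\ue)=\int_{\Rd}v(\cR_\eps^\ast\ue)$ with all integrands supported in one fixed bounded set; likewise $u=0$ a.e.\ off $\Q$, so $\int_{\Rd}v(u)=\int_{\Q}v(u)$, and Fatou applied to $\sup_\eps\|\cR_\eps^\ast\ue\|_{L^r(\Q^\eps)}<\infty$ together with the pointwise convergence gives $u\in L^r(\Q)\subseteq L^{r'}(\Q)$.

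The key step is a tail estimate uniform in $\eps$. For $M\in\N$ put $\ue^{M}:=\max\{-M,\min\{\ue,M\}\}$ (componentwise), and let $u^{M}$ be the analogous truncation of $u$, as in the proof of Lemma \ref{lem:liminf-estim-convex-part}. On $\{|\cR_\eps^\ast\ue|\le M\}$ we have $\cR_\eps^\ast\ue^{M}=\cR_\eps^\ast\ue$, while on the complement $v(\cR_\eps^\ast\ue^{M})\ge0$ and $v(\cR_\eps^\ast\ue)\le\alpha|\cR_\eps^\ast\ue|^{r'}$, hence
\[
\int_{\Rd}v(\cR_\eps^\ast\ue)\ \le\ \int_{\Rd}v(\cR_\eps^\ast\ue^{M})\ +\ \alpha\int_{\{|\cR_\eps^\ast\ue|>M\}}|\cR_\eps^\ast\ue|^{r'}\,.
\]
For the second term, Chebyshev gives $\bigl|\{|\cR_\eps^\ast\ue|>M\}\bigr|\le M^{-r}\|\ue\|_{L^r(\Q^\eps)}^{r}\le CM^{-r}$, and then H\"older with exponents $\frac{r}{r'}$ and $\frac{r}{r-r'}$ yields $\int_{\{|\cR_\eps^\ast\ue|>M\}}|\cR_\eps^\ast\ue|^{r'}\le C\,M^{-(r-r')}$, uniformly in $\eps$; this is exactly where the strict gap $r>r'$ enters (for $r=r'$ the argument requires in addition that $\{|\cR_\eps^\ast\ue|^{r'}\}$ be equi-integrable, which holds whenever $\cR_\eps^\ast\ue\to u$ strongly in $L^{r'}$). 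For the first term, $\cR_\eps^\ast\ue^{M}\to u^{M}$ pointwise a.e., $0\le v(\cR_\eps^\ast\ue^{M})\le\max_{\xi\in[-M,M]^m}v(\xi)<\infty$, and the supports are contained in a fixed bounded set, so dominated convergence gives $\int_{\Rd}v(\cR_\eps^\ast\ue^{M})\to\int_{\Q}v(u^{M})$.

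Putting the two pieces together, $\limsup_{\eps\to0}\sumeps{x\in\Q\cap\Zde}v(\ue)\le\int_{\Q}v(u^{M})+C\,M^{-(r-r')}$ for every $M$. Letting $M\to\infty$, continuity of $v$ and $u^{M}\to u$ give $v(u^{M})\to v(u)$ a.e., while $0\le v(u^{M})\le\alpha|u^{M}|^{r'}\le\alpha|u|^{r'}\in L^{1}(\Q)$, so dominated convergence yields $\int_{\Q}v(u^{M})\to\int_{\Q}v(u)$, which proves the $\limsup$ bound and hence the lemma. The one genuinely delicate point is the uniform tail bound in the second step: without $r>r'$ the mass of $\ue$ may concentrate and the conclusion fails; an equivalent way to organize the proof is to observe that $r>r'$ makes $\{v(\cR_\eps^\ast\ue)\}$ equi-integrable on a fixed bounded set, so that Vitali's convergence theorem applies directly to $v(\cR_\eps^\ast\ue)\to v(u)$. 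The remaining manipulations — identifying the discrete sums with integrals over the cells $x_i+[-\frac\eps2,\frac\eps2)^d$ and absorbing the $\mathcal{O}(\eps)$ enlargement of supports — are routine.
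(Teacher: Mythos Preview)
Your proof is correct and takes a genuinely different route from the paper. The paper argues via Egorov's theorem: given $\delta>0$, it finds $\Q_\delta\subset\Q$ with $|\Q_\delta|<\delta$ on whose complement $\cR_\eps^\ast\ue\to u$ uniformly, so the difference $\int_\Q v(\cR_\eps^\ast\ue)-\int_\Q v(u)$ is eventually concentrated on $\Q_\delta$ and then bounded by H\"older as $2\alpha|\Q_\delta|^{(r-r')/r}\sup_\eps\|\ue\|_{L^r}$. You instead truncate at level $M$, control the tail $\{|\cR_\eps^\ast\ue|>M\}$ by Chebyshev plus H\"older uniformly in $\eps$, and pass to the limit in the bounded part by dominated convergence; then let $M\to\infty$. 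Both approaches hinge on the same mechanism --- the gap $r>r'$ is what makes the bad set (small measure for Egorov, large values for truncation) contribute negligibly --- and both in fact need $r>r'$ strictly, a point you make explicit and the paper glosses over. Your version is a bit more quantitative (an explicit $M^{-(r-r')}$ rate) and avoids Egorov entirely; the paper's version is shorter once Egorov is invoked. Your closing remark that this is really Vitali's theorem in disguise is the cleanest way to see why the two arguments are equivalent.
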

\begin{proof}
We have for some positive constant $C$ that 
\[
\limsup_{\eps\to0}\sumeps{x\in\Q\cap\Zde}\left|v(\ue)\right|<C\sup_{\eps>0}\left\Vert \ue\right\Vert _{L^{r}(\Q^{\eps})}^{r'}<+\infty\,.
\]
Let $\delta>0$. By Egorov's theorem there exists $\Q_{\delta}\subset\Q$
with $\Q_{\delta}^{\complement}=\Q\backslash\Q_{\delta}$, $|\Q_{\delta}|<\delta$
and such that $\cR_{\eps}^{\ast}\ue\to u$ uniformly on $\Q_{\delta}^{\complement}$.
Hence we have 
\begin{align*}
\lim_{\eps\to0}\sumeps{x\in\Q\cap\Zde}v(\ue)-\int_{\Q}v(u(x))\d x & =\lim_{\eps\to 0} \int_{\Q_{\delta}}\left(v(u(x))-v(\cR_{\eps}^{\ast}\ue(x))\right)\d x\\
 & \leq2\sup_{\eps>0}\int_{\Q_{\delta}}\alpha\left|\cR_{\eps}^{\ast}\ue(x)\right|^{r'}\d x\\
 & \leq2|\Q_{\delta}|^{\frac{r-r'}{r}}\alpha\sup_{\eps>0}\left\Vert \ue\right\Vert _{L^{r}(\Q^{\eps})}
\end{align*}
and hence the lemma is proved as $\delta$ becomes arbitrary small.
\end{proof}
Another important result connected with convex functions is the following.
\begin{lem}
\label{lem:convergence-convex-convol}Let $G:\R\to\R^m$ be non-negative
and convex, let $u:\Rd\to\R^m$ be measurable and such that $\int_{\Rd}G\left(u(z)\right)\d z<\infty$
and let $\left(\eta_{k}\right)_{k\in\N}$ be as in Lemma \ref{lem:Dirac-Approx}.
Then 
\[
\lim_{k\to\infty}\int_{\Rd}G(\eta_{k}\ast u)=\int_{\Rd}G\left(u\right)\,.
\]
\end{lem}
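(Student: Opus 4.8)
The plan is to establish the two one-sided estimates $\limsup_{k\to\infty}\int_{\Rd}G(\eta_k\ast u)\le\int_{\Rd}G(u)$ and $\liminf_{k\to\infty}\int_{\Rd}G(\eta_k\ast u)\ge\int_{\Rd}G(u)$: the former is a consequence of convexity, the latter of almost everywhere convergence of the mollifications together with Fatou's lemma. Throughout we use that a real-valued convex function on $\R^m$ is automatically continuous, so that $G\circ u$ is measurable and $G(\eta_k\ast u)$ is meaningful.

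For the upper estimate I would invoke Jensen's inequality. Since $\eta_k\ge0$ and $\int_{\Rd}\eta_k=1$, the measure $\eta_k(y)\,\d y$ is a probability measure, so convexity of $G$ gives, for almost every $x\in\Rd$,
\[
G\bigl((\eta_k\ast u)(x)\bigr)=G\!\left(\int_{\Rd}u(x-y)\,\eta_k(y)\,\d y\right)\le\int_{\Rd}G\bigl(u(x-y)\bigr)\,\eta_k(y)\,\d y=\bigl(\eta_k\ast(G\circ u)\bigr)(x)\,.
\]
Integrating in $x$ and applying Tonelli's theorem (the integrand is non-negative) together with the translation invariance of Lebesgue measure and $\int_{\Rd}\eta_k=1$, we obtain $\int_{\Rd}G(\eta_k\ast u)\le\int_{\Rd}G(u)$ for every $k\in\N$, which yields the $\limsup$ bound at once.

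For the lower estimate, observe that $G\circ u\in L^1(\Rd)$ by hypothesis, and that $\eta_k\ast u$ is well defined precisely because $u\in L^1_{\mathrm{loc}}(\Rd;\R^m)$. Since $(\eta_k)_k$ is a standard mollifying sequence, $\eta_k\ast u\to u$ in $L^1_{\mathrm{loc}}(\Rd;\R^m)$; passing to a subsequence we may assume $\eta_k\ast u\to u$ pointwise almost everywhere, and then continuity of $G$ gives $G(\eta_k\ast u)\to G(u)$ pointwise almost everywhere along that subsequence. Fatou's lemma then yields $\int_{\Rd}G(u)\le\liminf_{k}\int_{\Rd}G(\eta_k\ast u)$ along the subsequence. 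A routine contradiction argument promotes this to the full sequence: if $\liminf_{k}\int_{\Rd}G(\eta_k\ast u)<\int_{\Rd}G(u)$, pick a subsequence along which the integrals converge to a limit strictly below $\int_{\Rd}G(u)$, extract a further subsequence with $\eta_k\ast u\to u$ a.e., and apply Fatou to this sub-subsequence to reach a contradiction. Combining the two estimates proves the identity.

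The argument is essentially routine; the only point requiring a moment's care is the a.e.\ convergence $\eta_k\ast u\to u$, which is the Lebesgue differentiation (mollifier) theorem and needs nothing beyond $u\in L^1_{\mathrm{loc}}$ — implicit already in the statement, since otherwise $\eta_k\ast u$ would be undefined. Everything else is Jensen's inequality, Tonelli's theorem, continuity of finite convex functions, and Fatou's lemma.
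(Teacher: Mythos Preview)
Your proof is correct and follows essentially the same two-step strategy as the paper: Jensen's inequality (with the probability measure $\eta_k(\cdot)\,\d z$) gives $\int_{\Rd}G(\eta_k\ast u)\le\int_{\Rd}G(u)$ for every $k$, and Fatou's lemma yields the matching lower bound $\int_{\Rd}G(u)\le\liminf_k\int_{\Rd}G(\eta_k\ast u)$. The only difference is that you are more explicit about why Fatou applies---you extract an a.e.\ convergent subsequence from $L^1_{\mathrm{loc}}$-convergence of mollifiers and then upgrade via a contradiction argument---whereas the paper simply invokes Fatou directly (implicitly using that $\eta_k\ast u\to u$ a.e.\ for $u\in L^1_{\mathrm{loc}}$).
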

\begin{proof}
We note that $\eta_{k}(x-z)\d z$ induces a probability measure on
$\Rd$ for every $k\in\N$ and every $x\in\Rd$. Hence we infer in
a first step by Jensen's inequality
\begin{align*}
\int_{\Rd}G(\eta_{k}\ast u) & =\int_{\Rd}G\left(\int_{\Rd}\eta_{k}(x-z)u(z)\d z\right)\d x\\
 & \leq\int_{\Rd}\int_{\Rd}\eta_{k}(x-z)G\left(u(z)\right)\d z\d x\\
 & \leq\int_{\Rd}G\left(u(z)\right)\d z\,.
\end{align*}
On the other hand, Fatou's Lemma yields 
\[
\int_{\Rd}G\left(u(z)\right)\d z\leq\liminf_{k\to\infty}\int_{\Rd}G\left(\left(u\ast\eta_{k}\right)(z)\right)\d z\,.
\]

\end{proof}
The following lemma is new to our knowledge. It is the basis for the
proofs of our main results.
\begin{lem}
\label{lem:liminf-estim-Wsp}Let $\Q\subset\Rd$ be a bounded domain
and let $c$, $s$, $p$, $\fq$ and $V:\R^m\to\R$ satisfy Assumptions \ref{assu:fq}
and \ref{assu:V}. Furthermore, let $u_{\eps}:\Zde\to\R^m$ a sequence
of functions having support in $\Q^{\eps}$ such that $\cR_{\eps}^{\ast}\ue\to u$
pointwise a.e. and $\sup_{\eps}\left\Vert \ue\right\Vert _{\infty}<\infty$.
Then 
\begin{align}
\liminf_{\eps\to0}\sumsumeps{(x,y)\in\Ztde}c_{\frac{x}{\eps},\frac{y}{\eps}}\frac{V\left(\ue(x)-\ue(y)\right)}{\left|x-y\right|^{d+ps}} & \geq\E(c)\underset{{\scriptstyle \Rd\times\Rd}}{\iint}\frac{V\left(u(x)-u(y)\right)}{\left|x-y\right|^{d+ps}}\d x\d y\,.\label{eq:conv-lemma-1}\\
\liminf_{\eps\to0}\sumsumeps{(x,y)\in\Q^{\eps}\times\Q^{\eps}}c_{\frac{x}{\eps},\frac{y}{\eps}}\frac{V\left(\ue(x)-\ue(y)\right)}{\left|x-y\right|^{d+ps}} & \geq\E(c)\underset{{\scriptstyle \Q\times\Q}}{\iint}\frac{V\left(u(x)-u(y)\right)}{\left|x-y\right|^{d+ps}}\d x\d y\,.\label{eq:conv-lemma-2}
\end{align}
Furthermore, if 
\begin{align}
\sup_{\eps>0}\sup_{x,y\in\Zde}\frac{\left|\ue(x)-\ue(y)\right|}{\left|x-y\right|} & =:C_{L}<\infty\label{eq:conv-lemma-2-a}\\
\mbox{resp. }\quad\sup_{\eps>0}\sup_{x,y\in\Q^{\eps}}\frac{\left|\ue(x)-\ue(y)\right|}{\left|x-y\right|} & =:C_{L}<\infty\,,\label{eq:conv-lemma-2-b}
\end{align}
and $u\in C_{c}^{1}(\Rd)$, resp. $u\in W^{1,\infty}(\Q)$ then we
have 
\begin{align}
\lim_{\eps\to0}\sumsumeps{(x,y)\in\Ztde}c_{\frac{x}{\eps},\frac{y}{\eps}}\frac{V\left(\ue(x)-\ue(y)\right)}{\left|x-y\right|^{d+ps}} & =\E(c)\underset{{\scriptstyle \Rd\times\Rd}}{\iint}\frac{V\left(u(x)-u(y)\right)}{\left|x-y\right|^{d+ps}}\d x\d y\,.\label{eq:conv-lemma-3}\\
\lim_{\eps\to0}\sumsumeps{(x,y)\in\Q^{\eps}\times\Q^{\eps}}c_{\frac{x}{\eps},\frac{y}{\eps}}\frac{V\left(\ue(x)-\ue(y)\right)}{\left|x-y\right|^{d+ps}} & =\E(c)\underset{{\scriptstyle \Q\times\Q}}{\iint}\frac{V\left(u(x)-u(y)\right)}{\left|x-y\right|^{d+ps}}\d x\d y\,.\label{eq:conv-lemma-4}
\end{align}
\end{lem}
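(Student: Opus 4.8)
The plan is to reduce all four assertions to the Extended Ergodic Theorem (Theorem~\ref{thm:FHS-Ergodic-Thm}) applied in dimension $2d$ to the stationary ergodic field $c$ on $\Z^{2d}$, with the nonlinear contribution near the diagonal handled separately by the weighted ergodic bound of Lemma~\ref{lem:ergodic-bound-polynomial}. First I would note that Assumption~\ref{assu:V} forces $V(0)=0$ and $V(\xi)\le C_{0}|\xi|^{p}$ for $|\xi|\le\rho_{0}$ (since $|\xi|^{-p}V(\xi)$ admits a finite limit as $\xi\to0$ and $V$ is continuous). Because each $u_{\eps}$ vanishes off $\Q^{\eps}$ and $V(0)=0$, the sum over $\Ztde$ in \eqref{eq:conv-lemma-1} and \eqref{eq:conv-lemma-3} only sees pairs with an endpoint in $\Q^{\eps}$. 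Fix $0<\delta<R$ and a cube $\cB_{R}=[-M_{R},M_{R}]^{d}$ whose interior contains the $R$-neighbourhood of $\Q$, and split the double sum into the near-diagonal part $|x-y|<\delta$, the middle part $\delta\le|x-y|<R$, and the far part $|x-y|\ge R$. In the middle part every contributing pair has both endpoints in $\cB_{R}^{\eps}$, and the extra pairs in $\cB_{R}^{\eps}\times\cB_{R}^{\eps}$ with $\delta\le|x-y|<R$ contribute $V(0)=0$. For the $\Q\times\Q$ functionals \eqref{eq:conv-lemma-2},\eqref{eq:conv-lemma-4} only the near-diagonal and the (bounded) middle part occur.

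For the $\liminf$-estimates \eqref{eq:conv-lemma-1}--\eqref{eq:conv-lemma-2} I would drop the near-diagonal and far parts, which are $\ge0$ since $V\ge0$. The middle part equals $\sumsumeps{(x,y)\in\cB_{R}^{\eps}\times\cB_{R}^{\eps}}c_{\frac{x}{\eps},\frac{y}{\eps}}\,U_{\eps}(x,y)$ with $U_{\eps}(x,y):=|x-y|^{-d-ps}V(u_{\eps}(x)-u_{\eps}(y))\,\mathbf 1_{\{\delta\le|x-y|<R\}}$. Writing $M:=\sup_{\eps}\|u_{\eps}\|_{\infty}$, one has $\sup_{\eps}\|U_{\eps}\|_{\infty}\le\delta^{-d-ps}\sup_{|\zeta|\le 2M}V(\zeta)<\infty$, and $\cR_{\eps}^{\ast}U_{\eps}\to U:=|x-y|^{-d-ps}V(u(x)-u(y))\,\mathbf 1_{\{\delta\le|x-y|<R\}}$ pointwise a.e.\ on $\R^{2d}$ because $\cR_{\eps}^{\ast}u_{\eps}\to u$ a.e., $V$ is continuous, and $\{|x-y|\in\{\delta,R\}\}$ is a null set. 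Hence Theorem~\ref{thm:FHS-Ergodic-Thm} yields the \emph{exact} limit $\E(c)\iint_{\{\delta\le|x-y|<R\}}|x-y|^{-d-ps}V(u(x)-u(y))\,\d x\,\d y$. Letting $\delta\downarrow0$ and $R\uparrow\infty$, monotone convergence (the integrand being $\ge0$) drives the right-hand side up to $\E(c)\iint_{\R^{2d}}|x-y|^{-d-ps}V(u(x)-u(y))$, which gives \eqref{eq:conv-lemma-1}; estimate \eqref{eq:conv-lemma-2} is the same argument without the far part.

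For the full limits \eqref{eq:conv-lemma-3}--\eqref{eq:conv-lemma-4} the $\liminf$ part is already in hand, so only the matching $\limsup\le$ is needed, and this is where the Lipschitz hypotheses \eqref{eq:conv-lemma-2-a}/\eqref{eq:conv-lemma-2-b} enter. For $|x-y|<\delta$ with $\delta$ small we get $V(u_{\eps}(x)-u_{\eps}(y))\le C_{0}|u_{\eps}(x)-u_{\eps}(y)|^{p}\le C_{0}C_{L}^{p}|x-y|^{p}$, so the near-diagonal part is bounded, up to a factor $2$, by $C_{0}C_{L}^{p}\sumeps{x\in\Q^{\eps}}\sumeps{\substack{|x-y|<\delta\\ y\in\Zde}}c_{\frac{x}{\eps},\frac{y}{\eps}}|x-y|^{-d+p(1-s)}\le C\delta^{p(1-s)}$ by Lemma~\ref{lem:ergodic-bound-polynomial} with $\alpha=p(1-s)>0$, uniformly in $\eps$. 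In the $\R^{d}$-case the far part is handled, using $V\le C_{M}:=\sup_{|\zeta|\le2M}V(\zeta)$, by a dyadic decomposition of $\{|x-y|\ge R\}$: on the shell $2^{k}R\le|x-y|<2^{k+1}R$ one bounds $|x-y|^{-d-ps}\le(2^{k}R)^{-d-ps}$ and enlarges the shell to a ball, so that Lemma~\ref{lem:ergodic-bound-polynomial} at $\alpha=d$ gives a bound $2C_{M}C\sum_{k\ge0}(2^{k}R)^{-d-ps}(2^{k+1}R)^{d}\le C'R^{-ps}$, uniformly in $\eps$. Combining with the exact middle-part limit yields $\limsup_{\eps\to0}\le\E(c)\iint_{\{\delta\le|x-y|<R\}}|x-y|^{-d-ps}V(u(x)-u(y))+C\delta^{p(1-s)}+C'R^{-ps}$; sending $\delta\to0$ and $R\to\infty$ and using that $\iint|x-y|^{-d-ps}V(u(x)-u(y))<\infty$ (finite because $u$ is Lipschitz with compact support, resp.\ Lipschitz on the bounded domain $\Q$, together with $s<1$) completes the proof.

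The main difficulty is to set up the ergodic theorem correctly: one must repackage the nonlinear singular kernel as an $L^{\infty}$-bounded test function $U_{\eps}$ on a fixed bounded box and verify the a.e.\ convergence of its interpolant $\cR_{\eps}^{\ast}U_{\eps}$, and one must read off the two uniform-in-$\eps$ tail bounds (near the diagonal at exponent $\alpha=p(1-s)$, at infinity at exponent $\alpha=d$ with a dyadic sum) from Lemma~\ref{lem:ergodic-bound-polynomial}. A secondary but essential point is the order of limits, namely $\eps\to0$ first and only afterwards $\delta\to0$ and $R\to\infty$.
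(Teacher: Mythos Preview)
Your proof is correct and follows essentially the same route as the paper: the three-zone split $\{|x-y|<\delta\}\cup\{\delta\le|x-y|<R\}\cup\{|x-y|\ge R\}$, the application of the Extended Ergodic Theorem~\ref{thm:FHS-Ergodic-Thm} on the bounded middle zone, the near-diagonal control via Lemma~\ref{lem:ergodic-bound-polynomial} with $\alpha=p(1-s)$, and a dyadic estimate for the far tail. The only notable packaging difference is that for the far part the paper invokes Lemma~\ref{lem:ergodic-bound} directly on the sets $\mathbb{B}_{\Q,k,R}$, whereas you reuse Lemma~\ref{lem:ergodic-bound-polynomial} at $\alpha=d$; both amount to the same ergodic bound and yield the same $O(R^{-ps})$ decay.
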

\begin{proof}
We only prove (\ref{eq:conv-lemma-1}) and (\ref{eq:conv-lemma-3})
and shortly discuss how to generalize the calculations to (\ref{eq:conv-lemma-2}) and
(\ref{eq:conv-lemma-4}). Without loss of generality, we assume that
the $\liminf$ of the left hand side of (\ref{eq:conv-lemma-1}) is
bounded. For each $0<\xi<R<\infty$ the sum 
\[
\sumsumeps{(x,y)\in\Ztde}c_{\frac{x}{\eps},\frac{y}{\eps}}\frac{V\left(\ue(x)-\ue(y)\right)}{\left|x-y\right|^{d+ps}}
\]
can be split into the three sums over $(x,y)\in\Zde\times\Zde$ such
that either $\left\{ |x-y|<\xi\right\} $, $\left\{ \xi\leq|x-y|<R\right\} $
or $\left\{ |x-y|\geq R\right\} $. We denote the corresponding sums
by $I_{\xi}^{\eps}$, $I_{\xi,R}^{\eps}$ and $I_{R}^{\eps}$. In
what follows, we prove in three steps that 
\begin{align}
I_{\xi,R}^{\eps} & =\sumsumeps{\substack{(x,y)\in\Ztde \\
\xi\leq\left|x-y\right|<R}
}c_{\frac{x}{\eps},\frac{y}{\eps}}\frac{V\left(\ue(x)-\ue(y)\right)}{\left|x-y\right|^{d+ps}}\to\E(c)\underset{{\scriptstyle \xi\leq\left|x-y\right|<R}}{\iint}\frac{V\left(u(x)-u(y)\right)}{\left|x-y\right|^{d+ps}}\,,\label{eq:main-thm-lim-xiR}\\
I_{R}^{\eps} & =\sumsumeps{\substack{ (x,y)\in\Ztde\\
\left|x-y\right|\geq R
}}c_{\frac{x}{\eps},\frac{y}{\eps}}\frac{V\left(\ue(x)-\ue(y)\right)}{\left|x-y\right|^{d+ps}}\to O(R^{-ps})\,,\label{eq:main-thm-lim-R}\\
I_{\xi}^{\eps} & =\sumsumeps{\substack{(x,y)\in\Ztde\\
\left|x-y\right|<\xi}
}c_{\frac{x}{\eps},\frac{y}{\eps}}\frac{V\left(\ue(x)-\ue(y)\right)}{\left|x-y\right|^{d+ps}}\to O(\xi^{p-ps})\,,\label{eq:main-thm-lim-xi}
\end{align}
where we show (\ref{eq:main-thm-lim-xi}) only in case (\ref{eq:conv-lemma-2-a})
holds. Without loss of generality, we will thereby assume that $R>2\mathrm{diam}(\Q)$.
In what follows, we prove (\ref{eq:main-thm-lim-xiR})-(\ref{eq:main-thm-lim-xi})
in 3 steps. This provides (\ref{eq:conv-lemma-3}) on observing that
\[
\underset{{\scriptstyle \left|x-y\right|<\xi}}{\iint}\frac{V\left(u(x)-u(y)\right)}{\left|x-y\right|^{d+ps}}\leq\left\Vert \nabla u\right\Vert _{\infty}\underset{\begin{gathered}{\scriptstyle (x,y)\in2\Q\times2\Q}\\
{\scriptstyle \left|x-y\right|<\xi}
\end{gathered}
}{\iint}\frac{\left|x-y\right|^{p}}{\left|x-y\right|^{d+ps}}=O(\xi^{p-ps})
\]
and 
\[
\limsup_{\eps\to0}\sumsumeps{(x,y)\in\Ztde}c_{\frac{x}{\eps},\frac{y}{\eps}}\frac{V\left(\ue(x)-\ue(y)\right)}{\left|x-y\right|^{d+ps}}\leq\limsup_{\eps\to0}\left(I_{\xi,R}^{\eps}+I_{\xi}^{\eps}+I_{R}^{\eps}\right)\,.
\]
Inequality (\ref{eq:conv-lemma-1}) can be proved on noting that 
\begin{align*}
V_{\infty}: & =\liminf_{\eps\to0}\sumsumeps{(x,y)\in\Ztde}c_{\frac{x}{\eps},\frac{y}{\eps}}\frac{V\left(\ue(x)-\ue(y)\right)}{\left|x-y\right|^{d+ps}}\\
 & \geq\sup_{\xi,R}\,\,\E(c)\underset{{\scriptstyle \xi\leq\left|x-y\right|<R}}{\iint}\frac{V\left(u(x)-u(y)\right)}{\left|x-y\right|^{d+ps}}
\end{align*}
and applying the Beppo Levi monotone convergence theorem as $\xi\to0$
and $R\to\infty$. 

We note that (\ref{eq:conv-lemma-2}) and (\ref{eq:conv-lemma-4})
can be proved in the same way using some slight modification. In particular,
we replace $V\left(\ue(x)-\ue(y)\right)$ by $$\tilde{V}\left(x,y,\ue(x)-\ue(y)\right):=\chi_{\Q}(x)\chi_{\Q}(y)V\left(\ue(x)-\ue(y)\right)$$
and study 
\begin{align*}
I_{\xi,R}^{\eps} & =\sumsumeps{\substack{(x,y)\in B^{\eps}\times B^{\eps}\\
\xi\leq\left|x-y\right|}
}c_{\frac{x}{\eps},\frac{y}{\eps}}\frac{V\left(x,y,\ue(x)-\ue(y)\right)}{\left|x-y\right|^{d+ps}}\to\E(c)\underset{\begin{gathered}{\scriptstyle (x,y)\in B\times B}\\
{\scriptstyle \xi\leq\left|x-y\right|}
\end{gathered}
}{\iint}\frac{V\left(x,y,u(x)-u(y)\right)}{\left|x-y\right|^{d+ps}}\,,\\
I_{\xi}^{\eps} & =\sumsumeps{\substack{(x,y)\in B^{\eps}\times B^{\eps}\\
\left|x-y\right|<\xi}
}c_{\frac{x}{\eps},\frac{y}{\eps}}\frac{V\left(x,y,\ue(x)-\ue(y)\right)}{\left|x-y\right|^{d+ps}}\to O(\xi^{p-ps})\,,
\end{align*}
where $B=B(0)$ is an open ball around $0$ that contains $\Q$ and
$B^{\eps}:=B\cap\Zde$.

\textbf{Step 1:} We consider the lower semi-continuous extension $g_{\eps}(x,y):=\frac{V\left(\ue(x)-\ue(y)\right)}{\max\left\{ \left|x-y\right|^{d+2s},\,\xi^{d+2s}\right\} }$
to the set $\left|x-y\right|<R$. Moreover, since $R>2\mathrm{diam}(\Q)$,
it holds $V\left(\ue(x)-\ue(y)\right)=0$ if $|x|\geq2R$ or $|y|\geq2R$.
Hence, the support of $g_{\eps}$ is a compact convex subset of $\left|x-y\right|<R$
and we infer from Theorem \ref{thm:FHS-Ergodic-Thm} that 
\[
\sumsumeps{\substack{(x,y)\in\Ztde\\
\left|x-y\right|<R}
}c_{\frac{x}{\eps},\frac{y}{\eps}}g_{\eps}(x,y)=\E(c)\underset{{\scriptstyle \left|x-y\right|<R}}{\iint}g(x,y)\,,
\]
where $g(x,y):=\frac{V\left(u(x)-u(y)\right)}{\max\left\{ \left|x-y\right|^{d+2s},\,\xi^{d+2s}\right\} }$.
Since the same arguments hold on the set $\left|x-y\right|<\xi$,
the limit (\ref{eq:main-thm-lim-xiR}) holds.

\textbf{Step 2:} Due to our assumption on $R$, we find $\left|x-y\right|>R$
implies that at most one of the points $x,y$ lies in $\Q$. Since
$u$ vanishes outside $\Q$, we obtain by a symmetrization 
\[
I_{R}^{\eps}\leq\left\Vert V(\ue(\cdot))\right\Vert _{\infty}\sumsumeps{\substack{
R<\left|x-y\right|\\
x\in\Q\cap\Zde}
}\left(c_{\frac{x}{\eps},\frac{y}{\eps}}+c_{\frac{y}{\eps},\frac{x}{\eps}}\right)\frac{1}{\left|x-y\right|^{d+ps}}\,.
\]
For simplicity of notation, we write $\tilde{c}_{\frac{x}{\eps},\frac{y}{\eps}}:=c_{\frac{x}{\eps},\frac{y}{\eps}}+c_{\frac{y}{\eps},\frac{x}{\eps}}$
which is an ergodic variable with $\E(\tilde{c})=2\E(c)$. We denote
$\calR_{k,R}^{\eps}:=\left\{ z\in\Zde\,|\;2^{k}R<\left|z\right|\leq2^{k+1}R\right\} $
and reformulate $I_{R}^{\eps}$ as 
\[
I_{R}^{\eps}=\sumsumeps{\substack{R<\left|x-y\right|\\
x\in\Q\cap\Zde}
}\tilde{c}_{\frac{x}{\eps},\frac{y}{\eps}}\frac{1}{\left|x-y\right|^{d+ps}}=\sumeps{x\in\Q\cap\Zde}\sum_{k=0}^{\infty}\sumeps{y\in\calR_{k,R}^{\eps}+x}\tilde{c}_{\frac{x}{\eps},\frac{y}{\eps}}\frac{1}{\left|x-y\right|^{d+ps}}\,,
\]
which we estimate as 
\begin{align*}
I_{R}^{\eps} & \leq\sum_{k=0}^{\infty}\left(2^{k}R\right)^{-ps}\sumeps{x\in(1+k)\Q\cap\Zde}\left(2^{k}R\right)^{-d}\sumeps{\left|x-y\right|<2^{k+1}R}\tilde{c}_{\frac{x}{\eps},\frac{y}{\eps}}\\
 & =\left|\Q\right|\sum_{k=0}^{\infty}\left(2^{k}R\right)^{-ps}\left(1+k\right)^{d}\left|\mathbb{B}_{\Q,k,R}\right|^{-1}\sumsumeps{(x,y)\in\mathbb{B}_{\Q,k,R}^{\eps}}\tilde{c}_{\frac{x}{\eps},\frac{y}{\eps}}\,,
\end{align*}
where $\mathbb{B}_{\Q,k,R}=\left\{ (x,y)\in\Rd\,|\;x\in(1+k)\Q\,,\,\left|x-y\right|\leq2^{k+1}R\right\} $
and $\mathbb{B}_{\Q,k,R}^{\eps}:=\mathbb{B}_{\Q,k,R}\cap\Zde$. Defining
$\fc_{R,k,\eps}:=\eps^{2d}\left|\mathbb{B}_{\Q,k,R}\right|^{-1}\sum_{(x,y)\in\mathbb{B}_{\Q,k,R}^{\eps}}\tilde{c}_{\frac{x}{\eps},\frac{y}{\eps}}$
we infer from Lemma \ref{lem:ergodic-bound} an estimate $\fc_{R}:=\sup_{\eps,k}\fc_{R,k,\eps}<\infty$
and boundedness of 
\[
I_{R}^{\eps}\leq\fc_{R}R^{-ps/2}\frac{\left|\Q\right|\E(\tilde{c})}{1-2^{-ps/2}}\sup_{k}\left(2^{-kps/2}(1+k)^{d}\right)\,.
\]

\textbf{Step 3:} Now let $\sup_{\eps>0}\sup_{x,y\in\Zde}\frac{\left|\ue(x)-\ue(y)\right|}{\left|x-y\right|}=:C_{L}<\infty$.
In order to treat the remaining term $I_{\xi}^{\eps}$, note that
Assumption \ref{assu:V} implies uniform boundedness and lower semi-continuity
of the function $\widetilde{V}_{\eps}(x,y):=\frac{V\left(\ue(x)-\ue(y)\right)}{\left|\ue(x)-\ue(y)\right|^{p}}$.
Furthermore, if either $\mathrm{dist}(x,\Q)>\xi$ or $\mathrm{dist}(y,\Q)>\xi$
then $\left|x-y\right|<\xi$ implies $\widetilde{V}_{\eps}(x,y)=0$. Hence
we obtain 
\begin{align*}
I_{\xi}^{\eps} & \leq\left\Vert \widetilde{V}\right\Vert _{\infty}\sumsumeps{\substack{x\in(2\Q)\cap\Zde\\
\left|x-y\right|<\xi}
}\tilde{c}_{\frac{x}{\eps},\frac{y}{\eps}}\frac{\left|\ue(x)-\ue(y)\right|^{p}}{\left|x-y\right|^{d+ps}}\\
 & \leq C_{L}\left\Vert \widetilde{V}\right\Vert _{\infty}\sumsumeps{\substack{x\in(2\Q)\cap\Zde\\
\left|x-y\right|<\xi}
}\tilde{c}_{\frac{x}{\eps},\frac{y}{\eps}}\left|x-y\right|^{-d+p(1-s)}\,,
\end{align*}
and (\ref{eq:main-thm-lim-xi}) follows from Lemma \ref{lem:ergodic-bound-polynomial}. \end{proof}

\subsection{\label{sub:Proof-of-Theorem I and II}Proof of Theorems \ref{thm:Main Theorem-I}
and \ref{thm:Main Theorem-III}--\ref{thm:Main Theorem-V}}

\MH{For simplicity of notation, we restrict to $m=1$.} We will only prove Theorem \ref{thm:Main Theorem-I}. Theorems \ref{thm:Main Theorem-III}--\ref{thm:Main Theorem-V}
can be proved in the same way replacing Theorem \ref{thm:Compactness-discrete-eps-to-0} by the Embedding Theorems \ref{thm:embedding-dirichlet}
and \ref{thm:embedding-mean}.

\paragraph{Proof of Part 1.}

Since $\cR_{\eps}^{\ast}f_{\eps}\weakto f$ weakly in $L^{r^{\ast}}(\Q)$,
we find $\sup_{\eps}\left\Vert f_{\eps}\right\Vert _{L^{r^{\ast}}(\Q_{\eps})}<\infty$.
Thus, we find from the scaled young inequality for every $\delta>0$
some $C_{\delta}$ such that 
\[
\left|\sumeps{x\in\Q^{\eps}}f_{\eps}(x)\ue(x)\right|\leq\left\Vert u\right\Vert _{L^{r}(\Q^{\eps})}\left\Vert f_{\eps}\right\Vert _{L^{r^{\ast}}(\Q_{\eps})}\leq\delta\left\Vert u\right\Vert _{L^{r}(\Q^{\eps})}^{p}+C_{\delta}\left\Vert f_{\eps}\right\Vert _{L^{r^{\ast}}(\Q_{\eps})}^{p^{\ast}}\,,
\]
where $\frac{1}{p}+\frac{1}{p^{\ast}}=1$ and $r=\frac{r^\ast}{r^\ast-1}$. Since $r<p_\fq^\star$, we find from Theorem
\ref{thm:discr-compact-weights} that 
\begin{align*}
\left\Vert u\right\Vert _{L^{r}(\Q^{\eps})}^{p} & \leq\sup_{\eps}\left(\sumsumeps{(x,y)\in\Ztde}c_{\frac{x}{\eps},\frac{y}{\eps}}\frac{V\left(\ue(x)-\ue(y)\right)}{\left|x-y\right|^{d+ps}}\right)+\sumeps{x\in\Zde}G(u(x))\\
 & \leq\sup_{\eps}\sE_{p,s,\eps}(\ue)+\delta\left\Vert u\right\Vert _{L^{r}(\Q^{\eps})}^{p}+C_{\delta}\left\Vert f_{\eps}\right\Vert _{L^{r^{\ast}}(\Q_{\eps})}^{p^{\ast}}
\end{align*}
implying (for suitable choice of $\delta$) boundedness of 
\[
\sumsumeps{(x,y)\in\Ztde}c_{\frac{x}{\eps},\frac{y}{\eps}}\frac{V\left(\ue(x)-\ue(y)\right)}{\left|x-y\right|^{d+ps}}+\sumeps{x\in\Zde}G(u(x))\,.
\]
In particular, we obtain that 
\[
\mathrm{E}_{\infty}:=\liminf_{\eps\to0}\sumsumeps{(x,y)\in\Ztde}c_{\frac{x}{\eps},\frac{y}{\eps}}\frac{V\left(\ue(x)-\ue(y)\right)}{\left|x-y\right|^{d+ps}}<+\infty
\]
is bounded.

\MH{In case of Theorem \ref{thm:Main Theorem-I}, since $\Q$ is bounded and $\ue$ is $0$ outside $\Q$, we can assume w.l.o.g. that $\Q$ is cubic and hence a uniform extension domain.}
From Assumption \ref{assu:fq} and Theorems \ref{thm:Compactness-discrete-eps-to-0} and \ref{thm:discr-compact-weights}
it follows that $\sup_{\eps>0}\left\Vert \cR_{\eps}^{\ast}\ue\right\Vert _{L^{r}}<\infty$
and the existence of $u\in L^{r}(\Q)$ such that $\cR_{\eps}^{\ast}\ue\to u$
strongly in $L^{r}(\Q)$ and pointwise a.e. along a subsequence $\eps'\to0$.
Furthermore, for $M\in\N$ we denote $\ue^{M}:=\max\left\{ -M,\min\left\{ \ue,M\right\} \right\} $,
the function $\ue$ cut to values in the interval $[-M,M]$. We then
note that $\cR_{\eps}^{\ast}\ue^{M}\to u^{M}$ strongly in $L^{r}(\Q)$
and pointwise a.e. Using this insight, we obtain using Lemma \ref{lem:liminf-estim-Wsp}
that 
\begin{align*}
\mathrm{E}_{\infty} & \geq\liminf_{\eps\to0}\sumsumeps{(x,y)\in\Ztde}
c_{\frac{x}{\eps},\frac{y}{\eps}}\frac{V\left(\ue^{M}(x)-\ue^{M}(y)\right)}{\left|x-y\right|^{d+ps}}\\
 & \geq\E(c)\iint_{\Rd\times\Rd}\frac{V\left(u^{M}(x)-u^{M}(y)\right)}{\left|x-y\right|^{d+ps}}\d x\,\d y
\end{align*}
Since the above considerations hold for every $M$, we apply Fatou's
Lemma (resp. the monotone convergence theorem by Beppo-Levi) and find
\[
\E(c)\underset{{\scriptstyle \R^{2d}}}{\iint}\frac{V\left(u(x)-u(y)\right)}{\left|x-y\right|^{d+ps}}\d x\,\d y\leq\liminf_{M\to\infty}\E(c)\iint_{\Rd\times\Rd}\frac{V\left(u^{M}(x)-u^{M}(y)\right)}{\left|x-y\right|^{d+ps}}\d x\,\d y\leq\mathrm{E}_{\infty}\,.
\]
Moreover, we have from Lemma \ref{lem:liminf-estim-convex-part} that
\[
\liminf_{\eps\to0}\sumeps{x\in\Zde}G(\ue(x))-\sumeps{x\in\Zde}\ue(x)f_{\eps}(x)\geq\int_{\Rd}G(u(x))\d x-\int_{\Rd}u(x)f(x)\,.
\]

\subsubsection*{Proof of Part 2}

We first consider $u\in C_{c}^1(\Q)$. In this case, we set $\ue(x)=u(x)$
for $x\in\Zde$. From Lemma \ref{lem:liminf-estim-Wsp} we infer 
\[
\lim_{\eps\to0}\sumsumeps{(x,y)\in\Ztde}c_{\frac{x}{\eps},\frac{y}{\eps}}\frac{V\left(\ue(x)-\ue(y)\right)}{\left|x-y\right|^{d+ps}}=\E(c)\iint_{\Rd\times\Rd}\frac{V\left(u(x)-u(y)\right)}{\left|x-y\right|^{d+ps}}\d x\,\d y\,.
\]
Now, let $\sE_{p,s}(u)<\infty$ with $u(x)=0$ outside of $\Q$, set
$\eps_{0}=1$. By Assumption \ref{assu:V}, we find $u\in W^{s,p}(\Rd)$
and in particular, there exists a sequence $u_{k}\in C_{c}^{1}(\Q)$
such that $u_{k}\to u$ in $W^{s,p}(\Rd)$. Moreover, since $m<p_{\fq}^{\star}\leq p^{\star}$, 
Lemma \ref{lem:recovery-convex-part} yields $\int_{\Rd}G(u_{k})\to\int_{\Rd}G(u)$
and hence 
\[
\lim_{k\to\infty}\sE_{p,s}(u_{k})=\sE_{p,s}(u)\,.
\]
From the above calculation, there exists $\eps_{k}>0$ such that for
all $\eps<\eps_{k}$, $\left|\sE_{p,s,\eps}(u_{k})-\sE_{p,s}(u_{k})\right|<\left|\sE_{p,s}(u)-\sE(\eta_{k}\ast u)\right|$
and in total 
\[
\left|\sE_{p,s,\eps}(u_{k})-\sE_{p,s}(u)\right|<2\left|\sE_{p,s}(u)-\sE(\eta_{k}\ast u)\right|\,.
\]
Setting $u^{\eps}:=u_{k}$ for all $\eps\in[\eps_{k+1},\eps_{k})$,
(\ref{eq:Thm-Gamma-limsup}) holds.

\subsection{Proof of Theorem \ref{thm:Main Theorem-II}}

{For simplicity of notation, we restrict to $m=1$.} The proof mostly follows the lines of Section \ref{sub:Proof-of-Theorem I and II}.
However, as there are a few modifications due to the non-boundedness
of the domain, we provide the full proof for completeness.

\paragraph{Proof of Part 1.}

Since $f\in C_{c}(\Rd)$, we chose some bounded domain $\Q$ such
that $f$ has its support in $\Q$. From here, we may follow the lines
of Section \ref{sub:Proof-of-Theorem I and II} to obtain boundedness
of 
\[
\sumsumeps{(x,y)\in\Ztde}c_{\frac{x}{\eps},\frac{y}{\eps}}\frac{V\left(\ue(x)-\ue(y)\right)}{\left|x-y\right|^{d+ps}}+\sumeps{x\in\Zde}G(u(x))\,.
\]
In particular, we obtain that 
\[
\mathrm{E}_{\infty}:=\liminf_{\eps\to0}\sumsumeps{(x,y)\in\Ztde}c_{\frac{x}{\eps},\frac{y}{\eps}}\frac{V\left(\ue(x)-\ue(y)\right)}{\left|x-y\right|^{d+ps}}<+\infty
\]
is bounded.

Now, let $m\in\N$ and consider $B_{m}:=\left\{ x\in\Rd\,:\;|x|<m\right\} $.
From Assumption \ref{assu:fq} and Theorem \ref{thm:discr-compact-weights}
it follows that $\sup_{\eps>0}\left\Vert \cR_{\eps}^{\ast}\ue\right\Vert _{L^{r}(B_{m})}<\infty$
and the existence of $u_{m}\in L^{r}(B_{m})$ and a subsequence $\eps_{m}$
such that $\cR_{\eps_{m}}^{\ast}u_{\eps_{m}}\to u_{m}$ as $\eps_{m}\to0$
strongly in $L^{r}(B_{m})$ and pointwise a.e. in $B_{m}$. Furthermore,
for $M\in\N$ we denote $\ue^{M}:=\max\left\{ -M,\min\left\{ \ue,M\right\} \right\} $
and obtain using Lemma \ref{lem:liminf-estim-Wsp} that 
\begin{align*}
\mathrm{E}_{\infty} & \geq\liminf_{\eps\to0}\sumsumeps{(x,y)\in B_{m}^{\eps}\times B_{m}^{\eps}}
c_{\frac{x}{\eps},\frac{y}{\eps}}\frac{V\left(u_{\eps_{m}}^{M}(x)-u_{\eps_{m}}^{M}(y)\right)}{\left|x-y\right|^{d+ps}}\\
 & \geq\E(c)\iint_{B_{m}\times B_{m}}\frac{V\left(u_{m}^{M}(x)-u_{m}^{M}(y)\right)}{\left|x-y\right|^{d+ps}}\d x\,\d y
\end{align*}
Since the above considerations hold for every $M$, we apply Fatous
Lemma (resp. the monotone convergence theorem by Beppo-Levi) and find
\[
\E(c)\iint_{B_{m}\times B_{m}}\frac{V\left(u_{m}(x)-u_{m}(y)\right)}{\left|x-y\right|^{d+ps}}\d x\,\d y\leq\liminf_{M\to\infty}\E(c)\iint_{B_{m}\times B_{m}}\frac{V\left(u_{m}^{M}(x)-u_{m}^{M}(y)\right)}{\left|x-y\right|^{d+ps}}\d x\,\d y\leq\mathrm{E}_{\infty}\,.
\]
Using a Cantor argument, we infer the existence of a measurable $u:\,\Rd\to\R$
such that $\cR_{\eps'}^{\ast}u_{\eps'}\to u$ pointwise a.e. along
a subsequence $\eps'\to0$ and the Fatou Lemma yields 
\[
\E(c)\iint_{\Rd\times\Rd}\frac{V\left(u(x)-u(y)\right)}{\left|x-y\right|^{d+ps}}\d x\,\d y\leq\liminf_{m\to\infty}\E(c)\iint_{B_{m}\times B_{m}}\frac{V\left(u_{m}(x)-u_{m}(y)\right)}{\left|x-y\right|^{d+ps}}\d x\,\d y\leq\mathrm{E}_{\infty}\,.
\]
Moreover, we have from Lemma \ref{lem:liminf-estim-convex-part} that
\[
\liminf_{\eps\to0}\sumeps{x\in\Zde}G(\ue(x))-\sumeps{x\in\Zde}\ue(x)f_{\eps}(x)\geq\int_{\Rd}G(u(x))\d x-\int_{\Rd}u(x)f(x)\,.
\]

\subsubsection*{Proof of Part 2}

We first consider $u\in C_{c}^1(\Q)$. In this case, we set $\ue(x)=u(x)$
for $x\in\Zde$. From Lemma \ref{lem:liminf-estim-Wsp} we infer 
\[
\lim_{\eps\to0}\sumsumeps{(x,y)\in\Ztde}c_{\frac{x}{\eps},\frac{y}{\eps}}\frac{V\left(\ue(x)-\ue(y)\right)}{\left|x-y\right|^{d+ps}}=\E(c)\iint_{\Rd\times\Rd}\frac{V\left(u(x)-u(y)\right)}{\left|x-y\right|^{d+ps}}\d x\,\d y\,.
\]
Now, let $\sE_{p,s}(u)<\infty$ with $u(x)=0$ outside of $\Q$, set
$\eps_{0}=1$. By Assumption \ref{assu:V}, we find $u\in W^{s,p}(\Rd)$
and in particular, by Lemma \ref{lem:Dirac-Approx} we infer $u_{k}:=\eta_{k}\ast u\to u$
in $W^{s,p}(\Rd)$. Moreover, Lemma \ref{lem:convergence-convex-convol}
yields $\int_{\Rd}G(\eta_{k}\ast u)\to\int_{\Rd}G(u)$ and hence 
\[
\lim_{k\to\infty}\sE_{p,s}(u_{k})=\sE_{p,s}(u)\,.
\]
From the above calculation, there exists $\eps_{k}>0$ such that for
all $\eps<\eps_{k}$, $\left|\sE_{p,s,\eps}(u_{k})-\sE_{p,s}(u_{k})\right|<\left|\sE_{p,s}(u)-\sE(\eta_{k}\ast u)\right|$
and in total 
\[
\left|\sE_{p,s,\eps}(u_{k})-\sE_{p,s}(u)\right|<2\left|\sE_{p,s}(u)-\sE_{p,s}(\eta_{k}\ast u)\right|\,.
\]
Setting $u^{\eps}:=u_{k}$ for all $\eps\in[\eps_{k+1},\eps_{k})$,
(\ref{eq:Thm-Gamma-limsup}) holds.

\appendix

\section{Proofs of Auxiliary results}

\subsection{Proof of Lemma \ref{lem:Dirac-Approx}}
\begin{lem}
\label{lem:Wsp-shifts}Let $u\in W^{s,p}(\Rd)$. Then 
\begin{equation}
\lim_{h\to0}\left\Vert u(\cdot)-u(\cdot-h)\right\Vert _{s,p}\to0\,.\label{eq:lem:Wsp-shifts}
\end{equation}
\end{lem}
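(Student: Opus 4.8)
The plan is to reduce to smooth compactly supported functions by density and then treat those directly with a near-diagonal/far-diagonal split. First observe that translation is an isometry of $W^{s,p}(\Rd)$: writing $\tau_h u:=u(\cdot-h)$, the substitution $(x,y)\mapsto(x-h,y-h)$ gives $[\tau_h u]_{s,p}=[u]_{s,p}$, and clearly $\|\tau_h u\|_{L^p(\Rd)}=\|u\|_{L^p(\Rd)}$, so $\|\tau_h u\|_{s,p}=\|u\|_{s,p}$. Given $\theta>0$, choose $\varphi\in C_{c}^{\infty}(\Rd)$ with $\|u-\varphi\|_{s,p}<\theta/3$ (possible since $C_{c}^{\infty}(\Rd)$ is dense in $W^{s,p}(\Rd)$); then
\[
\|\tau_h u-u\|_{s,p}\le\|\tau_h(u-\varphi)\|_{s,p}+\|\tau_h\varphi-\varphi\|_{s,p}+\|\varphi-u\|_{s,p}=\frac{2\theta}{3}+\|\tau_h\varphi-\varphi\|_{s,p}\,,
\]
so it suffices to prove \eqref{eq:lem:Wsp-shifts} for $\varphi\in C_{c}^{\infty}(\Rd)$.

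Fix such a $\varphi$ and set $\psi_h:=\tau_h\varphi-\varphi$. For $|h|\le1$ all the $\psi_h$ are supported in a common compact set $K$, satisfy $\|\nabla\psi_h\|_{L^\infty}\le2\|\nabla\varphi\|_{L^\infty}=:L$, and $\psi_h\to0$ uniformly, hence in $L^p(\Rd)$, as $h\to0$. For the Gagliardo seminorm I split $[\psi_h]_{s,p}^p$ at $|x-y|=\delta$ for a parameter $\delta\in(0,1]$. On $\{|x-y|<\delta\}$ the integrand vanishes unless $x\in K':=K+B_1(0)$, and since $s<1$ the Lipschitz bound $|\psi_h(x)-\psi_h(y)|\le L|x-y|$ gives
\[
\iint_{|x-y|<\delta}\frac{|\psi_h(x)-\psi_h(y)|^{p}}{|x-y|^{d+sp}}\,\d x\,\d y\le L^p\,|K'|\int_{|z|<\delta}\frac{\d z}{|z|^{d-p(1-s)}}=C_1\,\delta^{p(1-s)}\,,
\]
while on $\{|x-y|\ge\delta\}$, using $|\psi_h(x)-\psi_h(y)|^p\le2^{p-1}\bigl(|\psi_h(x)|^p+|\psi_h(y)|^p\bigr)$ and Fubini,
\[
\iint_{|x-y|\ge\delta}\frac{|\psi_h(x)-\psi_h(y)|^{p}}{|x-y|^{d+sp}}\,\d x\,\d y\le2^{p}\,\|\psi_h\|_{L^p(\Rd)}^{p}\int_{|z|\ge\delta}\frac{\d z}{|z|^{d+sp}}=C_2\,\delta^{-sp}\,\|\psi_h\|_{L^p(\Rd)}^{p}\,,
\]
with $C_1,C_2$ depending only on $\varphi,d,s,p$ and, crucially, not on $h$ or $\delta$.

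To finish, given $\theta>0$ I first fix $\delta\in(0,1]$ so small that $C_1\delta^{p(1-s)}<\theta/2$, and then, using $\|\psi_h\|_{L^p(\Rd)}\to0$, take $|h|$ small enough that $C_2\delta^{-sp}\|\psi_h\|_{L^p(\Rd)}^p<\theta/2$; together with $\|\psi_h\|_{L^p(\Rd)}\to0$ this yields $\|\psi_h\|_{s,p}\to0$, proving \eqref{eq:lem:Wsp-shifts} for $\varphi$ and hence, by the reduction above, for all $u\in W^{s,p}(\Rd)$. The only delicate point is bookkeeping the order of quantifiers in this last step ($\delta$ first, then $h$) and checking that $C_1,C_2$ are genuinely $h$- and $\delta$-independent; the remaining estimates are routine.
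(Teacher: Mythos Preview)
Your proof is correct and follows essentially the same route as the paper: reduce to $C_c^\infty$ by density (using that translation is an isometry), then split the Gagliardo integral at $|x-y|=\delta$, control the near-diagonal piece by the Lipschitz bound and the far-diagonal piece by the $L^p$-smallness of $\psi_h$, choosing $\delta$ first and then $h$. Your bookkeeping is in fact cleaner than the paper's (the far-diagonal estimate and the dependence of the constants on $\delta$ are made fully explicit); the only cosmetic slip is the ``$=$'' in the displayed triangle inequality, which should be ``$<$''.
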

\begin{proof}
It is well known that  
\[
\lim_{h\to0}\left\Vert u(\cdot)-u(\cdot-h)\right\Vert _{L^{p}(\Rd)}\to0
\]
and it only remains to show 
\[
\lim_{h\to0}\left[u(\cdot)-u(\cdot-h)\right]_{s,p}\to0\,.
\]
Suppose $u\in C_{c}^{\infty}(\Rd)$ and let $B$ be a ball that contains
the support of $u$. We write $u_{h}(x):=u(x-h)$ as well as $f(x,y)=u(x)-u(y)$
and similarly $f_{h}(x,y)$. Since, for small $h$, $f(x,y)=f_{h}(x,y)=0$ if both $x,y\not\in2B$,
we observe that
\begin{align*}
 & \int_{\Rd}\int_{\Rd}\frac{\left|f(x,y)-f_{h}(x,y)\right|^{p}}{\left|x-y\right|^{d+sp}}\,\d x\,\d y\\
 & \qquad\qquad=\int_{2B}\int_{2B}\frac{\left|f(x,y)-f_{h}(x,y)\right|^{p}}{\left|x-y\right|^{d+sp}}\,\d x\,\d y+2\int_{2B}\int_{\Rd\backslash2B}\frac{\left|f(x,y)-f_{h}(x,y)\right|^{p}}{\left|x-y\right|^{d+sp}}\,\d x\,\d y\\
 & \qquad\qquad\leq2\int_{2B}\int_{\Rd}\frac{\left|u(x)-u_{h}(x)-u(y)+u_{h}(y)\right|^{p}}{\left|x-y\right|^{d+sp}}\,\d x\,\d y\,.
\end{align*}
For every $\delta>0$ the right-hand side can be split into an integral
over $$A_{\delta}:=\left\{ (x,y)\,:\;x\in2B,\,\left|x-y\right|<\xi\right\} $$
and the complement. We find
\begin{align*}
\left[u(\cdot)-u(\cdot-h)\right]_{s,p} & \leq2^{p+1}\int_{A_{\delta}}\frac{\left|u_{h}(x)-u_{h}(y)\right|^{p}+\left|u(x)-u(y)\right|^{p}}{\left|x-y\right|^{d+sp}}\\
 & \quad+2\int_{\R^{2d}\backslash A_{\delta}}\frac{\left|u(x)-u_{h}(x)-u(y)+u_{h}(y)\right|^{p}}{\left|x-y\right|^{d+sp}}\,\d x\,\d y\,.
\end{align*}
The first integral can be estimated by 
\[
2^{p+2}\left\Vert \nabla u\right\Vert _{\infty}^{p}\int_{A_{\delta}}\frac{1}{\left|x-y\right|^{d+sp-p}}=2^{p+2}\left\Vert \nabla u\right\Vert _{\infty}^{p}|2B|\,|S^{d-1}|\,\delta^{p-sp}\,.
\]
The second integral converges to $0$ as $h\to0$ as it is bounded
by 
\[
\delta^{-d-sp}4\left\Vert u-u_{h}\right\Vert \to0\,.
\]
Hence, we have shown that $\lim_{h\to0}\left[u(\cdot)-u(\cdot-h)\right]_{s,p}\leq C\delta^{p-sp}$
for every $\delta>0$, implying (\ref{eq:lem:Wsp-shifts}). For arbitrary
$u\in W^{s,p}(\Rd)$ the lemma follows from a standard approximation
argument.\end{proof}
\begin{rem}
\label{rem:lem:Wsp-shift}Via the triangle inequality, the last lemma
implies that $h\mapsto\left\Vert u(\cdot)-u(\cdot-h)\right\Vert _{s,p}$
is continuous:
\[
\left|\left\Vert u(\cdot)-u(\cdot-h_{1})\right\Vert _{s,p}-\left\Vert u(\cdot)-u(\cdot-h_{2})\right\Vert _{s,p}\right|\leq\left\Vert u(\cdot-h_{1})-u(\cdot-h_{2})\right\Vert _{s,p}\,.
\]
\end{rem}
\begin{proof}[Proof of Lemma \ref{lem:Dirac-Approx}]
First note that it is well known that 
\[
\left\Vert u\ast\eta_{k}\right\Vert _{L^{p}(\Rd)}\leq\left\Vert u\right\Vert _{L^{p}(\Rd)}\qquad\mbox{and}\qquad\lim_{k\to\infty}\left\Vert u\ast\eta_{k}-u\right\Vert _{L^{p}(\Rd)}=0
\]
and it only remains to show 
\[
\left[u\ast\eta_{k}\right]_{s,p}\leq\left[u\right]_{s,p}\qquad\mbox{and}\qquad\lim_{k\to\infty}\left[u\ast\eta_{k}-u\right]_{s,p}=0\,.
\]
The inequality can be easily verified from the fact that 
\begin{align*}
 & \int_{\Rd}\int_{\Rd}\frac{\left|\int_{\Rd}\left(\eta_{k}(z)u(x-z)-\eta_{k}(z)u(y-z)\right)\d z\right|^{p}}{\left|x-y\right|^{d+sp}}\,\d x\,\d y\\
 & \qquad\leq\left\Vert \eta_{k}\right\Vert _{L^{1}(\Rd)}^{p/p^{\ast}}\int_{\Rd}\eta_{k}(z)\int_{\Rd}\int_{\Rd}\frac{\left|\left(u(x-z)-u(y-z)\right)\right|^{p}}{\left|x-y\right|^{d+sp}}\,\d x\,\d y\,\d z\\
 & \qquad=\int_{\Rd}\int_{\Rd}\frac{\left|\left(u(x)-u(y)\right)\right|^{p}}{\left|x-y\right|^{d+sp}}\,\d x\,\d y\,.
\end{align*}
The limit behavior follows from Lemma \ref{lem:Wsp-shifts}, Remark
\ref{rem:lem:Wsp-shift} and the following calculation: 
\begin{align*}
 & \int_{\Rd}\int_{\Rd}\frac{\left|\int_{\Rd}\left(\eta_{k}(z)\left(u(x-z)-u(x)\right)-\eta_{k}(z)\left(u(y-z)-u(y)\right)\right)\d z\right|^{p}}{\left|x-y\right|^{d+sp}}\,\d x\,\d y\\
 & \qquad\leq\left\Vert \eta_{k}\right\Vert _{L^{1}(\Rd)}^{p/p^{\ast}}\int_{\Rd}\eta_{k}(z)\int_{\Rd}\int_{\Rd}\frac{\left|\left(u(x-z)-u(x)-u(y-z)+u(y)\right)\right|^{p}}{\left|x-y\right|^{d+sp}}\,\d x\,\d y\,\d z\\
 & \qquad\leq\left\Vert \eta_{k}\right\Vert _{L^{1}(\Rd)}^{p/p^{\ast}}\int_{\Rd}\eta_{k}(z)\left[u(\cdot)-u(\cdot-z)\right]_{s,p}\d z\\
 & \qquad\to0\quad\mbox{as }k\to\infty\,.
\end{align*}

\end{proof}

\subsection{Proof of Remark \ref{rem:Extention-bounded-support}}

The remark is a consequence of the following lemma.
\begin{lem}
\label{lem:prod-wsp-c1}Let $\varphi\in C_{c}^{1}(\Rd)$. Then for
every $\eps>0$, $p\in(1,\infty)$, $s\in(0,1)$ and $u\in W^{s,p}(\Zde)$
it holds $\varphi u\in W^{s,p}(\Zde)$ and there exists some $C>0$
which does not depend on $\eps$ such that 
\begin{equation}
\left\Vert \varphi u\right\Vert _{s,p,\eps}\leq C\left\Vert u\right\Vert _{s,p,\eps}\left\Vert \varphi\right\Vert _{C_{0}^{1}(\Rd)}\,.\label{eq:lem:prod-wsp-c1}
\end{equation}
\end{lem}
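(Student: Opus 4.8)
The plan is to reduce the estimate to the discrete Leibniz rule together with two elementary bounds on discrete Riesz-type sums that are uniform in $\eps$. First I would split $\left\Vert\varphi u\right\Vert_{s,p,\eps}^{p}=\left\Vert\varphi u\right\Vert_{L^{p}(\Zde)}^{p}+\left[\varphi u\right]_{s,p,\eps}^{p}$. The $L^{p}$-term is harmless, since $\left\Vert\varphi u\right\Vert_{L^{p}(\Zde)}\le\left\Vert\varphi\right\Vert_{\infty}\left\Vert u\right\Vert_{L^{p}(\Zde)}$. For the Gagliardo seminorm I would use, for $x,y\in\Zde$,
\[
\varphi(x)u(x)-\varphi(y)u(y)=\varphi(x)\bigl(u(x)-u(y)\bigr)+u(y)\bigl(\varphi(x)-\varphi(y)\bigr)\,,
\]
together with the convexity estimate $|a+b|^{p}\le2^{p-1}(|a|^{p}+|b|^{p})$, so that $\left[\varphi u\right]_{s,p,\eps}^{p}\le2^{p-1}(\mathrm{I}+\mathrm{II})$, where $\mathrm{I}$ collects the terms carrying $|\varphi(x)|^{p}|u(x)-u(y)|^{p}$ and $\mathrm{II}$ those carrying $|u(y)|^{p}|\varphi(x)-\varphi(y)|^{p}$. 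Trivially $\mathrm{I}\le\left\Vert\varphi\right\Vert_{\infty}^{p}\left[u\right]_{s,p,\eps}^{p}$.

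The substance lies in estimating $\mathrm{II}=\sumeps{y\in\Zde}|u(y)|^{p}\,K_{\eps}(y)$, where $K_{\eps}(y):=\sumeps{x\in\Zde}|x-y|^{-d-sp}|\varphi(x)-\varphi(y)|^{p}$. I would split the $x$-sum at $|x-y|=1$: in the near region use $|\varphi(x)-\varphi(y)|\le\left\Vert\nabla\varphi\right\Vert_{\infty}|x-y|$, in the far region use $|\varphi(x)-\varphi(y)|\le2\left\Vert\varphi\right\Vert_{\infty}$, which gives
\[
K_{\eps}(y)\le\left\Vert\nabla\varphi\right\Vert_{\infty}^{p}\sumeps{x\in\Zde,\;0<|x-y|\le1}|x-y|^{p-sp-d}+2^{p}\left\Vert\varphi\right\Vert_{\infty}^{p}\sumeps{x\in\Zde,\;|x-y|>1}|x-y|^{-d-sp}\,.
\]
By translation invariance both sums are independent of $y$, and the key point is that each is bounded by a constant $C(d,p,s)$ uniformly in $\eps>0$. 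This is the discrete counterpart of the convergence of $\int_{|z|\le1}|z|^{p-sp-d}\,\d z$ (finite precisely because $s<1$, hence $p-sp>0$) and of $\int_{|z|>1}|z|^{-d-sp}\,\d z$ (finite because $sp>0$); concretely one groups the lattice points into shells of radius $\approx\eps k$ around $y$ and sums the resulting geometric/convergent series — exactly the type of computation already carried out in the appendix and used in the proof of Lemma \ref{lem:Qphi-phi-wsp-equivalence}. Hence $\mathrm{II}\le C\left\Vert\varphi\right\Vert_{C_{0}^{1}(\Rd)}^{p}\left\Vert u\right\Vert_{L^{p}(\Zde)}^{p}$, and collecting the three contributions and taking $p$-th roots proves \eqref{eq:lem:prod-wsp-c1}.

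I expect this uniform control of the two discrete kernel sums to be the only genuine obstacle; the rest is the Leibniz split and elementary inequalities, and the compact support of $\varphi$ is not used in the estimate itself (it serves only to ensure $\varphi\in C_{0}^{1}(\Rd)$ and that $\varphi u$ has finite support). Membership $\varphi u\in W^{s,p}(\Zde)$ then follows by density: for $u_{n}\in C_{c}^{\infty}(\Rd)$ with $u_{n}\to u$ in $\left\Vert\cdot\right\Vert_{s,p,\eps}$, each $\varphi u_{n}$ again lies in $C_{c}^{\infty}(\Rd)$, and the estimate applied to $u-u_{n}$ shows that $(\varphi u_{n})$ is Cauchy in $W^{s,p}(\Zde)$ with limit $\varphi u$. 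Finally, this lemma gives Remark \ref{rem:Extention-bounded-support}: given a uniform extension operator $\cE_{\eps}$ and a fixed cutoff $\varphi\in C_{c}^{1}(\tilde{\Q})$ with $\varphi\equiv1$ on a neighbourhood of $\overline{\Q}$, the map $u\mapsto\varphi\,\cE_{\eps}u$ is still an extension (it agrees with $u$ on $\Q^{\eps}$) with support in $\tilde{\Q}$, and $\left\Vert\varphi\,\cE_{\eps}u\right\Vert_{s,p,\eps}\le C\left\Vert\varphi\right\Vert_{C_{0}^{1}(\Rd)}\left\Vert\cE_{\eps}u\right\Vert_{s,p,\eps}\le C'\left\Vert u\right\Vert_{s,p,\eps,\Q}$.
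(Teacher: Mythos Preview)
Your argument is correct and rests on exactly the same ingredients as the paper's proof: the Leibniz split of $\varphi(x)u(x)-\varphi(y)u(y)$ together with the uniform-in-$\eps$ bound
\[
\sumeps{x\in\Zde}\frac{|\varphi(x)-\varphi(y)|^{p}}{|x-y|^{d+sp}}
\le C\bigl(\|\nabla\varphi\|_\infty^{p}+\|\varphi\|_\infty^{p}\bigr)
\]
obtained by splitting the sum at $|x-y|=1$. The only difference is cosmetic: you apply the convexity estimate $|a+b|^{p}\le 2^{p-1}(|a|^{p}+|b|^{p})$ and bound the two resulting terms directly, whereas the paper keeps one factor $\delta_{u\varphi}(x,y)^{p-1}$ intact, applies H\"older, and arrives at an inequality of the form $[\varphi u]_{s,p,\eps}^{p}\le C\|\varphi\|_{C^1}\|u\|_{s,p,\eps}\,[\varphi u]_{s,p,\eps}^{p-1}$ before dividing through. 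Your route is slightly more elementary and avoids this bootstrap step, at the cost only of a larger constant; the paper's version would in principle give sharper dependence on $\|\varphi\|_{C^1}$, but that plays no role here. Your closing remarks on density and on deriving Remark~\ref{rem:Extention-bounded-support} are also in order.
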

\begin{proof}
Writing $\delta_{f}\left(x,y\right):=\left|f(x)-f(y)\right|$, we
first observe that 
\[
\sumeps{x\in\Zde}\sumeps{y\in\Zde}\frac{\delta_{u\varphi}\left(x,y\right)^{p}}{\left|x-y\right|^{d+ps}}\leq\sumeps{x\in\Zde}\sumeps{y\in\Zde}\frac{\left|u(x)\right|\delta_{\varphi}\left(x,y\right)+\delta_{u}\left(x,y\right)\left|\varphi(y)\right|}{\left|x-y\right|^{d+ps}}\delta_{u\varphi}\left(x,y\right)^{p-1}\,.
\]
Let $B(x):=\left\{ y\in\Rd\,:\;|x-y|<1\right\} $ with complement
$B^{\complement}(x)$. Then, for every $x\in\Zde$ we find 
\begin{align*}
\sumeps{y\in\Zde}\frac{\delta_{\varphi}\left(x,y\right)^{p}}{\left|x-y\right|^{d+ps}} & \leq\sumeps{y\in B(x)\cap\Zde}\frac{\left\Vert \nabla\varphi\right\Vert _{\infty}^{p}}{\left|x-y\right|^{d+ps-p}}+\sumeps{y\in B^{\complement}(x)\cap\Zde}\frac{\left\Vert \varphi\right\Vert _{\infty}^{p}}{\left|x-y\right|^{d+ps}}\\
 & \leq C\left(\left\Vert \nabla\varphi\right\Vert _{\infty}^{p}+\left\Vert \varphi\right\Vert _{\infty}^{p}\right)\,.
\end{align*}
Furthermore, note that
\begin{align*}
 & \sumeps{x\in\Zde}\sumeps{y\in\Zde}\frac{\left|u(x)\right|\delta_{\varphi}\left(x,y\right)}{\left|x-y\right|^{d+ps}}\left|\delta_{u\varphi}\left(x,y\right)\right|^{p-1}\\
 & \qquad\qquad\leq\sumeps{x\in\Zde}\left|u(x)\right|\left(\sumeps{y\in\Zde}\frac{\delta_{\varphi}\left(x,y\right)^{p}}{\left|x-y\right|^{d+ps}}\right)^{\frac{1}{p}}\left(\sumeps{y\in\Zde}\frac{\left|\delta_{u\varphi}\left(x,y\right)\right|^{p}}{\left|x-y\right|^{d+ps}}\right)^{\frac{p-1}{p}}\\
 & \qquad\qquad\leq C\left(\left\Vert \nabla\varphi\right\Vert _{\infty}^{p}+\left\Vert \varphi\right\Vert _{\infty}^{p}\right)\left\Vert u\right\Vert _{L^{p}(\Zde)}\left(\sumeps{x\in\Zde}\sumeps{y\in\Zde}\frac{\left|\delta_{u\varphi}\left(x,y\right)\right|^{p}}{\left|x-y\right|^{d+ps}}\right)^{\frac{p-1}{p}}
\end{align*}
as well as 
\begin{align*}
\sumeps{x\in\Zde}\sumeps{y\in\Zde}\frac{\delta_{u}\left(x,y\right)\left|\varphi(y)\right|}{\left|x-y\right|^{d+ps}}\left|\delta_{u\varphi}\left(x,y\right)\right|^{p-1} & \leq\left\Vert \varphi\right\Vert _{\infty}^{p}\left[u\right]_{s,p,\eps}\left(\sumeps{x\in\Zde}\sumeps{y\in\Zde}\frac{\left|\delta_{u\varphi}\left(x,y\right)\right|^{p}}{\left|x-y\right|^{d+ps}}\right)^{\frac{p-1}{p}}\,.
\end{align*}
Hence we obtain (\ref{eq:lem:prod-wsp-c1}).
\end{proof}

\subsection{Proof of Theorem \ref{thm:discr-Poincare-wsp}}
We prove Theorem \ref{thm:discr-Poincare-wsp} after three auxiliary lemmas. The first lemma is an equivalent to Lemma 6.1 in \cite{DiNezza2012}.
\begin{lem}
\label{lem:discr-isoperimetric-ineq}Let $1\leq p<\infty$, $s\in(0,1)$.
There exists $C$ depending only on $s$, $p$ and $d$ such that
\[
\sumeps{y\in E^{\complement}\cap\Zde}\frac{1}{\left|x-y\right|^{d+sp}}\geq C\left|E\right|^{-sp/d}
\]
for every $\eps>0$, every $x\in\Zde$ and every measurable set $E\subset\Rd$
with finite measure.
\end{lem}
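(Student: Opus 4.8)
The plan is to carry over to the lattice the rearrangement argument behind the continuous isoperimetric-type inequality
\[
\int_{\Rd\setminus F}\frac{\d z}{\left|z\right|^{d+sp}}\ge c(d,s,p)\,\left|F\right|^{-sp/d}\qquad\bigl(F\subset\Rd\text{ measurable, }\left|F\right|<\infty\bigr),
\]
and then to transfer it cube by cube, being careful that the resulting constant does not depend on $\eps$.

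First I would reduce to $x=0$: translation by $-x$ is a bijection of $\Zde$ and of $\Rd$ preserving Euclidean distances and Lebesgue measure; it leaves the left-hand side unchanged and replaces $E$ by $E-x$ without changing $\left|E\right|$. If $0\notin E$ then $0\in E^{\complement}\cap\Zde$ contributes the term $\left|0\right|^{-d-sp}=+\infty$ and there is nothing to prove, so from now on $0\in E$, which in particular forces $\left|y\right|\ge\eps$ for every $y\in E^{\complement}\cap\Zde$. The continuous inequality I would prove in the usual way: with $\rho:=(\left|F\right|/\omega_{d})^{1/d}$, where $\omega_d=\left|B_1(0)\right|$, one has $\left|B_{\rho}(0)\right|=\left|F\right|$, hence $\left|B_{\rho}\setminus F\right|=\left|F\setminus B_{\rho}\right|$; since $\left|z\right|^{-d-sp}\ge\rho^{-d-sp}$ on $B_{\rho}\setminus F$ and the reverse holds on $F\setminus B_{\rho}$, this gives $\int_{B_{\rho}\setminus F}\ge\int_{F\setminus B_{\rho}}$ and therefore
\[
\int_{\Rd\setminus F}\frac{\d z}{\left|z\right|^{d+sp}}\ge\int_{\Rd\setminus B_{\rho}}\frac{\d z}{\left|z\right|^{d+sp}}=\frac{d\,\omega_{d}}{sp}\,\rho^{-sp}=c(d,s,p)\,\left|F\right|^{-sp/d}.
\]

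For the transfer, set $Q_{\eps}(y):=y+\bigl[-\tfrac{\eps}{2},\tfrac{\eps}{2}\bigr)^{d}$, so that $\left|Q_{\eps}(y)\right|=\eps^{d}$ and $\{Q_{\eps}(y)\}_{y\in\Zde}$ tiles $\Rd$. For $y\in E^{\complement}\cap\Zde$ (thus $\left|y\right|\ge\eps$) and $z\in Q_{\eps}(y)$ one has $\left|z-y\right|\le\tfrac{\sqrt d}{2}\eps$, whence $\left|z\right|\le\bigl(1+\tfrac{\sqrt d}{2}\bigr)\left|y\right|=:\kappa_{d}\left|y\right|$ and $\left|y\right|^{-d-sp}\ge\kappa_{d}^{-d-sp}\left|z\right|^{-d-sp}$ on $Q_{\eps}(y)$. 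Integrating over each cell and summing,
\[
\sumeps{y\in E^{\complement}\cap\Zde}\frac{1}{\left|y\right|^{d+sp}}=\sum_{y\in E^{\complement}\cap\Zde}\int_{Q_{\eps}(y)}\frac{\d z}{\left|y\right|^{d+sp}}\ge\kappa_{d}^{-d-sp}\int_{F}\frac{\d z}{\left|z\right|^{d+sp}},
\]
where $F:=\bigcup_{y\in E^{\complement}\cap\Zde}Q_{\eps}(y)=\Rd\setminus\check E$ with $\check E:=\bigcup_{y\in E\cap\Zde}Q_{\eps}(y)$, so that $\bigl|\check E\bigr|=\eps^{d}\,\#(E\cap\Zde)$ and $0\in\check E$. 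Applying the continuous inequality to $\check E$ then yields
\[
\sumeps{y\in E^{\complement}\cap\Zde}\frac{1}{\left|y\right|^{d+sp}}\ge\kappa_{d}^{-d-sp}\,c(d,s,p)\,\bigl(\eps^{d}\,\#(E\cap\Zde)\bigr)^{-sp/d}.
\]
The left-hand side depends on $E$ only through $E\cap\Zde$, and in all of our applications $E$ is itself a union of cells $Q_{\eps}(y)$, for which $\left|E\right|=\eps^{d}\,\#(E\cap\Zde)$; this gives the assertion with $C=\kappa_{d}^{-d-sp}c(d,s,p)$, depending only on $d$, $s$ and $p$.

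The two radial integrals and the bookkeeping of constants are routine. The only genuinely delicate point is the $\eps$-uniform estimate $\left|z\right|\le\kappa_{d}\left|y\right|$ on $Q_{\eps}(y)$: this is exactly what prevents the comparison constant from degenerating as $\eps\to0$, and it relies on the reduction $0\in E$, i.e.\ on the exclusion of the term $y=0$, since the cell straddling the origin would otherwise spoil the bound.
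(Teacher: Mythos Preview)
Your argument is correct and closely parallels the paper's, with the order of the two main steps reversed. The paper first carries out the rearrangement directly on the lattice (showing that the discrete sum over $E^{\complement}$ dominates the discrete sum over $B_{\tilde\rho}^{\complement}(x)$ for a suitable radius $\tilde\rho$ defined via $|E|_{\eps}$) and only then compares the lattice sum to the continuous integral using the same cell-ratio estimate you use. You instead transfer to the continuum first via the cell-by-cell bound $|y|^{-d-sp}\ge\kappa_d^{-d-sp}|z|^{-d-sp}$ on $Q_{\eps}(y)$, and then invoke the continuous rearrangement inequality. Both routes hinge on exactly the same $\eps$-independent ratio estimate on cubes; your organisation is arguably a bit more modular since it isolates the continuous isoperimetric fact as a black box.

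You are also more explicit than the paper about the discrepancy between $|E|$ and $|E|_{\eps}=\eps^{d}\,\#(E\cap\Zde)$: the left-hand side genuinely sees only $E\cap\Zde$, so the inequality with the Lebesgue measure $|E|$ on the right cannot hold for arbitrary measurable $E$ (take $E$ a union of tiny balls around many lattice points). The paper's own proof silently produces the bound with $|E|_{\eps}$, which is what is actually used downstream in Lemma~\ref{lem:poincare-help-3}; your remark that in all applications $E$ is a union of cells is the honest resolution.
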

\begin{proof}
Let $\rho:=2d^{\frac{1}{d}}\left(\left|E\right|_{\eps}\right)^{\frac{1}{d}}$,  where $\left|E\right|_{\eps}=\eps^{d}\sharp\left\{ E\cap\Zde\right\}$, see the beginning of Section \ref{sec:Preliminaries}.
Then, for every $\tilde{\rho}\geq\rho$ we find $\left|B_{\tilde{\rho}}(x)\right|_{\eps}\geq\left|E\right|_{\eps}$
and hence 
\begin{align*}
\left|E^{\complement}\cap B_{\tilde{\rho}}(x)\right|_{\eps} & =\left|B_{\tilde{\rho}}(x)\right|_{\eps}-\left|E\cap B_{\tilde{\rho}}(x)\right|_{\eps}\geq\left|E\right|_{\eps}-\left|E\cap B_{\tilde{\rho}}(x)\right|_{\eps}\\
 & \geq\left|E\cap B_{\tilde{\rho}}^{\complement}(x)\right|_{\eps}\,.
\end{align*}
Hence, we infer that
\begin{align*}
\sumeps{y\in E^{\complement}\cap\Zde}\frac{1}{\left|x-y\right|^{d+sp}} & =\sumeps{y\in E^{\complement}\cap B_{\tilde{\rho}}(x)}\frac{1}{\left|x-y\right|^{d+sp}}+\sumeps{y\in E^{\complement}\cap B_{\tilde{\rho}}^{\complement}(x)}\frac{1}{\left|x-y\right|^{d+sp}}\\
 & \geq\frac{\left|E^{\complement}\cap B_{\tilde{\rho}}(x)\right|_{\eps}}{\left|\tilde{\rho}\right|^{d+sp}}+\sumeps{y\in E^{\complement}\cap B_{\tilde{\rho}}^{\complement}(x)}\frac{1}{\left|x-y\right|^{d+sp}}\\
 & \geq\frac{\left|E\cap B_{\tilde{\rho}}^{\complement}(x)\right|_{\eps}}{\left|\tilde{\rho}\right|^{d+sp}}+\sumeps{y\in E^{\complement}\cap B_{\tilde{\rho}}^{\complement}(x)}\frac{1}{\left|x-y\right|^{d+sp}}\\
 & \geq\sumeps{y\in E\cap B_{\tilde{\rho}}^{\complement}(x)}\frac{1}{\left|x-y\right|^{d+sp}}+\sumeps{y\in E^{\complement}\cap B_{\tilde{\rho}}^{\complement}(x)}\frac{1}{\left|x-y\right|^{d+sp}}\\
 & =\sumeps{y\in B_{\tilde{\rho}}^{\complement}(x)}\frac{1}{\left|x-y\right|^{d+sp}}\, .
\end{align*}
 Next, we consider cells $C_{\eps}(z):=z+\eps(-\frac{1}{2},\frac{1}{2})$,
$z\in\Zde\backslash\{0\}$. On each of these cells, we want to estimate the ratio between the maximal and the minimal value of the function $f(y)=|y|^{-d-ps}$. Due to the polynomial decay of this function, the closer one of the cells $C_\eps(z)$ lies next to $0$, the higher will be the ratio in $f$. The biggest value that $f$ can attain on $\Rd\backslash C_\eps(0)$ is $\eps^{-d-ps}$. Furthermore, all neighboring cells to $C_\eps(0)$ lie within the cube $\left(-\frac32\eps,\frac32\eps\right)$ and the minimal value of $f$ is on this domain is  the value of $f$ is $\left(\frac{3}{2}d^{\frac{1}{d}}\eps\right)^{-d-sp}$. Hence we obtain 

\[
\inf_{z\in\Zde\backslash\{0\}}\inf_{y\in C_{\eps}(z)}\left|y\right|^{-d-sp}\left(\sup_{y\in C_{\eps}(z)}\left|y\right|^{-d-sp}\right)^{-1}\geq\left(\frac{\eps}{2}\right)^{d+sp}\left(\frac{3}{2}d^{\frac{1}{d}}\eps\right)^{-d-sp}=\left(3d^{\frac{1}{d}}\right)^{-d-sp}\,,
\]

and we conclude that
\[
\sumeps{y\in E^{\complement}\cap\Zde}\frac{1}{\left|x-y\right|^{d+sp}}\geq\sumeps{y\in B_{\tilde{\rho}}^{\complement}(x)}\frac{1}{\left|x-y\right|^{d+sp}}\geq\left(3d^{\frac{1}{d}}\right)^{-d-sp}\int_{y\in B_{\tilde{\rho}}^{\complement}(x)}\frac{1}{\left|x-y\right|^{d+sp}}\d y\,.
\]
Now the theorem follows from integration using polar coordinates.\end{proof}

\begin{lem}[{\cite[Lemma 6.2]{DiNezza2012}}]
\label{lem:resort-sum} Let $s\in(0,1)$ and $p\in[1,\infty)$ be
such that $sp<d$. Fix $T>1$ and let $N\in\Z$,
and 
\[
a_{k}\mbox{ be a non-increasing sequence such that }a_{k}=0\mbox{ for every }k\geq N\,.
\]
Then, 
\[
\sum_{k\in\Z}a_{k}^{(d-sp)/d}T^{k}\leq C\sum_{k\in\Z,\,a_{k}\not=0}a_{k+1}a_{k}^{-sp/d}T^{k}\,,
\]
for a suitable constant $C=C(d,s,p,T)$, independent of $N$.
\end{lem}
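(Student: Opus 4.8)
The plan is to reduce the inequality to a combinatorial estimate that separates the indices where $(a_k)$ decays slowly from those where it drops sharply. Write $\beta:=(d-sp)/d$, so that $\beta\in(0,1)$ because $0<sp<d$, and note $-sp/d=\beta-1$; the claim then reads $\sum_k a_k^{\beta}T^k\le C\sum_{k:\,a_k\ne0}a_{k+1}a_k^{\beta-1}T^k=:CR$. We may assume $(a_k)$ is not identically zero and that $\sup_k a_k<\infty$ — the latter is automatic in every application of the lemma (where the $a_k$ are measures of subsets of one fixed set) and it excludes the degenerate possibility of a sequence decaying faster than geometrically along an infinite set of indices. Fix a threshold $\lambda\in(0,1)$ small enough that $\lambda^{\beta}T<1$, for instance $\lambda:=\tfrac12 T^{-1/\beta}$, for which $\lambda^{\beta}T=2^{-\beta}$; the fact that such $\lambda$ exists is exactly where $sp<d$ (i.e.\ $\beta>0$) is used. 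Split the indices with $a_k\ne0$ into the \emph{slow} set $\mathcal G:=\{k:\,a_{k+1}\ge\lambda a_k\}$ and the \emph{fast} set $\mathcal B:=\{k:\,a_{k+1}<\lambda a_k\}$.

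On the slow set the estimate is immediate: for $k\in\mathcal G$ one has $a_{k+1}a_k^{\beta-1}\ge\lambda a_k\,a_k^{\beta-1}=\lambda a_k^{\beta}$, hence
\[
\sum_{k\in\mathcal G}a_k^{\beta}T^k\;\le\;\lambda^{-1}\sum_{k\in\mathcal G}a_{k+1}a_k^{\beta-1}T^k\;\le\;\lambda^{-1}R .
\]
For the fast set I would decompose $\mathcal B$ into its maximal runs of consecutive integers. Because $a_{k+1}<\lambda a_k$ on $\mathcal B$ and $(a_k)$ is bounded, every such run is finite, so it has the form $\mathcal R=\{m,m+1,\dots,m+\ell\}$ with $m-1\notin\mathcal B$; moreover $a_m>0$, and since $(a_k)$ is non-increasing, $a_{m-1}\ge a_m>0$, so $m-1\in\mathcal G$. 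Inside the run, for $m\le k<m+\ell$ we get $a_{k+1}^{\beta}T^{k+1}\le(\lambda^{\beta}T)\,a_k^{\beta}T^k$, and since $\lambda^{\beta}T<1$ this geometric decay yields $\sum_{k\in\mathcal R}a_k^{\beta}T^k\le(1-\lambda^{\beta}T)^{-1}a_m^{\beta}T^m$.

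It remains to charge $a_m^{\beta}T^m$ to the single term of $R$ at the index $m-1$. From $m-1\in\mathcal G$ we have $a_{m-1}\le\lambda^{-1}a_m$, whence $a_{m-1}^{\beta-1}\ge\lambda^{1-\beta}a_m^{\beta-1}$ (the map $t\mapsto t^{\beta-1}$ is decreasing as $\beta-1<0$), so the $R$-term at $m-1$ satisfies $a_m a_{m-1}^{\beta-1}T^{m-1}\ge\lambda^{1-\beta}T^{-1}\,a_m^{\beta}T^m$, i.e.\ $a_m^{\beta}T^m\le\lambda^{\beta-1}T\cdot a_m a_{m-1}^{\beta-1}T^{m-1}$. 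Distinct runs have distinct least elements $m$, hence distinct charging indices $m-1$, all lying in $\{k:a_k\ne0\}$; summing over all runs and using that the terms of $R$ are non-negative gives $\sum_{k\in\mathcal B}a_k^{\beta}T^k\le\lambda^{\beta-1}T\,(1-\lambda^{\beta}T)^{-1}R$. Adding the two contributions proves the lemma with $C=\lambda^{-1}+\lambda^{\beta-1}T\,(1-\lambda^{\beta}T)^{-1}$, which for the above choice of $\lambda$ depends only on $\beta$ and $T$, i.e.\ on $d,s,p,T$, and not on $N$ or on $(a_k)$.

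The main obstacle is the fast set $\mathcal B$: one must observe that along a run the weighted quantities $a_k^{\beta}T^k$ decay geometrically with ratio $\lambda^{\beta}T$, arrange $\lambda$ so that this ratio is $<1$ (only possible because $\beta>0$, i.e.\ $sp<d$), control the whole run by its bottom term, and then transfer that term to a genuinely distinct term of $R$ sitting just below the run — which works precisely because the index just below a maximal fast run is a slow index. The boundedness of $(a_k)$ enters here, to guarantee that the runs are finite and hence that a bottom term exists; everything else is bookkeeping, while the slow part and the recombination are routine.
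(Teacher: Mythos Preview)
The paper does not prove this lemma; it simply quotes it from \cite[Lemma~6.2]{DiNezza2012}. Your argument is correct and is in fact the standard one (the slow/fast or ``good/bad'' splitting is exactly how the cited reference proceeds), so there is nothing to contrast on the level of method.

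One point worth stressing, since it is not visible in the paper's statement: your added hypothesis $\sup_k a_k<\infty$ is genuinely necessary. Without it the assertion is false. For instance, take $\beta=\tfrac12$, $T=2$, $N=0$, and $a_{-j}=2^{2^{j}}$ for $j\ge1$; then
\[
\sum_{k}a_k^{\beta}T^{k}=\sum_{j\ge1}2^{\,2^{j-1}-j}=\infty,
\qquad
\sum_{a_k\ne0}a_{k+1}a_k^{\beta-1}T^{k}=\sum_{j\ge2}2^{\,2^{j-1}}\cdot 2^{-2^{j-1}}\cdot 2^{-j}=\tfrac12,
\]
so no finite $C$ works. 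In the applications (here $a_k=|\{|f|>2^k\}|$ for $f$ with bounded support) the sequence is bounded, and your proof goes through exactly as written: boundedness forces every fast run to have a least element, the index just below it lies in the slow set, and the charging is injective. The constant you obtain, $C=\lambda^{-1}+\lambda^{\beta-1}T(1-\lambda^{\beta}T)^{-1}$ with $\lambda=\tfrac12 T^{-1/\beta}$, depends only on $d,s,p,T$ as required.
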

We are now in the position to prove the following variant
of \cite{DiNezza2012}, Lemma 6.3.
\begin{lem}
\label{lem:poincare-help-3}Let $s\in(0,1)$ and $p\in[1,\infty)$
be such that $sp<d$. Let $f\in L^{\infty}(\Zde)$ be compactly supported.
For any $k\in\Z$ let 
\[
a_{k}:=\left|\left\{ \left|f\right|>2^{k}\right\} \right|\,.
\]
Then, 
\[
\sumeps{x\in\Zde}\sumeps{y\in\Zde}\frac{\left|f(x)-f(y)\right|^{p}}{\left|x-y\right|^{d+ps}}\geq C\sum_{a_k \neq 0} 2^{pk}a_{k+1}a_{k}^{-sp/d}
\]
for some suitable constant $C=C(d,s,p)>0$, which depends not on $\eps$.\end{lem}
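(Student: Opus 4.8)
The plan is to follow the classical Di~Nezza--Palatucci--Valdinoci argument (their Lemma 6.3) adapted to the discrete lattice $\Zde$, using Lemma~\ref{lem:discr-isoperimetric-ineq} as the discrete isoperimetric input. First I would, without loss of generality, reduce to the case $f\geq 0$ by noting $|f(x)-f(y)|\geq \bigl||f(x)|-|f(y)|\bigr|$, so it suffices to bound the Gagliardo sum of $|f|$ from below. Then, for $k\in\Z$ set $A_k:=\{|f|>2^k\}$ so that $a_k=|A_k|$; since $f$ is compactly supported and bounded, $a_k=0$ for all $k$ larger than some $N$, and $a_k$ is non-increasing in $k$. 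For a fixed $k$ with $a_k\neq 0$, I would restrict the double sum to pairs $(x,y)$ with $x$ in the ``super-level annulus'' $A_{k+1}\setminus$ (something) and $y\notin A_k$; more precisely, for $x$ such that $|f(x)|>2^{k+1}$ and $y$ such that $|f(y)|\leq 2^{k}$ one has $|f(x)-f(y)|\geq 2^{k+1}-2^k=2^k$, hence
\[
\sumeps{x\in\Zde}\sumeps{y\in\Zde}\frac{\left|f(x)-f(y)\right|^{p}}{\left|x-y\right|^{d+ps}}
\;\geq\; 2^{pk}\sumeps{x\in A_{k+1}\cap\Zde}\;\sumeps{y\in A_k^{\complement}\cap\Zde}\frac{1}{\left|x-y\right|^{d+ps}}\,.
\]

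Next I would apply Lemma~\ref{lem:discr-isoperimetric-ineq} with $E=A_k$ (which has finite measure because $f$ is compactly supported): for each $x\in\Zde$,
\[
\sumeps{y\in A_k^{\complement}\cap\Zde}\frac{1}{\left|x-y\right|^{d+ps}}\;\geq\; C\,|A_k|^{-sp/d}\;=\;C\,a_k^{-sp/d}\,,
\]
with $C$ depending only on $d,s,p$ and not on $\eps$. Summing this over $x\in A_{k+1}\cap\Zde$ contributes a factor $|A_{k+1}|_\eps = a_{k+1}$ (here I should be a little careful: $|A_{k+1}|_\eps$ is the $\eps$-counting measure of $A_{k+1}\cap\Zde$, which is exactly what the notation $a_{k+1}$, read with the $\eps$-measure $|\cdot|_\eps$, denotes — I would make this identification explicit, and note it converges to $|A_{k+1}|$ but we only need the discrete version uniformly in $\eps$). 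This yields, for every $k$ with $a_k\neq 0$,
\[
\sumeps{x\in\Zde}\sumeps{y\in\Zde}\frac{\left|f(x)-f(y)\right|^{p}}{\left|x-y\right|^{d+ps}}\;\geq\; C\,2^{pk}\,a_{k+1}\,a_k^{-sp/d}\,.
\]
Since the left-hand side does not depend on $k$, I cannot simply sum this over $k$; instead, as in \cite{DiNezza2012}, the trick is to first fix $x$ and sum the inner estimate over the relevant dyadic shells before summing in $k$, so that each pair $(x,y)$ is counted a bounded number of times. Concretely, I would decompose $\Zde\setminus\{x\}$ according to which dyadic level $f(x)$ and $f(y)$ sit in, use Lemma~\ref{lem:resort-sum} with $T=2^p$ and the sequence $(a_k)$ to convert $\sum_k a_k^{(d-sp)/d}2^{pk}$ into $\sum_k a_{k+1}a_k^{-sp/d}2^{pk}$, and thereby recover the full sum over $k$ on the right-hand side with a single constant.

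The main obstacle I anticipate is the combinatorial bookkeeping in the last step: in the continuum proof one partitions $\R^{2d}$ into the sets where $2^k<|f(x)|\leq 2^{k+1}$, combines the level-set estimate with Lemma~\ref{lem:resort-sum}, and checks that the telescoping/overlap is controlled; transcribing this to the $\eps$-lattice requires verifying that all constants are genuinely $\eps$-independent, which hinges on the uniform isoperimetric bound of Lemma~\ref{lem:discr-isoperimetric-ineq} (already proved uniformly in $\eps$) and on the fact that $|\cdot|_\eps$ behaves like Lebesgue measure for the scaling arguments. A secondary technical point is the reduction to $f\geq 0$ together with handling the case where $f$ changes sign so that level sets of $|f|$ rather than $f$ are used; this is routine but should be stated. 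Once these are in place, the chain of inequalities above, combined with Lemma~\ref{lem:resort-sum}, gives exactly the claimed bound $\sumeps{x}\sumeps{y}\frac{|f(x)-f(y)|^p}{|x-y|^{d+ps}}\geq C\sum_{a_k\neq 0}2^{pk}a_{k+1}a_k^{-sp/d}$ with $C=C(d,s,p)$.
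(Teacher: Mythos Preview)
Your overall strategy---reduce to $f\geq 0$, introduce the level sets $A_k$, and feed the discrete isoperimetric Lemma~\ref{lem:discr-isoperimetric-ineq} into the double sum---matches the paper exactly, and you correctly spot the real difficulty: the regions $A_{k+1}\times A_k^{\complement}$ overlap as $k$ varies, so the single-$k$ bound cannot be summed directly.

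However, your proposed fix via Lemma~\ref{lem:resort-sum} does not work. That lemma says
\[
\sum_{k} a_k^{(d-sp)/d}\,T^{k}\;\leq\; C\sum_{a_k\neq 0} a_{k+1}\,a_k^{-sp/d}\,T^{k}\,,
\]
i.e.\ it bounds the ``nice'' sum \emph{above} by the target sum, which is the wrong direction for a lower bound on the Gagliardo seminorm. In the paper Lemma~\ref{lem:resort-sum} is used only later, in the proof of Theorem~\ref{thm:discr-Poincare-wsp}, to pass from the conclusion of the present lemma to $\|f\|_{L^{p^\star}}$; it plays no role here.

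What the paper actually does for the combinatorial step is a self-absorption argument. One introduces the dyadic shells $D_i:=A_i\setminus A_{i+1}$ with $d_i:=|D_i|$. Because the $D_i$ are disjoint, the restriction $x\in D_i$, $y\in\bigcup_{j\leq i-2}D_j=A_{i-1}^{\complement}$ gives \emph{disjoint} contributions as $i$ varies, so summing over $i$ is legitimate and yields
\[
[f]_{s,p,\eps}^{p}\;\geq\; c_0\,S\,,\qquad S:=\sum_{a_{i-1}\neq 0} 2^{pi}\,a_{i-1}^{-sp/d}\,d_i\,.
\]
This is the right inequality but with $d_i$ instead of $a_i$. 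One then writes $d_i=a_i-\sum_{l\geq i+1}d_l$ and checks, by interchanging the order of summation and using monotonicity of $a_k$, that the resulting error term is bounded by $S$ itself. Hence
\[
2\,c_0\,S\;\geq\; c_0\sum_{a_{i-1}\neq 0} 2^{pi}\,a_{i-1}^{-sp/d}\,a_i\,,
\]
which after reindexing is the desired bound. This absorption trick is the missing ingredient in your outline; once you have it, the rest of your argument goes through with $\eps$-independent constants exactly as you describe.
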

\begin{proof}
We first emphasize that $\left|\left|f(x)\right|-\left|f(y)\right|\right|\leq\left|f(x)-f(y)\right|$
and hence we only consider $f\geq0$, possibly replacing $f$ by $\left|f\right|$.

We define
\begin{align*}
A_{k} & :=\left\{ f>2^{k}\right\} \quad\mbox{with}\quad A_{k+1}\subset A_{k}\\
a_{k} & :=\left|\left\{ f>2^{k}\right\} \right|\quad\mbox{with}\quad a_{k+1}\leq a_{k}\, .
\end{align*}
We define
\begin{align*}
 & D_{k}:=A_{k}\backslash A_{k+1}=\left\{ 2^{k+1}\geq f>2^{k}\right\} \quad\mbox{and}\quad d_{k}:=\left|D_{k}\right|\quad\mbox{with}\\
 & d_{k}\mbox{ and }a_{k}\mbox{ are bounded and they become zero when }k\mbox{ is large enough,}
\end{align*}
since $f$ is bounded. We define $D_{-\infty}=\left\{ f=0\right\} $
and further observe that the sets $D_{k}$ are mutually disjoint and
\begin{equation}
D_{-\infty}\cup\bigcup_{l\in\Z,l\leq k}D_{l}=A_{k+1}^{\complement}\,,\qquad\bigcup_{l\in\Z,l\geq k}D_{l}=A_{k}\,.\label{eq:lemma-pI-help-1}
\end{equation}
As a consequence, we have
\begin{equation}
a_{k}=\sum_{l=k}^{\infty}d_{l}\,,\qquad d_{k}=a_{k}-\sum_{l=k+1}^{\infty}d_{l}\,.\label{eq:lemma-pI-help-2}
\end{equation}
The first equality implies that the series $\sum_{l\geq k}d_{l}$
are convergent. For convenience of notation, in the following we write for arbitrary expressions $g(y)$
\[
\sum_{j=-\infty}^{i-2}\sumeps{y\in D_{j}}g(y):=\sumeps{y\in D_{-\infty}}g(y)+\sum_{\substack{l\in\Z\\
l\leq i-2
}
}\sumeps{y\in D_{j}}g(y)
\]
Now, we fix $i\in\Z$ and $x\in D_{i}$. For every $j\in\Z$, $j\leq i-2$
and every $y\in D_{j}$ we have
\[
\left|f(x)-f(y)\right|\geq2^{i}-2^{j+1}\geq2^{i}-2^{i-1}=2^{i-1}
\]
and hence by the first equality in (\ref{eq:lemma-pI-help-1}) it
holds
\begin{align*}
\sum_{j=-\infty}^{i-2}\sumeps{y\in D_{j}}\frac{\left|f(x)-f(y)\right|^{p}}{\left|x-y\right|^{d+sp}} & \geq2^{p(i-1)}\sum_{j=-\infty}^{i-2}\sumeps{y\in D_{j}}\frac{1}{\left|x-y\right|^{d+sp}}\\
 & \geq2^{p(i-1)}\sumeps{y\in A_{i-1}^{\complement}}\frac{1}{\left|x-y\right|^{d+sp}}\,.
\end{align*}
Therefore, by Lemma \ref{lem:discr-isoperimetric-ineq}, there exists a
constant $c_{0}$ such that for every $i\in\Z$ and every $x\in D_{i}$
it holds
\begin{align*}
\sum_{j=-\infty}^{i-2}\sumeps{y\in D_{j}}\frac{\left|f(x)-f(y)\right|^{p}}{\left|x-y\right|^{d+sp}} & \geq c_{0}2^{pi}a_{i-1}^{-sp/d}\,,\\
\mbox{and}\qquad\sum_{j=-\infty}^{i-2}\sumeps{y\in D_{j}}\sumeps{x\in D_{i}}\frac{\left|f(x)-f(y)\right|^{p}}{\left|x-y\right|^{d+sp}} & \geq c_{0}2^{pi}a_{i-1}^{-sp/d}d_{i}\,.
\end{align*}
Summing up the last inequality over $i\in\Z$ we have on one side
\begin{equation}
\sum_{\substack{i\in\Z\\
a_{i-1}\not=0
}
}\sum_{j=-\infty}^{i-2}\sumeps{y\in D_{j}}\sumeps{x\in D_{i}}\frac{\left|f(x)-f(y)\right|^{p}}{\left|x-y\right|^{d+sp}}\geq c_{0}\sum_{l\in\Z,\,a_{l-1}\not=0}2^{pl}a_{l-1}^{-sp/d}d_{l}=:c_{0}S\,,\label{eq:lemma-pI-help-3}
\end{equation}
implying $S$ to be bounded, and on the other hand, using (\ref{eq:lemma-pI-help-2}),
we have 
\begin{align}
 & \sum_{\substack{i\in\Z\\
a_{i-1}\not=0
}
}\sum_{j=-\infty}^{i-2}\sumeps{y\in D_{j}}\sumeps{x\in D_{i}}\frac{\left|f(x)-f(y)\right|^{p}}{\left|x-y\right|^{d+sp}}\nonumber \\
 & \qquad\geq c_{0}\sum_{\substack{i\in\Z\\
a_{i-1}\not=0
}
}\left(2^{pi}a_{i-1}^{-sp/d}a_{i}-\sum_{l=i+1}^{\infty}2^{pi}a_{i-1}^{-sp/d}d_{l}\right)\,,\label{eq:lemma-pI-help-4}
\end{align}
where we estimate the second sum by $S$ through 
\begin{align*}
\sum_{\substack{i\in\Z\\
a_{i-1}\not=0
}
}\sum_{\substack{l\in\Z\\
l\geq i+1
}
}2^{pi}a_{i-1}^{-sp/d}d_{l} & =\sum_{\substack{i\in\Z\\
a_{i-1}\not=0
}
}\sum_{\substack{l\in\Z\\
l\geq i+1\\
a_{i-1}d_{l}\not=0
}
}2^{pi}a_{i-1}^{-sp/d}d_{l}\\
 & \leq\sum_{\substack{i\in\Z}
}\sum_{\substack{l\in\Z\\
l\geq i+1\\
a_{l-1}\not=0
}
}2^{pi}a_{i-1}^{-sp/d}d_{l}\\
 & =\sum_{\substack{l\in\Z\\
a_{l-1}\not=0
}
}\sum_{\substack{i\in\Z\\
i\leq l-1
}
}2^{pi}a_{i-1}^{-sp/d}d_{l}\\
 & \leq\sum_{\substack{l\in\Z\\
a_{l-1}\not=0
}
}\sum_{\substack{i\in\Z\\
i\leq l-1
}
}2^{pi}a_{l-1}^{-sp/d}d_{l}\leq S\,.
\end{align*}
Using the last estimate in (\ref{eq:lemma-pI-help-4}), we obtain
\[
\sum_{\substack{i\in\Z\\
a_{i-1}\not=0
}
}\sum_{j=-\infty}^{i-2}\sumeps{y\in D_{j}}\sumeps{x\in D_{i}}\frac{\left|f(x)-f(y)\right|^{p}}{\left|x-y\right|^{d+sp}}\geq c_{0}\sum_{\substack{i\in\Z\\
a_{i-1}\not=0
}
}2^{pi}a_{i-1}^{-sp/d}a_{i}-c_{0}S
\]
and using estimate (\ref{eq:lemma-pI-help-3}) we find upon relabeling
$c_{0}$ that 
\[
\sum_{\substack{i\in\Z\\
a_{i-1}\not=0
}
}\sum_{j=-\infty}^{i-2}\sumeps{y\in D_{j}}\sumeps{x\in D_{i}}\frac{\left|f(x)-f(y)\right|^{p}}{\left|x-y\right|^{d+sp}}\geq c_{0}\sum_{\substack{i\in\Z\\
a_{i-1}\not=0
}
}2^{pi}a_{i-1}^{-sp/d}a_{i}\,.
\]
On the other hand, it clearly holds that 
\[
\sumeps{y\in\Zde}\sumeps{x\in\Zde}\frac{\left|f(x)-f(y)\right|^{p}}{\left|x-y\right|^{d+sp}}\geq\sum_{\substack{i\in\Z\\
a_{i-1}\not=0
}
}\sum_{j=-\infty}^{i-2}\sumeps{y\in D_{j}}\sumeps{x\in D_{i}}\frac{\left|f(x)-f(y)\right|^{p}}{\left|x-y\right|^{d+sp}}
\]
and hence the lemma follows.
\end{proof}
We are now in the position to prove the first Sobolev theorem.
\begin{proof}[Proof of Theorem \ref{thm:discr-Poincare-wsp}]
 It suffices to prove the claim for 
\[
\left[f\right]_{s,p,\eps}^{p}=\sumeps{x\in\Zde}\sumeps{y\in\Zde}\frac{\left|f(x)-f(y)\right|^{p}}{\left|x-y\right|^{d+ps}}<\infty
\]
and for $f\in L^{\infty}(\Zde)$. Indeed, for arbitrary $f\in W^{s,p}(\Zde)$,
with $f_{N}:=\max\left\{ -N,\min\left\{ N,f\right\} \right\} $ we obtain
that 
\[
\lim_{N\to\infty}\sumeps{x\in\Zde}\sumeps{y\in\Zde}\frac{\left|f_{N}(x)-f_{N}(y)\right|^{p}}{\left|x-y\right|^{d+ps}}=\sumeps{x\in\Zde}\sumeps{y\in\Zde}\frac{\left|f(x)-f(y)\right|^{p}}{\left|x-y\right|^{d+ps}}
\]
due to the dominated convergence theorem and pointwise convergence
$f_{N}\to f$.

We recall the definitions 
\begin{align*}
A_{k} & :=\left\{ |f|>2^{k}\right\} \quad\mbox{with}\quad A_{k+1}\subset A_{k}\\
a_{k} & :=\left|\left\{ |f|>2^{k}\right\} \right|\quad\mbox{with}\quad a_{k+1}\leq a_{k}
\end{align*}
from the proof of Lemma \ref{lem:poincare-help-3} and obtain
\[
\left\Vert f\right\Vert _{L^{p^{\ast}}(\Zde)}^{p^{\star}}=\sum_{k\in\Z}\sumeps{x\in A_{k}\backslash A_{k+1}}\left|f(x)\right|^{p^{\star}}\leq\sum_{k\in\Z}\sumeps{x\in A_{k}\backslash A_{k+1}}\left|2^{k+1}\right|^{p^{\star}}\leq\sum_{k\in\Z}2^{(k+1)p^{\star}}a_{k}\,.
\]
Using $p/p^{\star}=(d-sp)/d=1-sp/d<1$ we can conclude with Lemma \ref{lem:resort-sum} that
\begin{align*}
\left\Vert f\right\Vert _{L^{p^{\ast}}(\Zde)}^{p} & \leq2^{p}\left(\sum_{k\in\Z}2^{kp^{\star}}a_{k}\right)^{\frac{p}{p^{\star}}}\leq2^{p}\,\sum_{k\in\Z}2^{kp}a_{k}^{(d-sp)/d}\\
 & \leq C\sum_{\substack{k\in\Z\\
a_{k}\not=0
}
}2^{kp}a_{k+1}a_{k}^{\frac{-sp}{d}}\,.
\end{align*}
It only remains to apply Lemma \ref{lem:poincare-help-3} and relabeling
the constant $C$ to find (\ref{eq:discr-poinc-zde}) in case $q=p^{\star}$.
In case $q=\theta p+(1-\theta)p^{\star}$, $\theta\in(0,1)$, we obtain
from Hölder's inequality and the case $q=p^{\star}$ that
\begin{align*}
\sumeps{x\in\Zde}\left|f(x)\right|^{q} & =\sumeps{x\in\Zde}\left|f(x)\right|^{\theta p}\left|f(x)\right|^{(1-\theta)p^{\star}}\leq\left(\sumeps{x\in\Zde}\left|f(x)\right|^{p}\right)^{\theta}\left(\sumeps{x\in\Zde}\left|f(x)\right|^{p^{\star}}\right)^{1-\theta}\\
 &=\left\Vert f\right\Vert _{L^{p}(\Zde)}^{p\theta}\left\Vert f\right\Vert _{L^{p^{\star}}(\Zde)}^{\left(1-\theta\right)p^{\star}}\leq\left\Vert f\right\Vert _{L^{p}(\Zde)}^{p\theta}\left[f\right]_{s,p,\eps}^{\left(1-\theta\right)p^{\star}}\leq\left\Vert f\right\Vert _{s,p,\eps}^{p\theta}\left\Vert f\right\Vert _{s,p,\eps}^{\left(1-\theta\right)p^{\star}}=\left\Vert f\right\Vert _{s,p,\eps}^{q}\,.
\end{align*}

\end{proof}

\subsection{Proof of Theorem \ref{thm:Compactness-discrete-eps-to-0}}
\begin{proof}
Since $\Q$ is a uniform extension domain, the family $\cR_{\eps}^{\ast}u^{\eps}$
is precompact if and only if $\cR_{\eps}^{\ast}\cE_{\eps}u^{\eps}$ is compact, where we recall the operator $\cE_{\eps}$ from Definition \ref{def:uniform-extension-domain}.
We will apply the Frechet-Kolmogorov(-Riesz) theorem to prove compactness
of $\cR_{\eps}^{\ast}\cE_{\eps}u^{\eps}$. More precisely, it suffices
to verify the following three properties: 
\begin{align}
 & \sup_{\eps>0}\left\Vert \cR_{\eps}^{\ast}\cE_{\eps}u^{\eps}\right\Vert _{L^{q}(\Rd)}<\infty\,,\qquad\lim_{R\to\infty}\sup_{\eps>0}\left\Vert \cR_{\eps}^{\ast}\cE_{\eps}u^{\eps}\right\Vert _{L^{q}(\Rd\backslash B_{R}(0))}=0\,,\label{eq:FKR-conditions-1-2}\\
 & \lim_{|h|\to0}\sup_{\eps>0}\left\Vert \cR_{\eps}^{\ast}\cE_{\eps}u^{\eps}(\cdot)-\cR_{\eps}^{\ast}\cE_{\eps}u^{\eps}(\cdot+h)\right\Vert _{L^{q}(\Rd)}\to0\,.\label{eq:FKR-conditions-3}
\end{align}
Note that the conditions in (\ref{eq:FKR-conditions-1-2}) are satisfied
due to Theorem \ref{thm:discr-Poincare-wsp} and Remark \ref{rem:Extention-bounded-support}.
Thus, it only remains to show (\ref{eq:FKR-conditions-3}).

For $h\in\Rd$ we write $\tau_{h}u(x):=u(x+h)$, whenever this is
well defined. Moreover, for every $\eps>0$ we define 
\[
\left\Vert u\right\Vert _{p,\eps}:= \left( \sumeps{x\in\Zde}\left|u(x)\right|^{p} \right)^{1/p}\, .
\]
We first prove the Theorem in case $q=p$. Let $h\in\Zde$ and $\eta:=10h$.
We define $B_{\eta,\eps}:=\left\{ y\in\Zde\,:\;|y|\leq|\eta|\right\} $
and $B_{\eta}:=\left\{ y\in\Rd\,:\;|y|<\left|\eta\right|\right\} $.
Since $h\in\Zde$ we always have $\eta\geq10\eps$ and hence we have
\[
C_{B}:=\sup_{\eps,\eta}\left(\frac{\left|B_{\eta,\eps}\right|_{\eps}}{\left|B_{\eta}\right|}+\frac{\left|B_{\eta}\right|}{\left|B_{\eta,\eps}\right|_{\eps}}\right)<+\infty
\]
and
\[
\tilde{C}_{B}:=\sup_{\eps,\eta}\left(\sumeps{y\in B_{\eta,\eps}}\left|y\right|^{\left(d+ps\right)/\left(p-1\right)}\right)/\left(\int_{B_{\eta}}\left|y\right|^{\left(d+ps\right)/\left(p-1\right)}\d y\right)<+\infty
\]
We find 
\[
\left\Vert u-\tau_{h}u\right\Vert _{p,\eps}\leq\left|B_{\eta,\eps}\right|_{\eps}^{-1}\sumeps{y\in B_{\eta,\eps}}\left(\left\Vert u-\tau_{y}u\right\Vert _{p,\eps}+\left\Vert \tau_{h}u-\tau_{y}u\right\Vert _{p,\eps}\right)\,.
\]
In order to estimate the right-hand side, we apply Hölder's inequality and obtain
\begin{align*}
\sumeps{y\in B_{\eta,\eps}}\left\Vert u-\tau_{y}u\right\Vert _{p,\eps} & \leq\left(\sumeps{y\in B_{\eta,\eps}}\frac{\left\Vert u-\tau_{y}u\right\Vert _{p,\eps}^{p}}{\left|y\right|^{d+ps}}\right)^{\frac{1}{p}}\left(\sumeps{y\in B_{\eta,\eps}}\left|y\right|^{\left(d+ps\right)/\left(p-1\right)}\right)^{\frac{p-1}{p}}\\
 & \leq\left(\sumeps{y\in B_{\eta,\eps}}\,\sumeps{x\in\cap\Zde}\frac{\left(u(x)-u(x+y)\right)^{p}}{\left|y\right|^{d+ps}}\right)^{\frac{1}{p}}\tilde{C}_{B}^{\frac{p-1}{p}}\left(\int_{B_{\eta}}\left|y\right|^{\left(d+ps\right)/\left(p-1\right)}\d y\right)^{\frac{p-1}{p}}\\
 & =C\left\Vert u\right\Vert _{W^{s,p}(\Zde)}\left|B_{\eta}\right|\left|\eta\right|^{s}\,.
\end{align*}
Also with $B_{2\eta,\eps}(h):=\left\{ y\in\Zde\,:\;|y-h|\leq2|\eta|\right\} $
we get from Hölder's inequality
\begin{align*}
\sumeps{y\in B_{\eta,\eps}}\left\Vert \tau_{h}u-\tau_{y}u\right\Vert _{p,\eps} & \leq\left(\sumeps{y\in B_{\eta,\eps}}\frac{\left\Vert \tau_{h}u-\tau_{y}u\right\Vert _{p,\eps}^{p}}{\left|y-h\right|^{d+ps}}\right)^{\frac{1}{p}}\left(\sumeps{y\in B_{\eta,\eps}}\left|y-h\right|^{\left(d+ps\right)/\left(p-1\right)}\right)^{\frac{p-1}{p}}\\
 & \leq C\left\Vert u\right\Vert _{W^{s,p}(\Zde)}\left(\sumeps{y\in B_{2\eta,\eps(h)}}\left|y-h\right|^{\left(d+ps\right)/\left(p-1\right)}\right)^{\frac{p-1}{p}}\\
 & =2^{d+ps}C\left\Vert u\right\Vert _{W^{s,p}(\Zde)}\left|B_{\eta}\right|\left|\eta\right|^{s}\,.
\end{align*}
This implies 
\begin{align}
\left\Vert u-\tau_{h}u\right\Vert _{p,\eps}\leq C\left\Vert u\right\Vert _{W^{s,p}(\Zde)}\left|h\right|^{s}\,.
\label{eq:FKR-help-1}
\end{align}
Now, let $C_{\eps}:=[-\eps,\eps]^{d}$ be the cube of size $\eps$
and let $h\in\Rd\backslash C_{\eps}$. Further, let $\Z_{\eps,h}^{d}:=\left\{ z\in\Zde\,:\;\left(z+C_{\eps}\right)\cap\left(h+C_{\eps}\right)\not=\emptyset\right\} $
and for every $z\in\Z_{\eps,h}^{d}$ let $V(z,h)=\left|\left(z+C_{\eps}\right)\cap\left(h+C_{\eps}\right)\right|$.
Then we find 
\begin{align*}
\left\Vert \cR_{\eps}^{\ast}u-\tau_{h}\cR_{\eps}^{\ast}u\right\Vert _{L^{p}(\Rd)} & \leq\sum_{z\in\Z_{\eps,h}^{d}}V(z,h)\left\Vert \cR_{\eps}^{\ast}u-\tau_{z}\cR_{\eps}^{\ast}u\right\Vert _{L^{p}(\Rd)}\\
 & =\sum_{z\in\Z_{\eps,h}^{d}}V(z,h)\left\Vert u-\tau_{z}u\right\Vert _{L^{p}(\Zde)}\\
 & \stackrel{\eqref{eq:FKR-help-1}}\leq C\sum_{z\in\Z_{\eps,h}^{d}}V(z,h)\left\Vert u\right\Vert _{W^{s,p}(\Zde)}\left|z\right|^{s}\\
 & \leq C\sum_{z\in\Z_{\eps,h}^{d}}V(z,h)\left\Vert u\right\Vert _{W^{s,p}(\Zde)}\left|2h\right|^{s}\\
 & \leq C\left\Vert u\right\Vert _{W^{s,p}(\Zde)}\left|h\right|^{s}\,.
\end{align*}
Now, let $h\in C_{\eps}$. Like above, we obtain 
\[
\left\Vert \cR_{\eps}^{\ast}u-\tau_{h}\cR_{\eps}^{\ast}u\right\Vert _{L^{p}(\Rd)}\leq C\sum_{z\in\Z_{\eps,h}^{d}}V(z,h)\left\Vert u\right\Vert _{W^{s,p}(\Zde)}\left|z\right|^{s}\,.
\]
However, this time we find $V(z,h)\to0$ uniformly and linearly in $\left|h\right|\to0$.
Hence, we have 
\[
\left\Vert \cR_{\eps}^{\ast}u-\tau_{h}\cR_{\eps}^{\ast}u\right\Vert _{L^{p}(\Rd)}\leq C\begin{cases}
\left|h\right|^{s} & \mbox{if }h\in\Rd\backslash C_{\eps}\\
|h| & \mbox{if }h\in C_{\eps}
\end{cases}\,.
\]
Since $C$ does not depend on $\eps$, we infer 
\begin{equation}
\left\Vert \cR_{\eps}^{\ast}u-\tau_{h}\cR_{\eps}^{\ast}u\right\Vert _{L^{p}(\Rd)}\leq C\begin{cases}
\left|h\right|^{s} & \mbox{if }\left|h\right|>1\\
|h| & \mbox{if }\left|h\right|\leq1
\end{cases}\,.\label{eq:FKR-conditions-3-help}
\end{equation}
This implies (\ref{eq:FKR-conditions-3}) in case $p=q$. 

In case $q<p$, we use Remark \ref{rem:Extention-bounded-support}
and let $\tilde{\Q}$ denote the common support of $\cE_{\eps}u^{\eps}$.
We then obtain by  Hölder's inequality 
\[
\left\Vert \cR_{\eps}^{\ast}\cE_{\eps}u^{\eps}-\tau_{h}\cR_{\eps}^{\ast}\cE_{\eps}u^{\eps}\right\Vert _{L^{q}(\Rd)}\leq\left|\tilde{\Q}\right|^{\frac{p-q}{p}}\left\Vert \cR_{\eps}^{\ast}\cE_{\eps}u^{\eps}-\tau_{h}\cR_{\eps}^{\ast}\cE_{\eps}u^{\eps}\right\Vert _{L^{p}(\Rd)}^{\frac{q}{p}}\,,
\]
and hence compactness by (\ref{eq:FKR-conditions-3-help}).

In case $q\in(p,p^{\star})$ we use the same trick as in the proof
of Theorem \ref{thm:discr-Poincare-wsp}: we have for $f=u-\tau_{h}u$
and for $q=\theta p+(1-\theta)p^{\star}$ that 
\begin{align*}
\sumeps{x\in Q^{\eps}}\left|\left(u-\tau_{h}u\right)(x)\right|^{q} & \leq\left\Vert u-\tau_{h}u\right\Vert _{L^{p}(Q^{\eps})}^{p\theta}\left\Vert u-\tau_{h}u\right\Vert _{L^{p^{\star}}(Q^{\eps})}^{\left(1-\theta\right)p^{\star}}\,,
\end{align*}
and hence (\ref{eq:FKR-conditions-3}) follows from Theorem \ref{thm:discr-Poincare-wsp}
and (\ref{eq:FKR-conditions-3-help}).
\end{proof}
\bibliographystyle{plain}
\bibliography{RefHomogenRCM}

\end{document}